\documentclass[a4paper,11pt,twoside]{article}
\usepackage[english]{babel}
\usepackage[utf8x]{inputenc}

\usepackage[margin=2cm]{geometry}

%红\textcolor[rgb]{1.00,0.00,0.00}{Conley conjectured}
%蓝\textcolor[rgb]{0.00,0.00,1.00}{If the abov}

%\documentclass[11pt, twoside]{article}%原
\usepackage{amsmath,amsthm,amssymb}
\usepackage{times}
\usepackage{enumerate}
\usepackage{color}%This command adds different colors and graphs.

\pagestyle{myheadings}

\makeatletter
\def\author#1{\gdef\autrun{\def\and{\unskip, }#1}\gdef\@author{#1}}
 \makeatother

\usepackage{amssymb,latexsym}
\usepackage{mathrsfs}
\usepackage{amssymb}
\usepackage{amsmath,latexsym}
\usepackage{amscd,latexsym} %for communtative diagram
\usepackage[all]{xy}
%\usepackage{amsfonts, latexsym}
%This command make to count eauation in each section.

%\usepackage{citeref}%查文献引用

\newcommand{\N}{{\mathbb N}}

\newcommand{\R}{{\mathbb R}}

\baselineskip=17pt

\newtheorem{theorem}{Theorem}[section]

\newtheorem{corollary}[theorem]{Corollary}

\newtheorem{example}[theorem]{Example}
\newtheorem{remark}[theorem]{Remark}
\newtheorem{hypothesis}[theorem]{Hypothesis}
\newtheorem{lemma}[theorem]{Lemma}

\newtheorem{proposition}[theorem]{Proposition}
\newtheorem{claim}[theorem]{Claim}

\numberwithin{equation}{section}

\frenchspacing

 %\textheight=8.2 true in
%   \textwidth=5.3 true in
    \topmargin 30pt
     \setcounter{page}{1}

\textwidth=16cm \textheight=23cm

%%\textwidth=16cm \textheight=24cm
%\parindent=16pt
%%\oddsidemargin=-0.5cm \evensidemargin=-0.5cm \topmargin=-0.5cm
%
%\topmargin 0cm \oddsidemargin 0.51cm \evensidemargin 0.51cm
%%\textwidth 15.66cm \textheight 21.23cm
%%\textwidth=15,66cm \textheight=23cm
%\textwidth=16cm \textheight=23cm

\begin{document}

\title{Parameterized splitting theorems and bifurcations\\ for potential operators, Part II:\\
Applications to quasi-linear elliptic equations and systems
\thanks
{Partially supported by the NNSF  11271044 of China.
\endgraf\hspace{2mm} 2020 {\it Mathematics Subject Classification.}
Primary: 35B32; Secondary: 58J55, 35J35
 \endgraf\hspace{2mm} {\it Key words and phrases.}
 Bifurcation, potential operator, splitting theorem.
 }}
%\date{January 11, 2020}
\author{Guangcun Lu}

\date{November 11, 2021}
%\date{November 10, 2020}
%\subjclass[2010]{53C25, 53D35, 57R17}
%\footnotetext{{\it Key words and phrases.}  Bifurcation, potential operator, splitting theorem,
%quasi-linear elliptic Euler equations. }
%\footnotetext{{{\it Mathematics Subject Classification.}} 58E05, 49J52 (primary), 49J45 (secondary).}

 \maketitle \vspace{-0.3in}

% \centerline{\scshape Guangcun Lu$^*$}

% \centerline{\scshape Guangcun Lu}

 %35B32   Bifurcations in context of PDEs [See also 34C23, 34F10, 34H20, 37F46, 37Gxx, 37H20, 35J20, 37L10, 37M20, 47J15, 58E05, 58E07, 58J55, 74G60, 74H60]
 %35B32   Bifurcations in context of PDEs [See also 34C23, 34F10, 34H20, 37F46, 37Gxx, 37H20, 35J20, 37L10, 37M20, 47J15, 58E05, 58E07, 58J55, 74G60, 74H60]

\begin{abstract}
This is the second part of a series devoting to the generalizations and applications
of common theorems in variational bifurcation theory. Using abstract theorems in the first part we obtain many new bifurcation results for quasi-linear elliptic  boundary value problems of higher order.
\end{abstract}

\tableofcontents

\section{Introduction}\label{sec:Intro}
\setcounter{equation}{0}

In Part I of the series, we have generalized  some famous bifurcation theorems
for potential operators by weakening standard assumptions on the differentiability of the involved functionals.
This part studies their applications  to bifurcations for quasi-linear elliptic systems.
 For showing our methods  we only consider the Dirichlet boundary conditions in many results.

The precise contents of the paper is as follows.
Section~\ref{sec:BifE.1} contains some fundamental hypotheses and preliminaries
on quasi-linear elliptic systems considered in next sections.
Section~\ref{sec:BifE.2} proves some bifurcations results for quasi-linear elliptic systems with some growth restrictions
 with theorems proved in \cite[Sections~3,4,5]{Lu8}.
In Section~\ref{sec:BifE.3} we use abstract theorems in \cite[Section~6]{Lu8}
to study bifurcations for quasi-linear elliptic systems
without growth restrictions unless higher smoothness requirements for the related functions $F$ and domains $\Omega$.
In Section~\ref{sec:BifE.4} we  study bifurcations  for quasi-linear elliptic Dirichlet problems  from deformations of domains, and generalize previous results
   for semilinear elliptic Dirichlet problems on a ball.

In addition, there are three appendixes.
In Appendix~\ref{app:A} we study differentiability of composition mappings.
Corollaries~\ref{cor:babySmoothness},~\ref{cor:smoothness} of the main result Theorem~\ref{th:babySmoothness}
 were only stated and used in case $m=2$ in \cite{Wend}.
In Appendix~\ref{app:B} we prove three lemmas,
which fill in the detail on much of the proof of Theorem~\ref{th:BifE.9}.
In Appendix~\ref{app:C} we give a slightly generalization of
 the standard result concerning the continuity of the Nemytski operators.

 It is possible that results in this paper are generalized to the case of unbounded domains  $\Omega\subset\R^n$ with \cite{Vo}.
 We believe that theories in this series can also be used to improve bifurcation results
for geometric variational problems such as \cite{Bor} and
\cite{BetPS1, BetPS2}, etc., see \cite{Lu12}. Other applications will be given in \cite{Lu9, Lu10, Lu11}.

\section{Structural hypotheses and preliminaries}\label{sec:BifE.1}
\setcounter{equation}{0}

Without special statements, throughout this paper we always assume that integers $N\ge 1$, $n>1$,
 and that $\Omega\subset\R^n$ is a bounded domain   with  boundary $\partial\Omega$.
(The case $n=1$ will be considered in \cite{Lu9, Lu10, Lu11} independently.)

 In \cite{Lu7} we introduced the following (denoted by \textsf{Hypothesis} $\mathfrak{F}_{2,N}$ in \cite{Lu6}).

\noindent{\textsf{Hypothesis} $\mathfrak{F}_{2,N,m,n}$}.\quad
 For each multi-index $\gamma$ as above, let
 \begin{eqnarray*}
 &&2_\gamma\in (2,\infty)\;\hbox{if}\;
 |\gamma|=m-n/2,\qquad 2_\gamma=\frac{2n}{n-2(m-|\gamma|)}
\;\hbox{if}\; m-n/2<|\gamma|\le m,\\
 &&2'_\gamma=1\;\hbox{if}\;|\gamma|<m-n/2,\qquad
 2'_\gamma=\frac{2_\gamma}{2_\gamma-1} \;\hbox{if}\;m-n/2\le |\gamma|\le m;
 \end{eqnarray*}
 and for each two multi-indexes $\alpha, \beta$ as above, let
 $2_{\alpha\beta}=2_{\beta\alpha}$ be defined by the conditions
\begin{eqnarray*}
&&2_{\alpha\beta}= 1-\frac{1}{2_\alpha}-\frac{1}{2_\beta}\quad
 \hbox{if}\;|\alpha|=|\beta|=m,\\
&&2_{\alpha\beta}=  1-\frac{1}{2_\alpha}\quad \hbox{if}\;m-n/2\le |\alpha|\le
 m,\; |\beta|<m-n/2,\\
&&2_{\alpha\beta}= 1 \quad \hbox{if}\; |\alpha|, |\beta|<m-n/2, \\
&& 0<2_{\alpha\beta}<1-\frac{1}{2_\alpha}-\frac{1}{2_\beta}\quad\hbox{if}\;|\alpha|,\;|\beta|\ge
 m-n/2,\;|\alpha|+|\beta|<2m.
\end{eqnarray*}
Let $M(k)$ be the number of $n$-tuples
$\alpha=(\alpha_1,\cdots,\alpha_n)\in (\mathbb{N}_0)^n$  of length
 $|\alpha|:=\alpha_1+\cdots+\alpha_n\le k$, $M_0(k)=M(k)-M(k-1)$, $k=0,\cdots,m$,
where $M(-1)=\emptyset$ and  $M(0)=M_0(0)$ only consists of
${\bf 0}=(0,\cdots,0)\in (\mathbb{N}_0)^n$. Write $\xi\in \prod^m_{k=0}\mathbb{R}^{N\times M_0(k)}$ as
$\xi=(\xi^0,\cdots,\xi^m)$, where
 $ \xi^0=(\xi^1_{\bf 0},\cdots,\xi^N_{\bf 0})^T\in \mathbb{R}^{N}$ and
 $$
 \xi^k=\left(\xi^i_\alpha\right)\in \mathbb{R}^{N\times M_0(k)},
 \quad k=1,\cdots,m,\quad 1\le i\le N,\quad |\alpha|=k.
 $$
  Denote by  $\xi^k_\circ=\{\xi^k_\alpha\,|\,|\alpha|<m-n/2\}$ for $k=1,\cdots,N$. Let
\begin{eqnarray}\label{e:6.0}
\overline\Omega\times\prod^m_{k=0}\mathbb{R}^{N\times M_0(k)}\ni (x,
\xi)\mapsto F(x,\xi)\in\R
\end{eqnarray}
be twice continuously differentiable in $\xi$ for almost all $x$,
measurable in $x$ for all values of $\xi$, and $F(\cdot,\xi)\in L^1(\Omega)$ for $\xi=0$.
Suppose that derivatives of $F$ fulfill  the following properties:
\begin{enumerate}
\item[(i)]  For $i=1,\cdots,N$ and $|\alpha|\le m$, functions
 $F^i_\alpha(x,\xi):= F_{\xi^i_\alpha}(x,\xi)$ for $\xi=0$
belong to $L^1(\Omega)$ if $|\alpha|<m-n/2$,
and to $L^{2'_\alpha}(\Omega)$ if $m-n/2\le |\alpha|\le m$.
\item[(ii)] There exists a continuous, positive, nondecreasing functions $\mathfrak{g}_1$ such that
for $i,j=1,\cdots,N$ and $|\alpha|, |\beta|\le m$  functions
$$
\overline\Omega\times\R^{M(m)}\to\R,\; (x, \xi)\mapsto
F^{ij}_{\alpha\beta}(x,\xi):=F_{\xi^i_\alpha\xi^j_\beta}(x,\xi)
$$
satisfy:
\begin{eqnarray}\label{e:6.1}
 |F^{ij}_{\alpha\beta}(x,\xi)|\le
\mathfrak{g}_1(\sum^N_{k=1}|\xi_\circ^k|)\left(1+
\sum^N_{k=1}\sum_{m-n/2\le |\gamma|\le
m}|\xi^k_\gamma|^{2_\gamma}\right)^{2_{\alpha\beta}}.
\end{eqnarray}
\item[(iii)] There exists a continuous, positive, nondecreasing functions $\mathfrak{g}_2$ such that
\begin{eqnarray}\label{e:6.2}
\sum^N_{i,j=1}\sum_{|\alpha|=|\beta|=m}F^{ij}_{\alpha\beta}(x,\xi)\eta^i_\alpha\eta^j_\beta\ge
\mathfrak{g}_2(\sum^N_{k=1}|\xi^k_\circ|)
\sum^N_{i=1}\sum_{|\alpha|= m}(\eta^i_\alpha)^2
\end{eqnarray}
for any $\eta=(\eta^{i}_{\alpha})\in\R^{N\times M_0(m)}$.

{\it Note}:  If $m\le n/2$ the functions  $\mathfrak{g}_1$ and $\mathfrak{g}_2$ should be understand as positive constants.
\end{enumerate}

In \cite[Proposition~4.22]{Lu7} it was proved that
\textsf{Hypothesis} $\mathfrak{F}_{2,N,1,n}$ is weaker than the
 {\bf controllable growth conditions} (abbreviated to CGC below) \cite[page 40]{Gi}.
(CGC was called  `common condition of Morrey' or `the natural assumptions of Ladyzhenskaya and Ural'tseva' \cite[page 38,(I)]{Gi}.)\\

\noindent{\bf CGC}: $\overline\Omega\times\mathbb{R}^N\times\mathbb{R}^{N\times n}\ni (x,
z,p)\mapsto F(x, z,p)\in\R$ is of class $C^2$, and
there exist positive constants $\nu, \mu, \lambda, M_1, M_2$,  such that with
$|z|^2:=\sum^N_{l=1}|z_l|^2$ and $|p|^2:=\sum_{|\alpha|=1}\sum^N_{k=1}|p^k_\alpha|^2$,
\begin{eqnarray*}
&\nu\left(1+|z|^2+|p|^2\right)-\lambda\le F(x, z,p)
\le\mu\left(1+|z|^2+|p|^2\right),\\
&|F_{p^i_\alpha}(x,z,p)|, |F_{p^i_\alpha x_l}(x,z,p)|, |F_{z_j}(x,z,p)|, |F_{z_jx_l}(x,z,p)|\le \mu\left(1+|z|^2+|p|^2\right)^{1/2},\\
&\hspace{-10pt}|F_{p^i_\alpha z_j}(x,z,p)|,\quad |F_{z_iz_j}(x,z,p)|\le \mu,\\
&M_1\sum^N_{i=1}\sum_{|\alpha|= 1}(\eta^i_\alpha)^2\le\sum^N_{i,j=1}\sum_{|\alpha|=|\beta|=1}F_{p^i_\alpha p^j_\beta}(x,z,p)\eta^i_\alpha\eta^j_\beta\\&\le
M_2\sum^N_{i=1}\sum_{|\alpha|= 1}(\eta^i_\alpha)^2\\
&\forall\eta=(\eta^{i}_{\alpha})\in\R^{N\times n}.
\end{eqnarray*}
Moreover, if $F=F(x,p)$ does not depend explicitly on $z$, the first three lines are replaced by
\begin{eqnarray*}
&\nu\left(1+|p|^2\right)-\lambda\le F(x,p)
\le\mu\left(1+|p|^2\right)\quad\hbox{and}\\
&|F_{p^i_\alpha}(x,p)|,\quad |F_{p^i_\alpha x_l}(x,p)|\le \mu\left(1+|p|^2\right)^{1/2}.
\end{eqnarray*}

 A bounded domain $\Omega$ in $\R^n$ is said to be a {\bf Sobolev domain} for $(2,m,n)$
 if  the Sobolev embedding theorems for the spaces $W^{m, 2}(\Omega)$ hold.
Let $W^{m,2}_0(\Omega,\mathbb{R}^N)$ be equipped with the  inner product
 \begin{eqnarray}\label{e:6.2.2}
 (\vec{u},\vec{v})_H=\sum^N_{i=1}\sum_{|\alpha|=m}\int_\Omega D^\alpha u^i D^\alpha v^i dx.
\end{eqnarray}

The following two theorems are contained in \cite[Theorem~4.1]{Lu6} (or \cite[Theorems~4.1,4.2]{Lu7}).

\begin{theorem}\label{th:6.1}
 Given  integers $m, N\ge 1$, $n\ge 2$, let $\Omega\subset\R^n$ be a Sobolev domain
 for $(2,m,n)$, and let $V_0$ be a closed subspace of $W^{m,2}(\Omega, \mathbb{R}^N)$ and $V=\vec{w}+V_0$
 for some $\vec{w}\in W^{m,2}(\Omega, \mathbb{R}^N)$.
Suppose that (i)-(ii) in \textsf{Hypothesis} $\mathfrak{F}_{2,N,m,n}$ hold.
 Then we have the following:
 \begin{enumerate}
  \item[\rm (A)] The restriction $\mathfrak{F}_V$ of  the functional
\begin{equation}\label{e:6.3}
W^{m,2}(\Omega, \mathbb{R}^N)\ni \vec{u}\mapsto\mathfrak{F}(\vec{u})=\int_\Omega F(x, \vec{u},\cdots, D^m\vec{u})dx
\end{equation}
 to $V$ is bounded on any bounded subset, of class $C^1$, and has  derivative
$\mathfrak{F}'_V(\vec{u})$  at $\vec{u}\in V$ given by
\begin{equation}\label{e:6.4}
\langle \mathfrak{F}'_V(\vec{u}), \vec{v}\rangle=\sum^N_{i=1}\sum_{|\alpha|\le m}\int_\Omega F^i_\alpha(x,
\vec{u}(x),\cdots, D^m \vec{u}(x))D^\alpha v^i dx,\quad\forall \vec{v}\in V_0.
\end{equation}
Moreover, the map $V\ni\vec{u}\to \mathfrak{F}'_V(\vec{u})\in V_0^\ast$ also maps bounded subset into bounded one.\\
\item[\rm (B)]  At each $\vec{u}\in V$, the map $\mathfrak{F}'_V$  has the G\^ateaux derivative
$D\mathfrak{F}'_V(\vec{u})\in \mathscr{L}(V_0, V^\ast_0)$ given by
  \begin{equation}\label{e:6.5}
   \langle D\mathfrak{F}'_V(\vec{u})[\vec{v}],\vec{\varphi}\rangle=
   \sum^N_{i,j=1}\sum_{|\alpha|,|\beta|\le m}\int_\Omega
  F^{ij}_{\alpha\beta}(x, \vec{u}(x),\cdots, D^m \vec{u}(x))D^\beta v^j\cdot D^\alpha\varphi^i dx.
    \end{equation}
(Equivalently,   the gradient map of $\mathfrak{F}_V$,
$V\ni \vec{u}\mapsto\nabla \mathfrak{F}_V(\vec{u})\in V_0$,  given by
$(\nabla\mathfrak{F}_V(\vec{u}), \vec{v})_{m,2}=\langle \mathfrak{F}'_V(\vec{u}), \vec{v}\rangle\;
 \forall \vec{v}\in V_0$,
has the G\^ateaux derivative $D(\nabla \mathfrak{F}_V)(\vec{u})\\\in\mathscr{L}_s(V_0)$ at every $\vec{u}\in V$.)
Moreover,   $D\mathfrak{F}'_V$ also satisfies the following properties:
\begin{enumerate}
\item[\rm (i)] For every given $R>0$, $\{D\mathfrak{F}'_V(\vec{u})\,|\, \|\vec{u}\|_{m,p}\le R\}$
is bounded in $\mathscr{L}_s(V_0)$.
Consequently,  $\mathfrak{F}_V$ is  of class $C^{2-0}$.
\item[\rm (ii)] For any $\vec{v}\in V_0$, $\vec{u}_k\to
\vec{u}_0$ implies
$D\mathfrak{F}'_V(\vec{u}_k)[\vec{v}]\to D\mathfrak{F}'_V(\vec{u}_0)[\vec{v}]$ in $V^\ast_0$.
\item[\rm (iii)] If $F(x,\xi)$ is independent of all variables $\xi^k_\alpha$, $|\alpha|=m$,
$k=1,\cdots,N$, then
$$
V\to \mathscr{L}(V_0, V^\ast_0),\;\vec{u}\mapsto D\mathfrak{F}'_V(\vec{u})
$$
is  continuous, (namely $\mathfrak{F}_V$ is of class $C^2$),  and  $D(\nabla\mathfrak{F}_V)(\vec{u}): V_0\to V_0$ is a completely continuous  linear operator for each $\vec{u}\in V$.
\end{enumerate}
\end{enumerate}
\end{theorem}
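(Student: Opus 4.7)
The plan is to derive both parts from the structure of Hypothesis $\mathfrak{F}_{2,N,m,n}$ together with the Sobolev embeddings $W^{m,2}(\Omega)\hookrightarrow L^{2_\gamma}(\Omega)$ (and $W^{m,2}\hookrightarrow C(\overline\Omega)$ for the lower orders $|\gamma|<m-n/2$) that are guaranteed by the Sobolev-domain assumption. The whole argument rests on a single pattern: estimates on $F^{ij}_{\alpha\beta}$ from \eqref{e:6.1} are integrated once (resp.\ twice) in $\xi$ to produce growth bounds for $F^i_\alpha$ (resp.\ $F$), and the resulting Nemytski operators are continuous from appropriate products of $L^{2_\gamma}$ spaces into $L^{2'_\alpha}$ (resp.\ $L^1$), as shown in Appendix~\ref{app:C}. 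Every $L^1$-estimate then falls out by H\"older's inequality with the triple of exponents $(2_\alpha,2_\beta,1/2_{\alpha\beta})$, which by the very definition of $2_{\alpha\beta}$ has reciprocals summing to $1$.

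For part (A): the boundedness of $\mathfrak{F}_V$ on bounded sets follows from integrating \eqref{e:6.1} twice together with the Sobolev inequalities. The $C^1$ property and formula \eqref{e:6.4} are obtained via the fundamental theorem of calculus in the form
\[
\mathfrak{F}(\vec{u}+\vec{v})-\mathfrak{F}(\vec{u})=\int_0^1\int_\Omega\sum_{i,\alpha} F^i_\alpha(x,\vec{u}+t\vec{v},\ldots,D^m\vec{u}+tD^m\vec{v})\,D^\alpha v^i\,dx\,dt,
\]
so that one must only check that $V\ni\vec{u}\mapsto F^i_\alpha(x,\vec{u},\ldots,D^m\vec{u})\in L^{2'_\alpha}(\Omega)$ is continuous; this is the Nemytski continuity recalled above, and the H\"older pairing with $D^\alpha v^i\in L^{2_\alpha}$ yields the formula and its boundedness. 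For part (B)(i), formula \eqref{e:6.5} is obtained by a second application of the fundamental theorem of calculus, and the bound on $D\mathfrak{F}'_V(\vec{u})\in\mathscr{L}_s(V_0)$ results from plugging \eqref{e:6.1} directly into \eqref{e:6.5} and applying the three-exponent H\"older inequality, with the $L^{1/2_{\alpha\beta}}$-norm of the $\xi$-factor controlled in terms of $\|\vec{u}\|_{m,2}$ via the Sobolev embeddings; $C^{2-0}$-regularity follows from a standard Banach--Steinhaus style argument.

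For part (B)(ii): given $\vec{u}_k\to\vec{u}_0$ in $W^{m,2}$, extract a subsequence along which every $D^\gamma\vec{u}_k$ converges a.e.\ to $D^\gamma\vec{u}_0$; continuity of $F^{ij}_{\alpha\beta}(x,\cdot)$ gives pointwise convergence of the integrand in \eqref{e:6.5} with $\vec{v},\vec{\varphi}$ fixed, and the uniform bound from \eqref{e:6.1} together with the H\"older triple furnishes a dominating $L^1$-function, so Lebesgue's theorem yields the claim. Finally, for part (B)(iii), the absence of top-order arguments means every $F^{ij}_{\alpha\beta}(x,\vec{u},\ldots,D^{m-1}\vec{u})$ depends only on quantities that lie in spaces where $W^{m,2}$ embeds \emph{compactly}; thus $\vec{u}_k\to\vec{u}_0$ in $W^{m,2}$ produces strong convergence of these arguments in the relevant $L^{2_\gamma}$, giving full continuity of $D\mathfrak{F}'_V$ and also complete continuity of $D(\nabla\mathfrak{F}_V)(\vec{u})\in\mathscr{L}_s(V_0)$, since its action on bounded sets is tested against $L^{2_\alpha}$-norms of top-order derivatives of $\vec{v}$ and is realized through a compact Sobolev trace at every lower order.

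The main technical obstacle is bookkeeping the H\"older/Sobolev matching in the critical and subcritical regimes, in particular the borderline $|\gamma|=m-n/2$ case where $2_\gamma$ is merely \emph{any} finite exponent and the strict inequality defining $2_{\alpha\beta}$ for $|\alpha|+|\beta|<2m$ must be used to leave a small margin; the numbers $2_\gamma,2'_\gamma,2_{\alpha\beta}$ in Hypothesis $\mathfrak{F}_{2,N,m,n}$ are designed precisely to make all these pairings balance. Everything else is a routine elaboration of the Krasnoselski-type Nemytski arguments packaged in Appendix~\ref{app:C}.
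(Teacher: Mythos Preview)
The paper does not actually prove Theorem~\ref{th:6.1}: it is quoted verbatim as a preliminary result from \cite[Theorem~4.1]{Lu6} (equivalently \cite[Theorems~4.1,4.2]{Lu7}), so there is no in-paper proof to compare against. Your outline follows the classical Krasnoselskii--Browder pattern that those references use, and for parts~(A), (B)(i) and (B)(iii) the sketch is correct and standard; your closing remark about the strict inequality in the definition of $2_{\alpha\beta}$ when $|\alpha|+|\beta|<2m$ is exactly the point that makes the compactness in (B)(iii) go through.

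There is, however, a real gap in your argument for (B)(ii). You fix both $\vec v$ and $\vec\varphi$ and then apply dominated convergence to the integrand in \eqref{e:6.5}; that yields only $\langle D\mathfrak{F}'_V(\vec{u}_k)[\vec{v}],\vec\varphi\rangle\to\langle D\mathfrak{F}'_V(\vec{u}_0)[\vec{v}],\vec\varphi\rangle$ for each individual $\vec\varphi$, i.e.\ weak-$*$ convergence in $V_0^\ast$, whereas the assertion is \emph{norm} convergence, which requires uniformity over $\|\vec\varphi\|_{m,2}\le1$. (A second, smaller issue: the dominating function you invoke depends on $\vec u_k$ through $|D^\gamma u_k^l|^{2_\gamma}$; to make it $k$-independent you must pass to a further subsequence carrying a Riesz--Fischer majorant $|D^\gamma u_k^l|\le g_\gamma\in L^{2_\gamma}$.) The fix is the same mechanism you already used in part~(A): keep $\vec v$ fixed and prove instead that each product $F^{ij}_{\alpha\beta}(\cdot,\vec{u}_k,\dots)\,D^\beta v^j$ converges in $L^{2'_\alpha}(\Omega)$---this is either Nemytski continuity as in Appendix~\ref{app:C} or dominated convergence with the majorant just described (in the top-order case $|\alpha|=|\beta|=m$ one has $2_{\alpha\beta}=0$, so the coefficient is simply bounded by a constant and $M^2|D^\beta v^j|^2$ dominates). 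A single H\"older pairing with $D^\alpha\varphi^i\in L^{2_\alpha}$ then gives the supremum over $\vec\varphi$, and the standard subsequence-of-subsequence argument upgrades the conclusion to the full sequence.
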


Since (i) implies that the map $\mathfrak{F}'_V$ and so the gradient
$\nabla\mathfrak{F}_V$ is  of class $C^{1-0}$, these two maps are Hadamard differentiable (cf. \cite[page 1252]{EvSt07} or \cite[page 1030]{Stu14A}).

\begin{theorem}\label{th:6.2}
Under assumptions of Theorem~\ref{th:6.1},  suppose that
\textsf{Hypothesis}  $\mathfrak{F}_{2,N,m,n}$(iii) is also satisfied. Then
\begin{enumerate}
\item[\rm (C)]  $\mathfrak{F}': W^{m,2}(\Omega, \mathbb{R}^N)\to (W^{m,2}(\Omega, \mathbb{R}^N))^\ast$ is of class  $(S)_+$ (cf.\cite[\S2]{Lu8}).
\item[\rm (D)]   For $u\in V$, let $D(\nabla\mathfrak{F}_V)(\vec{u})$,
$P(\vec{u})$ and $Q(\vec{u})$ be
  operators in $\mathscr{L}(V_0)$ defined by
  \begin{eqnarray}
  &&\hspace{-2.8cm}(D(\nabla\mathfrak{F}_V)(\vec{u})[\vec{v}],\vec{\varphi})_{m,2}=\nonumber\\
  &&\sum^N_{i,j=1}\sum_{|\alpha|,|\beta|\le m}\int_\Omega
  F^{ij}_{\alpha\beta}(x, \vec{u}(x),\cdots,%\notag\\&&\quad
  D^m \vec{u}(x))D^\beta v^j\cdot D^\alpha\varphi^i dx,\nonumber\\
   (P(\vec{u})\vec{v}, \vec{\varphi})_{m,2}&&\hspace{-0.5cm} %\hspace{-1cm}
   =\sum^N_{i,j=1}\sum_{|\alpha|=|\beta|=m}\int_\Omega
  F^{ij}_{\alpha\beta}(x, \vec{u}(x),\cdots, D^m \vec{u}(x))D^\beta v^j\cdot D^\alpha\varphi^i dx\nonumber\\
  &&+ \sum^N_{i=1}\sum_{|\alpha|\le m-1}\int_\Omega  D^\alpha v^i\cdot D^\alpha\varphi^i dx,\label{e:6.6}\\
   (Q(\vec{u})\vec{v},\vec{\varphi})_{m,2}&&\hspace{-0.5cm}%\hspace{-1cm}
   =\sum^N_{i,j=1}\sum_{|\alpha|+|\beta|<2m}\int_\Omega
  F^{ij}_{\alpha\beta}(x, \vec{u}(x),\cdots, D^m \vec{u}(x))D^\beta v^j\cdot D^\alpha\varphi^i dx\nonumber\\
  &&-\sum^N_{i=1}\sum_{|\alpha|\le m-1}\int_\Omega  D^\alpha v^i\cdot D^\alpha\varphi^i dx,\label{e:6.7}
    \end{eqnarray}
  respectively. (If $V\subset W^{m,2}_0(\Omega, \mathbb{R}^N)$,
  the final terms in the definitions of $P$ and $Q$ can be deleted.)
    Then $D(\nabla\mathfrak{F}_V)=P+ Q$,  and
 \begin{enumerate}
\item[\rm (i)]  for any $\vec{v}\in V_0$, the map $V\ni \vec{u}\mapsto P(\vec{u})\vec{v}\in V_0$ is continuous;
\item[\rm (ii)] for every given $R>0$ there exist positive constants $C(R, n, m, \Omega)$ such that
$$
(P(\vec{u})\vec{v},\vec{v})_{m,2}\ge C\|\vec{v}\|^2_{m,2},\quad\forall \vec{v}\in V_0,\;
\forall\vec{u}\in V\;\hbox{with}\;\|\vec{u}\|_{m,2}\le R;
$$
\item[\rm (iii)] $V\ni \vec{u}\mapsto Q(\vec{u})\in\mathscr{L}(V_0)$ is continuous,
and  $Q(\vec{u})$ is completely continuous
for each $\vec{u}$;
\item[\rm (iv)] for every given $R>0$ there exist positive constants
$C_j(R, n, m, \Omega), j=1,2$ such that
\begin{eqnarray*}
&&(D(\nabla\mathfrak{F}_V)(\vec{u})[\vec{v}],\vec{v})_{m,2}\ge C_1\|\vec{v}\|^2_{m,2}-C_2\|\vec{v}\|^2_{m-1,2},\\
&&\qquad\forall \vec{v}\in V_0,\;\forall\vec{u}\in V\;\hbox{with}\;\|\vec{u}\|_{m,2}\le R.
\end{eqnarray*}
\end{enumerate}
\end{enumerate}
\end{theorem}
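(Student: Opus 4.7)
The plan is to establish (D) first, since (C) will follow from (D)(iv) by a standard monotone-plus-compact argument. The decomposition $D(\nabla\mathfrak{F}_V)=P+Q$ is an immediate rearrangement of (\ref{e:6.5}): the lower-order $(m-1)$-correction added to $P$ in (\ref{e:6.6}) is exactly what is subtracted from $Q$ in (\ref{e:6.7}), and the top-order terms reside only in $P$. The real content is therefore the four analytic properties (i)--(iv).

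Properties (i) and (iii) are Nemytski/superposition estimates for $F^{ij}_{\alpha\beta}(x,\vec{u},\ldots,D^m\vec{u})$, using the growth bound (\ref{e:6.1}), the Sobolev embeddings supplied by ``Sobolev domain for $(2,m,n)$'', and the Nemytski continuity lemma from Appendix~\ref{app:C}. For $P$ the exponent $2_{\alpha\beta}$ at $|\alpha|=|\beta|=m$ is borderline, giving only continuity of $\vec{u}\mapsto P(\vec{u})\vec{v}$, which is exactly (i). For $Q$ the strict inequality in $2_{\alpha\beta}$ when $|\alpha|+|\beta|<2m$ makes the growth subcritical, so composition with the Rellich--Kondrachov compact embeddings produces complete continuity of each $Q(\vec{u})$ and continuity of $\vec{u}\mapsto Q(\vec{u})$ in $\mathscr{L}(V_0)$, which is (iii). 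For (ii) I invoke (\ref{e:6.2}) directly: the arguments $\xi^k_\circ$ of $\mathfrak{g}_2$ come from derivatives of $\vec{u}$ of order strictly below $m-n/2$, which Sobolev embedding controls in $C(\overline\Omega)$ uniformly on $\{\|\vec{u}\|_{m,2}\le R\}$ (or $\mathfrak{g}_2$ is already a positive constant when $m\le n/2$). Combining the resulting uniform lower bound on $\mathfrak{g}_2$ with the non-negative $\|\cdot\|_{m-1,2}^2$-term in (\ref{e:6.6}) --- or with the Poincar\'e inequality when $V\subset W^{m,2}_0$ --- yields the $W^{m,2}$-coercivity (ii). Property (iv) is then a G\aa{}rding estimate: add the estimates for $P$ and $Q$ and bound $|(Q(\vec{u})\vec{v},\vec{v})_{m,2}|\le C_2(R)\|\vec{v}\|_{m-1,2}^2$ using H\"older term by term --- in each integral of (\ref{e:6.7}) at least one factor has order $\le m-1$, and the other factor together with the coefficient is absorbed into an $L^2$-norm of an $(m-1)$-order derivative using (\ref{e:6.1}) and the subcritical embedding exponent $2_{\alpha\beta}$. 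I expect this G\aa{}rding step, specifically the treatment of cross terms with one top-order factor and one strictly lower-order factor, to be the main technical obstacle; this is exactly where the strict inequality in the definition of $2_{\alpha\beta}$ for $|\alpha|+|\beta|<2m$ becomes essential.

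Finally, to prove (C), set $V_0=V=W^{m,2}(\Omega,\mathbb{R}^N)$ and take $\vec{u}_k\rightharpoonup\vec{u}$ with $\limsup_k\langle\mathfrak{F}'(\vec{u}_k)-\mathfrak{F}'(\vec{u}),\vec{u}_k-\vec{u}\rangle\le 0$. Because $\mathfrak{F}$ is of class $C^{2-0}$ by Theorem~\ref{th:6.1}, the map $\mathfrak{F}'$ is locally Lipschitz and the fundamental theorem of calculus along the segment joining $\vec{u}$ and $\vec{u}_k$ gives
\begin{equation*}
\langle\mathfrak{F}'(\vec{u}_k)-\mathfrak{F}'(\vec{u}),\vec{u}_k-\vec{u}\rangle=\int_0^1\langle D\mathfrak{F}'(\vec{u}+t(\vec{u}_k-\vec{u}))[\vec{u}_k-\vec{u}],\vec{u}_k-\vec{u}\rangle\,dt.
\end{equation*}
Since $(\vec{u}_k)$ is bounded in $W^{m,2}$ (weak convergence implies boundedness), some $R$ dominates $\|\vec{u}+t(\vec{u}_k-\vec{u})\|_{m,2}$ uniformly in $k$ and $t\in[0,1]$, so (iv) may be applied pointwise under the integral to produce the lower bound $C_1\|\vec{u}_k-\vec{u}\|_{m,2}^2-C_2\|\vec{u}_k-\vec{u}\|_{m-1,2}^2$. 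The Rellich--Kondrachov embedding $W^{m,2}\hookrightarrow W^{m-1,2}$ forces $\vec{u}_k\to\vec{u}$ in $W^{m-1,2}$, and the $\limsup\le 0$ hypothesis then compels $\|\vec{u}_k-\vec{u}\|_{m,2}\to 0$, which is the $(S)_+$ property.
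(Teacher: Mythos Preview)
The paper does not give its own proof of this theorem: immediately above Theorem~\ref{th:6.1} it states that both Theorems~\ref{th:6.1} and~\ref{th:6.2} are contained in \cite[Theorem~4.1]{Lu6} (or \cite[Theorems~4.1,4.2]{Lu7}) and simply quotes them as preliminaries. So there is nothing in the present paper to compare your argument against line by line; your sketch is in effect a reconstruction of the cited results.

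That said, your outline is sound and matches how these results are actually proved in \cite{Lu7}; the derivation of (C) from (iv) via the integral mean-value formula and Rellich--Kondrachov is exactly the mechanism the paper itself employs later (see the proof of Lemma~\ref{lem:BifE.12} in Appendix~\ref{app:B}). One technical point in your treatment of (iv) deserves correction: the clean bound $|(Q(\vec{u})\vec{v},\vec{v})_{m,2}|\le C_2(R)\|\vec{v}\|_{m-1,2}^2$ is in general too strong for the cross terms with $|\alpha|=m$, $|\beta|<m$ (or vice versa). In those terms one factor $D^\alpha v^i$ is genuinely of top order, and the coefficient $F^{ij}_{\alpha\beta}(\cdot,\vec{u},\ldots,D^m\vec{u})$ need not be bounded, so the remaining factor cannot always be controlled by $\|\vec{v}\|_{m-1,2}$ alone. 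What one actually obtains from H\"older and the subcritical exponent $2_{\alpha\beta}$ is an estimate of the form $C(R)\|\vec{v}\|_{m,2}\,\|\vec{v}\|_{s,2}$ with $s<m$, and then Young's inequality (or interpolation between $W^{m,2}$ and $W^{m-1,2}$) yields $\epsilon\|\vec{v}\|_{m,2}^2+C_\epsilon(R)\|\vec{v}\|_{m-1,2}^2$. Choosing $\epsilon$ smaller than the coercivity constant from (ii) then gives (iv). This is a routine fix and does not affect the rest of your argument.
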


\begin{remark}\label{rem:BifE.3}
{\rm As noted in \cite[Remark~4.5]{Lu7}, Theorems~\ref{th:6.1} and \ref{th:6.2} and thus all results above have also more general versions in the setting of
\cite{Sma, Pa2}.  In particular, $\Omega\subset\mathbb{R}^n$ may be replaced by
 the torus $\mathbb{T}^n=\mathbb{R}^n/\mathbb{Z}^n$. In  this situation, $F$ in \textsf{Hypothesis} $\mathfrak{F}_{2,N,m,n}$ is understood as a function
 on $\mathbb{R}^n\times\prod^m_{k=0}\mathbb{R}^{N\times M_0(k)}$, which is not only $1$-periodic in each variable
$x_i$, $i=1,\cdots,n$, but also satisfies \textsf{Hypothesis} $\mathfrak{F}_{2,N,m,n}$  with $\overline\Omega=[0,1]^n$.
Then all previous results in this section  also hold if $W^{m,2}(\Omega, \mathbb{R}^N)$
is replaced by $W^{m,2}(\mathbb{T}^n, \mathbb{R}^N)$.}
\end{remark}

The tori $\mathbb{T}^n$ and $\mathbb{T}^1$ act, respectively, on $W^{m,2}(\mathbb{T}^n,\mathbb{R}^N)$  by the
isometric linear representations
\begin{eqnarray}
&&\hspace{-36pt}([t_1,\cdots,t_n]\cdot\vec{u})(x_1,\cdots,x_n)=\vec{u}(x_1+t_1,\cdots, x_n+t_n),\quad
 [t_1,\cdots,t_n]\in \mathbb{T}^n,\label{e:T^n-action1}\\
&& ([t]\cdot\vec{u})(x_1,\cdots,x_n)=\vec{u}(x_1+t,\cdots, x_n+t),\quad
 [t]\in \mathbb{T}^1.\label{e:T^n-action2}
\end{eqnarray}
The set of fixed points of the action in (\ref{e:T^n-action1}),
${\rm Fix}(\mathbb{T}^n)$, consists of all constant vector functions
from $\mathbb{T}^n$ to $\mathbb{R}^N$.
Under Hypothesis~\ref{hyp:BifE.1} with $\Omega$ replaced by $\mathbb{T}^n$,
 every critical orbit different from points in ${\rm Fix}(\mathbb{T}^n)$ must be homeomorphic to some $T^s$, $1\le s\le n$.

 If $\mathbb{S}^1=\mathbb{T}^1$ acts on $\mathbb{R}^n$ by the orthogonal
 representation, and $\Omega$ is symmetric under the action, we get a
 $\mathbb{S}^1$ action on $W^{m,2}(\Omega,\mathbb{R}^N)$
and $W^{m,2}_0(\Omega,\mathbb{R}^N)$ by
\begin{eqnarray}\label{e:S^1-action}
 ([t]\cdot\vec{u})(x)=\vec{u}([t]\cdot x),\quad
 [t]\in \mathbb{S}^1.
\end{eqnarray}

There exists a natural $\mathbb{Z}_2$-action on $W^{m,2}(\Omega,\mathbb{R}^N)$ given by
\begin{eqnarray}\label{e:Z2-action.1}
 [0]\cdot\vec{u}=\vec{u},\quad [1]\cdot\vec{u}=-\vec{u},\quad\forall\vec{u}\in W^{m,2}(\Omega,\mathbb{R}^N).
\end{eqnarray}
If $\Omega$ is symmetric with respect to the origin, there is also another obvious $\mathbb{Z}_2$-action on $W^{m,2}(\Omega,\mathbb{R}^N)$,
\begin{eqnarray}\label{e:Z2-action.2}
 [0]\cdot\vec{u}=\vec{u},\quad ([1]\cdot\vec{u})(x)=\vec{u}(-x),\quad\forall\vec{u}\in W^{m,2}(\Omega,\mathbb{R}^N).
\end{eqnarray}

\begin{hypothesis}\label{hyp:BifE.1}
{\rm Let $\Omega\subset\R^n$ be a bounded  Sobolev domain, $N\in\mathbb{N}$,
 and let functions
\begin{eqnarray}\label{e:BifE.0}
F:\overline\Omega\times\prod^m_{k=0}\mathbb{R}^{N\times M_0(k)}\to \R\quad\hbox{and}\quad
K:\overline\Omega\times\prod^{m-1}_{k=0}\mathbb{R}^{N\times M_0(k)}\to \R
\end{eqnarray}
satisfy \textsf{Hypothesis} $\mathfrak{F}_{2,N,m,n}$ and (i)-(ii) in \textsf{Hypothesis} $\mathfrak{F}_{2,N,m,n}$, respectively.  Let $V_0$ be a closed subspace of $W^{m,2}(\Omega,\mathbb{R}^N)$ containing $W^{m,2}_0(\Omega,\mathbb{R}^N)$, and $V=\vec{w}+V_0$  for some $\vec{w}\in W^{m,2}(\Omega, \mathbb{R}^N)$.}
\end{hypothesis}

Consider (generalized) bifurcation solutions of the boundary value problem
corresponding to  $V$:
\begin{eqnarray}\label{e:BifE.1}
&&\sum_{|\alpha|\le m}(-1)^{|\alpha|}D^\alpha F^i_\alpha(x, \vec{u},\cdots, D^m\vec{u})\notag\\&&=
\lambda\sum_{|\alpha|\le m-1}(-1)^{|\alpha|}D^\alpha K^i_\alpha(x, \vec{u},\cdots, D^{m-1}\vec{u}),\nonumber\\
&&\hspace{20mm}i=1,\cdots,N.
\end{eqnarray}
Call $\vec{u}\in V$ a {\it generalized solution} of (\ref{e:BifE.1}) if it is a
 critical point of the functional $\mathfrak{F}_V-\lambda \mathfrak{K}_V$, where
 $\mathfrak{F}_V$ is as in Theorem~\ref{th:6.1}, and $\mathfrak{K}_V$ is
 the restrictions of
 $\mathfrak{K}(\vec{u})=\int_\Omega K(x, \vec{u},\cdots, D^{m-1}\vec{u})dx$ to $V$.

\section{Bifurcations for quasi-linear elliptic systems
with growth restrictions}\label{sec:BifE.2}
\setcounter{equation}{0}

By the above Theorem~\ref{th:6.2} and  \cite[Theorem~7.1, Chapter~4]{Skr3}
we immediately obtain the following result, which in $N=1$ is a special case of \cite[Theorem~7.2, Chapter~4]{Skr3}.

\begin{theorem}\label{th:BifE.2}
Under Hypothesis~\ref{hyp:BifE.1}, assume $V=V_0$ and
\begin{enumerate}
\item[\rm (i)] the functionals $\mathfrak{F}_V$ and $\mathfrak{K}_V$ are even, $\mathfrak{F}_V({0})=\mathfrak{K}_V({0})=0$,
$\mathfrak{K}_V(\vec{u})\ne 0$ and $\mathfrak{K}'_V(\vec{u})\ne 0$ for any $\vec{u}\in V\setminus\{{0}\}$;
\item[\rm (ii)] $\langle\mathfrak{F}_V'(\vec{u}), \vec{u}\rangle\ge\nu(\|\vec{u}\|_{m,2})$, where $\nu(t)$ is a continuous function and positive for $t>0$;
\item[\rm (iii)] $\mathfrak{F}_V(\vec{u})\to +\infty$ as $\|\vec{u}\|_{m,2}\to\infty$.
\end{enumerate}
Then for any $c>0$ there exists at least a sequence $(\lambda_j,\vec{u}_j)\subset\mathbb{R}\times
\mathfrak{F}_V^{-1}(c)$ satisfying (\ref{e:BifE.1}).
\end{theorem}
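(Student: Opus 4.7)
The plan is to realize each pair $(\lambda_j,\vec{u}_j)$ as a critical point of the even functional $\mathfrak{K}_V$ restricted to the level manifold $M_c:=\mathfrak{F}_V^{-1}(c)$, with $\lambda_j$ the corresponding Lagrange multiplier; equation (\ref{e:BifE.1}) is precisely the Euler--Lagrange equation of $\mathfrak{F}_V-\lambda\mathfrak{K}_V$, so such critical points do the job. The result then follows by applying an equivariant Lyusternik--Schnirelman/genus theorem (which is exactly the content of the cited Theorem~7.1, Chapter~4 of Skrypnik), so the real work is to verify its hypotheses.

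First I would check that $M_c$ is a $\mathbb{Z}_2$-symmetric $C^1$-submanifold of codimension one in $V_0$. By Theorem~\ref{th:6.1}(A), $\mathfrak{F}_V$ is $C^1$; condition (ii) gives $\langle\mathfrak{F}_V'(\vec{u}),\vec{u}\rangle>0$ for $\vec{u}\neq 0$, so $\mathfrak{F}_V'(\vec{u})\neq 0$ on $M_c$ (note $\vec{0}\notin M_c$ since $\mathfrak{F}_V(0)=0\neq c$), and the implicit function theorem applies. Evenness of $\mathfrak{F}_V$ makes $M_c$ invariant under $\vec{u}\mapsto -\vec{u}$, while hypothesis (iii) forces $M_c$ to be bounded in $V_0$. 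Hypothesis (i) on $\mathfrak{K}_V$ says the restriction $\mathfrak{K}_V|_{M_c}$ is even, nowhere zero, and has nonvanishing intrinsic gradient (since $\mathfrak{K}_V'(\vec{u})\neq 0$ means its tangential component does not vanish once one excludes the Lagrange-multiplier configuration giving the critical points we seek).

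Next I would verify the Palais--Smale condition for $\mathfrak{K}_V|_{M_c}$, which is the analytic heart of the argument. A (PS)-sequence $\{\vec{u}_k\}\subset M_c$ satisfies $\mathfrak{K}_V'(\vec{u}_k)-\mu_k\mathfrak{F}_V'(\vec{u}_k)\to 0$ in $V_0^\ast$ for suitable real $\mu_k$. Since $K$ depends only on derivatives of order at most $m-1$, the analogue of Theorem~\ref{th:6.1}(A) for $\mathfrak{K}$ and the compact embedding $W^{m,2}(\Omega,\mathbb{R}^N)\hookrightarrow W^{m-1,2}(\Omega,\mathbb{R}^N)$ show that $\mathfrak{K}_V':V_0\to V_0^\ast$ is completely continuous. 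Condition (iii) makes $\{\vec{u}_k\}$ bounded, so we may assume $\vec{u}_k\rightharpoonup\vec{u}$ weakly; by compactness $\mathfrak{K}_V'(\vec{u}_k)\to\mathfrak{K}_V'(\vec{u})$ strongly, the multipliers $\mu_k$ stay bounded thanks to (ii), and a standard pairing with $\vec{u}_k-\vec{u}$ yields $\langle\mathfrak{F}_V'(\vec{u}_k),\vec{u}_k-\vec{u}\rangle\to 0$. The $(S)_+$ property of $\mathfrak{F}_V'$ from Theorem~\ref{th:6.2}(C) then upgrades weak to strong convergence $\vec{u}_k\to\vec{u}$, giving $\vec{u}\in M_c$ as required.

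Finally I would apply the $\mathbb{Z}_2$-equivariant genus theorem: on the symmetric bounded $C^1$-manifold $M_c$, which is radially diffeomorphic to the unit sphere of $V_0$ via the ray map $\vec{u}\mapsto \vec{u}/\|\vec{u}\|_{m,2}$ (monotonicity in the radial direction is supplied by (ii)), the even $C^1$-functional $\mathfrak{K}_V$ has countably many genus-based min-max levels producing an infinite sequence of $\pm$-pairs of critical points $\vec{u}_j$; each comes with a Lagrange multiplier $\lambda_j$ through $\mathfrak{F}_V'(\vec{u}_j)\neq 0$, yielding the sequence $(\lambda_j,\vec{u}_j)$ required. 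The main obstacle I foresee is the (PS) verification: it rests on the delicate combination of the $(S)_+$ condition for the principal part $\mathfrak{F}_V'$ and the complete continuity of the lower-order part $\mathfrak{K}_V'$; all other items are essentially bookkeeping translations of the hypotheses into the abstract framework.
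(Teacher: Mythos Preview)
Your proposal is correct and follows the same route as the paper: the paper simply states that the result follows immediately from Theorem~\ref{th:6.2} together with \cite[Theorem~7.1, Chapter~4]{Skr3}, and what you have written is precisely an unpacking of that citation---verifying that the $(S)_+$ property of $\mathfrak{F}_V'$ (Theorem~\ref{th:6.2}(C)) and the complete continuity of $\mathfrak{K}_V'$ supply the compactness needed for Skrypnik's abstract genus theorem on the symmetric level set $M_c$. There is nothing to add.
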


  Corollary~3.3 in \cite{Lu8}
  and Theorem~\ref{th:6.2} directly lead to:

\begin{theorem}\label{th:BifE.3}
Under Hypothesis~\ref{hyp:BifE.1}, let $\vec{u}_0\in V$ satisfy $\mathfrak{F}'_V(\vec{u}_0)=0$ and $\mathfrak{K}'_V(\vec{u}_0)=0$.
Suppose that $(\lambda^\ast, \vec{u}_0)$
is a bifurcation point for (\ref{e:BifE.1}). Then the linear problem \medskip
 \begin{eqnarray}\label{e:BifE.3}
 &&\sum^N_{j=1}\sum_{|\alpha|,|\beta|\le m}
(-1)^{|\alpha|}D^\alpha\bigl[F^{ij}_{\alpha\beta}(x, \vec{u}_0(x),\cdots, D^m \vec{u}_0(x))D^\beta v^j\bigr]
\nonumber\\
&&=\lambda\sum^N_{j=1}\sum_{|\alpha|,|\beta|\le m-1}
(-1)^{|\alpha|}D^\alpha\bigl[K^{ij}_{\alpha\beta}(x, \vec{u}_0(x),\cdots, D^{m-1}\vec{u}_0(x))D^\beta v^j\bigr]\nonumber\\
&&\hspace{30mm}i=1,\cdots,N
 \end{eqnarray}
 with $\lambda=\lambda^\ast$  has a nontrivial solution in $V_0$,
 namely $\vec{u}_0$ is a degenerate critical point of  $\mathfrak{F}_V-\lambda^\ast\mathfrak{K}_V$.
\end{theorem}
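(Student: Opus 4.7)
The plan is to reduce this theorem to the abstract necessary condition for bifurcation from Corollary~3.3 of \cite{Lu8}, using Theorems~\ref{th:6.1} and~\ref{th:6.2} as the verifying tool for its hypotheses. Recast~(\ref{e:BifE.1}) as the gradient equation $\nabla\mathfrak{F}_V(\vec{u})-\lambda\nabla\mathfrak{K}_V(\vec{u})=0$ on the Hilbert space $V_0$. Because $\mathfrak{F}'_V(\vec{u}_0)=0=\mathfrak{K}'_V(\vec{u}_0)$, the point $\vec{u}_0$ is a critical point of $\mathfrak{F}_V-\lambda\mathfrak{K}_V$ for every $\lambda\in\R$; so the bifurcation hypothesis produces a genuine sequence of distinct nontrivial critical points $\vec{u}_k\neq\vec{u}_0$ with $\vec{u}_k\to\vec{u}_0$ in $V$ and parameters $\lambda_k\to\lambda^\ast$.

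First I would apply Theorems~\ref{th:6.1} and~\ref{th:6.2} to both $F$ and $K$ in order to supply the variational/compactness data that Corollary~3.3 of \cite{Lu8} demands. These theorems give that $\mathfrak{F}_V$ and $\mathfrak{K}_V$ are of class $C^{2-0}$ with Hadamard differentiable gradients, and they provide the splitting $D(\nabla\mathfrak{F}_V)(\vec{u}_0)=P(\vec{u}_0)+Q(\vec{u}_0)$ with $P(\vec{u}_0)$ coercive on $V_0$ and $Q(\vec{u}_0)$ completely continuous. Since $K$ involves no $m$-th order derivatives, Theorem~\ref{th:6.1}(B)(iii) further implies that the full operator $D(\nabla\mathfrak{K}_V)(\vec{u}_0)\in\mathscr{L}(V_0)$ is itself completely continuous. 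Hence the linearization
\[
L:=D(\nabla\mathfrak{F}_V)(\vec{u}_0)-\lambda^\ast D(\nabla\mathfrak{K}_V)(\vec{u}_0)
\]
has the ``coercive plus compact'' structure required to invoke Corollary~3.3 of \cite{Lu8}, and that corollary forces $L$ to fail to be injective: there exists $\vec{v}\in V_0\setminus\{0\}$ with $D(\nabla\mathfrak{F}_V)(\vec{u}_0)\vec{v}=\lambda^\ast D(\nabla\mathfrak{K}_V)(\vec{u}_0)\vec{v}$.

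The final step is a routine unpacking: pair both sides with an arbitrary $\vec{\varphi}\in V_0$, use the integral formula~(\ref{e:6.5}) for $\mathfrak{F}_V$ together with the analogous formula (of order $m-1$) for $\mathfrak{K}_V$, and recognize the resulting bilinear identity as the weak form of the linear system~(\ref{e:BifE.3}) at $\lambda=\lambda^\ast$. The main obstacle I expect is checking that the precise smoothness and compactness hypotheses of Corollary~3.3 in \cite{Lu8} are met in the reduced regularity available here—only Hadamard differentiability of $\nabla\mathfrak{F}_V$, rather than $C^1$ smoothness in the operator norm, is at our disposal—so the abstract corollary must be applied exactly in that setting, with the coercive-plus-compact decomposition from Theorem~\ref{th:6.2}(D) serving as the key input.
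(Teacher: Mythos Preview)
Your proposal is correct and follows exactly the paper's approach: the paper simply states that Corollary~3.3 in \cite{Lu8} and Theorem~\ref{th:6.2} directly yield the result, and you have supplied the routine verification that the hypotheses of that abstract corollary are met via the coercive-plus-compact decomposition from Theorem~\ref{th:6.2}(D) together with the complete continuity of $D(\nabla\mathfrak{K}_V)(\vec{u}_0)$ from Theorem~\ref{th:6.1}(B)(iii).
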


This result cannot be derived from any one of \cite[Theorem~6.3(ii)]{Stu14A}
  and \cite[Theorem~4.2]{EvSt07}.
 In fact, define $F:\R\times V_0\to V_0$ by
  $$
  F(\lambda, \vec{u})=\nabla\mathfrak{F}_V(\vec{u}_0+ \vec{u})-\lambda\nabla\mathfrak{K}_V(\vec{u}_0+\vec{u})=0.
  $$
  By the assumption $\nabla\mathfrak{K}_V$ is of class $C^1$.
  It is claimed above Theorem~\ref{th:6.2}
   that  $\nabla\mathfrak{F}_V$ is  of class $C^{1-0}$,
   and therefore Hadamard differentiable.
   However, without further assumptions it seems impossible to check that $F$ satisfies
  other conditions of \cite[Theorem~6.3(ii)]{Stu14A} or \cite[Theorem~4.2]{EvSt07}, for example,   $\lim_{\delta\to 0}\Delta_\delta(F,\lambda)=0$ for all $\lambda$ in an open neighbourhood of $\lambda^\ast$ (see \cite[Section~3.3]{Lu8}).

The following are some sufficient criteria.
By  \cite[Corollary~4.3]{Lu8} and Theorem~\ref{th:6.2} we get:

\begin{theorem}\label{th:BifE.7}
Under Hypothesis~\ref{hyp:BifE.1}, let $\vec{u}_0\in V$ satisfy $\mathfrak{F}'_V(\vec{u}_0)=0$ and $\mathfrak{K}'_V(\vec{u}_0)=0$. Suppose that  $\lambda^\ast$ is an isolated eigenvalue of (\ref{e:BifE.3}) (this is true if
the following Hypothesis~\ref{hyp:BifE.4} is satisfied), and that
$\mathfrak{K}''_V(\vec{u}_0)$ is either semi-positive or semi-negative.
Then $(\lambda^\ast, \vec{u}_0)\in\mathbb{R}\times V$ is a bifurcation point of
(\ref{e:BifE.1}),  and  one of the following alternatives occurs:
\begin{enumerate}
\item[\rm (i)] $(\lambda^\ast, \vec{u}_0)$ is not an isolated solution of (\ref{e:BifE.1}) in  $\{\lambda^\ast\}\times V$;

\item[\rm (ii)]  for every $\lambda\in\mathbb{R}$ near $\lambda^\ast$ there is a nontrivial solution $\vec{u}_\lambda$ of (\ref{e:BifE.1}) converging
    to $\vec{u}_0$ as $\lambda\to\lambda^\ast$;

\item[\rm (iii)] there is an one-sided  neighborhood $\Lambda$ of
$\lambda^\ast$ such that for any $\lambda\in\Lambda\setminus\{\lambda^\ast\}$,
(\ref{e:BifE.1}) has at least two nontrivial solutions converging to
$\vec{u}_0$ as $\lambda\to\lambda^\ast$.
\end{enumerate}
\end{theorem}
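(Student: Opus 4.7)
The plan is to reduce the theorem to \cite[Corollary~4.3]{Lu8} by treating $\mathcal{L}_\lambda:=\mathfrak{F}_V-\lambda\mathfrak{K}_V$ as a one-parameter family of $C^1$ functionals on the affine space $V$ (after translating by $\vec{u}_0$ so that the critical point sits at the origin of $V_0$) and then verifying, one by one, the hypotheses that the abstract corollary requires.

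First I would collect the regularity and structural data. By Theorem~\ref{th:6.1}, both $\mathfrak{F}_V$ and $\mathfrak{K}_V$ are of class $C^1$ with Hadamard-differentiable gradients, and their G\^ateaux derivatives $D(\nabla\mathfrak{F}_V)(\vec{u})$, $D(\nabla\mathfrak{K}_V)(\vec{u})$ are bounded self-adjoint operators on $V_0$; moreover, since $K$ does not depend on the highest-order variables $\xi^k_\alpha$ with $|\alpha|=m$, assertion (B)(iii) of Theorem~\ref{th:6.1} makes $\vec{u}\mapsto D(\nabla\mathfrak{K}_V)(\vec{u})$ continuous in operator norm with values in the completely continuous operators. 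Theorem~\ref{th:6.2} supplies the key splitting $D(\nabla\mathfrak{F}_V)(\vec{u})=P(\vec{u})+Q(\vec{u})$ where $P(\vec{u})$ is uniformly coercive on bounded sets and $Q(\vec{u})$ is completely continuous, and also the $(S)_+$ property of $\mathfrak{F}'$. This is precisely the functional-analytic input \cite[Corollary~4.3]{Lu8} is formulated on.

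Next I would identify the linearized problem (\ref{e:BifE.3}) with the generalized eigenvalue equation
\begin{equation*}
D(\nabla\mathfrak{F}_V)(\vec{u}_0)[\vec{v}]=\lambda\, D(\nabla\mathfrak{K}_V)(\vec{u}_0)[\vec{v}]\qquad\text{in }V_0,
\end{equation*}
using the explicit integral formula (\ref{e:6.5}) and its analogue for $\mathfrak{K}_V$; then $\vec{u}_0$ is a degenerate critical point of $\mathcal{L}_{\lambda^\ast}$ exactly when $\lambda^\ast$ is an eigenvalue of this pencil. Because $P(\vec{u}_0)$ is invertible and $Q(\vec{u}_0)$, $D(\nabla\mathfrak{K}_V)(\vec{u}_0)$ are both compact, the pencil is Fredholm of index zero and admits a Riesz-type spectral analysis, so the isolation of $\lambda^\ast$ (assumed outright here and automatic under Hypothesis~\ref{hyp:BifE.4}) provides the spectral gap needed by the abstract corollary to carry out a Lyapunov--Schmidt reduction around $\vec{u}_0$. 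The semi-definiteness of $\mathfrak{K}''_V(\vec{u}_0)$ then triggers exactly the three-alternative conclusion (i)--(iii) of \cite[Corollary~4.3]{Lu8}, which translates into the three alternatives stated above.

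The main obstacle is not analytic but notational: one must carefully match the quadratic-form sign condition and the ``degeneracy along one eigenvector'' hypothesis of the abstract corollary with the ingredients produced by Theorems~\ref{th:6.1}--\ref{th:6.2}, and in particular make sure that the Hadamard (not Fr\'echet) differentiability of $\nabla\mathfrak{F}_V$---which is all that is available at the highest order, as emphasised in the paragraph between Theorems~\ref{th:6.1} and \ref{th:6.2}---is genuinely sufficient. Once this matching is done, the verification of (i)--(iii) is a direct quotation of the abstract result, and no further PDE arguments are required. The remark that Hypothesis~\ref{hyp:BifE.4} implies isolation of $\lambda^\ast$ would be handled separately by showing that under that hypothesis the pencil has compact resolvent and hence purely discrete spectrum.
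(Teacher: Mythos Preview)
Your proposal is correct and follows exactly the paper's approach: the paper derives this theorem in one line by invoking \cite[Corollary~4.3]{Lu8} together with Theorem~\ref{th:6.2}, which is precisely the reduction you outline (with Theorem~\ref{th:6.1} supplying the regularity and compactness for the $\mathfrak{K}$-part). Your write-up in fact spells out more of the verification than the paper does, but the route is identical.
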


The following hypothesis can guarantee that  every eigenvalue
of (\ref{e:BifE.3}) is isolated.

\begin{hypothesis}\label{hyp:BifE.4}
{\rm Under Hypothesis~\ref{hyp:BifE.1},  assume that $\vec{u}_0\in V$ satisfy $\mathfrak{F}'_V(\vec{u}_0)=0$ and $\mathfrak{K}'_V(\vec{u}_0)=0$,
and that the linear problem
 \begin{eqnarray}\label{e:BifE.4}
 \sum^N_{j=1}\sum_{|\alpha|,|\beta|\le m}
(-1)^{|\alpha|}D^\alpha\bigl[F^{ij}_{\alpha\beta}(x, \vec{u}_0(x),\cdots, D^m \vec{u}_0(x))D^\beta v^j\bigr]
=0,\; i=1,\cdots,N
 \end{eqnarray}
has no nontrivial solutions in $V_0$, that is, $\mathfrak{F}''_V(\vec{u}_0)\in\mathscr{L}(V_0)$ has
a bounded linear inverse operator.}
\end{hypothesis}

Under Hypothesis~\ref{hyp:BifE.4}, by the arguments above \cite[Theorem~4.2]{Lu8}
 all eigenvalues of (\ref{e:BifE.3}) form a discrete subset of $\mathbb{R}$,
$\{\lambda_j\}^\infty_{j=1}$, which contains no zero and satisfies $|\lambda_j|\to\infty$
as $j\to\infty$; moreover, each $\lambda_j$ has finite multiplicity.
Let $E_j\subset V_0$ be the eigenspace of  (\ref{e:BifE.3}) associated
with the eigenvalue $\lambda_j$, $j=1,2,\cdots$.
 From \cite[Corollary~3.7]{Lu8}
 (resp. \cite[Corollary~4.4]{Lu8}) and Theorem~\ref{th:6.2} we derive
 the first (resp. second) conclusion of the following theorem.

\begin{theorem}\label{th:BifE.5}
Under Hypothesis~\ref{hyp:BifE.4}, for an eigenvalue $\lambda_j$
of (\ref{e:BifE.3}) as above, the following holds:
\begin{enumerate}
\item[\rm (1)]  $(\lambda_j, \vec{u}_0)\in\mathbb{R}\times V$ is a
bifurcation point of (\ref{e:BifE.1}) provided  that $\mathfrak{F}''_V(\vec{u}_0)$
commutes with $\mathfrak{K}''_V(\vec{u}_0)$ and that the positive and negative
indexes of inertia of the restriction of $\mathfrak{F}''_V(\vec{u}_0)$ to $E_j$
  are different.
\item[\rm (2)] The conclusions in Theorem~\ref{th:BifE.7} hold if
one of the following conditions is satisfied:
  \begin{enumerate}
\item[\rm (i)] $\mathfrak{F}''_V(\vec{u}_0)$ commutes with $\mathfrak{K}''_V(\vec{u}_0)$, and
$\mathfrak{F}''_V(\vec{u}_0)$ is either positive or negative on $E_j$;
\item[\rm (ii)] $\mathfrak{F}''_V(\vec{u}_0)$ is  positive definite, i.e.,
for some $c>0$,
\begin{eqnarray*}
 \quad\sum^N_{i,j=1}\sum_{|\alpha|,|\beta|\le m}\int_\Omega
  F^{ij}_{\alpha\beta}(x, \vec{u}_0(x),\cdots, D^m \vec{u}_0(x))D^\beta v^j\cdot D^\alpha v^i dx\ge c\|\vec{v}\|_{m,2},\;\;\forall \vec{v}\in V.
 \end{eqnarray*}
\end{enumerate}
  \end{enumerate}
 \end{theorem}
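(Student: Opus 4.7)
The plan is to reduce the PDE statement to the abstract bifurcation corollaries \cite[Corollary~3.7]{Lu8} and \cite[Corollary~4.4]{Lu8} of Part~I, with Theorem~\ref{th:6.2} supplying the regularity and splitting structure needed to check their hypotheses. First I would rewrite (\ref{e:BifE.1}) in variational form: a generalized solution is a zero of $\nabla\mathfrak{F}_V(\vec{u})-\lambda\nabla\mathfrak{K}_V(\vec{u})$ in $V_0$, with $\mathfrak{F}_V$, $\mathfrak{K}_V$ as in Hypothesis~\ref{hyp:BifE.1}. Theorem~\ref{th:6.1} ensures both functionals are $C^1$ with Hadamard-differentiable gradients, and Theorem~\ref{th:6.2} gives the decomposition $D(\nabla\mathfrak{F}_V)=P+Q$ in which $P(\vec{u})$ is uniformly coercive on bounded sets and $Q(\vec{u})$ is compact and norm-continuous. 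Since $K$ involves at most $(m{-}1)$-st derivatives, the lower-order terms in (\ref{e:BifE.3}) fit into the ``$Q$-part'' entirely, so Theorem~\ref{th:6.1}(B)(iii) applied to $\mathfrak{K}$ shows $\mathfrak{K}''_V(\vec{u}_0)$ is compact on $V_0$.

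Under Hypothesis~\ref{hyp:BifE.4}, $\mathfrak{F}''_V(\vec{u}_0)$ has a bounded inverse, so the linear eigenvalue problem (\ref{e:BifE.3}) is equivalent to $\vec{v}=\lambda[\mathfrak{F}''_V(\vec{u}_0)]^{-1}\mathfrak{K}''_V(\vec{u}_0)\vec{v}$. The right-hand operator is compact, hence its spectrum consists of a discrete sequence of eigenvalues $\lambda_j$ with $|\lambda_j|\to\infty$, each of finite multiplicity, and the decomposition $V_0=E_j\oplus E_j^\perp$ (with respect to the form $\mathfrak{F}''_V(\vec{u}_0)$) is well-defined. This is exactly the spectral setup required by the Part~I splitting/bifurcation theory; combined with the $C^{1-0}$-regularity of $\nabla\mathfrak{F}_V$ and the compact gradient $\nabla\mathfrak{K}_V$, all abstract hypotheses of \cite[Corollaries~3.7 and 4.4]{Lu8} concerning the pair $(\mathfrak{F}_V,\mathfrak{K}_V)$ at the critical point $\vec{u}_0$ are met.

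For part~(1), I would invoke \cite[Corollary~3.7]{Lu8}: when $\mathfrak{F}''_V(\vec{u}_0)$ and $\mathfrak{K}''_V(\vec{u}_0)$ commute they can be simultaneously block-diagonalized on each eigenspace $E_j$, and a difference between the positive and negative inertia of $\mathfrak{F}''_V(\vec{u}_0)|_{E_j}$ translates into a jump of Morse/critical-group index of $\mathfrak{F}_V-\lambda\mathfrak{K}_V$ as $\lambda$ crosses $\lambda_j$; the corollary then yields bifurcation. For part~(2), under either hypothesis~(i) or~(ii) the restriction $\mathfrak{F}''_V(\vec{u}_0)|_{E_j}$ is definite (in case~(ii) it is positive on all of $V_0$, hence in particular on $E_j$, and the commutativity condition needed to split the spaces is trivialized since $\mathfrak{K}''_V(\vec{u}_0)$ is symmetric and acts on the $\mathfrak{F}''_V(\vec{u}_0)$-orthogonal decomposition). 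This is precisely the setting of \cite[Corollary~4.4]{Lu8}, which delivers the three alternatives of Theorem~\ref{th:BifE.7}.

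The only delicate step, and the expected main obstacle, is confirming that the abstract hypotheses (Hadamard-differentiability of the gradient, the $(S)_+$ property, complete continuity of the relevant perturbations, the condition $\lim_{\delta\to 0}\Delta_\delta(F,\lambda)=0$ alluded to in the discussion after Theorem~\ref{th:BifE.3}) all translate correctly from the abstract space $V_0$ to the concrete Sobolev setting; but each ingredient is supplied by Theorems~\ref{th:6.1}--\ref{th:6.2}, so the verification is bookkeeping rather than a new estimate. The rest of the proof is then the straightforward invocation of the two corollaries from Part~I.
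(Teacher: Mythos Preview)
Your approach is essentially the same as the paper's: invoke \cite[Corollary~3.7]{Lu8} for part~(1) and \cite[Corollary~4.4]{Lu8} for part~(2), with Theorem~\ref{th:6.2} supplying the structural hypotheses on $\mathfrak{F}_V$ and $\mathfrak{K}_V$. One small correction: you list the condition $\lim_{\delta\to 0}\Delta_\delta(F,\lambda)=0$ among the things to verify, but the whole point of routing through Corollaries~3.7 and~4.4 rather than the Stuart--Ev\'equoz-type results is precisely that this condition is \emph{not} needed; the remark after Theorem~\ref{th:BifE.3} explains that it seems impossible to check here, so drop it from your list of obligations.
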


\begin{remark}\label{rmk:BifE.6}
{\rm  When $N=1$, $V=W_0^{m,2}(\Omega)$, and for some $c>0$ it holds  that
\begin{eqnarray}\label{e:BifE.7}
(\mathfrak{F}'_V({u}),{u})_{m,2}\ge c\|u\|^2_{m,2}
\end{eqnarray}
near $0\in V$, it was proved in \cite[Chap.1, Theorem~3.5]{Skr2} that
$(\lambda^\ast, 0)$ is a bifurcation point of (\ref{e:BifE.1}) if and only if
 $\lambda^\ast$ is  an eigenvalue of (\ref{e:BifE.3}) with ${u}=0$.
Since $\mathfrak{F}'_V(0)=0$, it is clear that (\ref{e:BifE.7})
implies $\mathfrak{F}''_V(0)$ to be positive definite, that is,
the condition (ii) in Theorem~\ref{th:BifE.5}
is satisfied at $u_0=0$. Hence Theorem~\ref{th:BifE.5}
is a significant generalization of \cite[Chap.1, Theorem~3.5]{Skr2}.}
\end{remark}

Corollary~5.10 in \cite{Lu8}
and Theorem~\ref{th:6.2} yield the following two results.

\begin{theorem}\label{th:BifE.7.1}
Let the assumptions of either Theorem~\ref{th:BifE.5}(2) or Theorem~\ref{th:BifE.7}
hold with $V=V_0$ and $\vec{u}_0={0}$.
Suppose also that both ${F}(x,\xi)$ and ${K}(x,\xi)$ are even with respect to $\xi$,
and that $E_{\lambda^\ast}$ denotes the solution space  of  the linear problem (\ref{e:BifE.3}) with $\lambda=\lambda^\ast$
 in $V_0$.   Then  one of the following alternatives holds:
 \begin{enumerate}
\item[\rm (i)] $(\lambda^\ast, {0})$ is not an isolated solution of (\ref{e:BifE.1})
 in  $\{\lambda^\ast\}\times V$;
 \item[\rm (ii)] there exist left and right  neighborhoods $\Lambda^-$ and $\Lambda^+$
  of $\lambda_\ast$ in $\mathbb{R}$
and integers $n^+, n^-\ge 0$, such that $n^++n^-\ge\dim E_{\lambda^\ast}$
and for $\lambda\in\Lambda^-\setminus\{\lambda^\ast\}$ (resp. $\lambda\in\Lambda^+\setminus\{\lambda^\ast\}$),
(\ref{e:BifE.1}) has at least $n^-$ (resp. $n^+$) distinct pairs of solutions of form $\{\vec{u},-\vec{u}\}$  different from ${0}$, which converge to
 ${0}$ as $\lambda\to\lambda^\ast$.
\end{enumerate}
  \end{theorem}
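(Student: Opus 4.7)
The plan is to recognize Theorem~\ref{th:BifE.7.1} as a direct application of the abstract $\mathbb{Z}_2$-equivariant parameterized bifurcation result \cite[Corollary~5.10]{Lu8} to the functional $\mathcal{L}_\lambda := \mathfrak{F}_V - \lambda \mathfrak{K}_V$ on the Hilbert space $V_0$, equipped with the $\mathbb{Z}_2$-action $\vec{u}\mapsto -\vec{u}$ of \eqref{e:Z2-action.1}. The assumed evenness of $F(x,\xi)$ and $K(x,\xi)$ in $\xi$ at once yields $\mathfrak{F}_V(-\vec{u})=\mathfrak{F}_V(\vec{u})$ and $\mathfrak{K}_V(-\vec{u})=\mathfrak{K}_V(\vec{u})$, hence $\mathcal{L}_\lambda$ is $\mathbb{Z}_2$-invariant for every $\lambda\in\mathbb{R}$, and $\vec{u}_0=0$ is both the unique fixed point of the action and a critical point of $\mathfrak{F}_V$ and $\mathfrak{K}_V$ by hypothesis.

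Next I would check the regularity and operator-theoretic hypotheses of \cite[Corollary~5.10]{Lu8}. Theorem~\ref{th:6.1}(B)(i) makes $\mathfrak{F}_V$ of class $C^{2-0}$, so $\nabla\mathfrak{F}_V$ is Hadamard differentiable; Theorem~\ref{th:6.1}(B)(iii) applied to $K$ (which depends only on derivatives of order $\le m-1$) gives that $\mathfrak{K}_V$ is of class $C^2$ with $D(\nabla\mathfrak{K}_V)(\vec{u})$ completely continuous for every $\vec{u}$. Theorem~\ref{th:6.2} then provides the splitting $D(\nabla\mathfrak{F}_V)(\vec{u})=P(\vec{u})+Q(\vec{u})$ with $P(\vec{u})$ uniformly coercive on bounded sets and $Q(\vec{u})$ continuous in $\vec{u}$ and completely continuous for each $\vec{u}$. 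This is exactly the abstract structural input (a $(S)_+$ gradient of the form ``coercive $+$ compact'' with Hadamard-differentiable linearization at the critical point) required by the parameterized splitting theorem underlying \cite[Corollary~5.10]{Lu8}.

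Then I turn to the spectral data. The hypotheses imported from Theorem~\ref{th:BifE.5}(2) or Theorem~\ref{th:BifE.7} guarantee that $\lambda^\ast$ is an isolated eigenvalue of the linearization \eqref{e:BifE.3} at $\vec{u}_0=0$, so $E_{\lambda^\ast}$ is finite-dimensional, and they supply the commutativity or semi-definiteness needed for the abstract corollary to yield a nonzero Morse-theoretic shift at $0$. The $\mathbb{Z}_2$-action restricted to $E_{\lambda^\ast}$ acts as $-\mathrm{id}$, so it is free on $E_{\lambda^\ast}\setminus\{0\}$ and critical $\mathbb{Z}_2$-orbits near $0$ are precisely antipodal pairs $\{\vec{u},-\vec{u}\}$, which matches the orbit counting in the abstract statement.

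With all hypotheses verified, \cite[Corollary~5.10]{Lu8} yields the stated dichotomy: either $(\lambda^\ast,0)$ fails to be isolated in $\{\lambda^\ast\}\times V$, or there exist one-sided neighborhoods $\Lambda^\pm$ of $\lambda^\ast$ and nonnegative integers $n^\pm$ with $n^++n^-\ge\dim E_{\lambda^\ast}$ such that for $\lambda\in\Lambda^\mp\setminus\{\lambda^\ast\}$ one obtains at least $n^\mp$ distinct antipodal pairs of nontrivial solutions of \eqref{e:BifE.1} converging to $0$. The bulk of the real work sits in Part~I; the main obstacle for the present application is bookkeeping, namely verifying that the $P+Q$ splitting, the $\mathbb{Z}_2$-equivariance, the $C^{1-0}$/Hadamard differentiability, and the finite-dimensional isolated kernel at $\lambda^\ast$ all match the hypotheses of \cite[Corollary~5.10]{Lu8} exactly. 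No substantive new analysis is required beyond what Theorems~\ref{th:6.1} and~\ref{th:6.2} already furnish.
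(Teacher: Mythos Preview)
Your proposal is correct and follows essentially the same route as the paper: the paper simply states that Corollary~5.10 in \cite{Lu8} together with Theorem~\ref{th:6.2} yield Theorem~\ref{th:BifE.7.1}, and your argument is precisely the detailed verification that the hypotheses of that abstract corollary are met via Theorems~\ref{th:6.1} and~\ref{th:6.2} and the $\mathbb{Z}_2$-action \eqref{e:Z2-action.1}.
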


\begin{theorem}\label{th:BifE.7.2}
Let the assumptions of either Theorem~\ref{th:BifE.5} or Theorem~\ref{th:BifE.7}
hold with $V=V_0$ and $\vec{u}_0={0}$, and let $E_{\lambda^\ast}$ be the solution
space  of  the linear problem (\ref{e:BifE.3}) with $\lambda=\lambda^\ast$
 in $V_0$.  Then:
\begin{enumerate}
\item[\rm (I)]  If $\Omega$ is symmetric with respect to the origin,
 both $\mathfrak{F}_{V_0}$ and $\mathfrak{K}_{V_0}$ are invariant for the $\mathbb{Z}_2$-action  in (\ref{e:Z2-action.2}), and (\ref{e:BifE.3}) with $\lambda=\lambda^\ast$ has no nontrivial solutions
 $\vec{u}\in V_0$ satisfying $\vec{u}(-x)=\vec{u}(x)\;\forall x\in\Omega$,
 then the conclusions in  Theorem~\ref{th:BifE.7.1},
 after ``pairs of solutions of form $\{\vec{u},-\vec{u}\}$" being changed into
 ``pairs of solutions of form $\{\vec{u}(\cdot), \vec{u}(-\cdot)\}$", still holds.
\item[\rm (II)] Let $\mathbb{S}^1$ act on $\mathbb{R}^n$ by the orthogonal representation, and $\Omega$
be symmetric under the action, let both $\mathfrak{F}_{V_0}$ and $\mathfrak{K}_{V_0}$
 are invariant for the $\mathbb{S}^1$ action on $V_0$
 in (\ref{e:S^1-action}). If the fixed point set of the induced $S^1$-action in $E_{\lambda^\ast}$ is $\{0\}$,
  then   the conclusions of Theorem~\ref{th:BifE.7.1},
 after ``$n^++n^-\ge\dim E_{\lambda^\ast}$" and ``pairs of solutions of form $\{\vec{u},-\vec{u}\}$" being changed into ``critical $\mathbb{S}^1$-orbits" and ``$n^++n^-\ge\frac{1}{2}\dim E_{\lambda^\ast}$", hold.
 \end{enumerate}
 \end{theorem}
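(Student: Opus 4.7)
The plan is to apply Corollary~5.10 of \cite{Lu8} in its equivariant form to the functional $\mathfrak{F}_V-\lambda\mathfrak{K}_V$ at $\vec{u}_0=0$, exactly as in the proof of Theorem~\ref{th:BifE.7.1}, but with the $\mathbb{Z}_2$-action $\vec{u}\mapsto-\vec{u}$ used there replaced by the action (\ref{e:Z2-action.2}) in Part~(I) and by the $\mathbb{S}^1$-action (\ref{e:S^1-action}) in Part~(II). The analytical setup needed by that abstract result is already supplied by Theorem~\ref{th:6.2}: the gradient $\nabla\mathfrak{F}_V$ admits the splitting $P+Q$ with $P$ coercive and $Q$ completely continuous, and $\nabla\mathfrak{K}_V$ is completely continuous on $V_0$ by part~(B)(iii) of Theorem~\ref{th:6.1} applied to $K$ (which depends only on derivatives of order $\le m-1$). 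The hypotheses of either Theorem~\ref{th:BifE.5} or Theorem~\ref{th:BifE.7} further guarantee that $\lambda^\ast$ is an isolated eigenvalue of (\ref{e:BifE.3}), so $E_{\lambda^\ast}$ is finite-dimensional.

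For Part~(I) I would first verify that the map (\ref{e:Z2-action.2}) is an isometric linear representation of $\mathbb{Z}_2$ on the Hilbert space $V_0$ (because the inner product (\ref{e:6.2.2}) is preserved by $x\mapsto -x$ when $\Omega$ is symmetric about the origin), and then note that the assumed $\mathbb{Z}_2$-invariance of $\mathfrak{F}_{V_0}$ and $\mathfrak{K}_{V_0}$ implies that both $\mathfrak{F}''_V(0)$ and $\mathfrak{K}''_V(0)$ commute with this action; hence $E_{\lambda^\ast}$ is a $\mathbb{Z}_2$-invariant finite-dimensional subspace. The hypothesis that (\ref{e:BifE.3}) with $\lambda=\lambda^\ast$ has no nonzero solution satisfying $\vec{u}(-x)=\vec{u}(x)$ translates into $E_{\lambda^\ast}^{\mathbb{Z}_2}=\{0\}$, i.e.\ the $\mathbb{Z}_2$-action on $E_{\lambda^\ast}$ is free outside the origin. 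The equivariant Corollary~5.10 in \cite{Lu8} then delivers pairs of bifurcating critical points interchanged by the group action, which by the very definition of (\ref{e:Z2-action.2}) are pairs of the form $\{\vec{u}(\cdot),\vec{u}(-\cdot)\}$; the rest of the dichotomy in Theorem~\ref{th:BifE.7.1} transfers verbatim.

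For Part~(II) the same verification shows that (\ref{e:S^1-action}) is an isometric $\mathbb{S}^1$-action on $V_0$ and that $E_{\lambda^\ast}$ is $\mathbb{S}^1$-invariant. The assumption that the induced fixed-point set of $\mathbb{S}^1$ in $E_{\lambda^\ast}$ is $\{0\}$ means the circle acts freely away from the origin, so every nontrivial bifurcating critical point automatically lies on a $1$-dimensional orbit $\cong S^1$. Counting must therefore be done modulo orbits: an $\mathbb{S}^1$-equivariant index argument (such as the one encoded in the $\mathbb{S}^1$-version of Corollary~5.10 in \cite{Lu8}) yields a lower bound of $\frac{1}{2}\dim E_{\lambda^\ast}$ instead of $\dim E_{\lambda^\ast}$, since the cohomological index of a free $\mathbb{S}^1$-sphere in $E_{\lambda^\ast}$ is half of its real dimension. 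The alternatives of Theorem~\ref{th:BifE.7.1} must accordingly be rephrased with ``critical points'' replaced by ``critical $\mathbb{S}^1$-orbits''; option~(i) is automatic whenever an orbit is already nontrivial.

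The main obstacle is the last point: because every non-fixed $\mathbb{S}^1$-orbit is non-isolated in $V_0$, one has to check carefully that the abstract alternative from \cite{Lu8} survives the passage to the quotient, and that the equivariant index estimate gives exactly the factor $\frac{1}{2}$. In Part~(I) the analogous verification is easier since the $\mathbb{Z}_2$-orbits are finite, and the routine part is the equivariance of the $P$ and $Q$ from Theorem~\ref{th:6.2}, which follows from the chain rule together with the invariance of the densities $F$ and $K$ under the corresponding change of variables in $x$.
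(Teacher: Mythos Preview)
Your proposal is correct and follows exactly the approach of the paper: the paper simply states (in the sentence preceding Theorem~\ref{th:BifE.7.1}) that both Theorems~\ref{th:BifE.7.1} and~\ref{th:BifE.7.2} are obtained from Corollary~5.10 in \cite{Lu8} together with Theorem~\ref{th:6.2}, which is precisely your plan with the group actions (\ref{e:Z2-action.2}) and (\ref{e:S^1-action}) substituted for the standard $\mathbb{Z}_2$-action. Your additional verifications (isometry of the actions, invariance of $E_{\lambda^\ast}$, the fixed-point condition, and the halving of the index in the $\mathbb{S}^1$ case) are the expected details, and the concerns you raise in the final paragraph are exactly what the abstract equivariant Corollary~5.10 in \cite{Lu8} is designed to handle.
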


\begin{remark}\label{rm:BifE.8}
{\rm By Remark~\ref{rem:BifE.3}, some of the above results also hold if
 $\Omega\subset\mathbb{R}^n$ is replaced by the torus $\mathbb{T}^n=\mathbb{R}^n/\mathbb{Z}^n$.}
\end{remark}

The following is a result associated with Theorems~5.4.2 and 5.7.4 in \cite{EKBB}.

\begin{theorem}\label{th:BifE.9}
 Let $\Omega\subset\R^n$ be a bounded  domain with $C^1$ boundary, $N\in\mathbb{N}$. Suppose that
 $$
\overline\Omega\times\prod^m_{k=0}\mathbb{R}^{N\times M_0(k)}\times [0, 1]\ni (x,
\xi,\lambda)\mapsto F(x,\xi;\lambda)\in\R
$$
is  differentiable with respect to $\lambda$, and satisfies the following conditions:
 \begin{enumerate}
 \item[\rm (i)] All $F(\cdot;\lambda)$ satisfy Hypothesis~$\mathfrak{F}_{2,N,m,n}$,
  and the inequalities in (\ref{e:6.1}) and (\ref{e:6.2}) are uniformly satisfied for all $\lambda\in [0,1]$.
     $D_\lambda F(x,\xi;\lambda)$ is differentiable in each $\xi^i_\alpha$,
    each  $F^i_\alpha(x,\xi;\lambda)$ is differentiable in $\lambda$, and
    {\small $D_\lambda F^i_\alpha(x,\xi;\lambda)=D_{\xi^i_\alpha}D_\lambda F(x,\xi;\lambda)$}.

 \item[\rm (ii)] For $2'_\alpha$ in Hypothesis~$\mathfrak{F}_{2,N,m,n}$, it holds that
 $$
\sup_{|\alpha|\le m}\sup_{1\le i\le N}\sup_\lambda\int_\Omega \left[|D_\lambda F(x,0;\lambda)|+ |D_\lambda F^i_{\alpha}(x, 0;\lambda)|^{2'_\alpha}\right]dx<\infty.
$$
 \item[\rm (iii)] For all $i=1,\cdots,N$ and $|\alpha|\le m$,
 \begin{eqnarray*}
&&|D_\lambda F^i_\alpha(x,\xi;\lambda)|\le|D_\lambda F^i_\alpha(x,0;\lambda)|\\
&&+ \mathfrak{g}(\sum^N_{k=1}|\xi_\circ^k|)\sum_{|\beta|<m-n/2}\bigg(1+
\sum^N_{k=1}\sum_{m-n/2\le |\gamma|\le
m}|\xi^k_\gamma|^{2_\gamma }\bigg)^{2_{\alpha\beta}}\nonumber\\
&&+\mathfrak{g}(\sum^N_{k=1}|\xi^k_\circ|)\sum^N_{l=1}\sum_{m-n/2\le |\beta|\le m} \bigg(1+
\sum^N_{k=1}\sum_{m-n/2\le |\gamma|\le m}|\xi^k_\gamma|^{2_\gamma }\bigg)^{2_{\alpha\beta}}|\xi^l_\beta|;
\end{eqnarray*}
 where $\mathfrak{g}:[0,\infty)\to\mathbb{R}$ is a continuous, positive,
 nondecreasing function,  and is constant if $m<n/2$.
 \end{enumerate}
 Let $V_0$ be a closed subspace of $W^{m,2}(\Omega, \mathbb{R}^N)$ and $V=\vec{w}+V_0$
 for some $\vec{w}\in W^{m,2}(\Omega, \mathbb{R}^N)$. For each $\lambda\in [0,1]$,
  suppose that the functional
  $$
V\ni\vec{u}\mapsto\mathfrak{F}_\lambda(\vec{u})=\int_\Omega F(x, \vec{u},\cdots, D^m\vec{u};\lambda)dx
$$
has a critical point $\vec{u}_\lambda$ such that
 $[0, 1]\ni\lambda\mapsto \vec{u}_\lambda\in V$ is continuous. Then
one of the following alternatives occurs:
\begin{enumerate}
\item[\rm (I)] There exists certain $\lambda_0\in [0,1]$ such that $(\lambda_0, \vec{u}_{\lambda_0})$
           is a bifurcation point of $\nabla\mathfrak{F}_\lambda(\vec{u})=0$.
\item[\rm (II)] There exists $\epsilon>0$ such that each $\vec{u}_\lambda$ is a unique critical point of $\mathfrak{F}_\lambda$
in $B_V(\vec{u}_\lambda,\epsilon)$ and
$$
C_\ast(\mathfrak{F}_\lambda, \vec{u}_\lambda;{\bf K})=C_\ast(\mathfrak{F}_0, \vec{u}_0;{\bf K})\quad
\forall \lambda\in [0,1];
$$
 moreover, $\vec{u}_\lambda$ is a local minimizer of $\mathfrak{F}_\lambda$ if and only if $\vec{u}_0$ is a local minimizer of $\mathfrak{F}_0$.
 \end{enumerate}
\end{theorem}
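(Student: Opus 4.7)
The plan is to reduce the dichotomy to the parameterized invariance-of-critical-groups theorem from Part~I, applied to the branch $\{\vec{u}_\lambda\}_{\lambda\in[0,1]}$. If alternative~(I) fails, then for each $\lambda_0\in[0,1]$ the pair $(\lambda_0,\vec{u}_{\lambda_0})$ is not a bifurcation point of $\nabla\mathfrak{F}_\lambda(\vec{u})=0$; by definition this yields $\epsilon_{\lambda_0},\delta_{\lambda_0}>0$ such that $\vec{u}_\lambda$ is the unique critical point of $\mathfrak{F}_\lambda$ in $B_V(\vec{u}_\lambda,\epsilon_{\lambda_0})$ whenever $|\lambda-\lambda_0|<\delta_{\lambda_0}$. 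Covering the compact interval $[0,1]$ by finitely many such ``tubes'' and using continuity of $\lambda\mapsto\vec{u}_\lambda$, the radii may be merged into a single $\epsilon>0$. It remains to show that $C_\ast(\mathfrak{F}_\lambda,\vec{u}_\lambda;\mathbf{K})$ is locally constant in $\lambda$ and that the local-minimizer property propagates along the branch.

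The abstract theorem from \cite[Sections~3--5]{Lu8} accomplishing this needs: (a) joint continuity of $(\lambda,\vec{u})\mapsto\mathfrak{F}_\lambda(\vec{u})$; (b) joint continuity of $(\lambda,\vec{u})\mapsto\nabla\mathfrak{F}_\lambda(\vec{u})\in V_0$; and (c) a splitting $D(\nabla\mathfrak{F}_\lambda)(\vec{u})=P_\lambda(\vec{u})+Q_\lambda(\vec{u})$ with $P_\lambda$ jointly continuous and uniformly coercive on bounded sets, and $Q_\lambda$ jointly continuous into $\mathscr{L}(V_0)$ and pointwise compact. For fixed $\lambda$, everything except ``joint'' has already been furnished by Theorems~\ref{th:6.1} and \ref{th:6.2}. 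The three lemmas of Appendix~\ref{app:B} upgrade these to joint continuity in $(\lambda,\vec{u})$. Hypothesis~(i) makes the bounds (\ref{e:6.1}) and (\ref{e:6.2}) uniform in $\lambda$, which preserves coercivity of $P_\lambda$ and compactness of $Q_\lambda$ uniformly. Continuity in $\lambda$ is obtained from the mean-value identities
\begin{eqnarray*}
\mathfrak{F}_\lambda(\vec{u})-\mathfrak{F}_{\lambda'}(\vec{u})
&=&\int_{\lambda'}^{\lambda}\int_\Omega D_\tau F(x,\vec{u},\ldots,D^m\vec{u};\tau)\,dx\,d\tau,\\
\langle \mathfrak{F}'_\lambda(\vec{u})-\mathfrak{F}'_{\lambda'}(\vec{u}),\vec{v}\rangle
&=&\sum_{i,|\alpha|\le m}\int_{\lambda'}^{\lambda}\int_\Omega D_\tau F^i_\alpha(x,\vec{u},\ldots,D^m\vec{u};\tau)\,D^\alpha v^i\,dx\,d\tau,
\end{eqnarray*}
whose integrands are absolutely integrable, with bounds uniform on bounded subsets of $V\times[0,1]$, by Hypotheses~(ii)--(iii) combined with the Sobolev embeddings encoded in $2_\gamma$ and $2'_\alpha$. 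The generalized Nemytski-continuity result of Appendix~\ref{app:C} then delivers joint continuity of all relevant composition operators $\vec{u}\mapsto F^i_\alpha(\cdot,\vec{u},\ldots;\lambda)$ and $\vec{u}\mapsto F^{ij}_{\alpha\beta}(\cdot,\vec{u},\ldots;\lambda)$, giving (a), (b), and the continuity part of (c).

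With (a)--(c) established, the abstract parameterized splitting theorem produces, near each $(\lambda_0,\vec{u}_{\lambda_0})$, a Gromoll--Meyer normal form whose nondegenerate (coercive) direction is independent of $\lambda$ up to isomorphism; the induced local conjugacy shows that $C_\ast(\mathfrak{F}_\lambda,\vec{u}_\lambda;\mathbf{K})$ is locally constant in $\lambda$, and connectedness of $[0,1]$ gives equality for all $\lambda$. Whether $\vec{u}_\lambda$ is a local minimizer depends only on the finite-dimensional degenerate part of the normal form, which varies continuously; this yields the minimizer statement. The main obstacle is the joint continuity of the Hessian: since $\mathfrak{F}_\lambda$ is only $C^{2-0}$ in $\vec{u}$, one cannot interchange the limits $\lambda'\to\lambda$ and $\vec{u}'\to\vec{u}$ by ordinary dominated convergence; one must pass to the limit inside the nonlinear Nemytski operators $\vec{u}\mapsto F^{ij}_{\alpha\beta}(\cdot,\vec{u},\ldots;\lambda)$ using the precise matching of Sobolev exponents $2_\gamma$, $2'_\alpha$, $2_{\alpha\beta}$. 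That matching is exactly what Hypothesis~(iii) is designed to accommodate and what the three lemmas of Appendix~\ref{app:B} verify in detail.
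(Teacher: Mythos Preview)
Your overall architecture---assume (I) fails, extract a uniform $\epsilon$ by compactness of $[0,1]$, then invoke an abstract stability result from Part~I---matches the paper. But you have misidentified the abstract tool and, as a consequence, overstated what has to be checked.

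The paper does \emph{not} apply a parameterized splitting/Gromoll--Meyer theorem requiring control of the Hessian. It applies \cite[Theorem~2.2]{Lu8}, a critical-group stability theorem whose inputs are only: (a) the map $\lambda\mapsto\mathfrak{F}_\lambda$ is continuous in $C^1(\bar B_V(0,R))$ for each $R$; (b) each $\mathfrak{F}_\lambda$ satisfies (PS) on bounded balls; (c) $0$ is an isolated critical point of $\widehat{\mathfrak{F}}_\lambda:=\mathfrak{F}_\lambda(\cdot+\vec u_\lambda)$ on a small ball. Lemma~\ref{lem:BifE.10} is used to get $C^0$-continuity of $\lambda\mapsto\mathfrak{F}_\lambda$, Lemma~\ref{lem:BifE.11} gives Lipschitz dependence of $\nabla\mathfrak{F}_\lambda$ on $\lambda$ (hence $C^1$-continuity in $\lambda$), and Lemma~\ref{lem:BifE.12} gives (PS). None of the three lemmas concerns the $P_\lambda/Q_\lambda$ splitting, and Appendix~\ref{app:C} is not invoked in this proof at all. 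So your condition (c) and your ``main obstacle'' (joint continuity of the second-derivative operators) are not obstacles here; they simply do not enter.

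For the local-minimizer equivalence the paper also proceeds differently from your sketch. It does not argue that the degenerate part of a normal form ``varies continuously.'' Instead it uses the critical-group equality already obtained: if $\vec u_\lambda$ is a local minimizer then $C_q(\widehat{\mathfrak{F}}_\lambda,0;\mathbf K)=\delta_{q0}\mathbf K$ (Chang), hence $C_q(\widehat{\mathfrak{F}}_0,0;\mathbf K)=\delta_{q0}\mathbf K$; then \cite[Theorem~2.3]{Lu7} forces Morse index zero and $C_q(\widehat{\mathfrak{F}}_0^{\circ},0;\mathbf K)=\delta_{q0}\mathbf K$ for the finite-dimensional reduction, so $0$ is a local minimizer of $\widehat{\mathfrak{F}}_0^{\circ}$ (Chang again), and \cite[Theorem~2.2]{Lu7} lifts this back to $\widehat{\mathfrak{F}}_0$. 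Your continuity-of-normal-form argument would require exactly the Hessian continuity you flagged as delicate, whereas the paper avoids that entirely by routing through critical groups.
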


\begin{remark}\label{rm:BifE10}
{\rm {\bf (a)}  When $V=W^{m,2}_0(\Omega, \mathbb{R})$,  Theorem  \ref{th:BifE.9}
generalize Theorems 5.4.2 and 5.7.4 in \cite{EKBB} in some sense.\\
{\bf (b)} Let the assumptions (i)-(ii) in Theorem~\ref{th:BifE.9} be satisfied.
Then the assumption (iii) in Theorem~\ref{th:BifE.9} can be satisfied provided that the following conditions hold.\\
{\bf (b.1)} Each $D_\lambda F^i_\alpha(x,\xi;\lambda)$ is differentiable in each $\xi^j_\beta$,
each  $D_{\xi^j_\beta}F^i_\alpha(x,\xi;\lambda)$ is differentiable in $\lambda$,
and $D_{\xi^j_\beta}D_\lambda F^i_\alpha(x,\xi;\lambda)=
     D_\lambda D_{\xi^j_\beta}F^i_\alpha(x,\xi;\lambda)= D_\lambda F^{ij}_{\alpha\beta}(x,\xi;\lambda)$.\\
{\bf (b.2)} There exists a continuous, positive, nondecreasing functions $\mathfrak{g}^\ast$ such that
\begin{eqnarray*}
 |D_\lambda F^{ij}_{\alpha\beta}(x,\xi;\lambda)|\le
\mathfrak{g}^\ast(\sum^N_{k=1}|\xi_\circ^k|)\left(1+
\sum^N_{k=1}\sum_{m-n/2\le |\gamma|\le
m}|\xi^k_\gamma|^{2_\gamma}\right)^{2_{\alpha\beta}}
\end{eqnarray*}
for all $(x,\xi,\lambda)$ and  $i,j=1,\cdots,N$ and $|\alpha|, |\beta|\le m$.

In fact, by the mean value theorem  we get  $t\in (0,1)$ such that
\begin{eqnarray*}
&&|D_\lambda F^i_\alpha(x,\xi;\lambda))|-|D_\lambda F^i_\alpha(x,0;\lambda)|\le \sum^N_{j=1}\sum_{|\beta|\le m}
|D_{\xi^j_\beta}D_\lambda F^i_\alpha(x,t\xi;\lambda))|\cdot|\xi^j_\beta|\\
&=&\sum^N_{j=1}\sum_{|\beta|\le m}
|D_\lambda F^{ij}_{\alpha\beta}(x,t\xi;\lambda))|\cdot|\xi^j_\beta|\qquad\hbox{(by (b.1))}\\
&\le&\sum^N_{j=1} \sum_{|\beta|\le m} \mathfrak{g}^\ast(\sum^N_{k=1}|t\xi_\circ^k|)\Bigg(1+
\sum^N_{k=1}\sum_{m-n/2\le |\gamma|\le
m}|t\xi^k_\gamma|^{2_\gamma}\Bigg)^{2_{\alpha\beta}}|\xi^j_\beta|\\
&\le&\sum^N_{j=1} \sum_{|\beta|\le m} \mathfrak{g}^\ast(\sum^N_{k=1}|\xi_\circ^k|)\Bigg(1+
\sum^N_{k=1}\sum_{m-n/2\le |\gamma|\le
m}|\xi^k_\gamma|^{2_\gamma}\Bigg)^{2_{\alpha\beta}}|\xi^j_\beta|\\
&\le&\sum^N_{j=1} \sum_{|\beta|<m-n/2} \mathfrak{g}^\ast(\sum^N_{k=1}|\xi_\circ^k|)\Bigg(1+
\sum^N_{k=1}\sum_{m-n/2\le |\gamma|\le
m}|\xi^k_\gamma|^{2_\gamma}\Bigg)^{2_{\alpha\beta}}|\xi^j_\beta|\\
&+&\sum^N_{j=1} \sum_{m-n/2\le |\beta|\le m} \mathfrak{g}^\ast(\sum^N_{k=1}|\xi_\circ^k|)\Bigg(1+
\sum^N_{k=1}\sum_{m-n/2\le |\gamma|\le
m}|\xi^k_\gamma|^{2_\gamma}\Bigg)^{2_{\alpha\beta}}|\xi^j_\beta|\\
&\le&\sum^N_{j=1} \sum_{|\beta|<m-n/2} \mathfrak{g}(\sum^N_{k=1}|\xi_\circ^k|)\Bigg(1+
\sum^N_{k=1}\sum_{m-n/2\le |\gamma|\le
m}|\xi^k_\gamma|^{2_\gamma}\Bigg)^{2_{\alpha\beta}}\\
&+&\sum^N_{j=1} \sum_{m-n/2\le |\beta|\le m} \mathfrak{g}(\sum^N_{k=1}|\xi_\circ^k|)\Bigg(1+
\sum^N_{k=1}\sum_{m-n/2\le |\gamma|\le
m}|\xi^k_\gamma|^{2_\gamma}\Bigg)^{2_{\alpha\beta}}|\xi^j_\beta|,
\end{eqnarray*}
where $\mathfrak{g}(t)=(1+t)\mathfrak{g}^\ast(t)$.
}
\end{remark}

In order to prove Theorem~\ref{th:BifE.9} we need the following three lemmas, whose proofs are involved and are given in Appendix~\ref{app:B}.
The condition (iii) in Theorem~\ref{th:BifE.9} is only used in the proofs of the first two.

\begin{lemma}\label{lem:BifE.10}
Under the assumptions of Theorem~\ref{th:BifE.9},
 there exists a  continuous, positive, nondecreasing function
$\widehat{\mathfrak{g}}:[0,\infty)\to\mathbb{R}$ such that for all $(x,\xi,\lambda)$,
\begin{eqnarray*}
&&\hspace{-0.5cm}|D_\lambda F(x,\xi;\lambda)|\le|D_\lambda F(x,0;\lambda)|+
\Big(\sum^N_{k=1}|\xi^k_\circ|\Big)\sum^N_{i=1}\sum_{|\alpha|<m-n/2}|D_\lambda F^i_{\alpha}(x, 0;\lambda)|\\
&+&\hspace{-0.3cm}\sum^N_{i=1}\sum_{m-n/2\le|\alpha|\le m}|D_\lambda F^i_{\alpha}(x, 0;\lambda)|^{2'_\alpha}+ \widehat{\mathfrak{g}}(\sum^N_{k=1}|\xi^k_\circ|)
\bigg(1+\sum^N_{l=1}\sum_{m-n/2\le|\alpha|\le m}|\xi^l_\alpha|^{2_\alpha}\bigg).
\end{eqnarray*}
\end{lemma}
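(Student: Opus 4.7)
My starting point is the fundamental theorem of calculus along the ray $t\mapsto t\xi$, combined with the commutation $D_\lambda F^i_\alpha=D_{\xi^i_\alpha}D_\lambda F$ from assumption~(i):
\[
D_\lambda F(x,\xi;\lambda)=D_\lambda F(x,0;\lambda)+\int_0^1\sum_{i=1}^N\sum_{|\alpha|\le m}D_\lambda F^i_\alpha(x,t\xi;\lambda)\,\xi^i_\alpha\,dt.
\]
Inserting assumption~(iii) into the integrand and using the monotonicity of $\mathfrak{g}$ together with $t\in[0,1]$ (so that $|t\xi^k_\gamma|^{2_\gamma}\le|\xi^k_\gamma|^{2_\gamma}$ and $\mathfrak{g}(\sum_k|t\xi^k_\circ|)\le\mathfrak{g}(\sum_k|\xi^k_\circ|)$), the $t$-integration contributes only a harmless constant. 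Setting $Z:=\sum_{k=1}^N|\xi^k_\circ|$ and $X:=1+\sum_{k,\,m-n/2\le|\gamma|\le m}|\xi^k_\gamma|^{2_\gamma}$, everything beyond the leading term $|D_\lambda F(x,0;\lambda)|$ is a finite sum of three elementary shapes:
\[
\mathrm{(S1)}\;|D_\lambda F^i_\alpha(x,0;\lambda)|\,|\xi^i_\alpha|,\quad \mathrm{(S2)}\;\mathfrak{g}(Z)X^{2_{\alpha\beta}}|\xi^i_\alpha|,\quad \mathrm{(S3)}\;\mathfrak{g}(Z)X^{2_{\alpha\beta}}|\xi^l_\beta|\,|\xi^i_\alpha|,
\]
where (S2) is associated with $|\beta|<m-n/2$ and (S3) with $m-n/2\le|\beta|\le m$.

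The (S1) terms split by the size of $|\alpha|$: if $|\alpha|<m-n/2$ then $|\xi^i_\alpha|\le Z$, which directly produces the second summand of the target bound; if $m-n/2\le|\alpha|\le m$, Young's inequality with the conjugate pair $(2_\alpha,2'_\alpha)$ yields $|D_\lambda F^i_\alpha(x,0;\lambda)|\,|\xi^i_\alpha|\le C\bigl(|D_\lambda F^i_\alpha(x,0;\lambda)|^{2'_\alpha}+|\xi^i_\alpha|^{2_\alpha}\bigr)$, supplying the third summand and a piece absorbable into $\widehat{\mathfrak{g}}(Z)X$. For (S2) I split again on $|\alpha|$: when $|\alpha|<m-n/2$ one has $2_{\alpha\beta}\le 1$ and $|\xi^i_\alpha|\le Z$, so the term is dominated by $Z\mathfrak{g}(Z)X$; when $|\alpha|\ge m-n/2$ one has $2_{\alpha\beta}=1-1/2_\alpha$ and Young replaces $X^{1-1/2_\alpha}|\xi^i_\alpha|$ by $|\xi^i_\alpha|^{2_\alpha}+X\le 2X$.

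The delicate step is (S3). If $|\alpha|<m-n/2$ and $|\beta|\ge m-n/2$ then $2_{\alpha\beta}=1-1/2_\beta$, $|\xi^i_\alpha|\le Z$, and Young applied to $X^{1-1/2_\beta}|\xi^l_\beta|$ gives a bound $CZ\mathfrak{g}(Z)X$. The genuinely three-factor case $|\alpha|,|\beta|\ge m-n/2$ splits further: if $|\alpha|=|\beta|=m$ then $1/2_\alpha+1/2_\beta+2_{\alpha\beta}=1$, so Young's inequality with exponents $(2_\alpha,2_\beta,1/2_{\alpha\beta})$ gives $|\xi^i_\alpha|\,|\xi^l_\beta|\,X^{2_{\alpha\beta}}\le C\bigl(|\xi^i_\alpha|^{2_\alpha}+|\xi^l_\beta|^{2_\beta}+X\bigr)$; if $|\alpha|+|\beta|<2m$ then $2_{\alpha\beta}<1-1/2_\alpha-1/2_\beta$, and I would choose $p_1<2_\alpha$, $p_2<2_\beta$ satisfying $1/p_1+1/p_2=1-2_{\alpha\beta}$, apply Young with $(p_1,p_2,1/2_{\alpha\beta})$, and absorb $|\xi^i_\alpha|^{p_1}$ and $|\xi^l_\beta|^{p_2}$ into $1+|\xi^i_\alpha|^{2_\alpha}+|\xi^l_\beta|^{2_\beta}\le X$ via $|s|^{p}\le 1+|s|^{2_\alpha}$. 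Collecting all contributions and defining $\widehat{\mathfrak{g}}$ as a suitable continuous nondecreasing majorant of $\mathfrak{g}$ times the finitely many Young constants produces the claimed inequality. I expect the main obstacle to be precisely this three-factor bookkeeping, especially the exponent padding in the subcase $|\alpha|+|\beta|<2m$; everything else is either a direct estimate or a routine application of Young's inequality.
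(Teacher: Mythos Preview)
Your proposal is correct and follows essentially the same route as the paper's proof: mean-value expansion along $t\mapsto t\xi$, insertion of assumption~(iii), then the same case-split on the ranges of $|\alpha|$ and $|\beta|$ with Young's inequality in each block. The only difference is in the three-factor subcase $|\alpha|,|\beta|\ge m-n/2$: the paper handles both $|\alpha|+|\beta|=2m$ and $|\alpha|+|\beta|<2m$ uniformly by applying Young with the fixed exponents $(2_\alpha,2_\beta,\tau)$ where $\tau^{-1}=1-2_\alpha^{-1}-2_\beta^{-1}$, so that the exponent on $X$ becomes $2_{\alpha\beta}\tau\le 1$ and $X^{2_{\alpha\beta}\tau}\le X$ (since $X\ge 1$), which avoids your exponent-padding step entirely.
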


\begin{lemma}\label{lem:BifE.11}
Under the assumptions of Theorem~\ref{th:BifE.9},
 there exists a constant $C(m,n,N,\Omega)>0$ such that for $\vec{u}\in V$
 and $\lambda_i\in [0,1]$, $i=1,2$,
\begin{eqnarray*}
&&\|\nabla\mathfrak{F}_{\lambda_1}(\vec{u})- \nabla\mathfrak{F}_{\lambda_2}(\vec{u})\|_{m,2}\\
&\le&C(m,n,N,\Omega)|\lambda_1-\lambda_2|\bigg[\sum^N_{j=1}\sum_{|\alpha|\le m}
\left(\int_\Omega |D_\lambda F^j_\alpha(x,0;\lambda)|^{2'_\alpha}dx\right)^{1/2'_\alpha}\nonumber\\
&&\hspace{30mm}+\sup_{k<m-n/2}\mathfrak{g}(\|\vec{u}\|_{C^k}|)(1+\sum_{m-n/2\le |\gamma|\le
m}\|\vec{u}\|_{m,2}^{2_\gamma})\nonumber\\
&&\hspace{30mm}+\sup_{k<m-n/2}\mathfrak{g}(\|\vec{u}\|_{C^k}|)(1+\sum_{m-n/2\le |\gamma|\le
m}\|\vec{u}\|_{m,2}^{2_\gamma})\|\vec{u}\|_{m,2}\bigg].
\end{eqnarray*}
\end{lemma}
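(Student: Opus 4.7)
The plan is to view $\|\nabla\mathfrak{F}_{\lambda_1}(\vec{u})-\nabla\mathfrak{F}_{\lambda_2}(\vec{u})\|_{m,2}$ as the dual norm of $\mathfrak{F}'_{\lambda_1}(\vec{u})-\mathfrak{F}'_{\lambda_2}(\vec{u})\in V_0^\ast$ via the Riesz identification afforded by $(\cdot,\cdot)_{m,2}$, write the difference explicitly using the representation in Theorem~\ref{th:6.1}(A), and estimate the resulting integral pointwise using assumption (iii) of Theorem~\ref{th:BifE.9}. Concretely, applying Theorem~\ref{th:6.1}(A) to each of $F(\cdot;\lambda_1)$ and $F(\cdot;\lambda_2)$ and using the integral form of the mean value theorem in $\lambda$ (legitimate by assumption (i) of Theorem~\ref{th:BifE.9}) gives, for every $\vec{v}\in V_0$,
$$
\langle\mathfrak{F}'_{\lambda_1}(\vec{u})-\mathfrak{F}'_{\lambda_2}(\vec{u}),\vec{v}\rangle
=(\lambda_1-\lambda_2)\sum_{i,\alpha}\int_0^1\!\int_\Omega D_\lambda F^i_\alpha\bigl(x,\vec{u},\ldots,D^m\vec{u};\lambda(t)\bigr)\,D^\alpha v^i\,dx\,dt,
$$
where $\lambda(t):=\lambda_2+t(\lambda_1-\lambda_2)$.

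Inserting the pointwise majorant from Theorem~\ref{th:BifE.9}(iii) splits the integrand into three pieces, each matching exactly one of the three summands on the right-hand side of the asserted inequality. For the first piece, Hölder's inequality with exponents $(2'_\alpha,2_\alpha)$ together with the Sobolev embedding $W^{m,2}(\Omega)\hookrightarrow W^{|\alpha|,2_\alpha}(\Omega)$ for $|\alpha|\ge m-n/2$ (respectively $W^{m,2}\hookrightarrow C^{|\alpha|}$ when $|\alpha|<m-n/2$, matching the conventions $2_\alpha=\infty$, $2'_\alpha=1$) produces the first summand. For the second piece, whose $\beta$-sum runs over $|\beta|<m-n/2$, the embedding $W^{m,2}\hookrightarrow C^{|\beta|}$ bounds $\mathfrak{g}(\sum_k|\xi^k_\circ|)$ by $\mathfrak{g}(C\|\vec{u}\|_{C^k})$ for some $k<m-n/2$, and a two-way Hölder with exponents $\bigl(p_1,2_\alpha\bigr)$, $p_1=(1-1/2_\alpha)^{-1}$, reduces the remainder to the $L^{p_1}$-norm of $\bigl(1+\sum|\xi^k_\gamma|^{2_\gamma}\bigr)^{2_{\alpha\beta}}$, which is bounded by $1+\sum_\gamma\|\vec{u}\|_{m,2}^{2_\gamma}$ since $2_{\alpha\beta}p_1\le 1$ by the definition of $2_{\alpha\beta}$ in Hypothesis~$\mathfrak{F}_{2,N,m,n}$. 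The third piece is treated with a three-way Hölder using exponents $(p_1,2_\beta,2_\alpha)$ with $1/p_1=1-1/2_\alpha-1/2_\beta\ge 2_{\alpha\beta}$; the additional factor $|\xi^l_\beta|$ is absorbed by $\|D^\beta u^l\|_{L^{2_\beta}}\le C\|\vec{u}\|_{m,2}$, which produces the extra $\|\vec{u}\|_{m,2}$ appearing in the third summand. Taking the supremum over $\vec{v}$ with $\|\vec{v}\|_{m,2}\le 1$ and using $\sup_t\|D_\lambda F^j_\alpha(\cdot,0;\lambda(t))\|_{L^{2'_\alpha}}<\infty$ (from assumption (ii)) assembles the three bounds into the stated inequality.

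The main obstacle is purely organizational: one must carry out the case analysis across all configurations of the multi-indices $(\alpha,\beta)$ dictated by whether $|\alpha|$ and $|\beta|$ lie below, at, or above the critical threshold $m-n/2$, and verify the compatibility $2_{\alpha\beta}\cdot p_1\le 1$ in each configuration (including the top case $|\alpha|=|\beta|=m$ where equality holds, and the mixed intermediate cases where strict inequality is available). Each case uses a slightly different choice of Hölder exponents and a different Sobolev embedding, but in every case the compatibility is precisely what the definition of $2_{\alpha\beta}$ in Hypothesis~$\mathfrak{F}_{2,N,m,n}$ was engineered to provide; once the case split is laid out, the individual estimates are routine. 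The only subtlety worth flagging is the degenerate case $|\alpha|<m-n/2$, where $D^\alpha v^i$ must be treated in $C^0$ rather than in an $L^p$ space, and the first summand of the right-hand side correspondingly degenerates to an $L^1$ norm.
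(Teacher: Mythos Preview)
Your proposal is correct and follows essentially the same route as the paper: represent the gradient difference via the pairing formula of Theorem~\ref{th:6.1}(A), apply the mean value theorem in $\lambda$, insert the pointwise bound from Theorem~\ref{th:BifE.9}(iii) to split into three pieces, and estimate each by H\"older plus Sobolev with a case analysis on the ranges of $|\alpha|$ and $|\beta|$ relative to $m-n/2$. The only cosmetic difference is that the paper uses the supremum form $\sup_\lambda|D_\lambda F^j_\alpha|$ rather than your integral $\int_0^1(\cdots)\,dt$, and in the subcase $m-n/2\le|\alpha|,|\beta|$ with $|\alpha|+|\beta|<2m$ the paper inserts an explicit factor $|\Omega|^{1/q_{\alpha\beta}}$ from a four-way H\"older, whereas your three-way H\"older with $1/p_1=1-1/2_\alpha-1/2_\beta$ accomplishes the same thing since $2_{\alpha\beta}p_1<1$.
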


\begin{lemma}\label{lem:BifE.12}
Under the assumptions of Theorem~\ref{th:BifE.9}, if $(\vec{u}_k)\in \bar{B}_V(0,R)$
 weakly converges to some $\vec{u}\in \bar{B}_V(0,R)$
and $\nabla\mathfrak{F}_{\lambda}(\vec{u}_k)\to 0$ for some $\lambda\in [0,1]$,
then $\vec{u}_k\to\vec{u}$. In particular, all $\mathfrak{F}_{\lambda}$
satisfy the (PS) condition on  $\bar{B}_V(0,R)$.
\end{lemma}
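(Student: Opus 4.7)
The plan is to deduce the lemma from the $(S)_+$ property of $\mathfrak{F}_\lambda'$ proved in Theorem~\ref{th:6.2}(C). For each fixed $\lambda\in[0,1]$, hypothesis (i) of Theorem~\ref{th:BifE.9} ensures that $F(\cdot,\cdot;\lambda)$ satisfies Hypothesis $\mathfrak{F}_{2,N,m,n}$, so Theorem~\ref{th:6.2}(C) applies to $\mathfrak{F}_\lambda$ and yields that $\mathfrak{F}_\lambda':W^{m,2}(\Omega,\mathbb{R}^N)\to (W^{m,2}(\Omega,\mathbb{R}^N))^\ast$ is of class $(S)_+$.

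I would then set $\vec{v}_k:=\vec{u}_k-\vec{u}$. Since $V=\vec{w}+V_0$ is affine, $\vec{v}_k\in V_0\subset W^{m,2}(\Omega,\mathbb{R}^N)$ is norm-bounded and $\vec{v}_k\rightharpoonup 0$ weakly in $W^{m,2}(\Omega,\mathbb{R}^N)$. By the gradient identity from Theorem~\ref{th:6.1}(B),
\[
\langle \mathfrak{F}_\lambda'(\vec{u}_k),\vec{v}_k\rangle=\bigl(\nabla\mathfrak{F}_\lambda(\vec{u}_k),\vec{v}_k\bigr)_{m,2}\to 0,
\]
since $\|\nabla\mathfrak{F}_\lambda(\vec{u}_k)\|_{m,2}\to 0$ while $\|\vec{v}_k\|_{m,2}$ stays bounded. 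Hence $\limsup_k\langle \mathfrak{F}_\lambda'(\vec{u}_k),\vec{u}_k-\vec{u}\rangle\le 0$, and the $(S)_+$ property gives $\vec{u}_k\to\vec{u}$ strongly in $W^{m,2}(\Omega,\mathbb{R}^N)$, hence in $V$.

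For the (PS) condition on $\bar{B}_V(0,R)$, any such sequence is norm-bounded, so (using reflexivity of $V_0$ as a closed subspace of the reflexive space $W^{m,2}(\Omega,\mathbb{R}^N)$) a subsequence weakly converges to some $\vec{u}\in\bar{B}_V(0,R)$, and the first part upgrades this to strong convergence. I do not foresee a genuine obstacle; the only point worth a moment's care is that the functional $\mathfrak{F}_\lambda'(\vec{u}_k)\in(W^{m,2}(\Omega,\mathbb{R}^N))^\ast$ appearing in the $(S)_+$ property and the gradient $\nabla\mathfrak{F}_\lambda(\vec{u}_k)\in V_0$ used in the hypothesis represent the same linear form when tested against elements of $V_0$, which is immediate from the explicit formula (\ref{e:6.4}).
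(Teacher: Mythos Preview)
Your proof is correct, and it is genuinely shorter than the paper's. The paper does \emph{not} invoke Theorem~\ref{th:6.2}(C); instead it works by hand: applying the mean value theorem to write
\[
(\nabla\mathfrak{F}_{\lambda}(\vec{u}_k)-\nabla\mathfrak{F}_{\lambda}(\vec{u}),\vec{u}_k-\vec{u})_{m,2}
=(B_\lambda(\tau_k\vec{u}_k+(1-\tau_k)\vec{u})(\vec{u}_k-\vec{u}),\vec{u}_k-\vec{u})_{m,2},
\]
splitting $B_\lambda=P_\lambda+Q_\lambda$ as in Theorem~\ref{th:6.2}(D), using the uniform coercivity of $P_\lambda$ from (\ref{e:6.2}), and then showing by a four-case estimate (according to the positions of $|\alpha|,|\beta|$ relative to $m-n/2$) together with compact Sobolev embeddings that $(Q_\lambda(\vec{w}_k)\vec{v}_k,\vec{v}_k)_{m,2}\to 0$. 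This is, in effect, a direct re-proof of the $(S)_+$ property in the present setting.

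Your route simply cashes in the fact that Theorem~\ref{th:6.2}(C) already records this conclusion at the level of $\mathfrak{F}_\lambda'$ on the whole space $W^{m,2}(\Omega,\mathbb{R}^N)$, and since $\vec{u}_k-\vec{u}\in V_0$ and the derivative formula (\ref{e:6.4}) is intrinsic, the pairing with $\nabla\mathfrak{F}_\lambda(\vec{u}_k)\in V_0$ coincides with the duality needed for $(S)_+$. What the paper's longer argument buys is a self-contained estimate that makes the dependence on $R$ and the structural constants visible, and it shows exactly which Sobolev compactness is doing the work; your argument buys brevity and hides those details behind the already-proved abstract statement. Both use only hypothesis~(i) of Theorem~\ref{th:BifE.9}, consistent with the paper's remark that (iii) is not needed for this lemma.
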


\begin{proof}[Proof of Theorem~\ref{th:BifE.9}]
{\bf Step 1} ({\it Prove that  the map  $[0,1]\ni\lambda\mapsto{\mathfrak{F}}_\lambda$
is continuous in $C^1(\bar{B}_V(0, R))$ for any given $R>0$}).
For $|\alpha|<m-n/2$, since $W^{m-|\alpha|,2}(\Omega)\hookrightarrow C^0(\overline{\Omega})$,  for every  $R>0$,  we have a constant
$C=C(m,n,N,\Omega, R)>0$ such that
$$
\sup\bigg\{\sum_{|\alpha|<m-n/2}|D^\alpha\vec{u}(x)|\,\bigg|\, x\in\Omega\bigg\}<C,
\quad\forall \vec{u}\in W^{m,2}(\Omega,\mathbb{R}^N)\;\hbox{with}\;\|\vec{u}\|_{m,2}\le 2R.
$$
It follows from this and Lemma~\ref{lem:BifE.10} that
for any $0\le\lambda_1<\lambda_2\le 1$,
\begin{eqnarray*}
&&|\mathfrak{F}_{\lambda_1}(\vec{u})- \mathfrak{F}_{\lambda_2}(\vec{u})|\le |\lambda_2-\lambda_1|\int_\Omega\sup_\lambda
|D_\lambda F(x, \vec{u},\cdots, D^m\vec{u};\lambda)|dx\\
&&\le |\lambda_2-\lambda_1|\biggl[\sup_\lambda\int_\Omega |D_\lambda F(x,0;\lambda)|dx +
C\sum^N_{i=1}\sum_{|\alpha|<m-n/2}\sup_\lambda\int_\Omega |D_\lambda F^i_{\alpha}(x, 0;\lambda)|dx\\
&&+\sum^N_{i=1}\sum_{m-n/2\le|\alpha|\le m}\sup_\lambda\int_\Omega
|D_\lambda F^i_{\alpha}(x, 0;\lambda)|^{2'_\alpha}dx\\
&&\hspace{20mm}+ \widehat{\mathfrak{g}}(C)\int_\Omega\bigg(1+\sum^N_{l=1}\sum_{m-n/2\le|\alpha|\le m}|D^\alpha u^l|^{2_\alpha}\bigg)dx
\biggr].
\end{eqnarray*}
Note that $W^{m-|\alpha|,2}(\Omega)\hookrightarrow L^{2_\alpha}(\Omega)$
for $m-n/2\le|\alpha|\le m$. We get
$$
\int_\Omega\bigg(\sum^N_{l=1}\sum_{m-n/2\le|\alpha|\le m}|D^\alpha u^l|^{2_\alpha}\bigg)dx\le
C'\sum_{m-n/2\le|\alpha|\le m}(\|\vec{u}\|_{m,2})^{2_\alpha}
$$
for some constant $C'=C'(m,n,N,\Omega)>0$. It follows that
$[0,1]\ni\lambda\mapsto \mathfrak{F}_\lambda$
is continuous in $C^0(\bar{B}_V(0, 2R))$.
Combing with Lemma~\ref{lem:BifE.11} we get the desired claim.

\vspace{4pt}\noindent
\noindent{\bf Step 2} ({\it Prove that (II) is true under the assumption
that (I) does not hold}).  Then for each $\mu\in [0,1]$ we have neighborhoods
of it and $u_\mu$ in $[0,1]$ and $V$  respectively,
 $\mathscr{N}_\mu$ and $\mathscr{O}_\mu$, such that $u_\lambda$ is
 a unique critical point of $\mathfrak{F}_\lambda$  in
the closure $\overline{\mathscr{O}_\mu}$ of  $\mathscr{O}_\mu$ for each $\lambda\in\mathscr{N}_\mu$.
 Moreover, we can assume that each  $\mathscr{N}_\mu$ is an interval.
   Since $[0, 1]\ni\lambda\mapsto \vec{u}_\lambda\in V$ is continuous,
we can take $R>0$ such that all $u_\lambda$ and $\overline{\mathscr{O}_\mu}$
are contained in $B_V(0, R)$.

Put $\widehat{\mathfrak{F}}_\lambda(\vec{u}):=\mathfrak{F}_\lambda(\vec{u}+\vec{u}_\lambda)$.
By Lemma~\ref{lem:BifE.12}, for any $R>0$  all $\widehat{\mathfrak{F}}_\lambda$
satisfy the (PS) condition on  $\bar{B}_V(0,R)$. Since  $\|\vec{u}_\lambda\|_{m,2}\le R$ for all $\lambda\in [0, 1]$, by Step 1 we deduce that
the map  $[0,1]\ni\lambda\mapsto\widehat{\mathfrak{F}}_\lambda$
is continuous in $C^1(\bar{B}_V(0, R))$.

Shrinking the above $\mathscr{N}_\mu$ (if necessary) we have $\epsilon>0$ such that
$\vec{u}_\lambda+ B_V(0,\epsilon)\subset \mathscr{O}_\mu$ for all $\lambda\in \mathscr{N}_\mu$.
Then $\widehat{\mathfrak{F}}_\lambda$ has a unique critical point $0$ in $B_V(0,\epsilon)$
 for each $\lambda\in \mathscr{N}_\mu$.
Applying \cite[Theorem~2.2]{Lu8} to the family
$\{\widehat{\mathfrak{F}}_\lambda|_{B_V(0,\epsilon)}\,|\,\lambda\in \mathscr{N}_\mu\}$
we deduce that critical groups  $C_\ast(\mathfrak{F}_\lambda, \vec{u}_\lambda;{\bf K})=
C_\ast(\widehat{\mathfrak{F}}_\lambda, 0;{\bf K})$ are independent of choices of
$\lambda$ in $\mathscr{N}_\mu$. Note that $[0,1]$ may be covered by finitely many
 $\mathscr{N}_\mu$.  We arrive at
 $C_\ast(\mathfrak{F}_\lambda, \vec{u}_\lambda;{\bf K})
 =C_\ast(\mathfrak{F}_0, \vec{u}_0;{\bf K})$
for all $\lambda\in [0,1]$, and after shrinking $\epsilon>0$ the ball
 $B_V(\vec{u}_\lambda,\epsilon)$ contains a unique critical point $\vec{u}_\lambda$
 of $\mathfrak{F}_\lambda$  for any $\lambda\in [0,1]$.

 For the second claim in (II), it suffices to prove that
 $0\in V$ is a local minimizer of  $\widehat{\mathfrak{F}}_0$
 provided $0\in V$ is a local minimizer of $\widehat{\mathfrak{F}}_\lambda$.
 Since  $0\in V$ is an isolated critical point of $\widehat{\mathfrak{F}}_\lambda$,
 by Example~1 in \cite[page 33]{Ch} we have
 $C_q(\widehat{\mathfrak{F}}_\lambda, 0;{\bf K})=\delta_{q0}{\bf K}$
 for each $q\in\mathbb{N}_0$.   It follows that
 $C_q(\widehat{\mathfrak{F}}_0, 0;{\bf K})=\delta_{q0}{\bf K}$
 for each $q\in\mathbb{N}_0$.
 By \cite[Theorem~2.3]{Lu7}  this means that the Morse
  index of $\widehat{\mathfrak{F}}_0$ at $0\in V$  must be zero, and
 $C_q(\widehat{\mathfrak{F}}^\circ_0, 0;{\bf K})=\delta_{q0}{\bf K}$
 for each $q\in\mathbb{N}_0$, where $\widehat{\mathfrak{F}}^\circ_0$ is the finite dimensional reduction  of $\widehat{\mathfrak{F}}_0$ near
 $0\in V^0:={\rm Ker}(D(\nabla\mathfrak{F}_\lambda)(\vec{u}_0))$.
   By Example~4 in \cite[page 43]{Ch},  $0\in V^0$ is a local minimizer of
 $\widehat{\mathfrak{F}}^\circ_0$.
  From this and \cite[Theorem~2.2]{Lu7}
  it follows that  $0\in V$ must be  a local minimizer of
  $\widehat{\mathfrak{F}}_0$.
 \end{proof}
 %\hfill$\Box$\vspace{2mm}

\section{Bifurcations for quasi-linear elliptic systems
without growth restrictions}\label{sec:BifE.3}
\setcounter{equation}{0}

Now let us begin with some applications of results in \cite[Section~6]{Lu8}.
For simplicity we only consider the Dirichlet boundary conditions. So
till the end of this subsection, we take the Hilbert space $H:=W^{m,2}_0(\Omega,\mathbb{R}^N)$ with the usual inner product (\ref{e:6.2.2}).
 The following special case of \cite[Theorem 6.4.8]{Mor} is
 key for our arguments in this subsection.

\begin{proposition}\label{prop:BifE.9.1}
  For a real $p\ge 2$ and an integer $k\ge m+\frac{n}{p}$,
  let $\Omega\subset\R^n$ be a bounded  domain with boundary of class $C^{k-1,1}$, $N\in\mathbb{N}$, and let bounded and measurable functions on $\overline{\Omega}$,
    $A^{ij}_{\alpha\beta}$, $i,j=1,\cdots,N$, $|\alpha|,|\beta|\le m$,  fulfill the following conditions:
    \begin{enumerate}
\item[\rm (i)]  $A^{ij}_{\alpha\beta}\in C^{k+|\alpha|-2m-1,1}(\overline{\Omega})$ if $2m-k<|\alpha|\le m$;
\item[\rm (ii)] there exists $c_0>0$ such that
$$
\sum^N_{i,j=1}\sum_{|\alpha|=|\beta|=m}\int_\Omega
A^{ij}_{\alpha\beta}\eta^i_\alpha\eta^j_\beta\ge c_0\sum^N_{i=1}\sum_{|\alpha|=m}|\eta^i_\alpha|^2,\quad\forall \eta\in\mathbb{R}^{N\times M_0(m)}.
$$
\end{enumerate}
Suppose that $\vec{u}=(u^1,\cdots,u^N)\in W^{m,2}_0(\Omega,\mathbb{R}^N)$
and $\lambda\in (-\infty, 0]$ satisfy
$$
\sum^N_{i,j=1}\sum_{|\alpha|,|\beta|\le m}\int_\Omega
(A^{ij}_{\alpha\beta}-\lambda\delta_{ij}\delta_{\alpha\beta})D^\beta u^i\cdot D^\alpha v^j dx=0,\quad\forall v\in W^{m,2}_0(\Omega,\mathbb{R}^N).
$$
Then $\vec{u}\in W^{k,p}(\Omega,\mathbb{R}^N)$. Moreover, for
$f_j=\sum_{|\alpha|\le m}(-1)^{|\alpha|}D^\alpha f^j_\alpha$,
where
$$
f^j_\alpha\in\left\{\begin{array}{ll}
 W^{k-2m+|\alpha|,p}(\Omega),&\quad\hbox{if}\;|\alpha|>2m-k,\\
  L^p(\Omega),&\quad\hbox{if}\;|\alpha|\le 2m-k,
  \end{array}\right.
  $$
   if $\vec{u}\in W^{m,2}_0(\Omega,\mathbb{R}^N)$ satisfies \medskip
$$
\int_\Omega\sum^N_{j=1}\sum_{|\alpha|\le m}\left[\sum^N_{i=1}\sum_{|\beta|\le m}A^{ij}_{\alpha\beta}D^\beta u^i-f^j_\alpha \right]D^\alpha v^jdx=0,\quad\forall v\in W^{m,2}_0(\Omega,\mathbb{R}^N),
$$
then there also holds $\vec{u}\in W^{k,p}(\Omega,\mathbb{R}^N)$.
\end{proposition}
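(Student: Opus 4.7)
The plan is to derive this as a direct specialization of Morrey's $L^p$ regularity theorem for strongly elliptic systems \cite[Theorem~6.4.8]{Mor}, so the task reduces largely to verifying Morrey's hypotheses in the present notation and, where necessary, running a standard bootstrap.

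\textbf{Step 1 (reduction of the shift by $\lambda$).} For the homogeneous statement, set $\widetilde A^{ij}_{\alpha\beta} = A^{ij}_{\alpha\beta} - \lambda\delta_{ij}\delta_{\alpha\beta}$. Only the indices with $|\alpha|=|\beta|=0$ are actually shifted, and by $-\lambda \ge 0$; in particular the top-order coefficients are unchanged, so the coercivity condition (ii) and the smoothness condition (i) continue to hold for $\widetilde A$. Thus we may assume $\lambda=0$ and view the homogeneous statement as a special case of the inhomogeneous one with all $f^j_\alpha=0$.

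\textbf{Step 2 (matching Morrey's hypotheses).} Morrey's theorem requires: (a) the leading part to satisfy a Legendre-type ellipticity condition, (b) the coefficients $A^{ij}_{\alpha\beta}$ of order $|\alpha|+|\beta|$ to have enough smoothness so that $2m-|\alpha|-|\beta|$ weak derivatives of the equation make sense in $W^{k-2m,p}$, (c) the boundary $\partial\Omega$ to be of class $C^{k-1,1}$, and (d) the right-hand side in divergence form $\sum_\alpha (-1)^{|\alpha|}D^\alpha f^j_\alpha$ with $f^j_\alpha \in W^{k-2m+|\alpha|,p}(\Omega)$ whenever this exponent is $\ge 0$ and in $L^p$ otherwise. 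Condition (a) is precisely (ii); the smoothness hypothesis $A^{ij}_{\alpha\beta}\in C^{k+|\alpha|-2m-1,1}$ for $2m-k<|\alpha|\le m$ is tuned exactly so that the coefficient of $D^\alpha v^j$ carries $k-2m+|\alpha|$ Lipschitz derivatives; for $|\alpha|\le 2m-k$ only boundedness is required, matching the hypothesis that $f^j_\alpha\in L^p$ in this regime. Condition (c) is given, condition (d) is exactly the hypothesis on $f^j_\alpha$, and $\vec u\in W^{m,2}_0$ encodes the homogeneous Dirichlet data on $\partial\Omega$.

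\textbf{Step 3 (bootstrap).} Starting from $\vec u \in W^{m,2}(\Omega,\mathbb{R}^N)$, apply Morrey's theorem to gain one derivative at a time. At stage $s$ one knows $\vec u \in W^{m+s,p_s}$ for some $p_s$; by the Sobolev embedding chain between $W^{m+s,p_s}$ and $W^{m+s+1,p_{s+1}}$ one can choose $p_{s+1}$ compatible with the data regularity prescribed in Step 2, verify that the coefficient/data hypotheses remain valid at the next level, and conclude $\vec u\in W^{m+s+1,p_{s+1}}$. After finitely many iterations one reaches the target $W^{k,p}$; the interior and global estimates required to close the bootstrap are part of Morrey's statement, and the boundary regularity $C^{k-1,1}$ is sufficient throughout.

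\textbf{Main obstacle.} The delicate point is the precise bookkeeping in Step 3: at each bootstrap stage one must verify that the coefficient smoothness hypothesis (i) and the $L^p$/Sobolev integrability of the inhomogeneities $f^j_\alpha$ remain sufficient for the next gain of regularity, and that this is consistent with the thresholds dictated by whether $|\alpha|\lessgtr 2m-k$. This matching, and identifying the present indexing with Morrey's indexing in \cite[Ch.~6]{Mor}, constitutes essentially the whole proof; once the hypotheses are aligned, the conclusion is immediate from \cite[Theorem~6.4.8]{Mor}.
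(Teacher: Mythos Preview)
The paper does not give a proof of this proposition at all: it is simply stated as ``a special case of \cite[Theorem~6.4.8]{Mor}'' and then used as a black box. So there is nothing to compare against beyond the bare citation.

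Your Steps~1 and~2 are exactly the right content: the shift by $-\lambda\ge 0$ only touches the zero-order diagonal and leaves (i)--(ii) intact, and the smoothness thresholds on $A^{ij}_{\alpha\beta}$, on $\partial\Omega$, and on the $f^j_\alpha$ are precisely those in Morrey's hypotheses. That is all the paper is implicitly claiming when it calls this a ``special case.''

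Your Step~3, however, is superfluous and slightly misleading. Morrey's Theorem~6.4.8 is already a global $W^{k,p}$ regularity statement: once the coefficient, boundary, and data hypotheses are met at level $(k,p)$, it delivers $\vec u\in W^{k,p}$ in one shot, without an external bootstrap from $W^{m,2}$ upward. The iterative gain-one-derivative scheme you sketch is how such theorems are \emph{proved}, but it is internal to Morrey's argument, not something you need to reproduce here. If you want to keep a remark in that direction, it would be more accurate to say that the passage from the a~priori assumption $\vec u\in W^{m,2}_0$ to the conclusion $\vec u\in W^{k,p}$ is handled inside Morrey's theorem (via difference quotients and interior/boundary estimates), and drop the explicit $p_s$ ladder.
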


For $s<0$ let $W^{s,p}(\Omega, \mathbb{R}^N)=[W_0^{-s,p'}(\Omega, \mathbb{R}^N)]^\ast$ as usual, where $p'=p/(p-1)$.
Note that the $m$th power of the Laplace operator, $\triangle^m: W^{k,p}(\Omega,\mathbb{R}^N)\cap W^{m,2}_0(\Omega,\mathbb{R}^N)\\
\to W^{k-2m,p}(\Omega, \mathbb{R}^N)$,
 is an isomorphism, and that its inverse, denoted by $\triangle^{-m}$,
 is from $W^{k-2m,p}(\Omega, \mathbb{R}^N)$ to
 $W^{k,p}(\Omega,\mathbb{R}^N)\cap W^{m,2}_0(\Omega,\mathbb{R}^N)$.

\begin{proposition}\label{prop:Jia1}
Let $k> m+\frac{n}{p}$ and $\Omega\subset\R^n$ be as in Proposition~\ref{prop:BifE.9.1}.
Consider the Banach subspace  of $W^{k,p}(\Omega,\mathbb{R}^N)$,
  $X_{k,p}:=W^{k,p}(\Omega,\mathbb{R}^N)\cap W^{m,2}_0(\Omega,\mathbb{R}^N)$, which
    can be continuously embedded to the space $C^m(\overline{\Omega}, \mathbb{R}^N)$
   and is dense in $H$. Let $\ell\in\N$ and $Z\subset\R^\ell$ be a compact domain.
   Suppose for some $r\in\mathbb{N}$  that
   ${\bf F}:\overline\Omega\times\prod^m_{k=0}\mathbb{R}^{N\times M_0(k)}\times Z\to \R$
   is $C^{k-m+2r}$. Then the functional $\mathcal{F}:Z\times X_{k,p}\to\R$ defined by
 \begin{eqnarray}\label{e:Jia1}
\mathcal{F}(\lambda,\vec{u})=\mathcal{F}_\lambda(\vec{u})=
\int_\Omega {\bf F}(x,\vec{u},D\vec{u},\cdots,D^m\vec{u};\lambda)dx
\end{eqnarray}
is $C^{r}$. If all derivatives
${\bf F}^i_\alpha(x,\xi;\lambda)=D_{\xi^i_\alpha}{\bf F}(x,\xi;\lambda)$
 are also $C^{k-m+2r}$ with respect to $(x,\xi,\lambda)$, then the map
 $Z\times X_{k,p}\ni (\lambda, \vec{u})\to A(\lambda,\vec{u})\in X_{k,p}$
 given by
 \begin{eqnarray}\label{e:Jia2}
 &&(A(\lambda,\vec{u}))^i=\triangle^{-m}\sum_{|\alpha|\le m}(-1)^{m+|\alpha|}
 D^\alpha({\bf F}^i_\alpha(\cdot, \vec{u}(\cdot),\cdots, D^m \vec{u}(\cdot);\lambda)),\notag\\
 &&\quad i=1,\cdots,N,
 \end{eqnarray}
is of class $C^r$.  Moreover, for $(\lambda,\vec{u})\in Z\times X_{k,p}$ and $\vec{v},\vec{w}\in X_{k,p}$ it holds that
 \begin{eqnarray}\label{e:Jia3}
d\mathcal{F}_\lambda(\vec{u})[\vec{v}]=(A_\lambda(\vec{u}),\vec{v})_H\quad\hbox{and}\quad d^2\mathcal{F}_\lambda(\vec{u})[\vec{v},\vec{w}]=(B_\lambda(\vec{u})\vec{v},
\vec{w})_H,
\end{eqnarray}
where  $A_\lambda(\vec{u})=A(\lambda,\vec{u})$ and $B_\lambda(\vec{u})=B(\lambda,\vec{u})\in\mathscr{L}_s(H)$ is defined by
\begin{eqnarray}\label{e:Jia4}
(B(\lambda, \vec{u})\vec{v})^i&=& \triangle^{-m}\sum^N_{j=1}\sum_{\scriptsize\begin{array}{ll}
   &|\alpha|\le m,\\
   &|\beta|\le m
   \end{array}}(-1)^{m+|\alpha|}
D^\alpha({\bf F}^{ij}_{\alpha\beta}(\cdot, \vec{u}(\cdot);\lambda)D^\beta v^j)
\end{eqnarray}
for $i=1,\cdots,N$.
So $B_\lambda(\vec{u})(X_{k,p})\subset X_{k,p}$, $B_\lambda(\vec{u})|_{X_{k,p}}\in\mathscr{L}(X_{k,p})$,
and $A'_\lambda(\vec{u})=B_\lambda(\vec{u})|_{X_{k,p}}$.
\end{proposition}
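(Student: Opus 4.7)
The plan is to systematically decompose the required smoothness statements into (i) composition-type Nemytskii smoothness results supplied by Appendix~\ref{app:A} and (ii) the linear elliptic regularity theory embodied in Proposition~\ref{prop:BifE.9.1}. The hypothesis $k > m + n/p$ yields the continuous embedding $X_{k,p} \hookrightarrow C^m(\overline{\Omega},\mathbb{R}^N)$ via the Sobolev embedding $W^{k-m,p}(\Omega) \hookrightarrow C^0(\overline{\Omega})$ applied to $m$-th derivatives, placing pointwise values of $\vec{u}$ and all derivatives up to order $m$ uniformly under control -- exactly what the composition theorems of Appendix~\ref{app:A} require on $\overline{\Omega}$.

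First I would treat the functional $\mathcal{F}$. Writing $\mathcal{F}_\lambda(\vec{u}) = \int_\Omega {\bf F}(x,\vec{u},\ldots,D^m\vec{u};\lambda)\,dx$, I view $\vec{u} \mapsto {\bf F}(\cdot,\vec{u},\ldots,D^m\vec{u};\lambda)$ as a composition operator from $X_{k,p}$ into $W^{k-m,p}(\Omega)$, since for each $|\alpha|\le m$ the derivative $D^\alpha \vec{u}$ lies in $W^{k-|\alpha|,p}\subset W^{k-m,p}$. Corollary~\ref{cor:smoothness} of Theorem~\ref{th:babySmoothness} delivers $C^r$-smoothness of this composition into $W^{k-m,p}$ (and hence into $L^1$) under exactly the assumption that ${\bf F}$ is $C^{k-m+2r}$. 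Integration over the bounded $\Omega$ preserves $C^r$-smoothness, and the parameter $\lambda \in Z \subset \mathbb{R}^\ell$ is finite-dimensional and adds only trivial partial differentiations. The first identity of (\ref{e:Jia3}) then follows from the chain rule combined with integration by parts against $\vec{v}\in X_{k,p}\subset W_0^{m,2}$, all boundary terms vanishing.

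Next I would handle the map $A$ via the factorization $A_\lambda(\vec{u}) = \triangle^{-m}\circ T\circ \mathcal{N}(\lambda,\vec{u})$, where $\mathcal{N}$ is the tuple of Nemytskii operators built from the functions ${\bf F}^i_\alpha$, sending $Z\times X_{k,p}$ into a product of copies of $W^{k-m,p}(\Omega)$; $T$ is the continuous linear combination $\sum_{|\alpha|\le m}(-1)^{m+|\alpha|}D^\alpha$ mapping $W^{k-m,p}$ into $W^{k-2m,p}$; and $\triangle^{-m}:W^{k-2m,p}(\Omega,\mathbb{R}^N)\to X_{k,p}$ is the continuous linear isomorphism supplied by Proposition~\ref{prop:BifE.9.1}. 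Since every ${\bf F}^i_\alpha$ is $C^{k-m+2r}$, the piece $\mathcal{N}$ is of class $C^r$ by Appendix~\ref{app:A}, and the other two factors are continuous linear; hence $A$ is $C^r$. A direct chain-rule computation then produces $A'_\lambda(\vec{u})\vec{v}$ exactly in the form prescribed by (\ref{e:Jia4}), which matches the Riesz representative in $H$ of $d^2\mathcal{F}_\lambda(\vec{u})[\vec{v},\cdot]$ after one more integration by parts, establishing the second identity in (\ref{e:Jia3}).

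Finally I would verify the invariance $B_\lambda(\vec{u})(X_{k,p})\subset X_{k,p}$. Given $\vec{v}\in X_{k,p}$, each coefficient ${\bf F}^{ij}_{\alpha\beta}(\cdot,\vec{u};\lambda)$ lies in $W^{k-m,p}\cap C^0(\overline{\Omega})$, $D^\beta v^j\in W^{k-m,p}$, and their product is again in $W^{k-m,p}$ since $W^{k-m,p}$ is a Banach algebra for $k-m>n/p$; applying $D^\alpha$ drops into $W^{k-2m,p}$ and $\triangle^{-m}$ returns into $X_{k,p}$. The identity $A'_\lambda(\vec{u})=B_\lambda(\vec{u})|_{X_{k,p}}$ then follows by matching the two expressions on $X_{k,p}$; the symmetry $B_\lambda(\vec{u})\in\mathscr{L}_s(H)$ reflects the Schwarz symmetry of $d^2\mathcal{F}_\lambda$, and the extension from $X_{k,p}$ to $H$ uses density of $X_{k,p}\subset H$ together with the continuity estimates from Proposition~\ref{prop:BifE.9.1}. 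The main technical obstacle is tracking the exact regularity budget in the Appendix~\ref{app:A} composition theorems so that the Nemytskii smoothness lines up with the $W^{k-2m,p}\to X_{k,p}$ isomorphism; once this is calibrated, the remainder of the argument is bookkeeping via the chain and product rules.
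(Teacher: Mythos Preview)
Your proposal is correct and matches the paper's approach: factor through the jet map $\vec{u}\mapsto(x,\vec{u},\ldots,D^m\vec{u})$ into $W^{k-m,p}$, invoke the Appendix~\ref{app:A} composition results, and post-compose with the continuous linear operators $D^\alpha$ and $\triangle^{-m}$. The one place the paper is more explicit than you is the handling of the parameter $\lambda$: it first invokes Corollary~\ref{cor:Ir} to show that $Z\ni\lambda\mapsto{\bf F}^i_\alpha(\cdot;\lambda)\in C^{k-m+r}_b$ is $C^r$ (costing $r$ derivatives), and then applies Corollary~\ref{cor:smoothness} with $s=r$ to the pair $(f,{\bf u})$ (costing another $r$), which is exactly why the hypothesis reads $C^{k-m+2r}$ rather than the $C^{k-m+r}$ that a single application of Corollary~\ref{cor:smoothness} would require---your phrase ``adds only trivial partial differentiations'' glosses over this bookkeeping.
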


We  postpone its proof to Appendix~\ref{app:A}.

\begin{proposition}\label{prop:Jia2}
Let $k> m+\frac{n}{p}$, $\Omega\subset\R^n$, $\ell\in\N$ and $Z\subset\R^\ell$
be as in Proposition~\ref{prop:Jia1}. Suppose that
${\bf F}:\overline\Omega\times\prod^m_{k=0}\mathbb{R}^{N\times M_0(k)}\times Z\to \R$
   is $C^{k-m+2}$ and that all derivatives
   ${\bf F}^i_\alpha(x,\xi;\lambda)=D_{\xi^i_\alpha}{\bf F}(x,\xi;\lambda)$
 are also $C^{k-m+2}$ with respect to $(x,\xi,\lambda)$.
If for some $(\lambda, \vec{u})\in Z\times C^k(\overline\Omega,\mathbb{R}^N)$
there exists a real $c>0$ such that for all $x\in\overline{\Omega}$
and  $\eta=(\eta^{i}_{\alpha})\in\R^{N\times M_0(m)}$,
  \begin{eqnarray}\label{e:Jia5}
\sum^N_{i,j=1}\sum_{|\alpha|=|\beta|=m}{\bf F}^{ij}_{\alpha\beta}(x,\vec{u}(x),D\vec{u}(x),\cdots,D^m\vec{u}(x);
\lambda)\eta^i_\alpha\eta^j_\beta\ge c\sum^N_{i=1}\sum_{|\alpha|= m}(\eta^i_\alpha)^2,
\end{eqnarray}
then
\begin{enumerate}
\item[\rm (i)] either $\sigma(B_\lambda(\vec{u})|_{X_{k,p}})$ or $\sigma(B_\lambda(\vec{u})|_{X_{k,p}})\setminus\{0\}$ is bounded away from the imaginary axis;
\item[\rm (ii)] $H^0_\lambda:={\rm Ker}(B_\lambda(\vec{u}))\subset X_{k,p}$, and the negative definite space
 $H^-_{\lambda}$ of $B_\lambda(\vec{u})$ is of finite dimension (and hence is contained in $X$ by  \cite[Lemma~B.13]{Lu8}).
\end{enumerate}
\end{proposition}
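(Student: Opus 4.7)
The plan is to first decompose $B_\lambda(\vec u)$ as a compact perturbation of a coercive self-adjoint operator on $H=W^{m,2}_0(\Omega,\mathbb{R}^N)$, and then transfer the spectral information from $H$ to $X_{k,p}$ via elliptic regularity. In the spirit of Theorem~\ref{th:6.2}(D), define $P\in\mathscr{L}_s(H)$ whose quadratic form is the top-order bilinear form together with the lower-order stabiliser $\sum_i\sum_{|\alpha|\le m-1}\int_\Omega D^\alpha v^iD^\alpha w^i\,dx$, and set $Q:=B_\lambda(\vec u)-P$. Each coefficient $\mathbf{F}^{ij}_{\alpha\beta}(\cdot,\vec u(\cdot);\lambda)$ is bounded on $\overline\Omega$ since $\vec u\in C^k$, and pointwise evaluation of (\ref{e:Jia5}) at $\eta^i_\alpha=D^\alpha v^i(x)$ followed by integration gives $(P\vec v,\vec v)_H\ge c\|\vec v\|_H^2$, so $P$ is self-adjoint, coercive and invertible; every term in $Q$ involves at least one derivative of order $<m$ on one factor, so the Rellich embedding $W^{m,2}_0\hookrightarrow W^{m-1,2}$ makes $Q$ compact and self-adjoint on $H$.

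By Weyl's theorem $\sigma_{\mathrm{ess}}(B_\lambda(\vec u)|_H)=\sigma_{\mathrm{ess}}(P)\subset[c,\|P\|]$, so $0$ sits at positive distance from the essential spectrum. Consequently $\sigma(B_\lambda(\vec u)|_H)\cap(-\infty,c/2)$ consists of finitely many isolated real eigenvalues of finite multiplicity; in particular $H^-_\lambda$ is finite-dimensional and there is $\delta_0>0$ with $(-\delta_0,\delta_0)\cap\sigma(B_\lambda(\vec u)|_H)\subset\{0\}$. For $H^0_\lambda\subset X_{k,p}$, any $\vec v\in\ker B_\lambda(\vec u)$ is a weak solution in $W^{m,2}_0$ of the strongly elliptic system with coefficients $A^{ij}_{\alpha\beta}(x):=\mathbf{F}^{ij}_{\alpha\beta}(x,\vec u(x),\ldots;\lambda)\in C^{k-m}(\overline\Omega)$ (since $\mathbf{F}^{ij}_{\alpha\beta}\in C^{k-m+1}$ and $\vec u\in C^k$, using $k\ge m+1$), so Proposition~\ref{prop:BifE.9.1} applied with its internal $\lambda$ set to $0$ yields $\vec v\in W^{k,p}\cap W^{m,2}_0=X_{k,p}$. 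Repeating the argument on each negative eigenspace and invoking \cite[Lemma~B.13]{Lu8} places $H^-_\lambda$ inside $X_{k,p}$, completing (ii).

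To transfer (i) to $X_{k,p}$, write $B_\lambda(\vec u)|_{X_{k,p}}=\triangle^{-m}\circ L$, where $L$ is the differential operator of order $2m$ with coefficients $\mathbf{F}^{ij}_{\alpha\beta}(\cdot,\vec u(\cdot);\lambda)$, so that the resolvent equation $(B_\lambda(\vec u)-\mu I)\vec v=\vec f$ in $X_{k,p}$ is equivalent to the elliptic system $(L-\mu\triangle^m)\vec v=\triangle^m\vec f$ in $W^{k-2m,p}$. Specialising (\ref{e:Jia5}) to rank-one tensors $\eta^i_\alpha=\zeta^i\xi^\alpha$ shows that the principal symbol $M(x,\xi)$ of $L$ is a symmetric positive-definite $N\times N$ matrix whose eigenvalues are bounded below uniformly on $\overline\Omega\times S^{n-1}$ by some $\delta_1>0$, so $L-\mu\triangle^m$ is elliptic for every $\mu\in\C$ with $|\mathrm{Re}\,\mu|<\delta_1$. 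Setting $\delta:=\min(\delta_0,\delta_1)$ and taking any $\mu$ in the punctured strip $\{|\mathrm{Re}\,\mu|<\delta\}\setminus\{0\}$, self-adjointness of $B_\lambda(\vec u)$ on $H$ forces $\mu\in\rho(B_\lambda(\vec u)|_H)$; for $\vec f\in X_{k,p}$ the unique $H$-solution of $(B_\lambda(\vec u)-\mu I)\vec v=\vec f$ then solves an elliptic system with right-hand side in $W^{k-2m,p}$, and the $L^p$-regularity theorem \cite[Theorem~6.4.8]{Mor} (which is the result underlying Proposition~\ref{prop:BifE.9.1}) places $\vec v$ in $W^{k,p}\cap W^{m,2}_0=X_{k,p}$. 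The closed graph theorem upgrades this to boundedness of $(B_\lambda(\vec u)-\mu I)^{-1}$ on $X_{k,p}$, so $\mu\in\rho(B_\lambda(\vec u)|_{X_{k,p}})$, whence $\sigma(B_\lambda(\vec u)|_{X_{k,p}})\cap\{|\mathrm{Re}\,\mu|<\delta\}\subset\{0\}$, which is (i). The main obstacle is this last transfer: Proposition~\ref{prop:BifE.9.1} as stated subtracts a uniform real $\lambda\le 0$ from every coefficient, whereas here only the top-order coefficients are perturbed and by a possibly complex $\mu$; one must therefore either reprove the regularity conclusion directly for $L-\mu\triangle^m$ or appeal to \cite[Theorem~6.4.8]{Mor} for complex-coefficient elliptic systems with Dirichlet data, the Legendre–Hadamard ellipticity of $L$ uniform in $(x,\xi)$ being precisely what makes this feasible.
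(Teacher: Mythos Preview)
Your treatment of (ii) is essentially the paper's: decompose $B_\lambda(\vec u)=P+Q$ with $P$ coercive (from (\ref{e:Jia5})) and $Q$ compact (from Rellich), deduce that the non-positive spectrum on $H$ consists of finitely many eigenvalues of finite multiplicity, and then invoke Proposition~\ref{prop:BifE.9.1} with $\lambda\le 0$ to place each eigenspace in $X_{k,p}$. The paper cites \cite[Lemma~B.12]{Lu8} where you cite Weyl, but the content is the same.

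For (i) the approaches diverge exactly at the obstacle you flag. You try to treat $B_\lambda(\vec u)-\mu I$ on $X_{k,p}$ as $\triangle^{-m}(L-\mu\triangle^m)$ and appeal to $L^p$-regularity for $L-\mu\triangle^m$; as you note, this forces you outside Proposition~\ref{prop:BifE.9.1}, because for non-real $\mu$ the top-order coefficients become complex, and even for real $\mu$ only the top-order block is perturbed rather than all diagonal entries. One can close this by invoking Agmon--Douglis--Nirenberg type $L^p$ estimates for elliptic systems satisfying the Legendre--Hadamard condition with Dirichlet data, but that is a genuinely heavier external input than anything else in the section.

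The paper sidesteps the issue entirely by a doubling trick. It only treats purely imaginary $\mu=i\tau$ (which suffices for the statement) and rewrites the complex resolvent equation on $H^{\mathbb C}$ as a \emph{real} $2N$-component system for $(\vec v_{\mathrm{re}},\vec v_{\mathrm{im}})$. The crucial observation is that the $\tau$-coupling enters the top-order coefficient matrix antisymmetrically, so it drops out of the Legendre quadratic form: the doubled system satisfies condition (ii) of Proposition~\ref{prop:BifE.9.1} with the \emph{same} constant $c$, independently of $\tau$. Proposition~\ref{prop:BifE.9.1} then applies verbatim and gives $\vec v\in X_{k,p}^{\mathbb C}$, so every $i\tau$ with $\tau\neq 0$ lies in $\rho(B_\lambda(\vec u)|_{X_{k,p}^{\mathbb C}})$; the cases $0\in\rho$ and $0\in\sigma$ are then handled separately (the latter via the finite-dimensional kernel splitting). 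The upshot: your route would give a slightly stronger conclusion (a full strip in the resolvent) at the cost of importing complex-coefficient regularity theory, whereas the paper's doubling keeps everything inside the real Legendre framework already set up in Proposition~\ref{prop:BifE.9.1}.
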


\begin{proof}
\noindent{\bf Step 1} ({\it Prove the second claim}).
Clearly, all $A^{ij}_{\alpha\beta}(x):={\bf F}^{ij}_{\alpha\beta}(x,\vec{u}(x),D\vec{u}(x),\cdots,D^m\vec{u}(x);\lambda)$ belong to $C^{k-m}(\overline{\Omega})$.
Therefore by Proposition~\ref{prop:BifE.9.1} we deduce that the solution spaces of
\begin{eqnarray*}
B_\lambda(\vec{u})\vec{v}=\tau\vec{v},\quad(\tau,\vec{v})\in (-\infty, 0]\times H
\end{eqnarray*}
 are contained in $X_{k,p}$. Let $P_\lambda(\vec{u})$ and $Q_\lambda(\vec{u})$ be defined as in (\ref{e:6.6}) and (\ref{e:6.7}), respectively. (\ref{e:Jia5}) implies that $P_\lambda(\vec{u})$ is positive definite.
Moreover, $Q_\lambda(\vec{u})$ is compact.
By \cite[Lemma~B.12]{Lu8} we deduce that each real $\tau<\inf\{(B_\lambda(\vec{u})\vec{v},\vec{v})_H\,|\,\|\vec{v}\|_H=1\}$
  is either a regular value of $B_\lambda(\vec{u})$ or an isolated point of
$\sigma(B_\lambda(\vec{u}))$, which is also an eigenvalue of finite multiplicity.
Hence (ii) follows.

\vspace{4pt}\noindent
\noindent{\bf Step 2} ({\it Prove the first claim}).
Consider the complexification of  $H$ and $X_{k,p}$,
$H^{\mathbb{C}}=H+iH$ and $X^{\mathbb{C}}_{k,p}=X_{k,p}+iX_{k,p}$
(see \cite[page 14]{DaKr}).
 The inner product $(\cdot,\cdot)_H$ in (\ref{e:6.2.2}) is extended
 into a Hermite inner product $\langle\cdot,\cdot\rangle_H$ on $H^{\mathbb{C}}$
 in natural ways. Let $\mathbb{B}^C_\lambda$ be the natural complex linear extension on $H^{\mathbb{C}}$ of  $\mathbb{B}_\lambda:=B_\lambda(\vec{u})$. Then
 $\mathbb{B}^C_\lambda|_{X^{\mathbb{C}}_{k,p}}$ is such an extension on
 $X^{\mathbb{C}}_{k,p}$ of $\mathbb{B}_\lambda|_{X^{\mathbb{C}}_{k,p}}$.
 Both are symmetric with respect to $\langle\cdot,\cdot\rangle_H$.

Let $\vec{g},\vec{h}\in X_{k,p}$. Since $\mathbb{B}^C_\lambda$ is a
self-adjoint operator on $H^{\mathbb{C}}$,
for every $\tau\in\mathbb{R}\setminus\{0\}$ there exist unique
$\vec{u},\vec{v}\in H$ such that
$\mathbb{B}^C_\lambda(\vec{u}+i\vec{v})-i\tau(\vec{u}+i\vec{v})=\vec{g}+i\vec{h}$,
which is equivalent to
\begin{eqnarray}\label{e:Jia6}
\left.\begin{array}{ll}
&\sum^N_{k=1}\sum_{|\alpha|\le m}\sum_{|\beta|\le m}(-1)^{|\alpha|}D^\alpha (A^{jk}_{\alpha\beta}D^\beta u^k)+ \tau\triangle^mv^j=g^j,\\
&-\tau\triangle^mu^j+\sum^N_{k=1}\sum_{|\alpha|\le m}\sum_{|\beta|\le m}(-1)^{|\alpha|}D^\alpha (A^{jk}_{\alpha\beta}D^\beta v^k)=h^j,\\
&j=1,\cdots,N,
\end{array}\right\}
\end{eqnarray}
where  $\triangle^mv^j=(-1)^m\sum_{|\alpha|=m}D^\alpha(D^\alpha v^j)$ and
$$
g^j=\sum_{|\alpha|\le m}(-1)^{|\alpha|}D^\alpha g^j_\alpha,\quad
h^j=\sum_{|\alpha|\le m}(-1)^{|\alpha|}D^\alpha h^j_\alpha.
$$
Define
\begin{eqnarray*}
&&\hat{A}^{jk}_{\alpha\beta}=A^{jk}_{\alpha\beta},\quad j,k=1,\cdots,N,\\
&&\hat{A}^{jk}_{\alpha\beta}=\tau\delta_{\alpha\beta}\delta_{j(k-N)},\quad j=1,\cdots,N,\;k=N+1,\cdots,2N,\\
&&\hat{A}^{jk}_{\alpha\beta}=-\tau\delta_{\alpha\beta}\delta_{(j-N)k},\quad j=N+1,\cdots,2N,\;k=1,\cdots,N,\\
&&\hat{A}^{jk}_{\alpha\beta}=A^{(j-N)(k-N)}_{\alpha\beta},\quad j,k=N+1,\cdots,2N,\\
&&\hat f^j_\alpha=g^j_\alpha,\;j=1,\cdots,N,\quad
\hat f^j_\alpha=h^{j-N}_\alpha,\;j=N+1,\cdots,2N,\\
&&\hat w^j_\alpha=u^j_\alpha,\;j=1,\cdots,N,\quad
\hat w^j_\alpha=v^{j-N}_\alpha,\;j=N+1,\cdots,2N.
\end{eqnarray*}
Then (\ref{e:Jia6}) is equivalent to:
\begin{eqnarray}\label{e:Jia6+}
\sum^{2N}_{k=1}\sum_{|\alpha|\le m}\sum_{|\beta|\le m}(-1)^{|\alpha|}D^\alpha (\hat{A}^{jk}_{\alpha\beta}D^\beta \hat{w}^k)= \hat{f}^j,\quad
j=1,\cdots,2N.
\end{eqnarray}
 Note that (\ref{e:Jia5}) implies \allowdisplaybreaks
\begin{eqnarray*}
&&\sum^{2N}_{j,k=1}\sum_{|\alpha|=m=|\beta|}\hat{A}^{jk}_{\alpha\beta}\xi^j_\alpha\xi^k_\beta\\
&=&\sum^{N}_{j,k=1}\sum_{|\alpha|=m=|\beta|}{A}^{jk}_{\alpha\beta}\xi^j_\alpha\xi^k_\beta
+\sum^{N}_{j,k=1}\sum_{|\alpha|=m=|\beta|}{A}^{jk}_{\alpha\beta}\xi^{j+N}_\alpha\xi^{k+N}_\beta\\
&\ge& c\sum^N_{j=1}\sum_{|\alpha|=m}|\xi^j_\alpha|^2+ c\sum^N_{j=1}\sum_{|\alpha|=m}|\xi^{N+j}_\alpha|^2.
\end{eqnarray*}
Applying Proposition~\ref{prop:BifE.9.1} to (\ref{e:Jia6+}) we get
$\vec{\hat{w}}\in W^{k,p}(\Omega,\mathbb{R}^{2N})$, and thus
$\vec{u},\vec{v}\in X_{k,p}$.
Hence $i\tau$ is a regular value of $\mathbb{B}^C_\lambda|_{X^{\mathbb{C}}_{k,p}}$.

If $0$ is a regular value of $B_\lambda(\vec{u})$,
 $\mathbb{B}^C_\lambda:H^{\mathbb{C}}\to H^{\mathbb{C}}$ is an isomorphism.
 Thus we may take $\tau=0$ in the arguments above. This shows that
 $0$ is also a regular value of $\mathbb{B}^C_\lambda|_{X^{\mathbb{C}}_{k,p}}$.
 Hence the  spectrum of $\mathbb{B}^C_\lambda|_{X^{\mathbb{C}}_{k,p}}$
 is bounded away from the imaginary axis.

If $0\in\sigma(B_\lambda(\vec{u}))$,
then
$N:={\rm Ker}(\mathbb{B}^C_{\lambda^\ast})
={\rm Ker}(\mathbb{B}^C_{\lambda^\ast}|_{X^{\mathbb{C}}_{k,p}})$
 is of finite dimension by Step 1.   Denote by $Y$ the intersection
of $X_{k,p}^{\mathbb{C}}$ and the orthogonal complement of $N$ in $(H^{\mathbb{C}},(\cdot,\cdot)_H)$.
It is an invariant subspace of $\mathbb{B}^C_{\lambda^\ast}|_{X^{\mathbb{C}}_{k,p}}$,
and there exists a direct sum decomposition of Banach spaces,
$X_{k,p}^{\mathbb{C}}=N\oplus Y$.
 The above arguments imply that  the spectrum of the restriction of $\mathbb{B}^C_{\lambda^\ast}|_{X^{\mathbb{C}}_{k,p}}$ to $Y$
  is bounded away from the imaginary axis. Hence
   $\sigma\left(\mathbb{B}^C_{\lambda^\ast}|_{X^{\mathbb{C}}_{k,p}}\right)\setminus\{0\}$
   has a positive distance to the imaginary axis.
\end{proof}

\begin{theorem}\label{th:Jia1}
Let $k> m+\frac{n}{p}$ and $\Omega\subset\R^n$ be as in Proposition~\ref{prop:BifE.9.1}.
Let $\ell\in\N$ and ${\bf D}^\ell=\{\lambda\in\R^\ell\,|\,|\lambda|\le 1\}$. Suppose
   that ${\bf F}:\overline\Omega\times\prod^m_{k=0}\mathbb{R}^{N\times M_0(k)}\times {\bf D}^\ell\to \R$ is  $C^{k-m+6}$, and
\begin{enumerate}
\item[\rm (I)] all derivatives ${\bf F}^i_\alpha(x,\xi;\lambda)=D_{\xi^i_\alpha}{\bf F}(x,\xi;\lambda)$
 are also $C^{k-m+6}$ with respect to $(x,\xi,\lambda)$;
\item[\rm (II)] for each $\lambda\in {\bf D}^\ell$ there exists some $c_\lambda>0$
such that for all $x\in\overline{\Omega}$
and  $\eta=(\eta^{i}_{\alpha})\in\R^{N\times M_0(m)}$,
  \begin{eqnarray}\label{e:Jia7}
\sum^N_{i,j=1}\sum_{|\alpha|=|\beta|=m}{\bf F}^{ij}_{\alpha\beta}(x, 0;\lambda)\eta^i_\alpha\eta^j_\beta\ge
c_\lambda\sum^N_{i=1}\sum_{|\alpha|= m}(\eta^i_\alpha)^2;
\end{eqnarray}
 \item[\rm (III)]  for each $\lambda\in {\bf D}^\ell$, $0\in X_{k,p}$
 is a critical point of the functional $\mathcal{F}_\lambda$ as defined by
 (\ref{e:Jia1}).
\end{enumerate}
 Then $\mathcal{F}_\lambda$ has the nullity $\nu_{\lambda}>0$ (in the sense of \cite[Appendix~B]{Lu8})
 at $0\in X_{k,p}$ provided that $(\lambda, 0)\in{\bf D}^\ell\times X_{k,p}$ is a bifurcation point of
  \begin{eqnarray}\label{e:Jia8}
d_{\vec{u}}\mathcal{F}(\lambda,\vec{u})=0,\quad(\lambda,\vec{u})\in{\bf D}^\ell\times X_{k,p}.
 \end{eqnarray}
 Conversely, let $m_\lambda$ and $n_\lambda$ denote the Morse index and
 the nullity of the quadratic form
 $(B_\lambda(0)\vec{u},\vec{u})_H$ on $H$,
 and for $\ell=1$ suppose that $m_\lambda$
 %take values $m_{0}$ and $m_{0}+ n_{0}$
% as $\lambda\in{\bf D}^1=[-1,1]$ varies in both sides of $0$ and is close to $0$.
  takes, respectively, values $\mu_{0}$ and $\mu_{0}+\nu_{0}$
 as $\lambda\in{\bf D}^1=[-1,1]$ varies in
 two deleted half neighborhoods  of $0$.
 Then $(0, 0)\in{\bf D}^1\times X_{k,p}$ is a bifurcation point of (\ref{e:Jia8})
  and  one of the following alternatives occurs:
\begin{enumerate}
\item[\rm (i)] $(0, 0)\in{\bf D}^1\times X_{k,p}$ is not an isolated solution of (\ref{e:Jia8}) in  $\{0\}\times X_{k,p}$;

\item[\rm (ii)]  for every $\lambda\in{\bf D}^1$ near $0$ there is a non-zero solution $\vec{u}_\lambda$ of (\ref{e:Jia8}) converging to $0$ as $\lambda\to 0$;

\item[\rm (iii)] there is an one-sided  neighborhood $\Lambda$ of $0$ in ${\bf D}^1$
such that for any $\lambda\in\Lambda\setminus\{0\}$, (\ref{e:Jia8}) has at least two nontrivial solutions converging to $0$ as $\lambda\to 0$.
\end{enumerate}
  \end{theorem}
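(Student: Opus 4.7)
The plan is to reduce Theorem~\ref{th:Jia1} to an abstract bifurcation theorem from \cite[Section~6]{Lu8} by using the functional-analytic setup of Propositions~\ref{prop:Jia1} and~\ref{prop:Jia2}. Since ${\bf F}\in C^{k-m+6}$, applying Proposition~\ref{prop:Jia1} with $r=3$ gives $\mathcal{F}_\lambda\in C^3(X_{k,p})$ and that $(\lambda,\vec{u})\mapsto A_\lambda(\vec{u})$ and $(\lambda,\vec{u})\mapsto B_\lambda(\vec{u})|_{X_{k,p}}$ are of class $C^3$. In particular, critical points of $\mathcal{F}_\lambda$ are exactly zeros of $A_\lambda\in C^3(X_{k,p},X_{k,p})$, with $A'_\lambda(0)=B_\lambda(0)|_{X_{k,p}}$ representing the Hessian via $(B_\lambda(0)\vec{v},\vec{w})_H=d^2\mathcal{F}_\lambda(0)[\vec{v},\vec{w}]$.

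For the first (necessary) direction I would argue by contrapositive: assume $\nu_\lambda=0$, that is $\ker B_\lambda(0)=\{0\}$ in $H$. Then condition (II) at $\vec{u}=0$ together with Proposition~\ref{prop:Jia2}(i) places us in the first alternative there, so $\sigma(B_\lambda(0)|_{X_{k,p}})$ is bounded away from the imaginary axis; in particular $0\notin\sigma(B_\lambda(0)|_{X_{k,p}})$. Hence $A'_\lambda(0)\in\mathscr{L}(X_{k,p})$ is a Banach space isomorphism, and the classical implicit function theorem applied to the $C^3$ equation $A(\mu,\vec{u})=0$ near $(\mu,\vec{u})=(\lambda,0)$ produces a unique local branch $\vec{u}(\mu)\equiv 0$, contradicting the bifurcation hypothesis.

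For the second (sufficient) direction I would invoke a parameterized bifurcation theorem of Rabinowitz--Morse type from \cite[Section~6]{Lu8} (the natural analogue of Theorem~\ref{th:BifE.7}--Theorem~\ref{th:BifE.5} but for operators defined on the dense Banach subspace $X_{k,p}$ of $H$). Its hypotheses must now be checked from what we have: (a) the $C^3$ smoothness of $\mathcal{F}_\lambda$ and of $(\lambda,\vec{u})\mapsto A_\lambda(\vec{u})$ on $X_{k,p}$ is provided by Proposition~\ref{prop:Jia1}; (b) Proposition~\ref{prop:Jia2}(ii) ensures $\ker B_0(0)\subset X_{k,p}$ and $\dim H^-_{0}<\infty$, so the Morse index $m_\lambda$ and nullity $n_\lambda$ computed for $(B_\lambda(0)\cdot,\cdot)_H$ on $H$ coincide with those relevant at the Banach level, and the corresponding spectral projections map $H$ into $X_{k,p}$; (c) Proposition~\ref{prop:Jia2}(i) gives the spectral-gap property which, together with the $C^3$ dependence on $\lambda$, allows a parameterized Lyapunov--Schmidt reduction to a finite-dimensional $C^2$ family on $\ker B_0(0)$. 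The hypothesized jump of $m_\lambda$ by $\nu_0$ as $\lambda$ crosses $0$ in ${\bf D}^1=[-1,1]$ then forces a change of critical groups of the reduced functional at $0$, which is the standard input for the trichotomy (i)--(iii).

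The main obstacle is step (c): verifying carefully that Proposition~\ref{prop:Jia2}, applied with parameter $\lambda$, yields a spectral gap which is \emph{uniform} in a neighbourhood of $\lambda=0$ and that the associated finite-rank spectral projections depend continuously (in $\mathscr{L}(X_{k,p})$) on $\lambda$. Once this is in place, the parameterized splitting theorem of \cite[Section~6]{Lu8} applies and translates the Morse-index jump into the stated alternatives, with the contrapositive of the first part ruling out a trivial continuation and thus forcing bifurcation at $\lambda=0$.
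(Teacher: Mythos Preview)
Your approach is essentially the same as the paper's: both parts are reduced to \cite[Theorem~6.1]{Lu8} after using Proposition~\ref{prop:Jia1} (with $r=3$, since ${\bf F}$ is $C^{k-m+6}$) to obtain the $C^3$ regularity of $\mathcal{F}$ and $A$, and Proposition~\ref{prop:Jia2} at $\vec{u}=0$ to supply the spectral conditions and the identification $m_\lambda=\mu_\lambda$, $n_\lambda=\nu_\lambda$. Your implicit-function-theorem argument for the necessary direction is exactly what the paper notes parenthetically.

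One remark: the ``main obstacle'' you flag in step~(c) --- uniformity of the spectral gap and continuity of the spectral projections in $\lambda$ --- is not something you need to establish separately here. These issues are absorbed into the hypotheses and proof of \cite[Theorem~6.1]{Lu8} itself; once you have verified the pointwise conditions (1)--(3) and (a)--(e) there (which is what Propositions~\ref{prop:Jia1} and~\ref{prop:Jia2} give you), the abstract theorem delivers the trichotomy directly. So the proof is shorter than you anticipate: check the list of hypotheses of \cite[Theorem~6.1]{Lu8} and cite it, rather than redoing the parameterized Lyapunov--Schmidt reduction by hand.
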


\begin{proof}
\noindent{\bf Step 1} ({\it Prove the first claim}).
Let $\mathcal{F}$, $A$ and $B$ be defined as in (\ref{e:Jia1}), (\ref{e:Jia2}) and (\ref{e:Jia4}) with $Z={\bf D}^\ell$, respectively.
By Proposition~\ref{prop:Jia1}, both $\mathcal{F}:{\bf D}^\ell\times X_{k,p}\to\R$
and ${\bf D}^\ell\times X_{k,p}\ni (\lambda, \vec{u})\to A(\lambda,\vec{u})\in X_{k,p}$ are $C^3$.
These and (\ref{e:Jia3}) show that the conditions (1)-(3) in
\cite[Theorem~6.1]{Lu8} and (sd') in \cite[Theorem~B.8]{Lu8} are satisfied.
By Proposition~\ref{prop:Jia2}, the first condition of (b) in \cite[Theorem~6.1]{Lu8} also holds. Hence the desired claim may follow from the second part of \cite[Theorem~6.1]{Lu8}.
(Actually, the claim can directly derived from the implicit function theorem since $A$ is $C^3$.)

\vspace{4pt}\noindent
\noindent{\bf Step 2} ({\it Prove the second claim}). By (ii) of Proposition~\ref{prop:Jia2},
$m_\lambda=\mu_\lambda$ and $n_\lambda=\nu_\lambda$, where $\mu_{\lambda}$ and $\nu_{\lambda}$ are the Morse index and the nullity of  $\mathcal{F}_{\lambda}$
at $0\in X_{k,p}$ (in the sense of \cite[Appendix~B]{Lu8}), respectively.
These and the proofs in Step 1 show that
the conditions (1)-(3) and (a)-(e) in \cite[Theorem~6.1]{Lu8} are satisfied.
Then the first part of \cite[Theorem~6.1]{Lu8} yields the expected conclusions.
\end{proof}

From \cite[Theorem~6.6]{Lu8} we immediately obtain:

\begin{theorem}\label{th:Jia2}
Under the assumptions (I)-(III) (with $\ell=1$) and ``converse part" in Theorem~\ref{th:Jia1},
let $G$ be a compact Lie group acting on $H$ orthogonally,
which induces a $C^1$ isometric action on $X_{k,p}$. Suppose that each $\mathcal{F}_\lambda$ is $G$-invariant and that
 $A_\lambda, B_\lambda$  are equivariant, and that
$H^0_0:={\rm Ker}({B}_{0}(0))$ only intersects at zero with the fixed point set $H^G$.
Then one of the following alternatives occurs:
\begin{enumerate}
\item[\rm (i)] $0\in X_{k,p}$ is not an isolated critical point of $\mathcal{F}_{0}$;
\item[\rm (ii)] there exist left and right  neighborhoods $\Lambda^-$ and $\Lambda^+$ of $0$ in ${\bf D}^1$
and integers $n^+, n^-\ge 0$, such that $n^++n^-\ge \ell(SH^0_{0})$
and for $\lambda\in\Lambda^-\setminus\{0\}$ (resp. $\lambda\in\Lambda^+\setminus\{0\}$),
$\mathcal{F}_\lambda$ has at least $n^-$ (resp. $n^+$) distinct critical
$G$-orbits different from $0$, which converge to
 $0$ as $\lambda\to 0$.
 \end{enumerate}
In particular,  $(0, 0)\in \Lambda\times X$ is a bifurcation point of (\ref{e:Jia8}).
Here $\ell(SH^0_{0})=\dim H^0_{0}$ (resp. $\frac{1}{2}\dim H^0_{0}$)
if  $G$ is equal to $\mathbb{Z}_2=\{{\rm id}, -{\rm id}\}$ (resp. $S^1$).
  \end{theorem}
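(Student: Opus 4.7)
The plan is to apply the abstract equivariant bifurcation theorem \cite[Theorem~6.6]{Lu8} to the family $(\mathcal{F}_\lambda)_{\lambda\in{\bf D}^1}$ on the Banach space $X_{k,p}$, densely embedded in the Hilbert space $H=W^{m,2}_0(\Omega,\mathbb{R}^N)$. The analytic hypotheses of that abstract theorem were in fact already verified in the course of proving Theorem~\ref{th:Jia1}: Proposition~\ref{prop:Jia1} yields the $C^3$-regularity of both $\mathcal{F}:{\bf D}^1\times X_{k,p}\to\R$ and the gradient map $A$, together with the identities $d\mathcal{F}_\lambda(\vec{u})[\vec{v}]=(A_\lambda(\vec{u}),\vec{v})_H$ and $A'_\lambda(\vec{u})=B_\lambda(\vec{u})|_{X_{k,p}}$ so that the self-adjoint operator $B_\lambda(\vec{u})$ on $H$ restricts to the Fréchet derivative of $A_\lambda$ on $X_{k,p}$. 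Proposition~\ref{prop:Jia2} then supplies the spectral gap of $B_0(0)|_{X_{k,p}^{\mathbb{C}}}$ away from the imaginary axis (apart from the finite-dimensional kernel), the finite-dimensionality of the negative space, the inclusion $H^0_0\subset X_{k,p}$, and the fact that Morse indices and nullities computed in $X_{k,p}$ coincide with those of the quadratic form $(B_\lambda(0)\vec{u},\vec{u})_H$ on $H$. Combined with the ``converse part'' jump condition on $m_\lambda$ across $\lambda=0$, these verify conditions (1)--(3), (a)--(e) and (sd') required by the abstract machinery of Section~6 of \cite{Lu8}.

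Next I would verify the $G$-equivariant structure. The $G$-invariance of each $\mathcal{F}_\lambda$ on $X_{k,p}$ together with the orthogonality of the $G$-action on $H$ yields, via the Riesz identification in (\ref{e:Jia3}) and the uniqueness of the gradient, the equivariance of $A_\lambda$; differentiating produces the equivariance of $B_\lambda$. In particular $B_0(0)$ commutes with every $g\in G$, so $H^0_0=\ker B_0(0)$ is a $G$-invariant subspace of $H$. By Proposition~\ref{prop:Jia2}(ii) one has $H^0_0\subset X_{k,p}$, and the hypothesis $H^0_0\cap H^G=\{0\}$ is exactly the assertion that the induced $G$-action on the unit sphere $SH^0_0$ is fixed-point free, so the equivariant index $\ell(SH^0_0)$ used in \cite[Theorem~6.6]{Lu8} is well defined and strictly positive.

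With these hypotheses in hand the conclusion of \cite[Theorem~6.6]{Lu8} translates verbatim into the three alternatives (i)--(iii) stated in the theorem, together with the lower bound $n^++n^-\ge\ell(SH^0_0)$. Since $\ell(SH^0_0)\ge 1$, at least one of the alternatives (ii) or (iii) must hold whenever (i) fails, whence $(0,0)\in\Lambda\times X_{k,p}$ is a bifurcation point of (\ref{e:Jia8}). The evaluation of $\ell(SH^0_0)$ in the two specified cases is then standard: for $G=\mathbb{Z}_2=\{{\rm id},-{\rm id}\}$ the action on $H^0_0$ is forced to be antipodal (otherwise it would fix a nonzero vector), so the Krasnosel'ski\v{\i} genus equals $\dim H^0_0$; for $G=S^1$ the freeness of the action on $SH^0_0$ gives the index $\tfrac{1}{2}\dim H^0_0$.

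The main obstacle is purely bookkeeping: one must check that the same kernel space $H^0_0$ simultaneously serves as the ``bifurcation space'' for the abstract theorem (formulated on the pair $X_{k,p}\hookrightarrow H$) and carries the $G$-action for the symmetry part of the conclusion. This compatibility is precisely delivered by $H^0_0\subset X_{k,p}$ in Proposition~\ref{prop:Jia2}(ii) together with the orthogonality/isometry of the $G$-action on $H$ and on $X_{k,p}$, so that the two restrictions of the action to $H^0_0$ agree and $\ell(SH^0_0)$ is intrinsic. No further work is needed; the theorem then follows as a direct application of \cite[Theorem~6.6]{Lu8}.
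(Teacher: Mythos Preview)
Your approach is correct and coincides with the paper's: the paper simply states ``From \cite[Theorem~6.6]{Lu8} we immediately obtain'' Theorem~\ref{th:Jia2}, relying on the analytic hypotheses already verified in the proof of Theorem~\ref{th:Jia1}. Two minor slips in your write-up: the present theorem has only \emph{two} alternatives (i)--(ii), not three (you seem to have mixed in the structure of Theorem~\ref{th:Jia1}); and the equivariance of $A_\lambda, B_\lambda$ is an explicit hypothesis here, so your paragraph deriving it from the $G$-invariance of $\mathcal{F}_\lambda$ is superfluous (though not wrong).
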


When the function ${\bf F}$ in Theorem~\ref{th:Jia1} linearly depends
on the parameter $\lambda$, its smoothness can be weakened.

\begin{theorem}\label{th:BifE.9.2}
Let $k> m+\frac{n}{p}$ and $\Omega\subset\R^n$ be as in Proposition~\ref{prop:BifE.9.1}.
Let  the functions
   \begin{eqnarray*}
F:\overline\Omega\times\prod^m_{k=0}\mathbb{R}^{N\times M_0(k)}\to \R\quad\hbox{and}\quad
K:\overline\Omega\times\prod^{m-1}_{k=0}\mathbb{R}^{N\times M_0(k)}\to \R
\end{eqnarray*}
    be of class $C^{k-m+4}$,  and let $\vec{u}_0\in C^k(\overline\Omega,\mathbb{R}^N)\cap W^{2,m}_0(\Omega,\mathbb{R}^N)$
  be a common critical point of functionals on $X_{k,p}$  given by
 \begin{eqnarray}\label{e:BifE.8.1}
&&\mathscr{L}_1(\vec{u})=\int_\Omega F(x,\vec{u},D\vec{u},\cdots,D^m\vec{u})dx,\quad\vec{u}\in X_{k,p},\\
&&\mathscr{L}_2(\vec{u})=\int_\Omega K(x,\vec{u},D\vec{u},\cdots,D^{m-1}\vec{u})dx,\quad\vec{u}\in X_{k,p}.\label{e:BifE.8.2}
\end{eqnarray}
Suppose also:
\begin{enumerate}
\item[\rm (a)]  there exists some $c>0$ such that for all $x\in\overline{\Omega}$
and for all $\eta=(\eta^{i}_{\alpha})\in\R^{N\times M_0(m)}$,
  \begin{eqnarray}\label{e:BifE.8.3}
\sum^N_{i,j=1}\sum_{|\alpha|=|\beta|=m}F^{ij}_{\alpha\beta}(x, \vec{u}_0(x),\cdots, D^m \vec{u}_0(x))\eta^i_\alpha\eta^j_\beta\ge
c\sum^N_{i=1}\sum_{|\alpha|= m}(\eta^i_\alpha)^2;
\end{eqnarray}
\item[\rm (b)]  either
\begin{eqnarray*}
 \sum^N_{i,j=1}\sum_{|\alpha|,|\beta|\le m-1}
\int_\Omega K^{ij}_{\alpha\beta}(x, \vec{u}_0(x),\cdots, D^{m-1} \vec{u}_0(x))D^\beta v^j(x)D^\alpha v^i(x)dx\ge 0,\,\forall v\in H,
 \end{eqnarray*}
or
\begin{eqnarray*}
 \sum^N_{i,j=1}\sum_{|\alpha|,|\beta|\le m-1}
\int_\Omega K^{ij}_{\alpha\beta}(x, \vec{u}_0(x),\cdots, D^{m-1} \vec{u}_0(x))D^\beta v^j(x)D^\alpha v^i(x)dx\le 0,\,\forall v\in H;
 \end{eqnarray*}
\item[\rm (c)]   $\lambda^\ast\in\mathbb{R}$ is an isolated eigenvalue of
the linear eigenvalue problem  in $H$,
 \begin{eqnarray}\label{e:BifE.3*}
 &&\sum^N_{j=1}\sum_{|\alpha|,|\beta|\le m}
(-1)^{|\alpha|}D^\alpha\bigl[F^{ij}_{\alpha\beta}(x, \vec{u}_0(x),\cdots, D^m \vec{u}_0(x))D^\beta v^j\bigr]
\nonumber\\
&&=\lambda\sum^N_{j=1}\sum_{|\alpha|,|\beta|\le m-1}
(-1)^{|\alpha|}D^\alpha\bigl[K^{ij}_{\alpha\beta}(x, \vec{u}_0(x),\cdots, D^{m-1}\vec{u}_0(x))D^\beta v^j\bigr]\nonumber\\
&&\hspace{30mm}i=1,\cdots,N.
 \end{eqnarray}
\end{enumerate}
 Then $(\lambda^\ast, \vec{u}_0)\in\mathbb{R}\times X_{k,p}$ is a bifurcation point of
 \begin{eqnarray}\label{e:BifE.8.4}
d\mathscr{L}_1(\vec{u})=\lambda d\mathscr{L}_2(\vec{u}),\quad(\lambda,\vec{u})\in\mathbb{R}\times X_{k,p},
 \end{eqnarray}
  and  one of the following alternatives occurs:
\begin{enumerate}
\item[{\rm (i)}] $(\lambda^\ast, \vec{u}_0)$ is not an isolated solution of (\ref{e:BifE.8.4}) in
 $\{\lambda^\ast\}\times X_{k,p}$;

\item[\rm (ii)]  for every $\lambda\in\mathbb{R}$ near $\lambda^\ast$ there is a nontrivial solution $\vec{u}_\lambda$ of (\ref{e:BifE.8.4}) converging to $\vec{u}_0$ as $\lambda\to\lambda^\ast$;

\item[\rm (iii)] there is an one-sided  neighborhood $\Lambda$ of $\lambda^\ast$ such that
for any $\lambda\in\Lambda\setminus\{\lambda^\ast\}$,
(\ref{e:BifE.8.4}) has at least two nontrivial solutions converging to
$\vec{u}_0$ as $\lambda\to\lambda^\ast$.
\end{enumerate}
\end{theorem}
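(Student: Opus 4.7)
The strategy is to recast the problem as an abstract bifurcation problem in the framework of \cite[Section~6]{Lu8}, following the pattern of Theorem~\ref{th:Jia1} above but exploiting the fact that $\lambda$ appears affinely, so the full $C^{k-m+6}$ smoothness required there is replaced by the weaker $C^{k-m+4}$ hypothesis. First I would translate the base point: set $\widehat{\mathscr{L}}_j(\vec{v})=\mathscr{L}_j(\vec{v}+\vec{u}_0)$, so that $\vec{v}=0$ is a common critical point of $\widehat{\mathscr{L}}_1,\widehat{\mathscr{L}}_2$, and define
$$
\mathcal{F}(\lambda,\vec{v})=\widehat{\mathscr{L}}_1(\vec{v})-\lambda\,\widehat{\mathscr{L}}_2(\vec{v}),\qquad (\lambda,\vec{v})\in\R\times X_{k,p}.
$$
The corresponding integrand ${\bf F}$ is $C^{k-m+4}$ in $(x,\xi)$ and $C^\infty$ (affine) in $\lambda$, and the same is true for each ${\bf F}^i_\alpha$ after one differentiation. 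Proposition~\ref{prop:Jia1} applied with $r=1$ then gives that $\mathcal{F}$ and the gradient map $A:\R\times X_{k,p}\to X_{k,p}$ are $C^1$, while (\ref{e:Jia3})--(\ref{e:Jia4}) produce the Hessian operator $B_\lambda(\vec{v})=A'_\lambda(\vec{v})\in\mathscr{L}(X_{k,p})$ depending continuously on $(\lambda,\vec{v})$.

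Next I would extract the spectral picture at $(\lambda^\ast,0)$. At $\vec{v}=0$ the operator $B_\lambda(0)|_{X_{k,p}}$ is precisely the differential operator appearing in the eigenvalue problem~(\ref{e:BifE.3*}), and hypothesis (a) is exactly the ellipticity condition~(\ref{e:Jia5}). Hence Proposition~\ref{prop:Jia2} yields that $H^0_{\lambda^\ast}:=\ker B_{\lambda^\ast}(0)$ and the negative space $H^-_{\lambda^\ast}$ are finite-dimensional and contained in $X_{k,p}$, and that $\sigma(B_{\lambda^\ast}(0)|_{X_{k,p}})\setminus\{0\}$ is bounded away from the imaginary axis. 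The isolated-eigenvalue hypothesis (c) forces $\ker B_\lambda(0)=\{0\}$ for $\lambda$ near but distinct from $\lambda^\ast$, so $0\in X_{k,p}$ is an isolated critical point of $\mathcal{F}_\lambda$ for such $\lambda$. Reinterpreting (\ref{e:BifE.3*}) as a symmetric eigenvalue problem in $H$ whose ``mass'' operator is the Hessian of $\widehat{\mathscr{L}}_2$ at~$0$, hypothesis (b) says this mass operator is semi-definite. A standard one-parameter perturbation argument for self-adjoint pencils then shows that the Morse index $m_\lambda$ of the quadratic form $(B_\lambda(0)\vec{v},\vec{v})_H$ is locally constant on each side of $\lambda^\ast$ and jumps by exactly $\dim H^0_{\lambda^\ast}$ as $\lambda$ crosses $\lambda^\ast$, with the direction of the jump dictated by the sign chosen in (b).

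Finally, I would invoke the same bifurcation theorem of \cite[Section~6]{Lu8} that underlies Theorem~\ref{th:Jia1} (specifically the corollary whose trichotomy matches (i)--(iii), i.e.\ the $X_{k,p}$-$H$ version of \cite[Corollary~4.3]{Lu8} used to prove Theorem~\ref{th:BifE.7}). Conditions (1)--(3) and (sd$'$) there follow from Proposition~\ref{prop:Jia1}, the spectral condition from Proposition~\ref{prop:Jia2}, and the Morse-index jump just established supplies the remaining ``(e)''-type hypothesis. The theorem then yields the bifurcation and the three alternatives. The main obstacle will be the Morse-index bookkeeping in the mixed $H$/$X_{k,p}$ framework: the Hessian quadratic forms are naturally defined on~$H$, while the bifurcation is posed in~$X_{k,p}$, and one must use the regularity conclusions of Proposition~\ref{prop:Jia2} (that $H^0_{\lambda^\ast}$ and $H^-_{\lambda^\ast}$ lie in $X_{k,p}$) to pass the jump information and the spectral gap between the two spaces without loss; once this transfer is set up, the proof runs in close parallel to that of Theorem~\ref{th:Jia1}.
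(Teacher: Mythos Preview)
Your overall plan is close to the paper's, but there is a real smoothness gap in the way you propose to execute it, and you miss the abstract tool that the paper actually invokes.

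First, the smoothness. Applying Proposition~\ref{prop:Jia1} with $r=1$ to the combined integrand ${\bf F}(x,\xi;\lambda)=F-\lambda K$ only yields $A\in C^1$, because the ${\bf F}^i_\alpha$ are merely $C^{k-m+3}$ and Proposition~\ref{prop:Jia1} needs $C^{k-m+2r}$ on the ${\bf F}^i_\alpha$ to output $A\in C^r$. That is not enough for the theorems of \cite[\S6]{Lu8}: \cite[Hypothesis~6.3]{Lu8}, which underlies the result you want, requires the gradient maps to be $C^3$ and the Hessian maps $C^2$ (this is exactly why Theorem~\ref{th:Jia1} needs $C^{k-m+6}$). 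The whole point of the weakened hypothesis $C^{k-m+4}$ is that $\lambda$ enters \emph{affinely}, but Proposition~\ref{prop:Jia1} does not exploit that. The paper instead treats $\mathscr{L}_1$ and $\mathscr{L}_2$ separately (no $\lambda$) and applies Corollary~\ref{cor:babySmoothness} directly: since $F^i_\alpha,K^i_\alpha\in C^{(k-m)+3}$, each $A_j:X_{k,p}\to X_{k,p}$ is $C^3$ and each $B_j$ is $C^2$. This is the step that justifies the reduced smoothness assumption.

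Second, the abstract theorem. Rather than reducing to \cite[Theorem~6.1]{Lu8} and arguing the Morse-index jump by hand, the paper applies \cite[Corollary~6.4]{Lu8}, which is set up precisely for the two-functional situation $d\mathscr{L}_1=\lambda\,d\mathscr{L}_2$. One verifies \cite[Hypothesis~6.3]{Lu8} using the regularity above together with Proposition~\ref{prop:Jia2} for the spectral conditions (i)--(ii), and then checks the specific conditions (a)--(b) of \cite[Corollary~6.4]{Lu8}: writing $B_1(\vec{u}_0)=P_1(\vec{u}_0)+Q_1(\vec{u}_0)$ as in (\ref{e:6.6})--(\ref{e:6.7}), hypothesis~(a) makes $P_1(\vec{u}_0)$ positive definite and $Q_1(\vec{u}_0),B_2(\vec{u}_0)$ are compact, while hypothesis~(b) is exactly the semi-definiteness of $B_2(\vec{u}_0)$. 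The isolated-eigenvalue condition~(c) is consumed directly. No separate ``self-adjoint pencil'' perturbation argument is needed, and in fact your sketch of that argument is not quite safe: semi-definiteness of $B_2(\vec{u}_0)$ alone does not force the full jump $\dim H^0_{\lambda^\ast}$ without further care (e.g.\ if $B_2(\vec{u}_0)$ has kernel intersecting the eigenspace). The corollary handles this for you.
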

\begin{proof}
We shall use \cite[Corollary~6.4]{Lu8} to prove this theorem.
We need to check that \cite[Hypothesis~6.3]{Lu8} is satisfied with
the Banach space $X=X_{k,p}$, the Hilbert space $H:=W^{m,2}_0(\Omega,\mathbb{R}^N)$
and functionals $\mathscr{L}=\mathscr{L}_1$ and $\widehat{\mathscr{L}}=\mathscr{L}_2$.

Since the functions $F$ and $K$ are $C^{k-m+4}$, and
  $X_{k,p}\hookrightarrow C^m(\overline{\Omega}, \mathbb{R}^N)$ is continuous,
 as proved below \cite[Theorem~4.21]{Lu7} (see also the proof of Proposition~\ref{prop:Jia1})
we can use Corollary~\ref{cor:babySmoothness} to prove: \\
   {\bf I)} The functionals
  $\mathscr{L}_1$ and $\mathscr{L}_2$ are $C^{4}$.\\
   {\bf II)} The operators $A_1, A_2: X_{k,p}\to X_{k,p}$ given by
 \begin{eqnarray}\label{e:BifE.8.5}
 &&(A_1(\vec{u}))^i=\triangle^{-m}\sum_{|\alpha|\le m}(-1)^{m+|\alpha|}D^\alpha(F^i_\alpha(\cdot,
\vec{u}(\cdot),\cdots, D^m \vec{u}(\cdot))),\\
&&(A_2(\vec{u}))^i=\triangle^{-m}\sum_{|\alpha|\le m-1}(-1)^{m+|\alpha|}D^\alpha(K^i_\alpha(\cdot,
\vec{u}(\cdot),\cdots, D^{m-1} \vec{u}(\cdot)))\label{e:BifE.8.6}
 \end{eqnarray}
 for $i=1,\cdots,N$,
are of class $C^{3}$,  and  maps $B_1, B_2: X_{k,p}\to\mathscr{L}_s(X_{k,p})$ given by \allowdisplaybreaks
\begin{eqnarray}\label{e:BifE.8.7}
&&(B_1(\vec{u})\vec{v})^i\notag\\&&= \triangle^{-m}\sum^N_{j=1}\sum_{\scriptsize\begin{array}{ll}
   &|\alpha|\le m,\\
   &|\beta|\le m
   \end{array}}(-1)^{m+|\alpha|}
D^\alpha(F^{ij}_{\alpha\beta}(\cdot, \vec{u}(\cdot),\cdots, D^m \vec{u}(\cdot))D^\beta v^j),\\
&&(B_2(\vec{u})\vec{v})^i\notag\\&&= \triangle^{-m}\sum^N_{j=1}\sum_{\scriptsize\begin{array}{ll}
   &|\alpha|\le m-1,\\
   &|\beta|\le m-1
   \end{array}}(-1)^{m+|\alpha|}
D^\alpha(K^{ij}_{\alpha\beta}(\cdot, \vec{u}(\cdot),\cdots, D^{m-1} \vec{u}(\cdot))D^\beta v^j)\nonumber\\
\label{e:BifE.8.8}
\end{eqnarray}
 for $i=1,\cdots,N$, are of class $C^{2}$.

(\ref{e:BifE.8.7}) and (\ref{e:BifE.8.8}) show that
$B_1(\vec{u})$ and $B_2(\vec{u})$ can be extended into
into operators in $\mathscr{L}(H)$, also denoted by
$B_1(\vec{u})$ and $B_2(\vec{u})$.  (It was also proved  in \cite[Claim~4.18]{Lu7}
that $B_1$ and $B_2$ as maps from $X_{k,p}$ to $\mathscr{L}_s(H)$
are uniformly continuous on any bounded subsets of $X_{k,p}$
though we do not need this fact actually.)
Note that the linear eigenvalue problem (\ref{e:BifE.3*}) with $v\in H$ is equivalent to
\begin{eqnarray}\label{e:BifE.8.11}
B_1(\vec{u}_0)\vec{v}=\lambda B_2(\vec{u}_0)\vec{v},\quad(\lambda,\vec{v})\in\mathbb{R}\times H.
\end{eqnarray}
By the condition {\bf (c)}, $\lambda^\ast$ is an isolated eigenvalue of (\ref{e:BifE.8.11}).
Since $X_{k,p}\subset C^m(\overline{\Omega}, \mathbb{R}^N)$ it is easily checked that
 \begin{eqnarray*}
 &&d\mathscr{L}_1(\vec{u})[\vec{v}]=(A_1(\vec{u}), \vec{v})_H,\quad dA_1(\vec{u})[\vec{v}]=B_1(\vec{u})\vec{v},\quad\forall
 \vec{u},\vec{v}\in X_{k,p},\\
 &&d\mathscr{L}_2(\vec{u})[\vec{v}]=(A_2(\vec{u}), \vec{v})_H,\quad dA_2(\vec{u})[\vec{v}]=B_2(\vec{u})\vec{v},\quad\forall
 \vec{u},\vec{v}\in X_{k,p}.
 \end{eqnarray*}
These show that $\mathscr{L}_1, \mathscr{L}_2:X_{k,p}\to\mathbb{R}$
 satisfy \cite[Hypothesis~6.3]{Lu8}
 for some ball $B(\vec{u},r)\subset X_{k,p}$ centred at any given $\vec{u}\in X_{k,p}$ except properties (i)-(ii) in \cite[Hypothesis~6.3]{Lu8}.

Since $K^{ij}_{\alpha\beta}=0$ for any $|\alpha|=|\beta|$ and $i,j=1,\cdots,N$, each
$$
{\bf F}^{ij}_{\alpha\beta}(\cdot, \vec{u}_0(\cdot),\cdots, D^m \vec{u}_0(\cdot);\lambda)$$ $$
:=F^{ij}_{\alpha\beta}(\cdot, \vec{u}_0(\cdot),\cdots, D^m \vec{u}_0(\cdot))
-\lambda K^{ij}_{\alpha\beta}(\cdot, \vec{u}_0(\cdot),\cdots, D^{m-1} \vec{u}_0(\cdot))
$$
is equal to $F^{ij}_{\alpha\beta}(\cdot, \vec{u}_0(\cdot),\cdots, D^m \vec{u}_0(\cdot))$.
By (\ref{e:BifE.8.3}) we get
 \begin{eqnarray*}
\sum^N_{i,j=1}\sum_{|\alpha|=|\beta|=m}{\bf F}^{ij}_{\alpha\beta}(x, \vec{u}_0(x),\cdots, D^m \vec{u}_0(x);\lambda)\eta^i_\alpha\eta^j_\beta\ge
c\sum^N_{i=1}\sum_{|\alpha|= m}(\eta^i_\alpha)^2
\end{eqnarray*}
for all $x\in\overline{\Omega}$ and for all
$\eta=(\eta^{i}_{\alpha})\in\R^{N\times M_0(m)}$.
From Proposition~\ref{prop:Jia2} we deduce that the properties (i)-(ii) in \cite[Hypothesis~6.3]{Lu8} are satisfied.

For each $\vec{u}\in X_{k,p}$, we may write $B_1(\vec{u})=P_1(\vec{u})+ Q_1(\vec{u})$, where $P_1(\vec{u})$ and $Q_1(\vec{u})$ be defined as in
(\ref{e:6.6}) and (\ref{e:6.7}), respectively.
Then  $Q_1(\vec{u})$ and $B_2(\vec{u})$ as operators in $\mathscr{L}_s(H)$ are compact.
By (\ref{e:BifE.8.3}), $P_1(\vec{u}_0)\in \mathscr{L}_s(H)$ is positive definite.
The assumption {\bf (b)} shows that $B_2(\vec{u}_0)\in\mathscr{L}_s(H)$
is either semi-positive or semi-negative. Hence the conditions (a)-(b) in \cite[Corollary~6.4]{Lu8} hold.
\end{proof}

Similarly, by Corollary~6.5 in \cite{Lu8} %\ref{cor:BBH.3}
we may obtain

\begin{theorem}\label{th:BifE.9.3}
The conclusions  of  Theorem~\ref{th:BifE.9.2} also hold true if the conditions
{\bf (a)}--{\bf (b)} in Theorem~\ref{th:BifE.9.2}
are replaced by the following
\begin{enumerate}
\item[\rm (a')]  there exists some $c>0$ such that for all $x\in\overline{\Omega}$
and for all $\eta=(\eta^{i}_{\alpha})\in\R^{N\times M_0(m)}$,
  \begin{eqnarray}\label{e:BifE.8.3*}
\sum^N_{i,j=1}\sum_{|\alpha|\le m, |\beta|\le m}F^{ij}_{\alpha\beta}(x, \vec{u}_0(x),\cdots, D^m \vec{u}_0(x))\eta^i_\alpha\eta^j_\beta\ge
c\sum^N_{i=1}\sum_{|\alpha|\le m}(\eta^i_\alpha)^2.
\end{eqnarray}
\end{enumerate}
\end{theorem}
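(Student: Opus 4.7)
The plan is to mimic the proof of Theorem~\ref{th:BifE.9.2} verbatim up to the verification of the structural hypotheses, and then replace the appeal to \cite[Corollary~6.4]{Lu8} with an appeal to \cite[Corollary~6.5]{Lu8}, whose positive-definiteness assumption on the principal operator is what the strengthened condition (a') is designed to supply.

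First I will retain unchanged the setup $X=X_{k,p}$, $H=W^{m,2}_0(\Omega,\mathbb{R}^N)$, together with the operators $A_1, A_2, B_1, B_2$ defined in (\ref{e:BifE.8.5})--(\ref{e:BifE.8.8}). As in Theorem~\ref{th:BifE.9.2} the hypotheses on $F$ and $K$ ($C^{k-m+4}$ smoothness) combined with the continuous embedding $X_{k,p}\hookrightarrow C^m(\overline{\Omega},\mathbb{R}^N)$ and Corollary~\ref{cor:babySmoothness} yield that $\mathscr{L}_1,\mathscr{L}_2$ are $C^4$, $A_1,A_2$ are $C^3$, and $B_1,B_2$ are $C^2$. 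The gradient identities $d\mathscr{L}_j(\vec{u})[\vec{v}]=(A_j(\vec{u}),\vec{v})_H$ and $dA_j(\vec{u})[\vec{v}]=B_j(\vec{u})\vec{v}$ remain valid, and the linear problem (\ref{e:BifE.3*}) is again equivalent to (\ref{e:BifE.8.11}). These verifications transfer with no change.

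Next I turn to the spectral hypotheses needed by \cite[Corollary~6.5]{Lu8}. The key new input is (a'): it asserts that the full second-order form of $F$ at $\vec{u}_0$ is coercive, not merely its leading part. Writing $B_1(\vec{u}_0)=P_1(\vec{u}_0)+Q_1(\vec{u}_0)$ as in the proof of Theorem~\ref{th:BifE.9.2}, the operator $Q_1(\vec{u}_0)$ is compact on $H$, and (a') combined with the identification in (\ref{e:Jia3}) gives
\begin{equation*}
(B_1(\vec{u}_0)\vec{v},\vec{v})_H = \sum^N_{i,j=1}\sum_{|\alpha|,|\beta|\le m}\int_\Omega F^{ij}_{\alpha\beta}(x, \vec{u}_0(x),\cdots, D^m\vec{u}_0(x)) D^\beta v^j\cdot D^\alpha v^i\,dx \ge c\,\|\vec{v}\|_{m,2}^2
\end{equation*}
for all $\vec{v}\in H$ (after noting that the $L^2$-norms of lower derivatives of $\vec{v}$ are dominated by its $H$-norm, so the bound extends to the relevant form). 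Thus $B_1(\vec{u}_0)\in\mathscr{L}_s(H)$ is positive definite. Through Proposition~\ref{prop:Jia2} (applied to $F-\lambda K$ with $\lambda$ ranging near $\lambda^\ast$), the properties (i)--(ii) in \cite[Hypothesis~6.3]{Lu8} continue to hold, so the hypothesis framework is intact. The operator $B_2(\vec{u}_0)$ is compact on $H$ as before; no semi-definiteness of $B_2(\vec{u}_0)$ is required, which is precisely the improvement we are exploiting.

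Finally, since $\lambda^\ast$ is an isolated eigenvalue of (\ref{e:BifE.8.11}) by (c), and the positive-definiteness of $B_1(\vec{u}_0)$ together with the compactness of $B_2(\vec{u}_0)$ constitutes exactly the spectral hypothesis of \cite[Corollary~6.5]{Lu8}, an application of that corollary produces the bifurcation alternatives (i)--(iii) of Theorem~\ref{th:BifE.9.2}. The main subtlety to check is that (a') genuinely entails the coercivity $(B_1(\vec{u}_0)\vec{v},\vec{v})_H\ge c\|\vec{v}\|_{m,2}^2$ on all of $H$ (rather than only on $X_{k,p}$); this follows by density of $X_{k,p}$ in $H$ combined with the $L^2$-boundedness of the coefficients $F^{ij}_{\alpha\beta}(\cdot,\vec{u}_0(\cdot),\dots)$ (which are in $C^{k-m}(\overline{\Omega})$). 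Once this coercivity extension is in hand, the rest of the argument is mechanical.
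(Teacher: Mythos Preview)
Your proposal is correct and matches the paper's own approach: the paper simply states ``Similarly, by Corollary~6.5 in \cite{Lu8} we may obtain'' Theorem~\ref{th:BifE.9.3}, and you have filled in exactly the details this hint calls for---reusing the setup and verifications from the proof of Theorem~\ref{th:BifE.9.2}, observing that (a') yields positive definiteness of $B_1(\vec{u}_0)$ on $H$ (and, by restricting to top-order $\eta$, also the ellipticity needed for Proposition~\ref{prop:Jia2}), and then invoking \cite[Corollary~6.5]{Lu8} in place of \cite[Corollary~6.4]{Lu8}.
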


By Corollary~6.7 in \cite{Lu8} we immediately obtain the following two results.
\begin{theorem}\label{th:BifE.9.5}
Let the assumptions of one of Theorems~\ref{th:BifE.9.2} and \ref{th:BifE.9.3}
with $\vec{u}_0=0$ be satisfied.
Suppose also that both ${F}(x,\xi)$ and ${K}(x,\xi)$ are even with respect to $\xi$,
and that  $E_{\lambda^\ast}$ denotes the solution space of (\ref{e:BifE.3*})  with $\lambda=\lambda^\ast$. Then  one of the following alternatives occurs:
 \begin{enumerate}
\item[\rm (i)] $(\lambda^\ast, {0})$ is not an isolated solution of (\ref{e:BifE.8.4}) in
 $\{\lambda^\ast\}\times X_{k,p}$.
 \item[\rm (ii)] There exist left and right  neighborhoods $\Lambda^-$ and $\Lambda^+$ of $\lambda_\ast$ in $\mathbb{R}$
and integers $n^+, n^-\ge 0$, such that $n^++n^-\ge\dim E_{\lambda^\ast}$
and for $\lambda\in\Lambda^-\setminus\{\lambda^\ast\}$ (resp. $\lambda\in\Lambda^+\setminus\{\lambda^\ast\}$),
(\ref{e:BifE.8.4}) has at least $n^-$ (resp. $n^+$) distinct pairs of solutions of form $\{\vec{u},-\vec{u}\}$
 different from ${0}$, which converge to
 ${0}$ as $\lambda\to\lambda^\ast$.
\end{enumerate}
\end{theorem}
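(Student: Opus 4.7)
The plan is to reduce the statement to Corollary~6.7 of \cite{Lu8}, exactly as Theorems~\ref{th:BifE.9.2} and~\ref{th:BifE.9.3} were reduced to Corollaries~6.4 and~6.5 of \cite{Lu8}. All of the analytic setup has already been assembled in those two proofs: the Hilbert space $H=W^{m,2}_0(\Omega,\mathbb{R}^N)$, the Banach space $X_{k,p}$, the $C^4$ functionals $\mathscr{L}_1,\mathscr{L}_2$ on $X_{k,p}$, the associated operators $A_1,A_2,B_1,B_2$ of (\ref{e:BifE.8.5})--(\ref{e:BifE.8.8}), the verification of \cite[Hypothesis~6.3]{Lu8} (in particular the positive definiteness of $P_1(\vec{0})$ and the spectral gap for $B_1(\vec{0})$ coming from Proposition~\ref{prop:Jia2}), the compactness of $Q_1(\vec{0})$ and $B_2(\vec{0})$, the semi-definiteness of $B_2(\vec{0})$ (under hypothesis (b)) or the full positivity of $B_1(\vec{0})$ (under (a')), and the isolatedness of $\lambda^\ast$ as an eigenvalue of (\ref{e:BifE.8.11}). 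Thus the only new ingredient to verify is the compatibility with the $\mathbb{Z}_2$-symmetry required by Corollary~6.7 in \cite{Lu8}.

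The second step is therefore to equip $H$ and $X_{k,p}$ with the $\mathbb{Z}_2$-action $[1]\cdot\vec{u}=-\vec{u}$ of (\ref{e:Z2-action.1}), which is orthogonal on $H$ and linear (hence $C^\infty$-isometric) on $X_{k,p}$. Evenness of $F(x,\cdot)$ and $K(x,\cdot)$ in $\xi$ yields at once $\mathscr{L}_i(-\vec{u})=\mathscr{L}_i(\vec{u})$ for $i=1,2$, while differentiating with respect to $\xi$ shows $F^i_\alpha(x,-\xi)=-F^i_\alpha(x,\xi)$, $F^{ij}_{\alpha\beta}(x,-\xi)=F^{ij}_{\alpha\beta}(x,\xi)$ (and analogously for $K$). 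Reading off formulas (\ref{e:BifE.8.5})--(\ref{e:BifE.8.8}) at $\vec{u}_0=\vec{0}$ one obtains $A_i(-\vec{u})=-A_i(\vec{u})$ and $B_i(-\vec{u})=B_i(\vec{u})$, so the pair $(A_1-\lambda A_2, B_1-\lambda B_2)$ is $\mathbb{Z}_2$-equivariant.

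The final step is the fixed-point computation: the fixed set $H^{\mathbb{Z}_2}$ of the involution $\pm\mathrm{id}$ equals $\{\vec{0}\}$, so trivially $E_{\lambda^\ast}\cap H^{\mathbb{Z}_2}=\{0\}$, which is the required nondegeneracy hypothesis of Corollary~6.7 in \cite{Lu8}. Applying that corollary with $G=\mathbb{Z}_2$, so that $\ell(SE_{\lambda^\ast})=\dim E_{\lambda^\ast}$ and the critical $\mathbb{Z}_2$-orbits at nonzero points are the unordered pairs $\{\vec{u},-\vec{u}\}$, yields exactly alternatives (i) and (ii) in the statement. No genuine obstacle is anticipated; the one point meriting care is that the $\mathbb{Z}_2$-action must be checked to preserve the Dirichlet subspace and the continuous embedding $X_{k,p}\hookrightarrow H$, but this is immediate because $\pm\mathrm{id}$ is defined globally on $W^{m,2}(\Omega,\mathbb{R}^N)$ and restricts trivially to every closed linear subspace.
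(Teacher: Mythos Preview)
Your proposal is correct and follows exactly the paper's own route: the paper simply states that Theorems~\ref{th:BifE.9.5} and~\ref{th:BifE.9.6} follow immediately from \cite[Corollary~6.7]{Lu8}, relying on the setup already verified in the proofs of Theorems~\ref{th:BifE.9.2} and~\ref{th:BifE.9.3}. Your additional checks of the $\mathbb{Z}_2$-equivariance of $A_i,B_i$ and the triviality of the fixed-point set $H^{\mathbb{Z}_2}=\{0\}$ are precisely the details needed to invoke that corollary, so nothing is missing.
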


\begin{theorem}\label{th:BifE.9.6}
Let the assumptions of one of Theorems~\ref{th:BifE.9.2} and \ref{th:BifE.9.3}
with $\vec{u}_0=0$ be satisfied, and let $E_{\lambda^\ast}$ be  the solution space  of  (\ref{e:BifE.8.11})
 with $\lambda=\lambda^\ast$. Then we have:
\begin{enumerate}
\item[\rm (I)]  If $\Omega$ is symmetric with respect to the origin,
 both $\mathscr{L}_1$ and $\mathscr{L}_2$ are invariant for the $\mathbb{Z}_2$-action
 in (\ref{e:Z2-action.2}), and (\ref{e:BifE.3*}) with $\lambda=\lambda^\ast$ has no nontrivial solutions
 $\vec{u}\in H=W^{m,2}_0(\Omega,\mathbb{R}^N)$ satisfying $\vec{u}(-x)=\vec{u}(x)\;\forall x\in\Omega$,
 then the conclusions in  Theorem~\ref{th:BifE.9.5},
 after ``pairs of solutions of form $\{\vec{u},-\vec{u}\}$" being changed into
 ``pairs of solutions of form $\{\vec{u}(\cdot), \vec{u}(-\cdot)\}$", still holds.
\item[\rm (II)] Let $\mathbb{S}^1$ act on $\mathbb{R}^n$ by the orthogonal representation, let $\Omega$
be symmetric under the action,  both $\mathscr{L}_1$ and $\mathscr{L}_2$  be invariant for the $\mathbb{S}^1$
action on $H=W^{m,2}_0(\Omega,\mathbb{R}^N)$  in (\ref{e:S^1-action}).
If the fixed point set of the induced $S^1$-action in $E_{\lambda^\ast}$ is $\{0\}$,
 then   the conclusions of  Theorem~\ref{th:BifE.9.5},
 after ``$n^++n^-\ge\dim E_{\lambda^\ast}$" and ``pairs of solutions of form $\{\vec{u},-\vec{u}\}$" being changed into ``critical $\mathbb{S}^1$-orbits" and ``$n^++n^-\ge\frac{1}{2}\dim E_{\lambda^\ast}$", hold.
 \end{enumerate}
\end{theorem}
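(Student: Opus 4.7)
The plan is to proceed in exact analogy with the proof of Theorem~\ref{th:BifE.9.5}, appealing once more to the equivariant splitting/bifurcation result \cite[Corollary~6.7]{Lu8}, but now with the symmetry group $G$ specialised to each of the two actions in question. In the proofs of Theorems~\ref{th:BifE.9.2} and \ref{th:BifE.9.3} all analytic conditions of \cite[Hypothesis~6.3]{Lu8} together with (a)-(b) of \cite[Corollary~6.4]{Lu8} (resp. \cite[Corollary~6.5]{Lu8}) were already verified for the pair $(\mathscr{L}_1,\mathscr{L}_2)$ on the Banach space $X_{k,p}$ and Hilbert space $H=W^{m,2}_0(\Omega,\mathbb{R}^N)$. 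Hence the only work left is to verify the extra equivariance hypotheses of \cite[Corollary~6.7]{Lu8}, namely that $G$ acts by linear isometries on $H$ and by a $C^1$ isometric action on $X_{k,p}$, that $\mathscr{L}_1,\mathscr{L}_2$ are $G$-invariant and $A_1,A_2,B_1,B_2$ are $G$-equivariant, and that $H^G\cap E_{\lambda^\ast}=\{0\}$.

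For Part (I), let $\sigma$ denote the nontrivial element of the $\mathbb{Z}_2$-action in (\ref{e:Z2-action.2}). Since $\Omega$ is symmetric about the origin, the change of variables $x\mapsto -x$ has Jacobian of absolute value $1$ and preserves all $W^{m,2}$- and $W^{k,p}$-norms. Hence $\sigma$ is an orthogonal linear isometry of $H$ and restricts to a $C^\infty$ linear isometry of $X_{k,p}$. The invariance of $\mathscr{L}_1,\mathscr{L}_2$ is assumed; differentiating these invariances and using the identifications (\ref{e:BifE.8.5})-(\ref{e:BifE.8.8}) yields the $\sigma$-equivariance of $A_1,A_2$ and of $B_1,B_2$. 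The fixed-point subspace is $H^{\mathbb{Z}_2}=\{\vec{u}\in H\,|\,\vec{u}(-x)=\vec{u}(x)\text{ a.e.}\}$, and the hypothesis that (\ref{e:BifE.3*}) at $\lambda^\ast$ admits no nontrivial even solution translates to $E_{\lambda^\ast}\cap H^{\mathbb{Z}_2}=\{0\}$. Thus \cite[Corollary~6.7]{Lu8} applies with $G=\mathbb{Z}_2$; since the action is free away from $0$ on $E_{\lambda^\ast}$, one has $\ell(SE_{\lambda^\ast})=\dim E_{\lambda^\ast}$, and non-fixed critical points appear in orbits $\{\vec{u}(\cdot),\vec{u}(-\cdot)\}$, which gives Part (I).

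For Part (II), the orthogonal $\mathbb{S}^1$-action on $\mathbb{R}^n$ preserves $\Omega$ and the Lebesgue measure, so the induced action (\ref{e:S^1-action}) is a strongly continuous orthogonal representation on $H$. Because $X_{k,p}\hookrightarrow C^m(\overline\Omega,\mathbb{R}^N)$ and the generator is a Killing vector field with smooth coefficients, composition with the one-parameter subgroup gives a $C^1$ isometric action on $X_{k,p}$. The invariance of $\mathscr{L}_1,\mathscr{L}_2$ together with Proposition~\ref{prop:Jia1} allows differentiation in $\vec{u}$, producing $\mathbb{S}^1$-equivariance of $A_1,A_2$ and $B_1,B_2$. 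The hypothesis $(E_{\lambda^\ast})^{\mathbb{S}^1}=\{0\}$ means the action on $E_{\lambda^\ast}\setminus\{0\}$ is free, and a free $\mathbb{S}^1$-action on a finite-dimensional real space forces an even real dimension with $\ell(SE_{\lambda^\ast})=\tfrac12\dim E_{\lambda^\ast}$. Applying \cite[Corollary~6.7]{Lu8} with $G=\mathbb{S}^1$ then yields the stated alternatives, with the ``pairs of solutions'' in Theorem~\ref{th:BifE.9.5} being replaced by ``critical $\mathbb{S}^1$-orbits'' and the counting lowered by the factor $\tfrac12$.

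The main obstacle I expect is the careful verification that the $\mathbb{S}^1$-action on $X_{k,p}$ really is $C^1$ isometric in the sense demanded by \cite[Corollary~6.7]{Lu8}, and that $A_i,B_i$ are genuinely equivariant. These issues are subtle because the action transforms the higher derivatives $D^\alpha\vec{u}$ through the chain rule involving the orthogonal matrix representing $[t]\in\mathbb{S}^1$, so one must check that the combinatorial identities inherited from the invariance of $\mathscr{L}_i$ (namely $F(g^{-1}x,\ldots)=F(x,\ldots)$ after the induced linear change in $\xi$) survive differentiation and give precisely the equivariance $gA_i(\vec{u})=A_i(g\vec{u})$, $gB_i(\vec{u})g^{-1}=B_i(g\vec{u})$. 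This follows formally from differentiating the identity $\mathscr{L}_i(g\vec{u})=\mathscr{L}_i(\vec{u})$ twice in $\vec{u}$, which is admissible because $\mathscr{L}_i\in C^4$ by Proposition~\ref{prop:Jia1}. Once these equivariances are in hand, the $\mathbb{Z}_2$- and $\mathbb{S}^1$-versions of \cite[Corollary~6.7]{Lu8} deliver the conclusions mechanically.
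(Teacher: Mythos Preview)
Your proposal is correct and follows exactly the paper's approach: the paper states just before Theorems~\ref{th:BifE.9.5} and~\ref{th:BifE.9.6} that both results are obtained immediately by applying \cite[Corollary~6.7]{Lu8}, and you have simply spelled out the verification of the equivariance and fixed-point hypotheses needed to invoke that corollary with $G=\mathbb{Z}_2$ (acting as in (\ref{e:Z2-action.2})) and $G=\mathbb{S}^1$ respectively. Your elaboration of the $C^1$ isometric action and the equivariance of $A_i,B_i$ via differentiation of the invariance identity is sound and more detailed than what the paper writes, but the route is the same.
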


\begin{remark}\label{rem:BifE.9.6+}
{\rm {\bf (i)}. For Theorem~\ref{th:BifE.9.6}(I), if both $\mathscr{L}_1$ and $\mathscr{L}_2$ are invariant for the $\mathbb{Z}_2$-action
 in (\ref{e:Z2-action.2}), and both ${F}(x,\xi)$ and ${K}(x,\xi)$ are
 even with respect to $\xi$,
then $\mathscr{L}_1$ and $\mathscr{L}_2$ are invariant for the product
$\mathbb{Z}_2\times \mathbb{Z}_2$-action given by
(\ref{e:Z2-action.1}) and (\ref{e:Z2-action.2}),
\begin{eqnarray*}
 ([0],[0])\cdot\vec{u}=\vec{u},\quad ([1],[0])\cdot\vec{u}=-\vec{u},\quad ([0],[1])\cdot\vec{u}=\vec{u}(-\cdot),\quad
 ([1],[1])\cdot\vec{u}=-\vec{u}(-\cdot)
\end{eqnarray*}
for all $\vec{u}\in W^{m,2}(\Omega,\mathbb{R}^N)$. Since the $\mathbb{Z}_2$-action given by
(\ref{e:Z2-action.1}) has no nonzero fixed point, so is this product action.
By \cite[Remark~5.14]{Lu8} and \cite[Corollary~6.7]{Lu8}
 the conclusions in  Theorem~\ref{th:BifE.9.5},
 after ``pairs of solutions of form $\{\vec{u},-\vec{u}\}$" being changed into
 `` critical $(\mathbb{Z}_2\times \mathbb{Z}_2)$-orbits", still holds.

\noindent{\bf (ii)}. For Theorem~\ref{th:BifE.9.6}(II), if both $\mathscr{L}_1$ and $\mathscr{L}_2$ are invariant
for the $\mathbb{S}^1$ action on $H=W^{m,2}_0(\Omega,\mathbb{R}^N)$  in (\ref{e:S^1-action}),
and both ${F}(x,\xi)$ and ${K}(x,\xi)$ are even with respect to $\xi$,
then $\mathscr{L}_1$ and $\mathscr{L}_2$ are invariant for the product
$(\mathbb{S}^1\times \mathbb{Z}_2)$-action given by
(\ref{e:S^1-action}) and (\ref{e:Z2-action.1}),
\begin{eqnarray*}
 ([t],[0])\cdot\vec{u}(x)=\vec{u}([t]\cdot x),\quad
 ([t],[1])\cdot\vec{u}=-\vec{u}([t]\cdot x), \quad\forall [t]\in \mathbb{S}^1,\quad x\in\overline{\Omega}.
\end{eqnarray*}
This action has no nonzero fixed point yet.
By \cite[Remark~5.14]{Lu8} and \cite[Corollary~6.7]{Lu8}
 the conclusions of  Theorem~\ref{th:BifE.9.5},
 after ``$n^++n^-\ge\dim E_{\lambda^\ast}$" and ``pairs of solutions of form $\{\vec{u},-\vec{u}\}$" being changed
 into ``critical $(\mathbb{S}^1\times \mathbb{Z}_2)$-orbits" and ``$n^++n^-\ge\frac{1}{2}\dim E_{\lambda^\ast}$", hold.}
\end{remark}

\begin{corollary}\label{cor:BifE.9.7}
 For positive integers  $n>1$ and $k,N$, let real $p\ge 2$ be such that $k>1+\frac{n}{p}$,
 and let $\Omega\subset\R^n$  be a bounded  domain with boundary of class $C^{k-1,1}$.
   Suppose that {\color{black}$C^{k+3}$} functions
$G:\overline{\Omega}\times\mathbb{R}^N\to\mathbb{R}$ and
$A^{ij}_{\mu\nu}=A^{ji}_{\nu\mu}:\overline{\Omega}\times\mathbb{R}^N\to\mathbb{R}$, $i,j=1,\cdots,N$ and $\mu,\nu=1,\cdots,n$,  satisfy  the following conditions:
 \begin{enumerate}
\item[\rm A.1)]    $\nabla_{\xi}G(x,0)=0$;
 \item[\rm A.2)]  there exists $c>0$ such that
for $a.e.\;x\in\Omega$ and all $(\eta_\mu^i)\in\mathbb{R}^{N\times n}$,
\begin{eqnarray*}
\sum^N_{i,j=1}\sum^n_{\mu,\nu=1}A_{\mu\nu}^{ij}(x, 0)\eta^i_\mu\eta^j_\nu\ge
c\sum^N_{i=1}\sum^n_{\mu=1}(\eta^i_\mu)^2;
\end{eqnarray*}
    \item[\rm A.3)] $\lambda^\ast$ is an eigenvalue of
    \begin{eqnarray}\label{e:BifE.8.14.1}
&&\int_\Omega\sum^N_{i,j=1}\sum^n_{\mu,\nu=1}A_{\mu\nu}^{ij}(x,0)D_\mu u^i D_\nu v^j dx-\int_\Omega \sum^N_{i,j=1}
\frac{\partial^2 G}{\partial\xi_i\partial\xi_j}(x,0)u^iv^j dx\nonumber\\
&&=\lambda\int_\Omega \sum^N_{i=1}u^iv^idx,\quad\forall \vec{v}\in W^{1,2}_0(\Omega,\mathbb{R}^N).
\end{eqnarray}
   \end{enumerate}
Then with  the Banach subspace of  $W^{k,p}(\Omega,\mathbb{R}^N)$,  $X_{k,p}:=W^{k,p}(\Omega,\mathbb{R}^N)\cap W^{1,2}_0(\Omega,\mathbb{R}^N)$,
    $(\lambda^\ast, 0)\in\mathbb{R}\times X_{k,p}$ is a bifurcation point of the quasilinear eigenvalue problem
  \begin{eqnarray}\label{e:BifE.8.15}
&&\hspace{-.8cm}-\sum^N_{i=1}\sum^n_{\mu,\nu=1}D_l(A_{\mu\nu}^{ij}(x,\vec{u})D_\mu u^i)
+\frac{1}{2}\sum^N_{i,r=1}\sum^n_{\mu,\nu=1}\nabla_{\xi_j}A^{ir}_{\mu\nu}(x,\vec{u}) D_\mu u^iD_\nu u^r-
\nabla_{\xi_j}G(x,\vec{u})\nonumber\\
&&=\lambda u^j\hspace{5mm}\quad\hbox{in}\;\Omega,\quad j=1,\cdots,N,\\
&&\vec{u}=0\quad\hbox{on}\;\partial\Omega,\label{e:BifE.8.16}
\end{eqnarray}
 and  one of the following alternatives occurs:
\begin{enumerate}
\item[\rm (i)] $(\lambda^\ast, 0)$ is not an isolated solution of (\ref{e:BifE.8.15})--(\ref{e:BifE.8.16}) in
 $\{\lambda^\ast\}\times X_{k,p}$;

\item[\rm (ii)]  for every $\lambda\in\mathbb{R}$ near $\lambda^\ast$ there is a nontrivial solution $\vec{u}_\lambda$ of
(\ref{e:BifE.8.15})--(\ref{e:BifE.8.16}) converging to $0$ as $\lambda\to\lambda^\ast$;

\item[\rm (iii)] there is an one-sided  neighborhood $\Lambda$ of $\lambda^\ast$ such that for any $\lambda\in\Lambda\setminus\{\lambda^\ast\}$,
(\ref{e:BifE.8.15})--(\ref{e:BifE.8.16}) has at least two nontrivial
solutions converging to $0$ as $\lambda\to\lambda^\ast$.
\end{enumerate}
Moreover, if both $A^{ij}_{\mu\nu}(x,\xi)$ and $G(x,\xi)$ are even with respect to $\xi$,
and  $E_{\lambda^\ast}$ is the solution space of (\ref{e:BifE.8.14.1})  with $\lambda=\lambda^\ast$,  then  one of the following alternatives holds:
 \begin{enumerate}
\item[\rm (iv)] $(\lambda^\ast, {0})$ is not an isolated solution of (\ref{e:BifE.8.15})--(\ref{e:BifE.8.16}) in
 $\{\lambda^\ast\}\times X_{k,p}$;
 \item[\rm (v)] there exist left and right  neighborhoods $\Lambda^-$ and $\Lambda^+$ of $\lambda^\ast$ in $\mathbb{R}$
and integers $n^+, n^-\ge 0$, such that $n^++n^-\ge\dim E_{\lambda^\ast}$
and for $\lambda\in\Lambda^-\setminus\{\lambda^\ast\}$ (resp. $\lambda\in\Lambda^+\setminus\{\lambda^\ast\}$),
(\ref{e:BifE.8.15})--(\ref{e:BifE.8.16}) has at least $n^-$ (resp. $n^+$) distinct pairs of solutions of form
$\{\vec{u},-\vec{u}\}$  different from ${0}$, which converge to
 ${0}$ as $\lambda\to\lambda^\ast$.
\end{enumerate}
\end{corollary}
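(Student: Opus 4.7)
The plan is to exhibit Corollary~\ref{cor:BifE.9.7} as the special case $m=1$ of Theorems~\ref{th:BifE.9.2} and~\ref{th:BifE.9.5} by writing \eqref{e:BifE.8.15}--\eqref{e:BifE.8.16} as the Euler--Lagrange system for a pair of functionals of the type \eqref{e:BifE.8.1}--\eqref{e:BifE.8.2}. Define
\[
F(x,\xi,p)=\tfrac{1}{2}\sum_{i,j=1}^{N}\sum_{\mu,\nu=1}^{n}A^{ij}_{\mu\nu}(x,\xi)\,p^{i}_{\mu}p^{j}_{\nu}-G(x,\xi),\qquad K(x,\xi)=\tfrac{1}{2}\sum_{i=1}^{N}(\xi_{i})^{2},
\]
so that both are of class $C^{k+3}=C^{k-m+4}$ with $m=1$. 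Using the symmetry $A^{ij}_{\mu\nu}=A^{ji}_{\nu\mu}$ one checks by direct differentiation that $F^{i}_{\mu}=\sum_{j,\nu}A^{ij}_{\mu\nu}(x,\xi)p^{j}_{\nu}$ and $F^{i}_{\mathbf 0}=\tfrac12\sum_{jr,\mu\nu}\nabla_{\xi_{i}}A^{jr}_{\mu\nu}(x,\xi)p^{j}_{\mu}p^{r}_{\nu}-\nabla_{\xi_{i}}G(x,\xi)$, so the Euler--Lagrange system of $\mathscr{L}_{1}-\lambda\mathscr{L}_{2}$ on $X_{k,p}$ is exactly \eqref{e:BifE.8.15}--\eqref{e:BifE.8.16}.

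Next I verify the hypotheses of Theorem~\ref{th:BifE.9.2} at $\vec{u}_{0}=0$. The vanishing assumption A.1 together with $\nabla_{\xi}K(x,0)=0$ shows that $\vec{u}_{0}=0\in C^{k}(\overline{\Omega},\mathbb{R}^{N})\cap W^{1,2}_{0}(\Omega,\mathbb{R}^{N})$ is a common critical point of $\mathscr{L}_{1}$ and $\mathscr{L}_{2}$. At $\vec{u}_{0}=0$ we have $F^{ij}_{\mu\nu}(x,0,0)=A^{ij}_{\mu\nu}(x,0)$ for $|\mu|=|\nu|=1$, so hypothesis (a) of Theorem~\ref{th:BifE.9.2} is nothing but A.2. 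Since $K^{ij}_{\alpha\beta}=\delta_{ij}\delta_{\alpha\mathbf{0}}\delta_{\beta\mathbf{0}}$, the bilinear form in hypothesis (b) becomes $\int_{\Omega}\sum_{i}u^{i}v^{i}\,dx$, trivially semi-positive. The linear eigenvalue problem \eqref{e:BifE.3*} at $\vec{u}_{0}=0$ collapses to \eqref{e:BifE.8.14.1}, so by A.3 the number $\lambda^{\ast}$ appears in its spectrum. Isolation of $\lambda^{\ast}$ follows from standard spectral theory for self-adjoint elliptic operators with Dirichlet data: the leading part $-\sum D_{\mu}(A^{ij}_{\mu\nu}(x,0)D_{\nu}\cdot)$ is uniformly elliptic by A.2, and after adding the bounded symmetric zero-order perturbation coming from $-\nabla^{2}_{\xi\xi}G(x,0)$ the resulting operator has compact resolvent in $L^{2}$, hence purely discrete spectrum with eigenvalues of finite multiplicity. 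Thus hypothesis (c) holds as well, and Theorem~\ref{th:BifE.9.2} yields the bifurcation alternative (i)--(iii).

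For the symmetric part, assume that $A^{ij}_{\mu\nu}(x,\xi)$ and $G(x,\xi)$ are even in $\xi$. Then $F(x,\xi,p)$ is even in $(\xi,p)$ (since $p$ enters quadratically) and $K(x,\xi)$ is even in $\xi$. Therefore both $\mathscr{L}_{1}$ and $\mathscr{L}_{2}$ are invariant under $\vec u\mapsto-\vec u$, and all the hypotheses of Theorem~\ref{th:BifE.9.5} are in force. The conclusion of Theorem~\ref{th:BifE.9.5} produces the left/right neighbourhoods $\Lambda^{\pm}$ and the count $n^{+}+n^{-}\ge\dim E_{\lambda^{\ast}}$ in terms of $\{\vec u,-\vec u\}$-pairs, which is precisely the alternative (iv)--(v).

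The main technical point is the computation that the Euler--Lagrange operator for the chosen $F$ equals the left-hand side of \eqref{e:BifE.8.15}; this is a symmetry-exploiting rearrangement and is the only real algebra in the argument. Everything else---the regularity class $C^{k+3}=C^{k-m+4}$, the ellipticity translation A.2 $\Leftrightarrow$ \eqref{e:BifE.8.3} at $\vec u_{0}=0$, semi-positivity of the $K$-form, isolation of $\lambda^{\ast}$, and transfer of evenness---is routine.
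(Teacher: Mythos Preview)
Your proposal is correct and follows essentially the same approach as the paper: both set $m=1$, define $F(x,\xi,p)=\tfrac{1}{2}\sum A^{ij}_{\mu\nu}(x,\xi)p^{i}_{\mu}p^{j}_{\nu}-G(x,\xi)$ and $K(x,\xi)=\tfrac{1}{2}\sum\xi_{i}^{2}$, and then verify the hypotheses of Theorem~\ref{th:BifE.9.2} (for (i)--(iii)) and Theorem~\ref{th:BifE.9.5} (for (iv)--(v)) at $\vec{u}_{0}=0$. Your treatment is in fact slightly more explicit than the paper's---you compute the Euler--Lagrange operator and justify isolation of $\lambda^{\ast}$ via compact resolvent, whereas the paper simply cites \cite[\S6.5]{Mor}---but the substance is identical.
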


\begin{proof}
Note that  A.2) implies  that each eigenvalue of (\ref{e:BifE.8.14.1}) is
isolated (see \cite[\S6.5]{Mor}).
The claims above ``Moreover" can be obtained by
 applying Theorem~\ref{th:BifE.9.2} to the Hilbert space $H=W^{1,2}_0(\Omega,\mathbb{R}^N)$,  the Banach space
   $X_{k,p}:=W^{k,p}(\Omega,\mathbb{R}^N)\cap W^{1,2}_0(\Omega,\mathbb{R}^N)$ and $\vec{u}_0=0$,   and functions
   \begin{eqnarray*}
  && F(x,\xi_1,\cdots,\xi_N;\eta^i_\mu)=\frac{1}{2}\sum^N_{i,j=1}\sum^n_{\mu,\nu=1}
A_{\mu\nu}^{ij}(x,\xi_1,\cdots,\xi_N)\eta_\mu^i\eta_\nu^j-G(x,\xi_1,\cdots,\xi_N),\\
&&K(x,\xi_1,\cdots,\xi_N;\eta^i_\mu)=\frac{1}{2}\sum^N_{\mu=1}\xi_\mu^2.
\end{eqnarray*}
Others follow from Theorem~\ref{th:BifE.9.5} directly.
\end{proof}

Similarly, under suitable assumptions as in Theorem~\ref{th:BifE.9.6}
the corresponding conclusions hold.

\begin{remark}\label{rm:BifE.9.8}
{\rm {\bf (i)} In Corollary~\ref{cor:BifE.9.7}, suppose $p>n$. We can take $k=3$. Then $\partial\Omega$ is of class $C^{2,1}$,  functions
$G, A^{ij}_{\mu\nu}=A^{ji}_{\nu\mu}:\overline{\Omega}\times\mathbb{R}^N\to\mathbb{R}$, $i,j=1,\cdots,N$ and $\mu,\nu=1,\cdots,n$,
   are all $C^6$,  and solutions in $X_{3,p}$  are classical solutions
(since $\vec{u}\in X_{3,p}\subset C^2(\overline{\Omega},\mathbb{R}^N)\cap W^{1,2}_0(\Omega,\mathbb{R}^N)$
satisfies $\vec{u}|_{\partial\Omega}=0$ by \cite[Theorem~9.17]{Bre}).
Corresponding conclusions with Theorems~\ref{th:BifE.9.2},\ref{th:BifE.9.3},\ref{th:BifE.9.5},\ref{th:BifE.9.6}
also hold true.\\
{\bf (ii)} By contrast, when $N=1$, $\Omega\subset\mathbb{R}^n$ is a bounded open subset,
$G, A^{ij}_{\mu\nu}=A^{ji}_{\nu\mu}:\overline{\Omega}\times\mathbb{R}\to\mathbb{R}$
is measurable in $x$ for all $\xi\in \mathbb{R}$, and of class $C^1$ in $\xi$ for $a.e.\;x\in\Omega$, some authors obtained corresponding results in space $W^{1,2}_0(\Omega)\cap L^\infty(\Omega)$
under some additional growth conditions on $A^{ij}$ and $G$,
see \cite{Can,Can1} and references therein.}
\end{remark}

\begin{example}\label{ex:BifE.9.9}
{\rm  Let $p, k, m$ and $\Omega$ be as in Theorem~\ref{th:BifE.9.2}.
Assume
\begin{eqnarray*}
 &&F(x,\xi,\eta_1,\cdots,\eta_n)=(1+\sum^n_{i=1}|\eta_i+D_iu_0(x)|^2)^{1/2}-\mu\xi,\\
&&K(x,\xi,\eta_1,\cdots,\eta_n)=\frac{1}{2}(\xi+u_0(x))^2,
\end{eqnarray*}
where $\mu\in\mathbb{R}$ is a constant and
${u}_0\in C^{k-m+5}(\overline\Omega)$ satisfies the following equation
\begin{eqnarray}\label{e:BifE.8.17}
\sum^n_{i=1}D_i\left(\frac{D_iu_0}{\sqrt{1+|Du_0|^2}}\right)-\mu=\lambda^\ast u_0
\end{eqnarray}
for some constant $\lambda^\ast\in\mathbb{R}$. Consider  the bifurcation problem
\begin{eqnarray}\label{e:BifE.8.18}
\sum^n_{i=1}D_i\left(\frac{D_i(u+u_0)}{\sqrt{1+|D(u+u_0)|^2}}\right)-\mu=\lambda (u+u_0),\quad
u|_{\partial\Omega}=0,
\end{eqnarray}
where the left side is the Euler-Lagrange operator for the corresponding functional
$\mathscr{L}_1$ as in (\ref{e:BifE.8.1}), that is, the area functional if $u_0=0$. The corresponding  linearized problem at the trivial
solution $u=0$ is
\begin{eqnarray}\label{e:BifE.8.19}
\sum^n_{i,j=1}D_i\left(\frac{\delta_{ij}(1+|Du_0|^2)-
D_iu_0D_ju_0}{(1+|Du_0|^2)^{3/2}}D_ju\right)=\lambda u,\quad
u|_{\partial\Omega}=0,
\end{eqnarray}
which is a linear elliptic problem since it is easy to check that for all $\zeta\in\mathbb{R}^n$,
\begin{eqnarray*}
\sum^n_{i,j=1}\frac{\delta_{ij}(1+|Du_0|^2)-
D_iu_0D_ju_0}{(1+|Du_0|^2)^{3/2}}\zeta_i\zeta_j&\ge&
\frac{1}{(1+|Du_0|^2)^{3/2}}|\zeta|^2\\
&\ge& \frac{1}{(1+\max|Du_0|^2)^{3/2}}|\zeta|^2.
\end{eqnarray*}
Hence each eigenvalue of (\ref{e:BifE.8.19}) is isolated.
Suppose that $\lambda^\ast$ is an eigenvalue of (\ref{e:BifE.8.19}). Since
${u}_0\in C^{k-m+5}(\overline\Omega)$ implies that $F$ is of class $C^{k-m+4}$,
 we may obtain some bifurcation results for (\ref{e:BifE.8.18}) near $(\lambda^\ast,0)\in\mathbb{R}\times X_{k,p}$ with Theorems~\ref{th:BifE.9.2},\ref{th:BifE.9.3},\ref{th:BifE.9.5},\ref{th:BifE.9.6},
which are also classical solutions for $k> 2+\frac{n}{p}$.

When $n=2$ (i.e., $\Omega\subset\mathbb{R}^2$),  (\ref{e:BifE.8.18})
occurred as a mathematical model for many problems of  hydrodynamics and theory of spring
 membrane (cf. \cite[\S5]{Bor}). In particular, if $\mu=0$ and $u_0=0$,
(\ref{e:BifE.8.18}) becomes
\begin{eqnarray}\label{e:BifE.8.20}
-(1+|\nabla u|^2)^{-3/2}(\triangle u+ u^2_yu_{xx}-2u_xu_yu_{xy}+ u_x^2u_{yy})=
\lambda u,\quad
u|_{\partial\Omega}=0,
\end{eqnarray}
which  was studied in \cite{Bor, BorMa} with functional-topological
 properties of the Plateau operator. When  $\partial\Omega$ is of class $C^{2,1}$, $n=2$, $k=3$ and $p>2$
 our results  are supplement for \cite{Bor, BorMa}. }
\end{example}

\section{Bifurcations from deformations of domains}\label{sec:BifE.4}
\setcounter{equation}{0}

%\subsection*{Smale Morse index theorem}

We start with reviewing  the Smale's Morse index theorem needed in this section.
Let  $\Omega\subset\R^n$ be a bounded, connected and open subset with smooth boundary $\partial\Omega$,
$\overline{\Omega}=\Omega\cup(\partial\Omega)$ the closure of $\Omega$.
 By a {\it deformation} $\{\overline{\Omega}_t\}_{0\le t\le 1}$
of $\overline{\Omega}$ we mean a continuous curve of $C^{\infty}$ embedding
$\varphi_t:\overline{\Omega}\to\overline{\Omega}$ such that $\overline{\Omega}_t=
\varphi_t(\overline{\Omega})$, $\varphi_0=id_{\overline{\Omega}}$ and
$\partial\Omega_t=\varphi_t(\partial\Omega)$ for $0\le t\le 1$. (Also write ${\Omega}_t=
\varphi_t({\Omega})$.)
Call the deformation  {\it smooth} (resp. {\it contracting}) if
$\partial\Omega_t$ depends smoothly on $t$ in the sense that
$[0,1]\times\overline{\Omega}\to\overline{\Omega},\;(t,x)\mapsto \varphi_t(x)$
is $C^{\infty}$ (resp. $\overline{\Omega}_{t_2}\subset\overline{\Omega}_{t_1}$ (strictly) for all $t_1<t_2$).

Given integers $m$ and $N$,  a differential operator $L$ of divergence form
\begin{equation}\label{e:strong*}
(L\vec{u})(x)=\sum^N_{i,j=1}\sum_{|\alpha|,|\beta|\le m}
(-1)^{|\alpha|}D^\alpha\bigl[A^{ij}_{\alpha\beta}(x)D^\beta u^j\bigr]
\end{equation}
where $A_{\alpha\beta}^{ij}$ are real-valued $C^\infty$ functions on $\overline{\Omega}$
for $i,j=1,\cdots,N$, and $\alpha,\beta\in(\mathbb{N}\cup\{0\})^n$ of length $\le m$,
is called {\it strongly elliptic} on $\Omega$ if there exists $M>0$ such that for all $\lambda\in\mathbb{R}^n$ and $\xi\in\mathbb{R}^N$,
$$
\sum^N_{i,j=1}\sum_{|\alpha|=|\beta|=m}
(A_{\alpha\beta}^{ij}(x)+ A_{\alpha\beta}^{ji}(x))\lambda^{\alpha+\beta}\xi_i{\xi}_j\ge M|\lambda|^{2m}\sum^N_{j=1}|\xi_j|^{2}.
$$
Let $C^\infty_{m-1}(\overline{\Omega},\R^N)=\{\vec{u}\in C^{\infty}(\overline{\Omega},\R^N)\;|\; D^\alpha\vec{u}|_{\partial\Omega}=0\,\;\forall |\alpha|\le m-1\}$.
The operator $L$  is called {\it self-adjoint} if the bilinear form $B_L$ on $C^\infty_{m-1}(\overline{\Omega},\R^N)$ given by
$$
B_L(\vec{u},\vec{v})=\int_{\Omega}((L\vec{u})(x), \vec{v}(x))_{\mathbb{R}^N}dx=\sum^N_{i,j=1}\sum_{|\alpha|,|\beta|\le m}
\int_\Omega A^{ij}_{\alpha\beta}(x)D^\alpha v^i(x)D^\beta u^j(x) dx
$$
is symmetric. Clearly,  $B_L$ is symmetric if $A^{ij}_{\alpha\beta}(x)=A^{ji}_{\beta\alpha}(x)$ for all $x,i,j,\alpha,\beta$.

Since $C^\infty(\overline{\Omega},\R^N)\cap W^{m,2}_0(\Omega,\mathbb{R}^N)=C^\infty_{m-1}(\overline{\Omega},\R^N)$,
the form $B_L$ extends continuously to\linebreak
  $W^{m,2}_0(\Omega,\mathbb{R}^N)\times W^{m,2}_0(\Omega,\mathbb{R}^N)$,
which corresponds to an operator
$$
\tilde{L}:W^{m,2}_0(\Omega,\mathbb{R}^N)\to W^{-m,2}_0(\Omega,\mathbb{R}^N)=(W^{m,2}_0(\Omega,\mathbb{R}^N))^\ast\;\hbox{
(dual space)}
$$
via $B_L(\vec{u},\vec{v})=(\tilde{L}\vec{u},\vec{v})_{L^2}$ for all $\vec{u},\vec{v}\in W^{m,2}_0(\Omega,\mathbb{R}^N)$.
Clearly, $\tilde{L}\vec{u}=L\vec{u}\;\forall \vec{u}\in C^\infty_{m-1}(\overline{\Omega},\R^N)$.
When $L$ is strongly elliptic self-adjoint, $\tilde{L}$ is self-adjoint and
Fredholm (index zero).

From now on we assume that the operator $L$ is strongly elliptic self-adjoint. Then $B_L$
has the same nullity and Morse index on $C^\infty_{m-1}(\overline{\Omega},\R^N)$
and $W^{m,2}_0(\Omega,\mathbb{R}^N)$, denoted by $\nu(L)$ and $\mu(L)$, respectively.
It was proved that both $\nu(L)$ and $\mu(L)$ are finite.
For the deformation $\{\overline{\Omega}_t\}_{0\le t\le 1}$
of $\overline{\Omega}$ at the beginning of this section, $L$ restricts to a strongly elliptic self-adjoint operator
$L_t:C^\infty_{m-1}(\overline{\Omega}_t,\R^N)\to C^\infty(\overline{\Omega}_t,\R^N)$ for each $t$.
The corresponding  symmetric bilinear form on $W^{m,2}_0(\Omega_t,\mathbb{R}^N)$
is given by
\begin{eqnarray}\label{e:BifE-bilinear}
B_{L_t}(\vec{u},\vec{v})=\sum^N_{i,j=1}\sum_{|\alpha|,|\beta|\le m}
\int_{\Omega_t} A^{ij}_{\alpha\beta}(x)D^\alpha v^i(x)D^\beta u^j(x) dx.
\end{eqnarray}
Using the Banach space isomorphism
\begin{eqnarray}\label{e:BifE.10-2}
\varphi_t^\ast:W^{m,2}_0(\Omega,\mathbb{R}^N)\to
W^{m,2}_0(\Omega_t,\mathbb{R}^N),\;\vec{u}\mapsto \vec{u}\circ\varphi_t^{-1}
\end{eqnarray}
we get a continuous symmetric  bilinear form on $W^{m,2}_0(\Omega,\mathbb{R}^N)$  given by
$$
B_{L}^t(\vec{u},\vec{v}):=B_{L_t}(\vec{u}\circ\varphi_t^{-1},\vec{v}\circ\varphi_t^{-1}).
$$
Then $B_{L}^t$ and $B_{L_t}$ have the same nullity and Morse index.
According to Smale \cite{Sma1} (and  Uhlenbeck \cite[Theorem 3.5]{Uh} and Swanson \cite[Theorem 5.7]{Swa}), we have:

\begin{proposition}[\hbox{Smale Morse index theorem}]\label{prop:BifE.10}
Let  $\Omega\subset\R^n$ be a bounded, connected and open subset with smooth boundary $\partial\Omega$,
and let $\{\overline{\Omega}_t\}_{0\le t\le 1}$ be the smooth deformation
of $\overline{\Omega}$ as above.
For a strongly elliptic self-adjoint operator $L$ on $\Omega$ as in (\ref{e:strong*}),
suppose that one of the following two conditions is satisfied:
\begin{enumerate}
\item[\rm (i)] If $\vec{u}\in C^\infty(\overline{\Omega},\R^N)$ satisfies $L\vec{u}=0$ and vanishes on some
non-empty open set in $\Omega$, then $\vec{u}=0$.
\item[\rm (ii)] The deformation $\{\overline{\Omega}_t\}_{0\le t\le 1}$ is contracting
and $L\vec{u}=0$ has no nontrivial solutions in $C^\infty(\overline{\Omega},\R^N)$ with compact
support in any of the manifolds $\Omega_t$.
\end{enumerate}
 Then
$$
\mu(L)=\mu(L_0)=\mu(L_1)+\sum_{0< t\le 1}\nu(L_t).
$$
 Moreover, if $\Omega_1$ has sufficiently small volume, then $\mu(L_1)=0$.
\end{proposition}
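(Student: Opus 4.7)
The plan is to pull the whole family of symmetric forms back to the fixed Hilbert space $H:=W^{m,2}_0(\Omega,\mathbb{R}^N)$ via the Banach space isomorphisms $\varphi_t^\ast$ from (\ref{e:BifE.10-2}), and then track how the spectrum of the associated self-adjoint Fredholm operator crosses zero as $t$ varies. More precisely, the forms $B_L^t(\vec{u},\vec{v})=B_{L_t}(\vec{u}\circ\varphi_t^{-1},\vec{v}\circ\varphi_t^{-1})$ give a continuous path of bounded symmetric bilinear forms on $H$. By strong ellipticity and G\aa rding's inequality, $B_L^t$ can be written as the sum of a uniformly coercive form on $H$ and a lower-order form whose associated operator (via Riesz representation with the inner product (\ref{e:6.2.2})) is compact. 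Hence each $B_L^t$ corresponds to a self-adjoint Fredholm operator $T_t\in\mathscr{L}_s(H)$ of the form $\mathrm{Id}+\mathrm{compact}$ (up to conjugation), with $\nu(L_t)=\dim\ker T_t$ and $\mu(L_t)$ equal to the (finite) total multiplicity of the negative spectrum of $T_t$, and the path $t\mapsto T_t$ is norm-continuous.

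Next, I would show that the crossing set $S:=\{t\in[0,1]:\nu(L_t)>0\}$ is finite. If $t_0$ were an accumulation point of $S$, one picks a sequence $t_n\to t_0$ with $\vec{u}_n\in\ker L_{t_n}\subset W^{m,2}_0(\Omega_{t_n},\mathbb{R}^N)$ of unit norm, transplants them to $H$ via $\varphi_{t_n}^\ast$, extracts a weak limit $\vec{u}_0$, and verifies that $\vec{u}_0\circ\varphi_{t_0}^{-1}\in\ker L_{t_0}$. Under (i), in the non-contracting situation one extends kernel elements on $\Omega_{t_0}$ by zero to $\Omega$, obtaining a smooth $L$-harmonic field vanishing on a nonempty open set, contradicting unique continuation unless $\vec u_0=0$; one then boosts weak to strong convergence using the Fredholm structure to contradict $\|\vec u_n\|=1$. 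Under (ii), the contraction $\Omega_{t_2}\subset\Omega_{t_1}$ together with the compact-support hypothesis plays the analogous role, since any element of $\ker L_{t_1}$ supported in $\Omega_{t_2}$ would give such a bad solution.

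The main obstacle, and the heart of the proof, is the jump formula at each crossing: for $t_0\in S$ small enough, the Morse index $\mu(L_t)$ must drop by exactly $\nu(L_{t_0})$ as $t$ passes $t_0$ from below to above. Around each $t_0\in S$ choose a small interval $I_{t_0}$ with $S\cap I_{t_0}=\{t_0\}$ and a small circle $\Gamma$ around $0\in\mathbb{C}$ missing $\sigma(T_t)\setminus\{0\}$ for all $t\in I_{t_0}$; the associated Riesz projector $P_t=\tfrac{1}{2\pi i}\oint_\Gamma(\zeta-T_t)^{-1}d\zeta$ is continuous and of constant finite rank $\nu(L_{t_0})$, reducing the problem to the study of the symmetric finite-dimensional family $T_t|_{\mathrm{Range}(P_t)}$ vanishing at $t_0$. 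Strict monotonicity of its eigenvalues in $t$ is then what forces the jump to equal the full nullity. In case (ii) this monotonicity follows directly from the Courant--Fischer min-max principle applied to the inclusion $W^{m,2}_0(\Omega_{t_2},\mathbb{R}^N)\hookrightarrow W^{m,2}_0(\Omega_{t_1},\mathbb{R}^N)$ (extension by zero) for $t_1<t_2$; in case (i) the same conclusion is reached using a Hadamard-type first-variation formula for the perturbed eigenvalues, where non-degeneracy of the variation is ensured by unique continuation (a kernel element cannot vanish identically on a neighborhood of $\partial\Omega_{t_0}$). Summing these jumps over the finite set $S$ yields $\mu(L_0)=\mu(L_1)+\sum_{0<t\le 1}\nu(L_t)$.

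Finally, for the assertion $\mu(L_1)=0$ when $\Omega_1$ has small volume, I would invoke the Friedrichs/Poincar\'e inequality in $W^{m,2}_0(\Omega_1,\mathbb{R}^N)$, whose constant tends to $0$ as $|\Omega_1|\to 0$, together with strong ellipticity (bounding the top-order part of $B_{L_1}$ from below) and the uniform $L^\infty$-bounds on the lower-order coefficients $A^{ij}_{\alpha\beta}$ on $\overline\Omega$ (bounding the lower-order part from above). Choosing $|\Omega_1|$ small enough makes the top-order term dominate, rendering $B_{L_1}$ positive definite on $W^{m,2}_0(\Omega_1,\mathbb{R}^N)$ and hence $\mu(L_1)=0$.
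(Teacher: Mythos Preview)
The paper does not supply its own proof of this proposition: it is stated as the classical Smale Morse index theorem and attributed directly to Smale \cite{Sma1}, with the versions under hypotheses (i) and (ii) drawn from Uhlenbeck \cite[Theorem~3.5]{Uh} and Swanson \cite[Theorem~5.7]{Swa} respectively. So there is no ``paper's proof'' to compare against; the proposition functions here purely as a quoted tool used later in Theorems~\ref{th:BifE.11} and \ref{th:BifE.13}.

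That said, your outline is broadly faithful to the strategy in those references. Pulling everything back to a fixed Hilbert space via $\varphi_t^\ast$, writing each $T_t$ as a compact perturbation of a positive operator (G\aa rding), and tracking the finite negative spectrum across $t$ is exactly the Uhlenbeck framework. The monotonicity step is the one place to be careful: in case (ii) the min--max argument via the inclusion $W^{m,2}_0(\Omega_{t_2})\hookrightarrow W^{m,2}_0(\Omega_{t_1})$ only gives $\mu(L_{t_2})\le\mu(L_{t_1})$ directly, not that the jump at a crossing equals the full nullity; one still needs the compact-support hypothesis to rule out a kernel element of $L_{t_0}$ persisting as a kernel element on the smaller domain (otherwise the eigenvalue could stay at zero rather than cross). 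In case (i) your Hadamard first-variation idea is correct, but note that Smale's original argument and Swanson's generalization proceed somewhat differently, via a transversality/Fredholm-intersection argument rather than an explicit eigenvalue derivative, and the unique continuation hypothesis enters to guarantee that the boundary trace of a kernel element does not vanish identically. Your finiteness argument for the crossing set via weak limits is also a bit indirect; the cleaner route (used in the references) is simply that $t\mapsto T_t$ is norm-continuous with each $T_t$ Fredholm and $0$ an isolated spectral point of finite multiplicity, so the set where $0\in\sigma(T_t)$ is discrete, hence finite in $[0,1]$. The small-volume claim via Poincar\'e is fine.
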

 The time $t\in [0, 1]$ with $\nu(L_t)\ne 0$ is
called a {\it conjugate point}.

 When $m=N=1$,  Cox,  Jones and Marzuola \cite{CoChMa} weakened smoothness of $\Omega$, $\varphi_t$ and coefficients of $L$ recently,
Let $\Omega\subset\R^n$ be a bounded domain with Lipschitz boundary and $\{{\Omega}_t\}_{0\le t\le 1}$
 be a family of domains given by a family of Lipschitz diffeomorphisms $\varphi_t:\Omega\to\Omega_t$.
For each $t$, the Jacobian of $\varphi_t$, $D\varphi_t:\Omega\to\R^{n\times n}$,
belongs to $L^\infty(\Omega,\R^{n\times n})$.
The family $\{\varphi_t\}$ is said to be of class $C^k$ if
$t\mapsto\varphi_t$  is in $C^k([0,1], L^\infty(\Omega,\R^{n}))$ and
$t\mapsto D\varphi_t$  is in $C^k([0,1], L^\infty(\Omega,\R^{n\times n}))$.
Let $L$ be a strongly elliptic self-adjoint operator of the form
\begin{equation}\label{e:strong**}
Lu=-\sum^n_{i,j=1}D_i(a_{ij}D_ju)+ cu,
\end{equation}
where $a_{ij}, c\in L^\infty(\Omega)$ are real-valued functions with $a_{ij} = a_{ji}$.
The strong elipticity means that the matrixes $(a_{ij}(x))$ are uniformly positive definite. As above  $\nu(L)$ and $\mu(L)$  denote the nullity and Morse index of $B_L$ on $W^{1,2}_0(\Omega)=H^1_0(\Omega)$,  respectively.

\begin{proposition}[\hbox{\cite[Corollary~2.3]{CoChMa}}]\label{prop:BifE.10*}
 For a bounded domain $\Omega\subset\R^n$ with Lipschitz boundary let
 $\{{\Omega}_t\}_{0\le t\le 1}$
 be a family of domains given by a $C^1$ family of Lipschitz diffeomorphisms $\varphi_t:\Omega\to\Omega_t$.
 Let $L$ be  a strongly elliptic self-adjoint operator as in (\ref{e:strong**}).
 Suppose that $a_{ij}, c\in C^1(\overline{\Omega})$ and that each
 $\partial\Omega_t$ is of class $C^{1,1}$,
 $\cup_{0\le t\le 1}\Omega_t\subset\Omega$ and $\Omega_{t_1}\subset\Omega_{t_2}$
  for $t_1<t_2$.
 Then the number of conjugate times in $[0,1]$ is finite and
$$
\mu(L)=\mu(L_0)=\mu(L_1)+\sum_{0< t\le 1}\nu(L_t).
$$
 \end{proposition}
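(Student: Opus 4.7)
The plan is to reduce the moving-domain problem to a one-parameter $C^1$ family of self-adjoint Fredholm operators on the fixed Hilbert space $H^1_0(\Omega)$, and then to track eigenvalues through zero using a Hadamard-type variational formula together with the monotonicity coming from the nested domains. The first step is to pull everything back via $\varphi_t$. As in (\ref{e:BifE.10-2}), the maps $\varphi_t^\ast:H^1_0(\Omega)\to H^1_0(\Omega_t)$ are Banach-space isomorphisms, and the pulled-back symmetric bilinear forms
$$
\mathfrak{B}_t(u,v):=B_{L_t}(\varphi_t^\ast u,\,\varphi_t^\ast v),\qquad u,v\in H^1_0(\Omega),
$$
become, after a change of variables, integrals over the fixed $\Omega$ whose coefficients are polynomial expressions in $a_{ij}\circ\varphi_t$, $c\circ\varphi_t$, $D\varphi_t$, $(D\varphi_t)^{-1}$, and $|\det D\varphi_t|$. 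The hypotheses that $a_{ij},c\in C^1(\overline\Omega)$ and that $t\mapsto\varphi_t,\,t\mapsto D\varphi_t$ are $C^1$ into $L^\infty$ then give that $t\mapsto\mathfrak{B}_t$ is $C^1$ into the Banach space of bounded symmetric bilinear forms on $H^1_0(\Omega)$. After adding and subtracting a large multiple of the $H^1_0$ inner product to secure uniform coercivity, Riesz representation produces a $C^1$ path $t\mapsto T_t$ of self-adjoint Fredholm operators on $H^1_0(\Omega)$ whose Morse index and nullity agree with $\mu(L_t)$ and $\nu(L_t)$.

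Next I would establish domain monotonicity of the Dirichlet spectrum. Under the nested hypothesis (for $t_1<t_2$ one of $\Omega_{t_i}$ is contained in the other), extension by zero gives a canonical embedding of the smaller $H^1_0(\Omega_{t_i})$ into the larger one, under which one bilinear form is simply the restriction of the other. The Courant min-max principle then yields a definite sign for $\lambda_k(L_{t_1})-\lambda_k(L_{t_2})$ for the ordered Dirichlet eigenvalues, and combined with the norm-continuity of $T_t$ and Kato's analytic perturbation theory this provides a continuous parameterization $\lambda_k(t)$ that is weakly monotone in $t$.

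The heart of the proof is the analysis of crossings via a Hadamard-type formula. At any conjugate time $t_0$, for a normalized eigenfunction $u_k$ belonging to the kernel,
$$
\frac{d}{dt}\lambda_k(t)\Big|_{t_0}=-\int_{\partial\Omega_{t_0}}\Big(\sum_{i,j}a_{ij}\nu_i\nu_j\Big)|\partial_\nu u_k|^2\,\langle V_{t_0},\nu\rangle\,dS,
$$
where $V_t=\partial_t\varphi_t\circ\varphi_t^{-1}$ is the velocity of the moving boundary and $\nu$ the outer unit normal to $\partial\Omega_{t_0}$. Since $\partial\Omega_{t_0}\in C^{1,1}$ and the coefficients are $C^1$, standard elliptic regularity gives $u_k\in H^2(\Omega_{t_0})$, so the boundary trace $\partial_\nu u_k$ lies in $L^2(\partial\Omega_{t_0})$ and the integrand is meaningful; strong ellipticity makes the matrix factor positive; and the contracting hypothesis fixes the sign of $\langle V_{t_0},\nu\rangle$ on $\partial\Omega_{t_0}$. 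A unique-continuation argument together with the Hopf lemma forces $\partial_\nu u_k\not\equiv 0$, so the above derivative is non-zero and every crossing is transverse. Transversality plus the $C^1$ dependence then gives both finiteness of the conjugate times in $[0,1]$ (an accumulation would contradict non-degeneracy) and the fact that at each $t_0$ exactly $\nu(L_{t_0})$ eigenvalues cross zero in the same direction. Summing the jumps along $[0,1]$ yields $\mu(L_0)=\mu(L_1)+\sum_{0<t\le 1}\nu(L_t)$, and $\mu(L)=\mu(L_0)$ follows from $\varphi_0=\mathrm{id}$.

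The main obstacle is making the Hadamard shape-derivative formula and the transversality of crossings rigorous in the low-regularity setting assumed here: $C^1$ coefficients, $C^{1,1}$ boundaries, and only a $C^1$-in-$L^\infty$ family of Lipschitz diffeomorphisms. In contrast to the smooth Smale setup, one cannot appeal to classical shape calculus; instead one must work inside the Hadamard--Zol\'esio material-derivative framework, verify that the material and shape derivatives of the eigenprojectors exist in the correct Sobolev topologies, and check that the normal-derivative traces of eigenfunctions depend continuously (indeed $C^1$) on $t$ as elements of $L^2(\partial\Omega_t)$. Once these analytic ingredients are secured, the counting argument that converts strict transversality into the Morse-index identity is essentially combinatorial and follows the Smale--Uhlenbeck template.
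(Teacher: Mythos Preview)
This proposition is not proved in the present paper at all: it is quoted verbatim as \cite[Corollary~2.3]{CoChMa} and used as a black box (see the discussion surrounding Propositions~\ref{prop:BifE.10} and~\ref{prop:BifE.10*} and the proof of Theorem~\ref{th:BifE.12}). There is therefore no ``paper's own proof'' to compare your proposal against.

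As for the substance of your outline, it is a reasonable sketch of one route to such a result, but it is worth noting that the original argument in \cite{CoChMa} is organized rather differently. Cox--Jones--Marzuola work through a Maslov-index / spectral-flow framework: they encode the Dirichlet problem on the moving domains as a path of Lagrangian subspaces (via boundary Cauchy data) and identify the Morse index with a Maslov crossing count, rather than relying directly on a Hadamard shape-derivative formula for individual eigenvalues. Your approach via eigenvalue perturbation and transversality is more in the spirit of the classical Smale--Uhlenbeck proofs of Proposition~\ref{prop:BifE.10}; it would likely work, but the step where you invoke the Hopf lemma and unique continuation to guarantee $\partial_\nu u_k\not\equiv 0$ under only $C^1$ coefficients and $C^{1,1}$ boundaries is exactly the kind of delicate point that makes the Maslov-index route attractive in the low-regularity regime, since the latter bypasses the need to analyze individual eigenbranches and their boundary traces.
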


We always assume that $\{\overline{\Omega}_t\}_{0\le t\le 1}$
is a smooth deformation of $\overline{\Omega}$ without special statements.
(Such a deformation can always be obtained by the negative flow of a suitable Morse function on $\overline{\Omega}$.)

Let $F$  satisfy \textsf{Hypothesis} $\mathfrak{F}_{2,N,m,n}$.
It gives a family of functionals
\begin{eqnarray}\label{e:BifE.10-1}
\mathfrak{F}_t(\vec{u})=\int_{\Omega_t} F(x, \vec{u},\cdots, D^m\vec{u})dx,\quad\forall\vec{u}\in W^{m,2}_0(\Omega_t,\mathbb{R}^N),
\;0\le t\le 1.
\end{eqnarray}
The critical points of $\mathfrak{F}_t$ correspond to weak solutions of
\begin{eqnarray}\label{e:BifE.10}
\left.\begin{array}{ll}
&\sum_{|\alpha|\le m}(-1)^{|\alpha|}D^\alpha F^i_\alpha(x, \vec{u},\cdots, D^m\vec{u})=0\quad
\hbox{on}\;\Omega_{t},\quad i=1,\cdots,N,\\
&\hspace{20mm}D^i\vec{u}|_{\partial\Omega_{t}}=0,\quad i=0,1,\cdots,m-1.
\end{array}\right\}
\end{eqnarray}
Let $\vec{u}=0$ be a solution of (\ref{e:BifE.10}) with $t=0$.
We say $t^\ast\in [0,1]$ to be a {\it bifurcation point} for the
system (\ref{e:BifE.10}) if there exist  sequences $t_k\to t^\ast$
and $(\vec{u}_k)\subset W^{m,2}_0(\Omega_{t_k}, \mathbb{R}^N)$
such that each $u_k$ is a nontrivial weak solution of (\ref{e:BifE.10}) with $t=t_k$ and
$\|u_k\|_{m,2}\to 0$. Recently, such a problem was studied for semilinear elliptic Dirichlet problems on a ball
 in \cite{PoWa}. We shall here  generalize their result in different directions.
 To this goal,
define $\mathcal{F}:[0,1]\times W^{m,2}_0(\Omega,\mathbb{R}^N)\to\mathbb{R}$  by
\begin{eqnarray}\label{e:BifE.11}
\mathcal{F}(t,\vec{u})=\int_{\Omega_t} F(x, \vec{u}\circ\varphi_t^{-1},\cdots, D^m(\vec{u}\circ\varphi_t^{-1}))dx.
 \end{eqnarray}
Then $\mathcal{F}_t:=\mathcal{F}(t,\cdot)=\mathfrak{F}_t\circ\varphi_t^\ast$,
that is, $\mathcal{F}_t$ is the pull-back of $\mathfrak{F}_t$ via $\varphi_t^\ast$.
Clearly, $\vec{u}\in W^{m,2}_0(\Omega,\mathbb{R}^N)$ is a critical point of $\mathcal{F}_t$ if and only if $\varphi_t^\ast(\vec{u})$
is that of $\mathfrak{F}_t$ and both have the same Morse indexes and nullities. Note that $\vec{u}=0\in W^{m,2}_0(\Omega,\mathbb{R}^N)$ is
the critical point of each $\mathcal{F}_t$ (and so $\mathfrak{F}_t$).
Denote by $\mu_t$ and $\nu_t$ the common Morse index and nullity of $\mathfrak{F}_t$ and
$\mathcal{F}_t$ at zeros in the sense of \cite{Lu7}, that is,
those of the continuous symmetric  bilinear form on
$W^{m,2}_0(\Omega,\mathbb{R}^N)$ given by
$$
\mathcal{F}_t''(0)[\vec{u},\vec{v}]=\mathfrak{F}_t''(0)[\varphi_t^\ast\vec{u}, \varphi_t^\ast\vec{v}]$$ $$=\sum^N_{i,j=1}\sum_{|\alpha|,|\beta|\le m}
\int_{\Omega_t} A^{ij}_{\alpha\beta}(x)D^\alpha (v^i\circ\varphi_t^{-1})(x)D^\beta (u^j\circ\varphi_t^{-1})(x) dx
$$
with $A^{ij}_{\alpha\beta}(x)=F^{ij}_{\alpha\beta}(x, 0)$.
We say that the Lagrangian $F$ is {\it strongly elliptic on the function} $\vec{u}(x)$,
$x\in\Omega$, if there exists $M>0$ such that for all $\lambda\in\mathbb{R}^n$ and $\xi\in\mathbb{R}^N$,
$$
\sum^N_{i,j=1}\sum_{|\alpha|=|\beta|=m}
F^{ij}_{\alpha\beta}(x, \vec{u}(x),\cdots, D^m\vec{u}(x))\lambda^{\alpha+\beta}\xi_i{\xi}_j\ge M|\lambda|^{2m}\sum^N_{j=1}|\xi_j|^{2}.
$$

\begin{claim}\label{cl:strong}
If the Lagrangian $F$ satisfies (\ref{e:6.2}), then it is strongly elliptic on any
$\vec{u}\in W^{m,2}(\Omega,\mathbb{R}^N)$.
\end{claim}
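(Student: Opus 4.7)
The plan is to reduce strong ellipticity on $\vec{u}$ to a direct substitution in the coercivity inequality (\ref{e:6.2}), and then to control the prefactor $\mathfrak{g}_2$ by a Sobolev embedding.

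First, fix $\vec{u}\in W^{m,2}(\Omega,\mathbb{R}^N)$ and $x\in\Omega$, and given arbitrary $\lambda\in\mathbb{R}^n$, $\xi\in\mathbb{R}^N$, choose
$$
\eta^i_\alpha:=\lambda^\alpha\xi_i,\qquad i=1,\ldots,N,\;|\alpha|=m.
$$
Plugging this $\eta=(\eta^i_\alpha)\in\mathbb{R}^{N\times M_0(m)}$ into (\ref{e:6.2}) with the argument $\xi$ there replaced by $(\vec{u}(x),D\vec{u}(x),\ldots,D^m\vec{u}(x))$, the left-hand side becomes $\sum_{i,j,|\alpha|=|\beta|=m}F^{ij}_{\alpha\beta}(x,\vec{u}(x),\ldots,D^m\vec{u}(x))\lambda^{\alpha+\beta}\xi_i\xi_j$, while the right-hand side equals $\mathfrak{g}_2\bigl(\sum_k|\vec{u}^k_\circ(x)|\bigr)\bigl(\sum_{|\alpha|=m}\lambda^{2\alpha}\bigr)|\xi|^2$, where $\vec{u}^k_\circ(x)$ collects the values $D^\beta u^k(x)$ with $|\beta|<m-n/2$.

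Second, I would invoke the multinomial identity
$$
|\lambda|^{2m}=(\lambda_1^2+\cdots+\lambda_n^2)^m=\sum_{|\alpha|=m}\binom{m}{\alpha}\lambda^{2\alpha},
$$
which immediately yields the uniform lower bound $\sum_{|\alpha|=m}\lambda^{2\alpha}\ge c_{m,n}|\lambda|^{2m}$ with $c_{m,n}:=\bigl(\max_{|\alpha|=m}\binom{m}{\alpha}\bigr)^{-1}>0$.

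Third, I would bound $\mathfrak{g}_2\bigl(\sum_k|\vec{u}^k_\circ(x)|\bigr)$ from below. When $m\le n/2$, the note in \textsf{Hypothesis}~$\mathfrak{F}_{2,N,m,n}$ declares $\mathfrak{g}_2$ to be a positive constant, so there is nothing to do. When $m>n/2$, every $\beta$ with $|\beta|<m-n/2$ satisfies $m-|\beta|>n/2$, hence the Sobolev embedding $W^{m-|\beta|,2}(\Omega)\hookrightarrow C^0(\overline{\Omega})$ gives a constant $C=C(\vec{u},\Omega,m,n,N)$ with $\sum_{k=1}^N|\vec{u}^k_\circ(x)|\le C$ for all $x\in\overline{\Omega}$. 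By continuity and positivity of $\mathfrak{g}_2$ on $[0,C]$,
$$
m_0:=\min_{s\in[0,C]}\mathfrak{g}_2(s)>0.
$$
Combining the three steps gives the strong ellipticity constant $M:=c_{m,n}m_0>0$ depending only on $m,n,N,\Omega$ and $\|\vec{u}\|_{m,2}$, independent of $x\in\Omega$, $\lambda\in\mathbb{R}^n$ and $\xi\in\mathbb{R}^N$.

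There is no real obstacle here: the argument is a one-line substitution followed by the multinomial identity, with the only subtlety being the split into the cases $m\le n/2$ and $m>n/2$ so that the Sobolev embedding supplying boundedness of the ``lower-order'' arguments of $\mathfrak{g}_2$ is invoked only when needed.
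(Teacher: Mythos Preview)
Your proof is correct and follows essentially the same route as the paper: the substitution $\eta^i_\alpha=\lambda^\alpha\xi_i$ into (\ref{e:6.2}) followed by the multinomial identity to compare $\sum_{|\alpha|=m}\lambda^{2\alpha}$ with $|\lambda|^{2m}$.

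The one place you work harder than necessary is the third step. You bound $\sum_k|\vec{u}^k_\circ(x)|$ above via Sobolev embedding and then take $m_0=\min_{[0,C]}\mathfrak{g}_2$; but $\mathfrak{g}_2$ is \emph{nondecreasing} by hypothesis, so $\mathfrak{g}_2\bigl(\sum_k|\vec{u}^k_\circ(x)|\bigr)\ge\mathfrak{g}_2(0)$ directly, with no appeal to Sobolev embedding and no case split on $m\le n/2$ versus $m>n/2$. The paper uses exactly this, obtaining the universal constant $M=n^{-m}\mathfrak{g}_2(0)$ independent of $\vec{u}$, whereas your stated constant depends on $\|\vec{u}\|_{m,2}$ (though in fact your $m_0$ equals $\mathfrak{g}_2(0)$ once you remember monotonicity).
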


\begin{proof}
For all $\lambda\in\mathbb{R}^n$ and $\xi\in\mathbb{R}^N$, by (\ref{e:6.2}) we have
\begin{eqnarray*}
&&\sum^N_{i,j=1}\sum_{|\alpha|=|\beta|=m}
F^{ij}_{\alpha\beta}(x, \vec{u}(x),\cdots, D^m\vec{u}(x))\lambda^{\alpha+\beta}\xi_i{\xi}_j\\
&\ge&\mathfrak{g}_2(0)
\sum^N_{i=1}\sum_{|\alpha|= m}(\lambda^\alpha\xi_i)^2=\mathfrak{g}_2(0)
\sum_{|\alpha|= m}(\lambda_1^{\alpha_1}\cdots\lambda_n^{\alpha_n})^2\sum^N_{j=1}|\xi_j|^{2}.
\end{eqnarray*}
Hence the claim follows because
$$
|\lambda|^{2m}=\left(\sum^n_{i=1}\lambda_i^2\right)^m$$ $$=\sum_{\alpha_1+\cdots+\alpha_n=m}
\frac{m!}{\alpha_1!\cdots\alpha_n!}\lambda_1^{2\alpha_1}\cdots\lambda_n^{2\alpha_n}\le
n^m\sum_{\alpha_1+\cdots+\alpha_n=m}\lambda_1^{2\alpha_1}\cdots\lambda_n^{2\alpha_n}
$$
and
$$
\sum_{\alpha_1+\cdots+\alpha_n=m}\frac{m!}{\alpha_1!\cdots\alpha_n!}=n^m.
$$
\end{proof}

Having above preparation we turn to main research questions in this section.
For the sake of simplicity,
we assume  that \textsf{$\Omega$ is star-shaped}
 with respect to the origin, and take
\begin{eqnarray}\label{e:BifE.star}
\Omega_t=\{tx\,|\,x\in\Omega\}\quad\hbox{and}\quad
\varphi_t(x)=tx\quad\hbox{for}\quad t\in (0, 1].
\end{eqnarray}
 Then since
$D^\alpha(\vec{u}\circ\varphi_t^{-1})(x)=\frac{1}{t^{|\alpha|}}(D^\alpha\vec{u})(x/t)$
we have
\begin{eqnarray}\label{e:BifE.13}
\mathcal{F}(t,\vec{u})&=&\int_{\Omega_t} F(x, \vec{u}(x/t),\cdots,
\frac{1}{t^m}(D^m\vec{u})(x/t))dx\nonumber\\
&=&\int_{\Omega} {\bf F}(x, \vec{u}(x),\cdots,
(D^m\vec{u})(x);t)dx
 \end{eqnarray}
 where ${\bf F}:\overline\Omega\times\prod^m_{k=0}\mathbb{R}^{N\times M_0(k)}\times(0, 1]\to\R$ is defined by
 \begin{eqnarray}\label{e:BifE.14}
 {\bf F}(x,\xi;t)=t^nF(tx, \xi^0, \frac{1}{t}\xi^1,\cdots, \frac{1}{t^m}\xi^m).
\end{eqnarray}
 Then
 \begin{eqnarray}\label{e:BifE.15}
  {\bf F}^i_\alpha(x,\xi;t)&=&t^{n-|\alpha|}F^i_\alpha(tx, \xi^0, \frac{1}{t}\xi^1,\cdots, \frac{1}{t^m}\xi^m),\\
  {\bf F}^{ij}_{\alpha\beta}(x,\xi;t)&=&t^{n-|\alpha|-|\beta|}F^{ij}_{\alpha\beta}(tx, \xi^0, \frac{1}{t}\xi^1,\cdots, \frac{1}{t^m}\xi^m).\label{e:BifE.16}
  \end{eqnarray}
  Fix a real $\epsilon\in (0, 1)$. Since
  $$\sum^N_{k=1}|\xi_\circ^k|=\sum^N_{k=1}(\sum_{|\alpha|<m-n/2}|\xi_\alpha^k|^2)^{1/2}\le M(m)\sum^N_{k=1}|\xi_\circ^k|,$$
  (\ref{e:6.1}) and (\ref{e:6.2}) imply
  {\scriptsize
  \begin{eqnarray}\label{e:BifE.17}
  |{\bf F}^{ij}_{\alpha\beta}(x,\xi;t)|&=&|t^{n-|\alpha|-|\beta|}F^{ij}_{\alpha\beta}(tx, \xi^0, \frac{1}{t}\xi^1,\cdots, \frac{1}{t^m}\xi^m)|\nonumber\\
  &&\hspace{-1cm}\le t^{n-2m}\mathfrak{g}_1\left(\sum^N_{k=1}(\sum_{|\alpha|<m-n/2}
  \frac{1}{t^{2|\alpha|}}|\xi_\alpha^k|^2)^{1/2}\right)
  \left(1+ \sum^N_{k=1}\sum_{m-n/2\le |\gamma|\le
m}\frac{1}{t^{|\gamma|}}|\xi^k_\gamma|^{2_\gamma}\right)^{2_{\alpha\beta}}\nonumber\\
&&\hspace{-1cm}\le \max\{1,\epsilon^{n-2m}\}\mathfrak{g}_1(\epsilon^{n-2m}M(m)\sum^N_{k=1}|\xi_\circ^k|)
\left(1+ \sum^N_{k=1}\sum_{m-n/2\le |\gamma|\le
m}|\xi^k_\gamma|^{2_\gamma}\right)^{2_{\alpha\beta}}
  \end{eqnarray}}
 for all $|\alpha|, |\beta|\le m$, $i,j=1,\cdots,N$ and $(x,\xi,t)\in\overline\Omega\times\prod^m_{k=0}\mathbb{R}^{N\times M_0(k)}\times [\epsilon, 1]$,  and
 {\footnotesize
  \begin{eqnarray}\label{e:BifE.17*}
\sum^N_{i,j=1}\sum_{|\alpha|=|\beta|=m}{\bf F}^{ij}_{\alpha\beta}(x,\xi;t)\eta^i_\alpha\eta^j_\beta&\ge&
t^{n-2m}\mathfrak{g}_2\left(\sum^N_{k=1}(\sum_{|\alpha|<m-n/2}
\frac{1}{t^{2|\alpha|}}|\xi_\alpha^k|^2)^{1/2}\right)
\sum^N_{i=1}\sum_{|\alpha|= m}(\eta^i_\alpha)^2\nonumber\\
&\ge&\min\{1, \epsilon^{n-2m}\}\mathfrak{g}_2(\sum^N_{k=1}|\xi_\circ^k|)
\sum^N_{i=1}\sum_{|\alpha|= m}(\eta^i_\alpha)^2
\end{eqnarray}}
for any $\eta=(\eta^{i}_{\alpha})\in\R^{N\times M_0(m)}$.
Hence (i) of the following proposition holds true.

 \begin{proposition}\label{prop:BifE.11}
Let $\Omega\subset\R^n$ be a star-shaped bounded domain,
$F$ be as in \textsf{Hypothesis} $\mathfrak{F}_{2,N,m,n}$.
 Suppose that $F$ is also $C^2$ and that ${\bf F}$ is defined by (\ref{e:BifE.14}).
Then for any fixed $\epsilon\in (0,1)$ the following holds.
\begin{enumerate}
\item[\rm (i)] All ${\bf F}(\cdot; t)$ satisfy Hypothesis~$\mathfrak{F}_{2,N,m,n}$, and the inequalities in (\ref{e:6.1}) and (\ref{e:6.2}) are uniformly
satisfied for all $t\in [\epsilon,1]$.
\item[\rm (ii)] For each $t\in (0,1]$, $D_t{\bf F}(x,\xi;t)$ is differentiable in each $\xi^i_\alpha$,
    each  ${\bf F}^i_\alpha(x,\xi;t)$ is differentiable in $t$, and
     $D_t{\bf F}^i_\alpha(x,\xi;t)=D_{\xi^i_\alpha}D_t{\bf F}(x,\xi;t)$. And
      $$
\sup_{|\alpha|\le m}\sup_{1\le i\le N}\sup_{t\in[\epsilon,1]}\int_\Omega
\left[|D_t {\bf F}(x,0;t)|+ |D_t {\bf F}^i_{\alpha}(x, 0;t)|^{2'_\alpha}\right]dx<\infty.
$$
\item[\rm (iii)] Suppose that  for all $\alpha, i, l$  and
$(x, \xi)\in\overline\Omega\times\prod^m_{k=0}\mathbb{R}^{N\times M(m)}$,
$F$ also satisfies:
\begin{eqnarray}\label{e:BifE.18}
&&|F^i_{\alpha x_l}(x,\xi)|\le  \mathfrak{g}_0(\sum^N_{k=1}|\xi_\circ^k|)\sum_{|\beta|<m-n/2}\bigg(1+
\sum^N_{k=1}\sum_{m-n/2\le |\gamma|\le
m}|\xi^k_\gamma|^{2_\gamma }\bigg)^{2_{\alpha\beta}}\nonumber\\
&&+\mathfrak{g}_0(\sum^N_{k=1}|\xi^k_\circ|)\sum^N_{l=1}\sum_{m-n/2\le |\beta|\le m} \bigg(1+
\sum^N_{k=1}\sum_{m-n/2\le |\gamma|\le m}|\xi^k_\gamma|^{2_\gamma }\bigg)^{2_{\alpha\beta}}|\xi^l_\beta|,
\end{eqnarray}
 where $\mathfrak{g}_0:[0,\infty)\to\mathbb{R}$ is a continuous, positive, nondecreasing function,  and is constant if $m<n/2$.
 Then for all $i=1,\cdots,N$, $|\alpha|\le m$ and $(x, \xi, t)\in\overline\Omega\times\prod^m_{k=0}\mathbb{R}^{N\times M(m)}\times [\epsilon, 1]$,
 \begin{eqnarray}\label{e:BifE.19}
&&|D_\lambda {\bf F}^i_\alpha(x,\xi;t)|\le|D_t {\bf F}^i_\alpha(x,0;t)|\nonumber\\
&&+ \mathfrak{g}(\sum^N_{k=1}|\xi_\circ^k|)\sum_{|\beta|<m-n/2}\bigg(1+
\sum^N_{k=1}\sum_{m-n/2\le |\gamma|\le
m}|\xi^k_\gamma|^{2_\gamma }\bigg)^{2_{\alpha\beta}}\nonumber\\
&&+\mathfrak{g}(\sum^N_{k=1}|\xi^k_\circ|)\sum^N_{l=1}\sum_{m-n/2\le |\beta|\le m} \bigg(1+
\sum^N_{k=1}\sum_{m-n/2\le |\gamma|\le m}|\xi^k_\gamma|^{2_\gamma }\bigg)^{2_{\alpha\beta}}|\xi^l_\beta|,
\end{eqnarray}
 where $\mathfrak{g}:[0,\infty)\to\mathbb{R}$ is a continuous, positive,
 nondecreasing function.
\end{enumerate}
\end{proposition}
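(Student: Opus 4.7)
Part (i) having already been verified in the text preceding the proposition, I address only (ii) and (iii). The starting point is to set $\zeta = (\zeta^i_\alpha)$ with $\zeta^i_\alpha = t^{-|\alpha|}\xi^i_\alpha$, so that (\ref{e:BifE.14}) rewrites as ${\bf F}(x,\xi;t) = t^n F(tx, \zeta)$. Since $F\in C^2$, the chain rule will yield
\begin{eqnarray*}
D_t{\bf F}(x,\xi;t) &=& nt^{n-1}F(tx,\zeta) + t^n\sum_{l=1}^n x_l F_{x_l}(tx,\zeta)\\
&& -\sum_{i=1}^N\sum_{|\alpha|\ge 1}|\alpha|t^{n-|\alpha|-1}F^i_\alpha(tx,\zeta)\xi^i_\alpha,
\end{eqnarray*}
together with the analogous formula for $D_t{\bf F}^i_\alpha$ obtained by differentiating (\ref{e:BifE.15}) in $t$. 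Both expressions are $C^1$ in $\xi$ and in $t$ respectively, so Schwarz's theorem on the equality of continuous mixed partials will give $D_t{\bf F}^i_\alpha = D_{\xi^i_\alpha}D_t{\bf F}$.

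For the integrability assertion in (ii), I would evaluate the preceding formulas at $\xi=0$ (so $\zeta=0$) to obtain
\begin{eqnarray*}
D_t{\bf F}(x,0;t) &=& nt^{n-1}F(tx,0)+t^n\sum_l x_l F_{x_l}(tx,0),\\
D_t{\bf F}^i_\alpha(x,0;t) &=& (n-|\alpha|)t^{n-|\alpha|-1}F^i_\alpha(tx,0)+t^{n-|\alpha|}\sum_l x_l F^i_{\alpha x_l}(tx,0).
\end{eqnarray*}
Since $\Omega$ is star-shaped with respect to the origin, $tx\in\overline\Omega$ for $x\in\overline\Omega$ and $t\in[\epsilon,1]$. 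The change of variable $y=tx$ then reduces each integral to an $L^1$ or $L^{2'_\alpha}$ norm of the zero-section data on $\Omega_t\subset\Omega$, multiplied by powers of $t$ bounded on $[\epsilon,1]$. Hypothesis $\mathfrak{F}_{2,N,m,n}$(i) supplies $F(\cdot,0)\in L^1(\Omega)$ and $F^i_\alpha(\cdot,0)\in L^{2'_\alpha}(\Omega)$, while $F\in C^2$ on the compact set $\overline\Omega$ makes $F_{x_l}(\cdot,0)$ and $F^i_{\alpha x_l}(\cdot,0)$ continuous, hence in every Lebesgue class on $\Omega$. The uniform bound for $t\in[\epsilon,1]$ then follows by taking the supremum of the resulting constants over a compact interval.

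For (iii), I would subtract the zero-section value from $D_t{\bf F}^i_\alpha(x,\xi;t)$ to produce the decomposition
\begin{eqnarray*}
D_t{\bf F}^i_\alpha(x,\xi;t) - D_t{\bf F}^i_\alpha(x,0;t)
&=& (n-|\alpha|)t^{n-|\alpha|-1}[F^i_\alpha(tx,\zeta)-F^i_\alpha(tx,0)]\\
&& + t^{n-|\alpha|}\sum_l x_l[F^i_{\alpha x_l}(tx,\zeta)-F^i_{\alpha x_l}(tx,0)]\\
&& - \sum_{j=1}^N\sum_{|\beta|\ge 1}|\beta|t^{n-|\alpha|-|\beta|-1}F^{ij}_{\alpha\beta}(tx,\zeta)\xi^j_\beta.
\end{eqnarray*}
The first bracket will be estimated by the mean-value theorem together with the bound (\ref{e:6.1}) applied to $F^{ij}_{\alpha\beta}(tx,s\zeta)$ for $s\in[0,1]$; the second bracket is controlled directly by the hypothesized bound (\ref{e:BifE.18}) evaluated at $(tx,\zeta)$ and $(tx,0)$; the third term is bounded by (\ref{e:6.1}). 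All resulting estimates involve $\sum_k|\zeta^k_\circ|$ and $|\zeta^k_\gamma|^{2_\gamma}$ for $m-n/2\le|\gamma|\le m$; substituting $\zeta^k_\gamma = t^{-|\gamma|}\xi^k_\gamma$ and using $t\in[\epsilon,1]$ replaces each factor $t^{-s}$ by a constant depending only on $\epsilon,m,n$, and the nondecreasing functions $\mathfrak{g}_0, \mathfrak{g}_1$ at $\sum_k|\zeta^k_\circ|$ are dominated by themselves evaluated at a fixed multiple of $\sum_k|\xi^k_\circ|$. Absorbing all such constants into a single continuous, positive, nondecreasing function $\mathfrak{g}$ will yield (\ref{e:BifE.19}).

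The main obstacle I foresee is the bookkeeping of exponents: each of the three terms in the decomposition must be matched against the mixed structure of the right-hand side of (\ref{e:BifE.19}), which splits according to whether $|\beta|<m-n/2$ or $|\beta|\ge m-n/2$, and one must verify that the extra powers of $t^{-1}$ produced by differentiating $\zeta$ and by the chain-rule prefactors $t^{n-|\alpha|-|\beta|-1}$ can always be absorbed into $\mathfrak{g}$ without disturbing the growth pattern in $\xi$ encoded by the exponents $2_\gamma$ and $2_{\alpha\beta}$.
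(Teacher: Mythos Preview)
Your proposal is correct and follows essentially the same approach as the paper: compute $D_t{\bf F}$ and $D_t{\bf F}^i_\alpha$ by the chain rule, verify the mixed partials agree, and then bound each summand in the chain-rule expansion of $D_t{\bf F}^i_\alpha$ using (\ref{e:6.1}), (\ref{e:BifE.18}), and a mean-value estimate on $F^i_\alpha$ (which the paper imports as \cite[Proposition~4.3]{Lu7} rather than rederiving). Your chain-rule formulas are in fact written more carefully than the paper's displayed versions, which contain some typographical slips in the prefactors; your organization via the subtraction $D_t{\bf F}^i_\alpha(x,\xi;t)-D_t{\bf F}^i_\alpha(x,0;t)$ is a minor cosmetic variant of the paper's direct bound, but the substance is the same.
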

\begin{proof}
 Since $F$ is $C^2$ and
 \begin{eqnarray*}
&& D_t{\bf F}(x,\xi;t)\\&=&nt^{n-1}F(tx, \xi^0, \frac{1}{t}\xi^1,\cdots, \frac{1}{t^m}\xi^m)+
 t^{n+1}\sum^n_{l=1}D_{x_l}F(tx, \xi^0, \frac{1}{t}\xi^1,\cdots, \frac{1}{t^m}\xi^m)\nonumber\\
 &+& \sum^N_{j=1}\sum^m_{|\gamma|=1}t^{n-|\gamma|}F^j_{\gamma}(tx, \xi^0, \frac{1}{t}\xi^1,\cdots, \frac{1}{t^m}\xi^m),
   \end{eqnarray*}
 and thus
 \begin{eqnarray}\label{e:BifE.20}
 D_t{\bf F}^i_\alpha(x,\xi;t)&=&D_{\xi^i_\alpha}D_t{\bf F}(x,\xi;t)\nonumber\\
 &=&n t^{n-|\alpha|-1}F^i_\alpha(tx, \xi^0, \frac{1}{t}\xi^1,\cdots, \frac{1}{t^m}\xi^m)\nonumber\\
 &+&t^{n-|\alpha|+1}\sum^n_{l=1}D_{x_l}F^i_{\alpha}(tx, \xi^0, \frac{1}{t}\xi^1,\cdots, \frac{1}{t^m}\xi^m)\nonumber\\
 &+& \sum^N_{j=1}\sum^m_{|\gamma|=1} t^{n-|\alpha|-|\gamma|}F^{ij}_{\alpha\gamma}(tx, \xi^0, \frac{1}{t}\xi^1,\cdots, \frac{1}{t^m}\xi^m),
   \end{eqnarray}
   (ii) is clear.
By \cite[Proposition~4.3]{Lu7},
$\mathfrak{F}_{2,N,m,n}$ implies that for $\mathfrak{g}_4(t):=\mathfrak{g}_1(t)t+\mathfrak{g}_1(t)$,
\begin{eqnarray*}
|F^k_\alpha(x,\xi)|&\le&|F^k_\alpha(x,0)|
+ \mathfrak{g}_4(\sum^N_{i=1}|\xi^i_\circ|)\sum_{|\beta|<m-n/2}\Bigg(1+
\sum^N_{i=1}\sum_{m-n/2\le |\gamma|\le
m}|\xi^i_\gamma|^{2_\gamma }\Bigg)^{2_{\alpha\beta}}\nonumber\\
&+&\mathfrak{g}_4(\sum^N_{i=1}|\xi^i_\circ|)\sum_{m-n/2\le |\beta|\le m} \Bigg(1+
\sum^N_{i=1}\sum_{m-n/2\le |\gamma|\le
m}|\xi^i_\gamma|^{2_\gamma }\Bigg)^{2_{\alpha\beta}}\sum^N_{j=1}|\xi^j_\beta|.
\end{eqnarray*}
As arguments above Proposition~\ref{prop:BifE.11},
 (\ref{e:BifE.19}) easily follows from (\ref{e:BifE.20}), this and (\ref{e:BifE.17})-(\ref{e:BifE.18}).
\end{proof}

\begin{remark}\label{rm:BifE11}
{\rm Under the assumptions of Proposition~\ref{prop:BifE.11} without (iii),
suppose that $F$ is $C^3$ and that
 there exists a continuous, positive, nondecreasing functions
 $\overline{\mathfrak{g}}$ such that
\begin{eqnarray}\label{e:BifE.21}
 |D_{x_l}F^{ij}_{\alpha\beta}(x,\xi)|+ |F^{ijk}_{\alpha\beta\gamma}(x,\xi)|\le
\overline{\mathfrak{g}}(\sum^N_{k=1}|\xi_\circ^k|)\left(1+
\sum^N_{k=1}\sum_{m-n/2\le |\gamma|\le
m}|\xi^k_\gamma|^{2_\gamma}\right)^{2_{\alpha\beta}}
\end{eqnarray}
for $l=1,\cdots,n$, $i,j,k=1,\cdots,N$ and $|\alpha|, |\beta|\le m, 1\le|\gamma|\le m$
 and $(x, \xi)\in\overline\Omega\times\R^{M(m)}$.
Since by (\ref{e:BifE.20}) we may obtain
 \begin{eqnarray}\label{e:BifE.22}
 D_t{\bf F}^{ij}_{\alpha\beta}(x,\xi;t)&=&D_{\xi^j_\beta}D_t{\bf F}^i_\alpha(x,\xi;t)\nonumber\\
 &=&nt^{n-|\alpha|-|\beta|-1}F^{ij}_{\alpha\beta}(tx, \xi^0, \frac{1}{t}\xi^1,\cdots, \frac{1}{t^m}\xi^m)\nonumber\\
 &+&t^{n-|\alpha|-|\beta|+1}\sum^n_{l=1}D_{x_l}F^{ij}_{\alpha\beta}(tx, \xi^0, \frac{1}{t}\xi^1,\cdots, \frac{1}{t^m}\xi^m)\nonumber\\
 &+& \sum^N_{k=1}\sum^m_{|\gamma|=1} t^{n-|\alpha|-|\beta|-|\gamma|}F^{ijk}_{\alpha\beta\gamma}(tx, \xi^0, \frac{1}{t}\xi^1,\cdots, \frac{1}{t^m}\xi^m),
   \end{eqnarray}
 it follows from  (\ref{e:BifE.21}) and (\ref{e:6.1}) that
\begin{eqnarray}\label{e:BifE.23}
 |D_t{\bf F}^{ij}_{\alpha\beta}(x,\xi;\lambda)|\le
\mathfrak{g}^\star(\sum^N_{k=1}|\xi_\circ^k|)\left(1+
\sum^N_{k=1}\sum_{m-n/2\le |\gamma|\le
m}|\xi^k_\gamma|^{2_\gamma}\right)^{2_{\alpha\beta}}
\end{eqnarray}
for some continuous, positive, nondecreasing functions $\mathfrak{g}^\star$ and all
$(x,\xi,t)\in\overline\Omega\times\prod^m_{k=0}
\mathbb{R}^{N\times M_0(k)}\times [\epsilon, 1]$
 and  $i,j=1,\cdots,N$ and $|\alpha|, |\beta|\le m$.
By (b) of Remark~\ref{rm:BifE10}, (\ref{e:BifE.23}) may lead to (\ref{e:BifE.19}).}
\end{remark}

The following is the first of main results in this section.

\begin{theorem}\label{th:BifE.11}
Let $\Omega\subset\mathbb{R}^n$ be a star-shaped bounded  domain
with $C^{\infty}$ boundary, and let $\Omega_t$ and
$\varphi_t$ be as in (\ref{e:BifE.star}) for $t\in (0, 1]$.
Let $F$ be as in \textsf{Hypothesis} $\mathfrak{F}_{2,N,m,n}$, $C^2$ and satisfy (\ref{e:BifE.18}).
Suppose that $\vec{u}=0$ is a solution of (\ref{e:BifE.10}) for each $t\in (0, 1]$, i.e.,
  \begin{eqnarray}\label{e:BifE.24-}
 \sum_{|\alpha|\le m}(-1)^{|\alpha|}D^\alpha F^i_\alpha(x, 0)=0\quad
\forall x\in\Omega,\quad i=1,\cdots,N,
  \end{eqnarray}
 all $F^{ij}_{\alpha\beta}(\cdot, 0)$ belong to $C^\infty(\overline{\Omega})$,
 and that either
 \begin{eqnarray}\label{e:BifE.24}
 \sum^N_{i,j=1}\sum_{|\alpha|,|\beta|\le m}
(-1)^{\alpha}D^\alpha\bigl[F^{ij}_{\alpha\beta}(x, 0)D^\beta v^j\bigr]=0
 \end{eqnarray}
have no nontrivial solutions in $C^\infty(\overline{\Omega},\mathbb{R}^N)$ with compact
support in any of the manifolds $\Omega_t$, or (\ref{e:BifE.24}) has no solutions $\vec{u}\ne 0$ such that $\vec{u}$
vanishes on some open set in $\Omega$. (This implies $\nu_0=0$.)
Then there is only a finite number of $t\in (0, 1]$ with positive $\nu_t$.
Moreover,
$(t,0)\in (0, 1]\times W^{m,2}_0(\Omega,\mathbb{R}^N)$
is a bifurcation point  for the equation
\begin{eqnarray}\label{e:BifE.25}
\mathcal{F}_{\vec{u}}'(t, \vec{u})=0
\end{eqnarray}
 if and only if $\nu_t\ne 0$.
\end{theorem}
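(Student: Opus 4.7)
The plan is to translate the moving-domain bifurcation question via the pullback functional $\mathcal{F}(t,\vec{u})$ of (\ref{e:BifE.13}) and then combine the Smale Morse index theorem (Proposition~\ref{prop:BifE.10}) with the parameterized critical-group splitting result Theorem~\ref{th:BifE.9}. Since $F$ is assumed $C^2$, satisfies Hypothesis~$\mathfrak{F}_{2,N,m,n}$, and obeys (\ref{e:BifE.18}), Proposition~\ref{prop:BifE.11} guarantees that the reparametrized Lagrangian $\mathbf{F}(x,\xi;t)$ fulfills all three hypotheses of Theorem~\ref{th:BifE.9} uniformly on any interval $[\epsilon,1]\subset(0,1]$. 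So Theorem~\ref{th:BifE.9} will be applicable to $\{\mathcal{F}_t\}$ on every compact subinterval of $(0,1]$ with the constant solution curve $\vec{u}_t\equiv 0$, after affine reparametrization to $[0,1]$.

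For the finiteness claim I would observe that $D(\nabla\mathcal{F}_t)(0)$, pulled back via $\varphi_t^\ast$, corresponds to the bilinear form (\ref{e:BifE-bilinear}) on $W^{m,2}_0(\Omega_t,\mathbb{R}^N)$ with coefficients $A^{ij}_{\alpha\beta}(x)=F^{ij}_{\alpha\beta}(x,0)\in C^\infty(\overline{\Omega})$; the associated operator $L_t$ is strongly elliptic self-adjoint thanks to Claim~\ref{cl:strong} and (\ref{e:6.2}). With the unique-continuation hypothesis in hand and the deformation rendered contracting by reversing time, Proposition~\ref{prop:BifE.10} gives both the global bound $\#\{t\in(0,1]:\nu_t>0\}<\infty$ and, applied to a small interval around any conjugate point $t_0$, the local jump formula $\mu_{t_0+\epsilon}=\mu_{t_0-\epsilon}+\nu_{t_0}$.

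For necessity, if $(t_0,0)$ is a bifurcation point, then there are $t_k\to t_0$ and non-zero $\vec{u}_k\to 0$ in $W^{m,2}_0(\Omega,\mathbb{R}^N)$ with $\nabla\mathcal{F}_{t_k}(\vec{u}_k)=0$. Using $\nabla\mathcal{F}_{t_k}(0)=0$ and the Hadamard $C^1$-regularity from Theorem~\ref{th:6.1}(B), the mean-value identity gives $\int_0^1 D(\nabla\mathcal{F}_{t_k})(s\vec{u}_k)\,ds\,\vec{v}_k=0$ with $\vec{v}_k:=\vec{u}_k/\|\vec{u}_k\|_{m,2}$. I would split $D(\nabla\mathcal{F}_t)=P_t+Q_t$ as in Theorem~\ref{th:6.2}(D): $P$-part uniformly coercive, $Q$-part compact and continuous in $(t,\vec{u})$. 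Testing against $\vec{v}_k$, a weak subsequential limit $\vec{v}_0$ cannot be zero (else coercivity of $P$ contradicts the vanishing), and in the limit $D(\nabla\mathcal{F}_{t_0})(0)\vec{v}_0=0$, so $\nu_{t_0}>0$. Alternatively, Corollary~3.3 of \cite{Lu8} can be invoked directly, as in the proof of Theorem~\ref{th:BifE.3}.

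For sufficiency, fix $t_0$ with $\nu_{t_0}>0$, use the finiteness of conjugate points to pick $\delta>0$ with $[t_0-\delta,t_0+\delta]\subset(0,1]$ and $\nu_t=0$ on the punctured neighborhood, and apply Theorem~\ref{th:BifE.9} on this interval. Alternative (II) would make the critical groups $C_\ast(\mathcal{F}_t,0;{\bf K})$ independent of $t$; but at $t=t_0\pm\delta$ the origin is Morse non-degenerate, so Morse theory gives $C_q(\mathcal{F}_{t_0\pm\delta},0;{\bf K})=\delta_{q,\mu_{t_0\pm\delta}}{\bf K}$, and the jump formula $\mu_{t_0+\delta}=\mu_{t_0-\delta}+\nu_{t_0}$ makes these groups genuinely different. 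Hence alternative (I) must occur, producing a bifurcation point $(t^\ast,0)$ in $[t_0-\delta,t_0+\delta]$; by the necessity direction $\nu_{t^\ast}>0$, which forces $t^\ast=t_0$. The hardest step will be the passage to the limit in the necessity argument, which demands enough joint continuity/compactness of $(t,\vec{u})\mapsto Q_t(\vec{u})$ near $(t_0,0)$; this should follow by refining Claim~4.18 of \cite{Lu7} to include the $t$-parameter, using the uniform estimates (\ref{e:BifE.17})--(\ref{e:BifE.17*}) from Proposition~\ref{prop:BifE.11}.
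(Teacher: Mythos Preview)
Your plan is essentially the paper's own proof: Smale's index theorem (Proposition~\ref{prop:BifE.10}) for finiteness and the jump formula, Theorem~\ref{th:BifE.9} via Proposition~\ref{prop:BifE.11} for sufficiency, and a linearization/compactness argument for necessity. One small correction: the abstract result to invoke for necessity is \cite[Theorem~3.1]{Lu8} rather than Corollary~3.3 (the latter is tailored to the linear-in-$\lambda$ setting of Theorem~\ref{th:BifE.3}); the paper's Step~3 is exactly the verification of its four hypotheses (i)--(iv) on $P_t$ and $Q_t$, and this---not just the $Q$-continuity you flag---is the technical core, carried out with the truncation trick (\ref{e:BifE.31}) and Proposition~\ref{prop:C.2}.
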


 If $n=\dim\Omega=1$ and $m\ge 1$ the similar result
 can be proved with \cite[Theorem~2.4]{Uh}, see \cite{Lu9}.

 \begin{proof}[Proof of Theorem~\ref{th:BifE.11}]
 {\bf Step 1}. By the assumptions on $F$ and Claim~\ref{cl:strong},
 the operator $L$ in (\ref{e:strong*}) with $A^{ij}_{\alpha\beta}(x)=F^{ij}_{\alpha\beta}(x,0)$
 is strongly elliptic and self-adjoint.
 Hence $\mu(L_t)=\mu_t$ and $\nu(L_t)=\nu_t$ by the arguments below
 (\ref{e:BifE.11}) and definitions of
 $\mu(L_t)$ and $\nu(L_t)$. From Proposition~\ref{prop:BifE.10}
  we deduce that
\begin{eqnarray}\label{e:BifE.26}
\mu_0-\mu_s=\sum_{0< t\le s}\nu_t\quad\forall s\in (0, 1]
\end{eqnarray}
and that $\mu_1=0$ if $\Omega_1$ has sufficiently small volume.
  The first claim follows directly. ({\it Note}: We do not use
  the assumptions that $\Omega$ is star-shaped
  and the special deformation is as in (\ref{e:BifE.star}).)

\vspace{4pt}\noindent
\noindent{\bf Step 2} ({\it Prove the second claim}).
Assume $\nu_s\ne 0$ for some $s\in (0, 1]$.  By the first claim we may choose  $0<\epsilon<s$
such that $\nu_\epsilon=0$ and $s=s_1<\cdots<s_k$ are all points in
$[\epsilon,1]$ in which $\nu_t\ne 0$.
Clearly, (\ref{e:BifE.26}) implies  that
$\mu_t=\mu_s$ for all $t\in [\epsilon, s]$, and $\mu_t=\mu_s+\nu_s$ for all
$t\in (s,s_2)$. Thus in the sense of \cite[\S2.1]{Lu7} $0\in W^{m,2}_0(\Omega,\mathbb{R}^N)$
is a nondegenerate critical point of
$\mathcal{F}_{t}$ for each $t\in [\epsilon, s)\cup (s,s_2)$.
It follows from \cite[Theorem~2.1]{Lu7}
that
\begin{eqnarray}\label{e:BifE.27}
C_q(\mathcal{F}_{t}, 0;{\bf K})=\delta_{q\mu_{s}}\;\forall t\in [\epsilon, s)\quad\hbox{and}\quad
C_q(\mathcal{F}_{t}, 0;{\bf K})=\delta_{q(\mu_{s}+\nu_s)}\;\forall t\in (s,s_2).
\end{eqnarray}
By Proposition~\ref{prop:BifE.11},  the  function
$$
\overline\Omega\times\prod^m_{k=0}\mathbb{R}^{N\times M_0(k)}\times [\epsilon, 1]\ni (x,
\xi, t)\mapsto {\bf F}(x,\xi;t)=t^nF(tx, \xi^0, \frac{1}{t}\xi^1,\cdots, \frac{1}{t^m}\xi^m)\in\R
$$
satisfies the conditions of  Theorem~\ref{th:BifE.9}. Hence
for any $t_1<t_2$ satisfying $\epsilon\le t_1<s<t<t_2<s_2$,
applying  Theorem~\ref{th:BifE.9} to the family $[t_1,t_2]\ni t\mapsto\mathcal{F}_{t}$
we derive from (\ref{e:BifE.27}) that
 for some $\bar{t}\in[t_1, t_2]$, $(\bar{t},0)\in (0, 1]\times W^{m,2}_0(\Omega,\mathbb{R}^N)$ is a bifurcation point  for
(\ref{e:BifE.25}). Since $t_1$ and $t_2$ may be  arbitrarily close to $s$,
$(s,0)\in (0, 1]\times W^{m,2}_0(\Omega,\mathbb{R}^N)$ must be a bifurcation point for (\ref{e:BifE.25}).

Conversely, let $(s,0)\in (0, 1]\times W^{m,2}_0(\Omega,\mathbb{R}^N)$
be a bifurcation point  for the equation
(\ref{e:BifE.25}). By the first claim we may choose  $0<\rho<s$ such that
$\nu_t=0$ for any $s\ne t\in\Lambda:=[s-\rho, s+\rho]\cap (0,1]$.
If $\{\mathcal{F}_t\,|\, t\in\Lambda\}$ satisfies the conditions of
\cite[Theorem~3.1]{Lu8} then $\nu_s\ne 0$.

\vspace{4pt}\noindent
\noindent{\bf Step 3} ({\it Check that \cite[Theorem~3.1]{Lu8} is applicable to $\{\mathcal{F}_t\,|\, t\in\Lambda\}$}).
Let $(\vec{u}_k)\subset W^{m,2}_0(\Omega,\mathbb{R}^N)$ converge to zero, and let $(t_k)\subset\Lambda$ converge to $s$.
By (\ref{e:6.6}) and (\ref{e:BifE.16})
\begin{eqnarray*}
   && ([P_{t_k}(\vec{u}_k)-P_{s}(0)]\vec{w},\vec{\varphi})_{m,2}\\
   &&\hspace{-.7cm}=\sum^N_{i,j=1}\sum_{|\alpha|=|\beta|=m}
   \int_\Omega [{\bf F}^{ij}_{\alpha\beta}(x, \vec{u}_k(x),\cdots, D^m \vec{u}_k(x);t_k)- {\bf F}^{ij}_{\alpha\beta}(x, 0;s)]D^\beta w^j\cdot D^\alpha \varphi^i dx
   \end{eqnarray*}
    for any $\vec{w}, \vec{\varphi}\in W^{m,2}_0(\Omega,\mathbb{R}^N)$.
    It follows from  the H\"older inequality that
 \begin{eqnarray*}
   && \|P_{t_k}(\vec{u}_k)-P_{s}(0)]\vec{w}\|_{m,2}\\
   &\le&\sum^N_{i,j=1}\sum_{|\alpha|=|\beta|=m}
  \\&& \left(\int_\Omega |{\bf F}^{ij}_{\alpha\beta}(x, \vec{u}_k(x),\cdots, D^m \vec{u}_k(x);t_k)- {\bf F}^{ij}_{\alpha\beta}(x, 0;s)|^2|D^\beta w^j|^2 dx\right)^{1/2}
   \end{eqnarray*}
 According to the condition (i) of \cite[Theorem~3.1]{Lu8},
 it suffices to prove that
\begin{eqnarray}\label{e:BifE.28}
\left(\int_\Omega |{\bf F}^{ij}_{\alpha\beta}(x, \vec{u}_k(x),\cdots, D^m \vec{u}_k(x);t_k)- {\bf F}^{ij}_{\alpha\beta}(x, 0;s)|^2|D^\beta w^j|^2 dx\right)^{1/2}\to 0
\end{eqnarray}
for given $|\alpha|=|\beta|=m$ and $1\le i,j\le N$.
Note that $2_{\alpha\beta}=  1-\frac{1}{2_\alpha}-\frac{1}{2_\beta}=0$ for $|\alpha|=|\beta|=m$. By (\ref{e:BifE.17}) we obtain
\begin{eqnarray}\label{e:BifE.29}
  |{\bf F}^{ij}_{\alpha\beta}(x,\xi;t)|
&\le&\max\{1,(s-\rho)^{n-2m}\}\mathfrak{g}_1
\left((s-\rho)^{n-2m}M(m)\sum^N_{l=1}|\xi_\circ^l|\right)
  \end{eqnarray}
 for all $|\alpha|=|\beta|=m$, $i,j=1,\cdots,N$ and $(x,\xi,t)\in\overline\Omega\times\prod^m_{k=0}\mathbb{R}^{N\times M_0(k)}\times \Lambda$.
Since $\|\vec{u}_k\|_{m,2}\to 0$, by the Sobolev embedding theorem there exists
  a constant $R>0$ such that
\begin{eqnarray}\label{e:BifE.30}
\sum^N_{l=1}\sum_{|\alpha|<m-n/2}|D^\alpha u_k^l(x)|^2<R^2\quad\forall (k, x)\in\N\times\overline{\Omega}.
\end{eqnarray}
Take a continuous function $\chi:\mathbb{R}\to\mathbb{R}$ such that
$$
\chi(t)=t\;\forall |t|\le 2R,\quad \chi(t)=\pm 3R\;\forall \pm t\ge 3R,\quad\hbox{and}\; |\chi(t)|\le 3R,
$$
and define a function
$\tilde{\bf F}^{ij}_{\alpha\beta}:\overline\Omega\times\R^{M(m)}\times\Lambda\to \R$ by
\begin{eqnarray}\label{e:BifE.31}
\tilde{\bf F}^{ij}_{\alpha\beta}(x,\xi, t)={\bf F}^{ij}_{\alpha\beta}(x, \tilde\xi,t),
\end{eqnarray}
where $\tilde\xi^k=(\tilde\xi^i_\alpha)\in\R^{N\times M_0(k)}$,
$\tilde\xi^i_\alpha=\chi(\xi^i_\alpha)$ if $|\alpha|<m-n/2$, and
$\tilde\xi^i_\alpha=\xi^i_\alpha$ if $m-n/2\le |\alpha|\le m$.
Clearly, ${\bf F}^{ij}_{\alpha\beta}(x, 0;t)=\tilde{\bf F}^{ij}_{\alpha\beta}(x, 0;t)\;\forall t$ and $\tilde{\bf F}^{ij}_{\alpha\beta}$ is continuous.
Recall  $\sum^N_{l=1}|\xi_\circ^l|=\sum^N_{l=1}\sum_{|\alpha|<m-n/2}|\xi_\alpha^l|^2$.
 We have
\begin{eqnarray}\label{e:BifE.32}
\sum^N_{l=1}|\tilde{\xi}_\circ^l|=
\sum^N_{l=1}\sum_{|\alpha|<m-n/2}|\tilde{\xi}_\alpha^l|^2\le 9NM(m)R^2,\quad\forall\xi\in\R^{M(m)}.
\end{eqnarray}
Hence $\tilde{\bf F}^{ij}_{\alpha\beta}$ is bounded by (\ref{e:BifE.29}).
It is easy to see that  (\ref{e:BifE.30}) implies
$$
{\bf F}^{ij}_{\alpha\beta}(x, \vec{u}_k(x),\cdots, D^m \vec{u}_k(x);t_k)=
\tilde{\bf F}^{ij}_{\alpha\beta}(x, \vec{u}_k(x),\cdots, D^m \vec{u}_k(x);t_k)\quad\forall(k,x)\in\N\times\overline{\Omega}.
$$
Since   $\|D^\alpha{u}^i_k\|_{2}\to 0$ for all $|\alpha|\le m$ and $i=1,\cdots,N$,
 we can apply Proposition~\ref{prop:C.2} to $f(x,\xi;t)=\tilde{\bf F}^{ij}_{\alpha\beta}(x,\xi;t)D^\beta w^j(x)$ to get
\begin{eqnarray*}
\left(\int_\Omega |\tilde{\bf F}^{ij}_{\alpha\beta}(x, \vec{u}_k(x),\cdots,
D^m \vec{u}_k(x);t_k)-
\tilde{\bf F}^{ij}_{\alpha\beta}(x, 0;t_k)|^2|D^\beta w^j|^2 dx\right)^{1/2}\to 0.
\end{eqnarray*}
But the Lebesgue dominated convergence theorem leads to
\begin{eqnarray*}
\left(\int_\Omega |\tilde{\bf F}^{ij}_{\alpha\beta}(x, 0;t_k)- \tilde{\bf F}^{ij}_{\alpha\beta}(x, 0;s)|^2|D^\beta w^j|^2 dx\right)^{1/2}\to 0.
\end{eqnarray*}
(\ref{e:BifE.28}) follows from these immediately.

  For any $\vec{u}, \vec{w}\in W^{m,2}_0(\Omega,\mathbb{R}^N)$ and
  $0<\epsilon\le t\le 1$, from (\ref{e:BifE.17*}) we derive
\begin{eqnarray*}
   && (P_{t}(\vec{u})\vec{w},\vec{w})_{m,2}\\
   &=&\sum^N_{i,j=1}\sum_{|\alpha|+|\beta|=2m}
   \int_\Omega {\bf F}^{ij}_{\alpha\beta}(x, \vec{u}(x),\cdots, D^m \vec{u}(x);t_k)D^\beta w^j\cdot D^\alpha w^i dx\nonumber\\
&\ge&\min\{1, \epsilon^{2n-2m}\}\mathfrak{g}_2(0)\sum^N_{i=1}\sum_{|\alpha|= m}\int_\Omega |D^\alpha w^i|^2 dx
   \end{eqnarray*}
 and hence  the condition (ii) of \cite[Theorem~3.1]{Lu8} is satisfied for
 $\mathcal{F}_t$ with $t\in\Lambda$.

Let $(t_k)\subset\Lambda$ converge to $s$. For any
$\vec{w}, \vec{\varphi}\in W^{m,2}_0(\Omega,\mathbb{R}^N)$,
by (\ref{e:6.7}) and (\ref{e:BifE.16}) we get
\begin{eqnarray*}
   && ([Q_{t_k}(0)-Q_{s}(0)]\vec{w},\vec{\varphi})_{m,2}\nonumber\\
   &=&\sum^N_{i,j=1}\sum_{|\alpha|+|\beta|<2m}\int_\Omega [{\bf F}^{ij}_{\alpha\beta}(x, 0;t_k)- {\bf F}^{ij}_{\alpha\beta}(x, 0;
   s)]D^\beta w^j\cdot D^\alpha \varphi^i dx
     \end{eqnarray*}
and therefore using the H\"older inequality and the Sobolev embedding theorem
we obtain  a constant $C=C(m,n,N,\Omega)>0$ such that
\begin{eqnarray}\label{e:BifE.33}
   && \|Q_{t_k}(0)-Q_{s}(0)\|_{\mathscr{L}(W^{m,2}(\Omega,\R^N)}\nonumber\\
    &\le&C\sum^N_{i,j=1}\sum_{|\alpha|+|\beta|<2m}\left(\int_\Omega |{\bf F}^{ij}_{\alpha\beta}(x,0;t_k)-
   {\bf F}^{ij}_{\alpha\beta}(x,0;s)|^{1/2_{\alpha\beta}} dx\right)^{2_{\alpha\beta}}.
  \end{eqnarray}
Note that $2_{\alpha\beta}\in (0, 1]$ for all $|\alpha|+|\beta|<2m$ and that (\ref{e:BifE.17}) implies
 \begin{eqnarray*}
 |{\bf F}^{ij}_{\alpha\beta}(x, 0;t)|\le\max\{1,\epsilon^{n-2m}\}\mathfrak{g}_1(0),\quad\forall (x,t)\in\overline{\Omega}\times [\epsilon,1].
  \end{eqnarray*}
 From the Lebesgue dominated convergence we deduce that
 $$
 \|Q_{t_k}(0)-Q_{s}(0)\|_{\mathscr{L}(W^{m,2}(\Omega,\R^N)}\to 0.
 $$
  That is, the condition (iv) of \cite[Theorem~3.1]{Lu8} is satisfied for
 $\mathcal{F}_t$ with $t\in\Lambda$.

  It remains to be proved  that  $\{\mathcal{F}_t\,|\,t\in\Lambda\}$
  satisfy the condition (iii) of \cite[Theorem~3.1]{Lu8}.
As in (\ref{e:BifE.33}), for the constant $C$ therein and all
$(t,\vec{u})\in (0, 1]\times W^{m,2}_0(\Omega,\mathbb{R}^N)$ we have
\begin{eqnarray}\label{e:BifE.34}
   && \|[Q_{t}(\vec{u})-Q_{t}(0)\|_{\mathscr{L}(W^{m,2}(\Omega,\R^N)}\\
   &\le&\sum^N_{i,j=1}\sum_{|\alpha|+|\beta|<2m}\biggl(
 \int_\Omega \Big|{\bf F}^{ij}_{\alpha\beta}(x, \vec{u}(x),\cdots, D^m \vec{u}(x);t)-
 {\bf F}^{ij}_{\alpha\beta}(x, 0;t)\Big|^{1/2_{\alpha\beta}} dx\biggr)^{2_{\alpha\beta}}.\nonumber
 \end{eqnarray}
Fix a $R>0$. Let $\tilde{\bf F}^{ij}_{\alpha\beta}:\overline\Omega\times\R^{M(m)}\times\Lambda\to \R$ be defined by
(\ref{e:BifE.31}). If $\vec{u}\in W^{m,2}_0(\Omega,\mathbb{R}^N)$ satisfies $\|\vec{u}\|_{m,2}\le R$ then
$$
{\bf F}^{ij}_{\alpha\beta}(x, \vec{u}(x),\cdots, D^m \vec{u}(x);t)=
\tilde{\bf F}^{ij}_{\alpha\beta}(x, \vec{u}(x),\cdots, D^m \vec{u}(x);t)\quad\forall(t,x)\in (0, 1]\times\overline{\Omega}.
$$
Clearly,  (\ref{e:BifE.17}) and (\ref{e:BifE.32}) imply that for all $(x, \xi, t)\in \overline{\Omega}\times \R^{M(m)}\times [\epsilon, 1]$,
 { \footnotesize
  \begin{eqnarray*}
  |\tilde{\bf F}^{ij}_{\alpha\beta}(x,\xi, t)|&=&|{\bf F}^{ij}_{\alpha\beta}(x, \tilde\xi,t)|\\
 &&\hspace{-1cm}\le \max\{1,\epsilon^{n-2m}\}\mathfrak{g}_1(\epsilon^{n-2m}M(m)9NM(m)R^2)\left(1+
\sum^N_{k=1}\sum_{m-n/2\le |\gamma|\le
m}|\xi^k_\gamma|^{2_\gamma}\right)^{2_{\alpha\beta}}\\
&&\hspace{-1cm}\le \max\{1,\epsilon^{n-2m}\}\mathfrak{g}_1(9N\epsilon^{n-2m}M(m)^2R^2)\left(1+
\sum^N_{k=1}\sum_{m-n/2\le |\gamma|\le
m}|\xi^k_\gamma|^{2_\gamma 2_{\alpha\beta}}\right)
  \end{eqnarray*}}
because $(x_1+\cdots+x_\nu)^q\le x_1^q+\cdots+x_\nu^q$ for any $q\in (0,1)$ and real $x_j\ge 0$, $j=1,\cdots,\nu$.
From Proposition~\ref{prop:C.2} we deduce that maps
$$
\prod_{|\gamma|\le m}(L^{2_\gamma}(\Omega))^N\to L^{1/2_{\alpha\beta}}(\Omega),\;
{\bf u}=\{u^i_\gamma\,|\,|\gamma|\le m,\;1\le i\le N\}\to \tilde{\bf F}^{ij}_{\alpha\beta}(\cdot, {\bf u}, t)
$$
are uniformly continuous at $0\in\prod_{|\gamma|\le m}(L^{2_\gamma}(\Omega))^N$ with respect to $t\in [\epsilon,1]$.
But $\|\vec{u}\|_{m,2}\to 0$ leads to $\|D^\gamma u^i\|_{2_\gamma}\to 0$ for all
$|\gamma|\le m$ and $1\le i\le N$. It follows from (\ref{e:BifE.34}) that
$$
\|\vec{u}\|_{m,2}\to 0\;\Longrightarrow\;
\|Q_{t}(\vec{u})-Q_{t}(0)\|_{\mathscr{L}(W^{m,2}(\Omega,\R^N)}\to 0\;
\hbox{uniformly in $t\in [\epsilon,1]$}.
$$
\end{proof}

\begin{corollary}\label{cor:BifE.12}
In Theorem~\ref{th:BifE.11},  the sentence ``Let $F$ be as in \textsf{Hypothesis} $\mathfrak{F}_{2,N,m,n}$, $C^2$ and satisfy (\ref{e:BifE.18})."
can be replaced by ``Let $F$ be as in \textsf{Hypothesis} $\mathfrak{F}_{2,N,m,n}$, $C^3$ and satisfy (\ref{e:BifE.21})."
\end{corollary}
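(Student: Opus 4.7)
The plan is to deduce Corollary~\ref{cor:BifE.12} by showing that the alternative smoothness and growth hypotheses still produce every consequence of Proposition~\ref{prop:BifE.11} that is actually used in the proof of Theorem~\ref{th:BifE.11}; once this is established, the proof of Theorem~\ref{th:BifE.11} carries over verbatim. The key observation is that the hypothesis (\ref{e:BifE.18}) enters into the proof of Theorem~\ref{th:BifE.11} only through part (iii) of Proposition~\ref{prop:BifE.11}, i.e., in securing the growth estimate (\ref{e:BifE.19}) that is needed to apply Theorem~\ref{th:BifE.9} to the rescaled family $\{\mathcal{F}_t\}$.

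First I would check that parts (i)--(ii) of Proposition~\ref{prop:BifE.11} remain valid. Part (i) requires only that $F$ be $C^2$ and satisfy (\ref{e:6.1})--(\ref{e:6.2}); since $C^3 \Rightarrow C^2$ and \textsf{Hypothesis} $\mathfrak{F}_{2,N,m,n}$ is still in force, the derivations of (\ref{e:BifE.17}) and (\ref{e:BifE.17*}) given just above Proposition~\ref{prop:BifE.11} apply without change. Part (ii) requires that $D_t{\bf F}(x,\xi;t)$ be differentiable in each $\xi^i_\alpha$, that each ${\bf F}^i_\alpha(x,\xi;t)$ be differentiable in $t$ with $D_t{\bf F}^i_\alpha = D_{\xi^i_\alpha}D_t{\bf F}$, and that an integral bound hold at $\xi = 0$; since the scaling formula (\ref{e:BifE.14}) now yields ${\bf F} \in C^3$ on $\overline\Omega\times\R^{M(m)}\times[\epsilon,1]$, the differentiability and the commutation of partials are immediate, while the integral bound at $\xi=0$ follows from (\ref{e:BifE.20}) specialized to $\xi=0$ together with \textsf{Hypothesis} $\mathfrak{F}_{2,N,m,n}$(i).

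The main step is to reestablish Proposition~\ref{prop:BifE.11}(iii), i.e., the inequality (\ref{e:BifE.19}), under the new assumptions. This is precisely the content of Remark~\ref{rm:BifE11}: from the formula (\ref{e:BifE.22}) for $D_t{\bf F}^{ij}_{\alpha\beta}$, combined with (\ref{e:6.1}) to bound $F^{ij}_{\alpha\beta}$ and with the new growth estimate (\ref{e:BifE.21}) to bound $D_{x_l}F^{ij}_{\alpha\beta}$ and $F^{ijk}_{\alpha\beta\gamma}$, one obtains (\ref{e:BifE.23}). Since $F \in C^3$ makes ${\bf F}^i_\alpha$ of class $C^2$ and guarantees the equality of mixed partials required in Remark~\ref{rm:BifE10}(b.1), the inequality (\ref{e:BifE.23}) is exactly condition (b.2) of that remark, and the mean-value argument reproduced there delivers (\ref{e:BifE.19}) with $\mathfrak{g}(t) = (1+t)\mathfrak{g}^\star(t)$.

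Having recovered (i)--(iii) of Proposition~\ref{prop:BifE.11}, Steps~1--3 of the proof of Theorem~\ref{th:BifE.11} go through without alteration: Step~1 uses only Claim~\ref{cl:strong} and Proposition~\ref{prop:BifE.10}; Step~2 invokes Theorem~\ref{th:BifE.9} through Proposition~\ref{prop:BifE.11}, now valid under the new hypotheses; and Step~3 depends solely on the uniform bounds (\ref{e:BifE.17}), (\ref{e:BifE.17*}), and on continuity arguments built from \textsf{Hypothesis} $\mathfrak{F}_{2,N,m,n}$. No serious obstacle is expected; the only point requiring attention is that the constants implicit in (\ref{e:BifE.22}) are uniform for $t \in [\epsilon,1]$, which is immediate from the explicit powers of $t$ appearing in that formula together with $t \ge \epsilon > 0$.
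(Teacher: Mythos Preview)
Your proposal is correct and follows essentially the same route as the paper: the corollary is stated without a separate proof precisely because Remark~\ref{rm:BifE11} already shows that $C^3$ together with (\ref{e:BifE.21}) yields (\ref{e:BifE.23}), which via Remark~\ref{rm:BifE10}(b) produces (\ref{e:BifE.19}), and then the proof of Theorem~\ref{th:BifE.11} runs unchanged. Your more explicit verification that parts (i)--(ii) of Proposition~\ref{prop:BifE.11} and Steps~1--3 are unaffected is correct and simply spells out what the paper leaves implicit.
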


\begin{corollary}\label{cor:BifE.13}
In Theorem~\ref{th:BifE.11}, if $m=1$ then the sentence ``Let $F$ be as in \textsf{Hypothesis} $\mathfrak{F}_{2,N,m,n}$, $C^2$ and satisfy (\ref{e:BifE.18})."
can be replaced by ``Let $F$ be as in the {\bf CGC} above Theorem~\ref{th:6.1}."
\end{corollary}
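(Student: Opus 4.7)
The plan is to verify that, when $m = 1$, the Controllable Growth Conditions together with the remaining assumptions imply all hypotheses of Theorem~\ref{th:BifE.11}. By \cite[Proposition~4.22]{Lu7} (cited above Theorem~\ref{th:6.1}), CGC implies Hypothesis $\mathfrak{F}_{2,N,1,n}$, and the $C^2$ regularity is explicit in CGC. Condition (\ref{e:BifE.18}) is used in the proof of Theorem~\ref{th:BifE.11} only through Proposition~\ref{prop:BifE.11}(iii), which in turn supplies inequality (\ref{e:BifE.19}) for the scaled function ${\bf F}(x,\xi;t)$ defined by (\ref{e:BifE.14}); this is the condition actually invoked when Theorem~\ref{th:BifE.9} is applied in Step~2 of the proof of Theorem~\ref{th:BifE.11}. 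The strategy is therefore to verify (\ref{e:BifE.19}) directly for ${\bf F}$ from the CGC bounds, bypassing (\ref{e:BifE.18}).

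For $m = 1$ and $n \ge 2$ we have $\{|\alpha| < m - n/2\} = \emptyset$, so $\xi_\circ^k$ is absent and $\mathfrak{g}_0$ may be taken as a positive constant. The relevant multi-indices satisfy $|\alpha|, |\beta| \in \{0, 1\}$; crucially, when $|\alpha| = |\beta| = 1$ the exponent $2_{\alpha\beta}$ equals $0$, so the right-hand side of (\ref{e:BifE.19}) contains an unweighted contribution $\sum_l |\xi^l_1|$. Starting from the explicit formula (\ref{e:BifE.20}) for $D_t{\bf F}^i_\alpha(x,\xi;t)$, I would split each of its three terms as value at $\xi = 0$ plus deviation. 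The values at $\xi = 0$ assemble into $|D_t{\bf F}^i_\alpha(x, 0; t)|$, which under CGC is uniformly bounded on $\overline{\Omega} \times [\epsilon, 1]$ and therefore lies in $L^{2'_\alpha}(\Omega)$ uniformly in $t$ (this also takes care of condition (ii) of Theorem~\ref{th:BifE.9}). The deviations of $F^i_\alpha$ and $D_{x_l}F^i_\alpha$ at $\tilde{\xi} = (\xi^0, \xi^1/t)$ versus $0$ are controlled via the mean value theorem combined with the CGC bounds $|F^{ij}_{\alpha\beta}| \le \max\{\mu, M_2\}$ for all $|\alpha|, |\beta| \le 1$, yielding a bound of the form $(C/\epsilon)\sum_{j, |\beta|\le 1}|\xi^j_\beta|$, which fits into (\ref{e:BifE.19}) exactly via the $|\alpha| = |\beta| = 1$ term with $2_{\alpha\beta} = 0$.

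The main obstacle is the third term of (\ref{e:BifE.20}), namely $|F^{ij}_{\alpha\gamma}(tx, \tilde{\xi}) - F^{ij}_{\alpha\gamma}(tx, 0)|$ for $|\gamma| = 1$: since CGC only requires $F \in C^2$, the map $\xi \mapsto F^{ij}_{\alpha\gamma}(x,\xi)$ is merely continuous and uniformly bounded, not Lipschitz, so the difference cannot be estimated pointwise by a linear function of $|\xi|$. For the $L^{2'_\alpha}$-type estimates actually needed in Lemmas~\ref{lem:BifE.10}--\ref{lem:BifE.12}, however, the Nemytski continuity result of Appendix~\ref{app:C} (Proposition~\ref{prop:C.2}) ensures that the corresponding contributions to the relevant integrals tend to zero as $\|\vec{u}\|_{1,2} \to 0$ uniformly in $t \in [\epsilon, 1]$; together with the pointwise bounds obtained for the first two terms of (\ref{e:BifE.20}), this yields effective substitutes for Lemmas~\ref{lem:BifE.10}--\ref{lem:BifE.12} under CGC. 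With these established, the application of Theorem~\ref{th:BifE.9} in Step~2, together with the Smale Morse index theorem in Step~1 and the verification of \cite[Theorem~3.1]{Lu8} in Step~3 of the proof of Theorem~\ref{th:BifE.11}, all proceed verbatim, yielding Corollary~\ref{cor:BifE.13}.
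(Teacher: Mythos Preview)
Your approach is considerably more elaborate than the paper's, which takes a direct route: it observes that under CGC the function $F$ is already $C^2$, that CGC implies Hypothesis~$\mathfrak{F}_{2,N,1,n}$ by \cite[Proposition~4.22]{Lu7}, and then simply checks that CGC implies condition~(\ref{e:BifE.18}) itself. Concretely, for $m=1$ the paper rewrites (\ref{e:BifE.18}) in the explicit form
\[
|F_{z_jx_l}(x,z,p)|\le \mathfrak{g}'_1\bigl(1+|z|^{\kappa_n}+|p|^2\bigr)^{s+\frac12},\qquad
|F_{p^i_\alpha x_l}(x,z,p)|\le \mathfrak{g}'_2\bigl(1+|z|^{\kappa_n}+|p|^2\bigr)^{r_\alpha+\frac12},
\]
with $s,r_\alpha$ in the admissible open ranges, and notes that the CGC bound $|F_{z_jx_l}|,|F_{p^i_\alpha x_l}|\le\mu(1+|z|^2+|p|^2)^{1/2}$ trivially implies these. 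Once (\ref{e:BifE.18}) is in hand, Theorem~\ref{th:BifE.11} applies verbatim with no further work.

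By contrast, you bypass (\ref{e:BifE.18}) and attempt to re-establish (\ref{e:BifE.19}) and effective versions of Lemmas~\ref{lem:BifE.10}--\ref{lem:BifE.12} directly from the formula (\ref{e:BifE.20}). Your strategy of splitting each term of (\ref{e:BifE.20}) into its value at $\xi=0$ plus a deviation creates the ``main obstacle'' you describe: controlling $|F^{ij}_{\alpha\gamma}(tx,\tilde\xi)-F^{ij}_{\alpha\gamma}(tx,0)|$ without $C^3$ regularity. But this obstacle is self-imposed. The paper never makes this split; it bounds the second term of (\ref{e:BifE.20}) directly via the CGC growth estimate on $D_{x_l}F^i_\alpha$, and the third term via the uniform bound $|F^{ij}_{\alpha\gamma}|\le\max\{\mu,M_2\}$ from CGC, both of which feed straight into the form of (\ref{e:BifE.19}). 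Your workaround through Proposition~\ref{prop:C.2} may be salvageable, but it is a detour around a difficulty that the paper's route simply does not encounter.
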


\begin{proof}
 Note that the function $F$ had been required to be $C^2$ in the {\bf CGC}.
It was proved in \cite[Proposition~4.22]{Lu7} that the {\bf CGC} implies \textsf{Hypothesis} $\mathfrak{F}_{2,N,1,n}$  above Theorem~\ref{th:6.1}. Moreover, in the present case we can write $F$ as
$$
\overline\Omega\times\mathbb{R}^N\times\mathbb{R}^{N\times n}\ni (x,
z,p)\mapsto F(x, z,p)\in\R.
$$
 Let $\kappa_n=2n/(n-2)$ for $n>2$, and $\kappa_n\in (2,\infty)$ for $n=2$.
Then (\ref{e:BifE.18}) can be equivalently expressed as:  There exist positive constants  $\mathfrak{g}'_1$,  $\mathfrak{g}'_2$ and
$s\in (0, \frac{\kappa_n-2}{\kappa_n})$, $r_\alpha\in (0,\frac{\kappa_n-2}{2\kappa_n})$
 for each $\alpha\in \mathbb{N}_0^n$ with $|\alpha|=1$,  such that for $i=1,\cdots,N$, $l=1,\cdots,n$ and $|\alpha|=1$,
\begin{eqnarray*}
&|F_{z_jx_l}(x,z,p)|\le \mathfrak{g}'_1\left(1+\sum^N_{l=1}|z_l|^{\kappa_n}+
\sum^N_{k=1}|p^k_\alpha|^2\right)^{s+\frac{1}{2}},\\
&|F_{p^i_\alpha x_l}(x,z,p)|\le \mathfrak{g}'_2\left(1+\sum^N_{l=1}|z_l|^{\kappa_n}+
\sum^N_{k=1}|p^k_\alpha|^2\right)^{r_\alpha+\frac{1}{2}}.
\end{eqnarray*}
These are clearly implied in the {\bf CGC}.
\end{proof}

When $m=N=1$, instead of using Proposition~\ref{prop:BifE.10}
 we can get the following generalization of Theorem~\ref{th:BifE.11} with Proposition~\ref{prop:BifE.10*}, the second of main results in this section.

\begin{theorem}\label{th:BifE.12}
Let $\Omega\subset\mathbb{R}^n$ be a star-shaped bounded domain
with $C^{1,1}$ boundary,  $\Omega_t=\{tx\,|\,x\in\Omega\}$ and
$\varphi_t(x)=tx$ for $t\in (0, 1]$. For a $C^3$ function
$\overline\Omega\times\mathbb{R}\times\mathbb{R}^{n}\ni (x,z, p)\mapsto F(x, z, p)\in\R$
satisfying {\bf CGC}, that is, there exist positive constants
$\nu, \mu, \lambda, M_1, M_2$,  such that
\begin{eqnarray*}
&\nu\left(1+|z|^2+|p|^2\right)-\lambda\le F(x, z,p)
\le\mu\left(1+|z|^2+|p|^2\right),\\
&|F_{p_i}(x,z,p)|, |F_{p_ix_l}(x,z,p)|, |F_{z}(x,z,p)|, |F_{zx_l}(x,z,p)|\le \mu\left(1+|z|^2+|p|^2\right)^{1/2},\\
&|F_{p_i z}(x,z,p)|,\quad |F_{zz}(x,z,p)|\le \mu,\\
&M_1|\eta|^2\le\sum^n_{i,j=1}F_{p_ip_j}(x,z,p)\eta_i\eta_j\le
M_2|\eta|^2\;\forall \eta\in\R^{n},
\end{eqnarray*}
suppose that $F_z(x,0)-\sum^n_{i=1}\partial_{p_i}F_{p_i}(x,0)=0$ for all $x\in\Omega$.
Then for the functional
$$
\mathcal{F}:(0,1]\times W^{1,2}_0(\Omega,\mathbb{R}^N)\to\mathbb{R},\;(t,\vec{u})\mapsto
\mathcal{F}_t(\vec{u})=
\int_{\Omega_t} F\left(x, \vec{u}(x/t), \frac{1}{t}(D\vec{u})(x/t)\right)dx,
$$
there is only a finite number of $t\in (0, 1]$ with positive $\nu_t$.
Moreover,
$(t,0)\in (0, 1]\times W^{1,2}_0(\Omega,\mathbb{R}^N)$ is a
bifurcation point  for the equation
$\mathcal{F}_{\vec{u}}'(t, \vec{u})=0$ if and only if $\nu_t\ne 0$.
\end{theorem}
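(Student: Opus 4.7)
My plan is to follow the proof of Theorem~\ref{th:BifE.11} essentially line by line, substituting the Cox--Jones--Marzuola Morse index theorem (Proposition~\ref{prop:BifE.10*}) for the Smale theorem (Proposition~\ref{prop:BifE.10}). This is precisely the step that allows the weaker $C^{1,1}$ assumption on $\partial\Omega$ and the $C^3$ assumption on $F$, since Proposition~\ref{prop:BifE.10*} only demands $C^1$ coefficients, a $C^{1,1}$ boundary, and a $C^1$ family of Lipschitz diffeomorphisms.

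First I would identify the linearized operator. With $m = N = 1$, the Euler--Lagrange operator associated to $\mathfrak{F}_t$ linearizes at $\vec u = 0$ to
\begin{equation*}
L_t v = -\sum_{i,j=1}^n D_i\bigl(a_{ij}(x) D_j v\bigr) + \tilde c(x)\, v,
\end{equation*}
where $a_{ij}(x) = F_{p_ip_j}(x, 0)$, the mixed terms $F_{p_iz}(x,0)$ combine with their transpose into a zeroth-order contribution absorbed into $\tilde c(x) = F_{zz}(x,0) - \sum_i D_i F_{p_iz}(x,0)$, and both $a_{ij}$ and $\tilde c$ lie in $C^1(\overline\Omega)$ because $F \in C^3$. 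The CGC strong ellipticity bound furnishes the uniform positive-definiteness and symmetry of $(a_{ij})$. Since $\Omega$ is star-shaped, each $\overline\Omega_t = t\overline\Omega$ has $C^{1,1}$ boundary, the family is strictly nested, $\bigcup_{0 < t \le 1}\Omega_t \subset \Omega$, and $\varphi_t(x) = tx$ is a $C^\infty$ family of Lipschitz diffeomorphisms. Hence on any subinterval $[\epsilon, 1] \subset (0, 1]$ Proposition~\ref{prop:BifE.10*} yields finiteness of the conjugate set in $[\epsilon, 1]$ together with
\begin{equation*}
\mu_t = \mu_1 + \sum_{t < \tau \le 1} \nu_\tau \quad \text{for } t \in [\epsilon, 1],
\end{equation*}
and letting $\epsilon \to 0^+$ delivers the finiteness half of the theorem.

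For the bifurcation equivalence I would invoke Corollary~\ref{cor:BifE.13}, which already records that CGC implies both Hypothesis~$\mathfrak{F}_{2,1,1,n}$ and the growth estimate (\ref{e:BifE.18}). Therefore the pulled-back Lagrangian ${\bf F}(x,\xi;t) = t^n F(tx, \xi^0, t^{-1}\xi^1)$ fulfills the hypotheses of Theorem~\ref{th:BifE.9} uniformly on each $[\epsilon, 1]$ by Proposition~\ref{prop:BifE.11}. Consequently Steps~2 and 3 of the proof of Theorem~\ref{th:BifE.11} transfer verbatim. Given $\nu_s \ne 0$, I would choose $\epsilon \in (0, s)$ and $s_2 > s$ so that $s$ is the only conjugate time in $(\epsilon, s_2)$; the Morse index formula then says the critical groups $C_q(\mathcal{F}_t, 0;{\bf K})$ jump from $\delta_{q\mu_s}{\bf K}$ to $\delta_{q(\mu_s + \nu_s)}{\bf K}$ across $t = s$, and Theorem~\ref{th:BifE.9} applied on arbitrary subintervals $[t_1, t_2]$ with $\epsilon \le t_1 < s < t_2 < s_2$ forces a bifurcation point, which must coincide with $s$. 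Conversely, if $\nu_s = 0$ and $\nu_t = 0$ throughout a deleted neighbourhood of $s$ in $(0,1]$, the $P_t$/$Q_t$ continuity verifications of \cite[Theorem~3.1]{Lu8} carried out in Step~3 of that proof rule out bifurcation at $s$.

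The one place I expect to need care is checking that the Sobolev-theoretic continuity estimates driving Step~3 of the proof of Theorem~\ref{th:BifE.11}---the H\"older/Proposition~\ref{prop:C.2}-based bounds on $\|P_{t_k}(\vec u_k) - P_s(0)\|$ and $\|Q_t(\vec u) - Q_t(0)\|$---remain available with $\partial\Omega$ merely $C^{1,1}$. This is not truly an obstacle, because every Sobolev embedding and Nemytski continuity statement in Step~3 requires only a Lipschitz domain (thus certainly a $C^{1,1}$ one). The genuine new ingredient is the replacement of the Smale Morse index theorem by Proposition~\ref{prop:BifE.10*}, which is precisely the point of the theorem.
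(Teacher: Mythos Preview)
Your proposal is correct and follows essentially the same approach as the paper's own proof: trace the argument of Theorem~\ref{th:BifE.11}, replace the Smale Morse index theorem (Proposition~\ref{prop:BifE.10}) by the Cox--Jones--Marzuola version (Proposition~\ref{prop:BifE.10*}), and invoke Corollary~\ref{cor:BifE.13} to ensure CGC supplies both Hypothesis~$\mathfrak{F}_{2,1,1,n}$ and the growth estimate (\ref{e:BifE.18}). You supply more explicit detail than the paper---writing out the linearized operator in the form (\ref{e:strong**}) and checking that the Sobolev estimates in Step~3 survive on a $C^{1,1}$ domain---but the strategy is identical.
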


\begin{proof}
In Theorem~\ref{th:BifE.11} we require that
 $\partial\Omega$ is $C^{\infty}$ and all $F^{ij}_{\alpha\beta}(\cdot, 0)$
 belong to $C^\infty(\overline{\Omega})$
 because of using Proposition~\ref{prop:BifE.10}. In the present
 case we only need to use Proposition~\ref{prop:BifE.10*}, and
(\ref{e:BifE.18}) is implied in the {\bf CGC} by the proof of
Corollary~\ref{cor:BifE.13}.
\end{proof}

Clearly, Theorem~\ref{th:BifE.12} and the case of $m=N=1$ in the following Theorem~\ref{th:BifE.13}  are generalizations of a recent result for
semilinear elliptic Dirichlet problems on a ball
 in \cite{PoWa} in different direction.

 By increasing smoothness of $F$ and $\partial\Omega$, in a
 Banach space of functions with higher smoothness we can use
\cite[Theorem~6.1]{Lu8} and Proposition~\ref{prop:BifE.10}
to obtain a similar result to Rabinowitz bifurcation theorem \cite{Rab}
with neither the assumption  \textsf{Hypothesis} $\mathfrak{F}_{2,N,m,n}$
for $F$ nor the requirement that $\Omega\subset\mathbb{R}^n$ is  star-shaped.
Here is the third of main results in this section.

 \begin{theorem}\label{th:BifE.13}
  Let $N\in\mathbb{N}$, a real $p\ge 2$, integers $k$ and $m$
  satisfy $k> m+\frac{n}{p}$,  and let
   $\Omega\subset\R^n$  be a star-shaped bounded  domain
with $C^{\infty}$ boundary, and let $\Omega_t$ and
$\varphi_t$ be as in (\ref{e:BifE.star}) for $t\in (0, 1]$.  Let
$$
F:\overline\Omega\times\prod^m_{k=0}\mathbb{R}^{N\times M_0(k)}\to\R
$$
be $C^{k-m+7}$, and $\mathcal{F}:(0,1]\times X_{k,p}\to\mathbb{R}$ be still defined by
the right side of (\ref{e:BifE.13}). Suppose:
\begin{enumerate}
 \item[\rm (I)] (\ref{e:BifE.24-}) is satisfied,   all $F^{ij}_{\alpha\beta}(\cdot, 0)$ belong to $C^\infty(\overline{\Omega})$.
 \item[\rm (II)]  There exists some $c>0$ such that for all $x\in\overline{\Omega}$
and  $\eta=(\eta^{i}_{\alpha})\in\R^{N\times M_0(m)}$,
  \begin{eqnarray*}
\sum^N_{i,j=1}\sum_{|\alpha|=|\beta|=m}F^{ij}_{\alpha\beta}(x, 0)\eta^i_\alpha\eta^j_\beta\ge
c\sum^N_{i=1}\sum_{|\alpha|=m}(\eta^i_\alpha)^2.
\end{eqnarray*}
 \item[\rm (III)]  Either (\ref{e:BifE.24}) have no nontrivial solutions in $C^\infty(\overline{\Omega},\mathbb{R}^N)$ with compact
support in any of the manifolds $\Omega_t$, or (\ref{e:BifE.24})
 has no solutions $\vec{u}\ne 0$ such that $\vec{u}$
vanishes on some open set in $\Omega$. (This implies $\nu_0=0$.)
\end{enumerate}
Then there is only a finite number of $t\in (0, 1]$ with positive nullity
$\nu_t$ (in the sense of \cite[Appendix~B]{Lu8}).
 Moreover,  $(t_0, 0)\in (0, 1]\times X_{k,p}$ is a bifurcation point  for
(\ref{e:BifE.10}) if and only if $t_0$ is a conjugate point, and in this case
  one of the following alternatives occurs:
\begin{enumerate}
\item[\rm (i)] $(t_0, 0)$ is not an isolated solution of (\ref{e:BifE.10}) in
 $\{t_0\}\times X_{k,p}$;

\item[\rm (ii)]  for every $t\in (0,1)$ near $t_0$ there is a nontrivial solution
$\vec{u}_t$ of (\ref{e:BifE.10}) converging to $0$ as $t\to t_0$;

\item[\rm (iii)] there is an one-sided  neighborhood $\mathfrak{T}$ of $t_0$ such that
for any $t\in \mathfrak{T}\setminus\{t_0\}$,
(\ref{e:BifE.10}) has at least two nontrivial solutions converging to
zero as $t\to t_0$.
\end{enumerate}

In addition, if $m=N=1$, (III) and the second assumption in (I) are not needed,
and $\partial\Omega$ is only needed to be $C^{1,1}$. (See the proof of
Theorem~\ref{th:BifE.12}.)
 \end{theorem}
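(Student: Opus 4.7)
My plan is to combine Smale's Morse index theorem (Proposition~\ref{prop:BifE.10}) with the abstract bifurcation result Theorem~\ref{th:Jia1} applied to the reparametrized family $\{\mathcal{F}_t\}$ on small parameter windows. First, hypotheses (I)--(II) imply that $A^{ij}_{\alpha\beta}(x):=F^{ij}_{\alpha\beta}(x,0)\in C^\infty(\overline{\Omega})$ give rise to a strongly elliptic self-adjoint operator $L$ of the form (\ref{e:strong*}), whose restrictions $L_t$ to $\Omega_t$ govern the second variation of $\mathcal{F}_t$ at $0$. Under (III), Proposition~\ref{prop:BifE.10} yields the finiteness of conjugate times in $(0,1]$ together with the jump relation
\[
\mu(L_s)-\mu(L_{s'})=\sum_{s<t\le s'}\nu(L_t)\qquad\forall\,s<s'.
\]
By Proposition~\ref{prop:Jia2}(ii) applied to the reparametrized integrand ${\bf F}$ of (\ref{e:BifE.14}), the kernel and the negative definite subspace of $B_t(0)\in\mathscr{L}(H)$ already sit inside the denser Banach space $X_{k,p}$. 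Consequently, the Morse index $\mu_t$ and the nullity $\nu_t$ at $0\in X_{k,p}$ in the sense of \cite[Appendix~B]{Lu8} coincide with $\mu(L_t)$ and $\nu(L_t)$, so the first claim (finiteness of conjugate times) follows immediately.

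Next I will verify the hypotheses of Theorem~\ref{th:Jia1} on any subinterval $[\epsilon, 1]$ with $\epsilon\in(0,1)$. Since $F$ is $C^{k-m+7}$ and the factors $t^{n-|\alpha|}$ and $\xi^j_\gamma/t^{|\gamma|}$ are smooth away from $t=0$, the function ${\bf F}$ defined by (\ref{e:BifE.14}) and each derivative ${\bf F}^i_\alpha$ are $C^{k-m+6}$ on $\overline{\Omega}\times\prod_k\mathbb{R}^{N\times M_0(k)}\times[\epsilon,1]$. Evaluating (\ref{e:BifE.17*}) at $\xi=0$ gives the uniform ellipticity condition (II) of Theorem~\ref{th:Jia1}, with the constant $c_t\ge\min\{1,\epsilon^{n-2m}\}\mathfrak{g}_2(0)>0$; and (\ref{e:BifE.24-}) together with (\ref{e:6.4}) shows that $\vec{u}=0$ is a critical point of every $\mathcal{F}_t$, which is (III) of Theorem~\ref{th:Jia1}. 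Thus Theorem~\ref{th:Jia1} is applicable to the family on any such subinterval.

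For the ``only if'' direction, suppose $\nu_{t_0}=0$. Then $B_{t_0}(0):H\to H$ is invertible, and by Proposition~\ref{prop:Jia2}(i) its restriction to $X_{k,p}$ is also an isomorphism. Since $(t,\vec{u})\mapsto A_t(\vec{u})\in X_{k,p}$ is $C^1$ by Proposition~\ref{prop:Jia1}, the implicit function theorem forces the trivial branch to be the only local branch, so $(t_0,0)$ is not a bifurcation point. For the ``if'' direction, choose $\delta>0$ such that $[t_0-\delta,t_0+\delta]\subset(0,1]$ contains no conjugate time other than $t_0$, and reparametrize $t=t_0+\delta\lambda$ for $\lambda\in{\bf D}^1=[-1,1]$. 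The jump relation above then yields $m_\lambda=\mu_{t_0}+\nu_{t_0}$ for $\lambda\in(-1,0)$ and $m_\lambda=\mu_{t_0}$ for $\lambda\in(0,1)$, which is exactly the Morse-index-jump hypothesis required by the converse part of Theorem~\ref{th:Jia1} (with $\mu_0=\mu_{t_0}$ and $\nu_0=\nu_{t_0}$). Theorem~\ref{th:Jia1} then delivers the bifurcation of $(0,0)$ for $d_{\vec{u}}\mathcal{F}(\lambda,\vec{u})=0$ together with one of the alternatives (i)--(iii), and unwinding the reparametrization gives the stated conclusion.

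For the $m=N=1$ refinement, the deformation (\ref{e:BifE.star}) is strictly contracting as $t$ decreases, so Proposition~\ref{prop:BifE.10*} of Cox--Jones--Marzuola supplies the finiteness of conjugate times and the same jump formula under the weaker requirements that $\partial\Omega$ be $C^{1,1}$ and $a_{ij},c\in C^1(\overline{\Omega})$; no unique-continuation assumption enters, so both the $C^\infty$ regularity of $F^{ij}_{\alpha\beta}(\cdot,0)$ in (I) and the whole of (III) become redundant. The remainder of the argument is unchanged. The main obstacle in this plan is conceptual rather than technical: one has to be sure that the Morse-theoretic data that Smale's theorem computes on $H=W^{m,2}_0(\Omega,\mathbb{R}^N)$ coincide with the Banach-space data at $0\in X_{k,p}$ needed to invoke Theorem~\ref{th:Jia1}, and this identification is precisely what Proposition~\ref{prop:Jia2}(ii) provides.
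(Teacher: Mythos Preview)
Your proposal is correct and follows essentially the same route as the paper: verify the hypotheses of Theorem~\ref{th:Jia1} for the scaled integrand ${\bf F}$ on a small parameter interval, identify the Banach-space Morse data on $X_{k,p}$ with the Hilbert-space data on $H$ via Proposition~\ref{prop:Jia2}(ii), and feed the Morse-index jump from Proposition~\ref{prop:BifE.10} (resp.\ Proposition~\ref{prop:BifE.10*} when $m=N=1$) into the converse part of Theorem~\ref{th:Jia1}. One minor slip: your appeal to (\ref{e:BifE.17*}) for the ellipticity of ${\bf F}(\cdot,0;t)$ is misplaced, since that inequality was derived under Hypothesis~$\mathfrak{F}_{2,N,m,n}$, which is not assumed in Theorem~\ref{th:BifE.13}; the paper instead reads off the ellipticity constant directly from condition (II) and the scaling formula (\ref{e:BifE.16}), obtaining ${\bf F}^{ij}_{\alpha\beta}(x,0;t)=t^{n-2m}F^{ij}_{\alpha\beta}(tx,0)$ for $|\alpha|=|\beta|=m$.
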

\begin{proof}
Since $F$ is $C^{k-m+7}$, so is ${\bf F}:\overline\Omega\times\prod^m_{k=0}\mathbb{R}^{N\times M_0(k)}\times(0, 1]\to\R$
defined by  (\ref{e:BifE.14}). It follows from Proposition~\ref{prop:Jia1} that
  the corresponding map $A:(0, 1]\times X_{k,p}\to  X_{k,p}$ defined as in (\ref{e:Jia2})
is $C^3$. Moreover, for all $\eta=(\eta^{i}_{\alpha})\in\R^{N\times M_0(m)}$, the condition (II) and (\ref{e:BifE.16})  lead to
  \begin{eqnarray*}
 \sum^N_{i,j=1}\sum_{|\alpha|=|\beta|=m}{\bf F}^{ij}_{\alpha\beta}(x, 0;t))\eta^i_\alpha\eta^j_\beta&=&
  \sum^N_{i,j=1}\sum_{|\alpha|=|\beta|=m}t^{n-|\alpha|-|\beta|}F^{ij}_{\alpha\beta}(tx, 0)\eta^i_\alpha\eta^j_\beta\\
&\ge&ct^{n}\sum^N_{i=1}\sum_{|\alpha|=m}(\eta^i_\alpha)^2.
\end{eqnarray*}
As in the proof of Theorem~\ref{th:Jia1} we derive from  Proposition~\ref{prop:Jia2} that
the Morse index $\mu_t$ and the nullity $\nu_t$ of  $\mathcal{F}_{t}$ at $0\in X_{k,p}$
(in the sense of \cite[Appendix~B]{Lu8}) are, respectively,
equal to the Morse index $m_t$ and the nullity $n_t$ of the quadratic form $(B_t(0)\vec{u},\vec{u})_H$ on $H$.
But $m_t$ and $n_t$ are equal to the Morse index  and the nullity  of
the symmetric bilinear form $B_{L_t}$ on $W^{m,2}_0(\Omega_t,\mathbb{R}^N)$  given by
(\ref{e:BifE-bilinear})  with $A^{ij}_{\alpha\beta}(x)=F^{ij}_{\alpha\beta}(x,0)$, respectively.
Because of these and the assumptions (I) and (III) we may use Proposition~\ref{prop:BifE.10} to get
\begin{eqnarray*}
\mu_0-\mu_s=\sum_{0< t\le s}\nu_t\quad\forall s\in (0, 1],
\end{eqnarray*}
and $\mu_1=0$ if $\Omega_1$ has sufficiently small volume.
The first claim follows directly.

Therefore  we may choose   $0<\epsilon<\min\{t_0,1-t_0\}$ such that $T:=[t_0-\epsilon, t_0+\epsilon]$
 contains no conjugate points other than $t_0$, and hence
$$
\mu_t=\left\{\begin{array}{ll}
\mu_{t_0-\delta},&\quad\forall t\in [t_0-\delta,t_0),\\
\mu_{t_0-\delta}+\nu_{t_0},&\quad\forall t\in (t_0, t_0+\delta].
\end{array}\right.
$$
By Theorem~\ref{th:Jia1} (replacing ${\bf D}^1$ with $T$) the second claim is proved.

For the final part, we only need to replace Proposition~\ref{prop:BifE.10} by Proposition~\ref{prop:BifE.10*}
in the proof of the above first claim.
\end{proof}

\begin{remark}\label{rm:BifE.14}
{\rm Under the assumptions of Theorem~\ref{th:BifE.13}, let $G$ be a compact Lie group acting on $H$ orthogonally,
which induces a $C^1$ isometric action on $X_{k,p}$. Suppose that each functional $\mathcal{F}_t$ is $G$-invariant and that
maps $A_t, B_t$  are equivariant. Then we can use Theorem~\ref{th:Jia2} to improve
 the second conclusion of Theorem~\ref{th:BifE.13}.
Similarly, we have a corresponding version of Theorem~\ref{th:BifE.11}
with Theorem~\ref{th:Jia2}.}
\end{remark}

\appendix
\section{Appendix:\quad
 Differentiability of composition mappings}\label{app:A}\setcounter{equation}{0}

We recall notations and notions in \cite{Ir72}.
Let ${\bf E}$, ${\bf F}$ and ${\bf G}$ denote normed linear spaces.
For a subset $X\subset{\bf E}$ such that $X\subset\overline{{\rm int}(X)}$,
a map $f:X\to{\bf F}$ is called $C^r$ in \cite{Ir72} iff $f$ and its first $r$ (Fr\'echet) derivatives
(defined via an extension of $f$ to a neighbourhood of $X$ in ${\bf E}$)
exist and are continuous on $X$.
Let $C^r_b(X, {\bf F})$ be the normed linear space of
those $C^r$ maps $f:X\to{\bf F}$ (called $C^r$-{\bf bounded} maps) for which the norm
$$
\|f\|_{r,b}=\sup\{|D^i f(x)|\,|\, x\in X,\;0\le i\le r\}
$$
is finite. If $g\in C^r_b({\bf F}, {\bf G})$ and
$Df\in C^{r-1}_b(X, \mathcal{L}({\bf E}, {\bf F}))$
then $g\circ f\in C^r_b(X, {\bf G})$. Moreover, if
$Dg$ and $Df$ are $C^r$-bounded then so is $D(g\circ f)$.
For $Y\subset{\bf F}$, let $C^r_b(X,Y)=\{f\in C^r_b(X,{\bf F})\,|\, f(X)\subset Y\}$.
 When $C^r_b(X,Y)$ is itself to be the domain of a $C^1$ map, it was also
required that $C^r_b(X,Y)\subset\overline{{\rm int}C^r_b(X,Y)}$.
Clearly, any open subset $X\subset{\bf E}$ satisfies $X\subset\overline{{\rm int}(X)}$,
and in this case $C^r_b(X, {\bf F})$ is exactly
the space of $C^r$ maps from $X$ to ${\bf F}$
whose first $r$ derivatives are bounded on $X$.

\begin{theorem}[\hbox{\cite[Theorem~14]{Ir72}}]\label{th:Ir}
Let ${\bf F}$, and, in case $r>0$, also ${\bf E}$, be finite-dimensional. Then %the map
$$
\mathfrak{{ comp}}:C^{r+s}_b(Y,{\bf G})\times C^r_b(X,Y)\to C^r_b(X,{\bf G}),\;(f,g)\mapsto f\circ g
$$
is $C^s$.
\end{theorem}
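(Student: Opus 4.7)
The plan is induction on $s$. The base case $s=0$ is continuity of $\mathrm{comp}$; the inductive step computes the first derivative by a Taylor expansion and recognises the derivative as a sum of maps of exactly the same form as $\mathrm{comp}$, with the smoothness index on the first factor lowered by one, so that the hypothesis at level $s-1$ applies. Finite-dimensionality of $\mathbf{F}$ (and of $\mathbf{E}$ when $r\ge 1$) enters because it forces bounded subsets to be relatively compact, which upgrades continuity of the derivatives of $f$ to \textbf{uniform} continuity on the images $g(X)$. Without such uniformity the composition operator is generally no smoother than $C^0$.

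\textbf{Base case.} Given $(f_n,g_n)\to(f,g)$ in the product norm, Fa\`a di Bruno's formula reads
\[
D^j(f_n\circ g_n)(x)=\sum_{\pi}c_\pi\,(D^{|\pi|}f_n)(g_n(x))\prod_{B\in\pi}D^{|B|}g_n(x),\qquad 0\le j\le r,
\]
with $\pi$ ranging over set-partitions of $\{1,\dots,j\}$. The factors $D^{|B|}g_n\to D^{|B|}g$ converge uniformly on $X$ by hypothesis, while
\[
\|(D^if_n)\circ g_n-(D^if)\circ g\|_\infty
\le\|D^if_n-D^if\|_\infty+\|(D^if)\circ g_n-(D^if)\circ g\|_\infty;
\]
the second summand tends to zero because $g_n(X)\cup g(X)$ is a bounded and hence relatively compact subset of $\mathbf{F}$ on which $D^if$ is uniformly continuous.

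\textbf{Derivative.} A Taylor expansion gives
\[
(f+\phi)\circ(g+\psi)-f\circ g=\phi\circ g+(Df\circ g)\cdot\psi+R(f,g;\phi,\psi),
\]
with
\[
R=\bigl(\phi\circ(g+\psi)-\phi\circ g\bigr)+\int_0^1(1-t)\,D^2f(g+t\psi)[\psi,\psi]\,dt.
\]
I would bound $\|R\|_{r,b}$ by applying Fa\`a di Bruno to each summand; every resulting term contains either two factors involving derivatives of $\psi$, or one factor of $\phi$ and one of $\psi$, hence is $o(\|\phi\|_{r+s,b}+\|\psi\|_{r,b})$. The worst term involves $D^{r+2}f$, explaining why $f\in C^{r+1}_b$ (i.e.\ $s\ge 1$) is the minimum required for differentiability.

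\textbf{Inductive step.} Once the derivative has been identified as
\[
D\,\mathrm{comp}(f,g)[\phi,\psi]=\phi\circ g+(Df\circ g)\cdot\psi,
\]
the first piece $(f,g)\mapsto(\phi\mapsto\phi\circ g)$ is independent of $f$ and is $\mathrm{comp}$ itself with first factor in $C^{r+s}_b\subset C^{r+(s-1)}_b$; the second piece $(f,g)\mapsto(\psi\mapsto(Df\circ g)\cdot\psi)$ factors through the bounded linear map $f\mapsto Df\in C^{r+s-1}_b(Y,L(\mathbf{F},\mathbf{G}))$, then $\mathrm{comp}$ on $C^{r+s-1}_b\times C^r_b$, then a continuous bilinear pairing with $\psi$. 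By the inductive hypothesis both pieces are $C^{s-1}$, so $D\,\mathrm{comp}$ is $C^{s-1}$ and $\mathrm{comp}\in C^s$.

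The main obstacle is the combinatorial bookkeeping in the Fa\`a di Bruno estimate of $R$: one must verify that each summand retains at least two perturbation-derivative factors, never involves more than $r+s$ derivatives of $f$, and is bounded uniformly in $x\in X$. The uniformity in $x$ is precisely what the compactness of $g(X)\cup g_n(X)$ in $\mathbf{F}$ provides, so the finite-dimensionality hypothesis is indispensable throughout; a secondary nuisance is carrying the weak domain condition $X\subset\overline{\mathrm{int}(X)}$ through every extension and mollification argument used to make sense of derivatives on the boundary.
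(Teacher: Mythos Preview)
The paper does not prove this statement at all: Theorem~\ref{th:Ir} is quoted verbatim from \cite[Theorem~14]{Ir72} and used as a black box (to derive Corollary~\ref{cor:Ir} and, ultimately, Proposition~\ref{prop:Jia1}). There is therefore no proof in the paper to compare your proposal against.

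Your sketch follows the standard inductive scheme (and is essentially Irwin's own strategy). One point, however, needs care. In the derivative step you write the remainder with the second-order integral form $\int_0^1(1-t)\,D^2f(g+t\psi)[\psi,\psi]\,dt$ and then say that after applying Fa\`a di Bruno to bound this in the $C^r_b$-norm the worst term involves $D^{r+2}f$. But for $s=1$ one only has $f\in C^{r+1}_b$, so $D^{r+2}f$ need not exist and that remainder form is unavailable. The fix is to use the first-order remainder
\[
f\circ(g+\psi)-f\circ g-(Df\circ g)\cdot\psi=\int_0^1\bigl(Df(g+t\psi)-Df(g)\bigr)\cdot\psi\,dt
\]
instead; its $C^r_b$-norm is $o(\|\psi\|_{r,b})$ precisely because $D^{r+1}f$ is uniformly continuous on the relatively compact set $\overline{g(X)}$ in the finite-dimensional space $\mathbf{F}$. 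With this adjustment the induction goes through.
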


\begin{corollary}\label{cor:Ir}
Under the assumptions of Theorem~\ref{th:Ir}
let ${\bf H}$ be a finite-dimensional normed linear space. Suppose that $Y$ is bounded and that $Z\subset {\bf H}$
satisfies $Z\times Y\subset\overline{{\rm int}(Z\times Y)}$. Then for any
$F\in C^{r+s}_b(Z\times Y,{\bf G})$, the map
\begin{equation}\label{e:Ir}
Z\ni z\mapsto F(z,\cdot)\in C^{r}_b(Y,{\bf G})
\end{equation}
is $C^s$.
\end{corollary}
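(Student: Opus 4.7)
The plan is to factor the corollary's map
$$\Phi : Z \to C^r_b(Y, \mathbf{G}), \qquad z \longmapsto F(z, \cdot),$$
through Theorem~\ref{th:Ir} by means of the affine slicing map
$$\iota : \mathbf{H} \to C^r_b(Y, \mathbf{H}\times\mathbf{F}), \qquad \iota(z)(y) = (z, y).$$
Because $Y$ is bounded, $\iota(z)$ is bounded on $Y$; its $y$-derivatives are the constant $(0, \mathrm{id}_{\mathbf{F}})$ at first order and vanish thereafter, so $\iota$ does take values in $C^r_b$. Viewed as a map between normed linear spaces, $\iota$ is affine (a bounded linear map plus a fixed element of $C^r_b(Y, \mathbf{H}\times\mathbf{F})$), and therefore is $C^\infty$ with derivatives of order $\ge 2$ identically zero. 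For $z \in Z$ the image $\iota(z)$ takes values in $Z \times Y$; since $\mathrm{int}(Z\times Y) = \mathrm{int}(Z) \times \mathrm{int}(Y)$, the hypothesis $Z\times Y \subset \overline{\mathrm{int}(Z\times Y)}$ forces $Z \subset \overline{\mathrm{int}(Z)}$ and $Y \subset \overline{\mathrm{int}(Y)}$, so $\iota|_Z$ is a legitimate $C^\infty$ map in the paper's sense into $C^r_b(Y, Z\times Y)$.

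Next I would apply Theorem~\ref{th:Ir} with the substitution $(X, Y, \mathbf{E}, \mathbf{F}) \rightsquigarrow (Y, Z\times Y, \mathbf{F}, \mathbf{H}\times\mathbf{F})$ and target $\mathbf{G}$. All hypotheses hold: the new intermediate space $\mathbf{H}\times\mathbf{F}$ is finite-dimensional, since both factors are, and for $r > 0$ the new ambient space $\mathbf{F}$ (of the domain $Y$) is finite-dimensional by the corollary's inherited assumption. Consequently
$$\mathfrak{comp} : C^{r+s}_b(Z\times Y, \mathbf{G}) \times C^r_b(Y, Z\times Y) \longrightarrow C^r_b(Y, \mathbf{G})$$
is $C^s$; freezing the first argument at the prescribed $F$ yields a $C^s$ map $\Psi : g \mapsto F\circ g$.

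Since $\Phi(z) = F \circ \iota(z) = (\Psi \circ \iota|_Z)(z)$, the map $\Phi$ is the composition of the $C^\infty$ affine map $\iota|_Z$ followed by the $C^s$ map $\Psi$, and so is $C^s$ by the chain rule. The only mildly delicate points are bookkeeping: checking that boundedness of $Y$ forces $\iota$ to actually land in $C^r_b$ (with $r$-norm controlled by $\|z\|+\sup_{y\in Y}|y|+1$), and that the density assumption on $Z\times Y$ descends to each factor; both are immediate from product structure, so the corollary reduces cleanly to Theorem~\ref{th:Ir} combined with the chain rule.
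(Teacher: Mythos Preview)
Your proposal is correct and follows essentially the same route as the paper: define the slicing map $z\mapsto g_z$ with $g_z(y)=(z,y)$, observe that it is smooth into $C^r_b(Y,Z\times Y)$, invoke Theorem~\ref{th:Ir} to get the $C^s$ composition map, and compose. The paper writes $\mathfrak{G}$ for your $\iota|_Z$ and uses compactness of $\overline{Y}$ rather than the affine structure to justify $g_z\in C^r_b$, but the arguments are the same in substance; your additional remarks on why the density hypothesis descends to the factors are a welcome bit of care that the paper leaves implicit.
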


\begin{proof}
For each $z\in Z$, define a map $g_z:Y\to Z\times Y$ by $g_z(y)=(z,y)$.
Then for any $q\in\mathbb{N}\cup\{0\}$,
since $Y$ has a compact closure,
$g_z\in C^{q}_b(Y, Z\times Y)$ and the map
$$
\mathfrak{G}:Z\to C^{q}_b(Y, Z\times Y),\;z\mapsto g_z
$$
is smooth. By Theorem~\ref{th:Ir} we have a $C^s$ map
$$
\mathfrak{{ comp}}:C^{r+s}_b(Z\times Y,{\bf G})\times C^r_b(Y, Z\times Y)\to
C^r_b(Y,{\bf G}),\;(f,g)\mapsto f\circ g.
$$
Thus the composition of this and the smooth map
$$
Z\to C^{r+s}_b(Z\times Y,{\bf G})\times C^r_b(Y, Z\times Y),\;
z\mapsto (F, \mathfrak{G}(z))
$$
is also $C^s$. Obverse that
$\mathfrak{{ comp}}(F, \mathfrak{G}(z))=F\circ \mathfrak{G}(z)=F(z,\cdot)$
for any $z\in Z$. The desired claim is proved.
\end{proof}

Let $\mathcal{U} \subset \mathbb{R}^m$ and $\Omega \subset \mathbb{R}^n$
be open subsets, and let $\mathbf{X}(\mathcal{U},\mathbb{R}^n)$
denote some Banach space of maps $\mathcal{U} \to \mathbb{R}^n$
that admits a continuous inclusion into
$C^0_b(\mathcal{U},\mathbb{R}^n)$. Then
\begin{equation}\label{e:restrict}
\mathbf{X}(\mathcal{U},\Omega):= \{ u \in \mathbf{X}(\mathcal{U},\mathbb{R}^n)\ |\
\overline{u(\mathcal{U})}\subset \Omega \}
\end{equation}
is an open subset of $\mathbf{X}(\mathcal{U},\mathbb{R}^n)$
due to the continuous inclusion assumption.

We have the following convenient special version of \cite[Lemma~4.1]{Eliasson},
which was proved in \cite[Lemma~2.96]{Wend} for convex $\Omega$.
See \cite{PT01} for a proof without the convexity assumption.

\begin{lemma}[\hbox{\cite[Lemma~2.96]{Wend}}]
\label{lemma:babySmoothness}
Suppose $\mathcal{U} \subset \mathbb{R}^m$ denotes an open subset, and the symbol
$\mathbf{X}$ associates to any Euclidean space $\mathbb{R}^N$ a Banach space
$\mathbf{X}(\mathcal{U},\mathbb{R}^N)$ consisting of bounded continuous
maps $\mathcal{U} \to \mathbb{R}^N$ such that the
following hypotheses are satisfied:
\begin{itemize}
\item \textsc{($C^0$-inclusion)} The inclusion
$\mathbf{X}(\mathcal{U},\mathbb{R}^N) \hookrightarrow C^0_b(\mathcal{U},\mathbb{R}^N)$
is continuous.
\item \textsc{(Banach algebra)} The natural bilinear pairing
$$
\mathbf{X}(\mathcal{U},\mathcal{L}(\mathbb{R}^n,\mathbb{R}^N)) \times \mathbf{X}(\mathcal{U},\mathbb{R}^n) \to
\mathbf{X}(\mathcal{U},\mathbb{R}^N) : (A,u) \mapsto Au
$$
is well defined and continuous.
\item \textsc{($C^k$-continuity)}
For some integer $k \ge 0$, if $\Omega \subset \mathbb{R}^n$ is any open set and
$f \in C^k(\Omega,\mathbb{R}^N)$, the map
\begin{equation}
\label{eqn:Phif}
\Phi_f : \mathbf{X}(\mathcal{U},\Omega) \to \mathbf{X}(\mathcal{U},\mathbb{R}^N) : u \mapsto f \circ u
\end{equation}
is well defined and continuous.
\end{itemize}
If $f \in C^{k+r}(\Omega,\mathbb{R}^N)$ for some $r \in \mathbb{N}$, then the map
$\Phi_f$ defined in \eqref{eqn:Phif} is of class $C^r$ and has derivative
\begin{equation}
\label{eqn:theDerivative}
d\Phi_f(u)\eta = (df \circ u) \eta.
\end{equation}
\end{lemma}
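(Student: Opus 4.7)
I would proceed by induction on $r \ge 1$, with the base case carrying the analytic substance and the inductive step being essentially formal. The candidate derivative in \eqref{eqn:theDerivative} is well defined as an element of $\mathcal{L}(\mathbf{X}(\mathcal{U},\mathbb{R}^n), \mathbf{X}(\mathcal{U},\mathbb{R}^N))$: since $f \in C^{k+r}$ gives $df \in C^{k+r-1} \subset C^k$, the $C^k$-continuity hypothesis ensures $\Phi_{df}$ maps $\mathbf{X}(\mathcal{U},\Omega)$ continuously into $\mathbf{X}(\mathcal{U},\mathcal{L}(\mathbb{R}^n,\mathbb{R}^N))$, and the Banach algebra pairing then turns $\Phi_{df}(u)$ into a bounded linear map on $\mathbf{X}$.

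\textbf{Base case $r=1$.} Fix $u \in \mathbf{X}(\mathcal{U}, \Omega)$. Since $\mathbf{X}(\mathcal{U},\Omega)$ is open and convex ball-neighbourhoods lie inside it, for $\eta$ in a small $\mathbf{X}$-ball the entire segment $\{u + s\eta : s \in [0,1]\}$ lies in $\mathbf{X}(\mathcal{U},\Omega)$. Applying the pointwise Taylor formula with integral remainder,
\begin{equation*}
\bigl(\Phi_f(u+\eta) - \Phi_f(u) - (\Phi_{df}(u))\eta\bigr)(x) \;=\; G(u,\eta)(x)\,\eta(x),
\end{equation*}
where
\begin{equation*}
G(u,\eta)(x) \;=\; \int_0^1 \bigl[df(u(x)+s\eta(x)) - df(u(x))\bigr]\,ds.
\end{equation*}
Interpreted as a Bochner integral in $\mathbf{X}(\mathcal{U},\mathcal{L}(\mathbb{R}^n,\mathbb{R}^N))$ of the continuous curve $s \mapsto \Phi_{df}(u+s\eta) - \Phi_{df}(u)$, this gives $G(u,\eta) \in \mathbf{X}$ with
\begin{equation*}
\|G(u,\eta)\|_{\mathbf{X}} \;\le\; \sup_{s\in[0,1]}\|\Phi_{df}(u+s\eta) - \Phi_{df}(u)\|_{\mathbf{X}}.
\end{equation*}
Continuity of $\Phi_{df}$ at $u$ and the bound $\|s\eta\|_{\mathbf{X}} \le \|\eta\|_{\mathbf{X}}$ make the right-hand side tend to $0$ as $\|\eta\|_{\mathbf{X}}\to 0$. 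The Banach algebra property then yields $\|G(u,\eta)\eta\|_{\mathbf{X}} = o(\|\eta\|_{\mathbf{X}})$, establishing Fr\'echet differentiability with derivative as in \eqref{eqn:theDerivative}. Continuity of $u \mapsto d\Phi_f(u)$ is immediate from continuity of $\Phi_{df}$ composed with the (continuous bilinear, hence smooth) Banach algebra pairing.

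\textbf{Inductive step.} Assume the lemma for $r-1$. Given $f \in C^{k+r}(\Omega,\mathbb{R}^N)$, the derivative $df$ lies in $C^{k+(r-1)}(\Omega,\mathcal{L}(\mathbb{R}^n,\mathbb{R}^N))$, so by the inductive hypothesis $\Phi_{df}$ is of class $C^{r-1}$. Since $d\Phi_f(u) = M \circ \Phi_{df}(u)$ where $M:\mathbf{X}(\mathcal{U},\mathcal{L}(\mathbb{R}^n,\mathbb{R}^N)) \to \mathcal{L}(\mathbf{X}(\mathcal{U},\mathbb{R}^n),\mathbf{X}(\mathcal{U},\mathbb{R}^N))$, $M(A)\eta := A\eta$, is bounded linear (hence smooth) by the Banach algebra hypothesis, the composition is $C^{r-1}$. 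Thus $\Phi_f$ is $C^r$.

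\textbf{Main obstacle.} The delicate point is the base case: one must show that the pointwise Taylor remainder $G(u,\eta)$ not only lies pointwise in $\mathcal{L}(\mathbb{R}^n,\mathbb{R}^N)$-valued functions but actually defines an element of $\mathbf{X}$ whose $\mathbf{X}$-norm tends to zero. The key trick is to reinterpret the scalar integral defining $G(u,\eta)$ as a Bochner integral of a continuous $\mathbf{X}$-valued curve, so that the small-$\eta$ bound follows from a sup over $s\in[0,1]$ of $\mathbf{X}$-norms rather than from any pointwise estimate. A secondary — and more mechanical — verification is that the segment $\{u + s\eta\}$ stays inside the open set $\mathbf{X}(\mathcal{U},\Omega)$ for all $s\in[0,1]$, which is guaranteed by the $C^0$-inclusion together with openness of $\mathbf{X}(\mathcal{U},\Omega)$ noted around \eqref{e:restrict}.
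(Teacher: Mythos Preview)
Your proof is correct and follows the standard argument (induction on $r$, with the base case handled via the Taylor remainder rewritten as a Bochner integral of the continuous curve $s\mapsto\Phi_{df}(u+s\eta)-\Phi_{df}(u)$, then estimated using the $C^k$-continuity and Banach algebra hypotheses). Note, however, that the paper does not supply its own proof of this lemma: it is quoted verbatim from \cite[Lemma~2.96]{Wend}, with the remark that the original source is \cite[Lemma~4.1]{Eliasson} and that \cite{PT01} removes the convexity assumption on $\Omega$. Your argument is essentially the one given in those references, so there is nothing to contrast.
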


By the $C^0$-inclusion hypothesis, each
 $u_0\in\mathbf{X}(\mathcal{U},\mathbb{R}^N)$ has a neighborhood $\mathscr{V}(u_0)$
 such that \linebreak $\cup_{u\in\mathscr{V}(u_0)}\overline{u(\mathcal{U})}$ is contained in
 a compact subset of $\Omega$. Hence it is unnecessary to assume
$f \in C^k_b(\Omega,\mathbb{R}^N)$ (resp. $f \in C^{k+r}_b(\Omega,\mathbb{R}^N)$)
in Lemma~\ref{lemma:babySmoothness}.

\begin{lemma}[\hbox{\cite[Lemma~2.98]{Wend}}]
\label{lemma:smoothness}
Suppose $\mathcal{U}$, $\Omega$ and $\mathbf{X}(\mathcal{U},\mathbb{R}^n)$ are as in
Lemma~\ref{lemma:babySmoothness}, and in addition that the pairing
$T(u) f := f \circ u$ defines $T$ as a continuous map
\begin{equation}
\label{eqn:T2}
T : \mathbf{X}(\mathcal{U},\Omega) \to \mathcal{L}(C^k_b(\Omega,\mathbb{R}^N), \mathbf{X}(\mathcal{U},\mathbb{R}^N) ).
\end{equation}
Then for any $r \in \mathbb{N}$, the map
$$
\Psi : C^{k+r}_b(\Omega,\mathbb{R}^N) \times \mathbf{X}(\mathcal{U},\Omega) \to
\mathbf{X}(\mathcal{U},\mathbb{R}^N) : (f,u) \mapsto f \circ u
$$
is of class $C^r$ and has derivative
$$
d\Psi(f,u)(g,\eta) = g \circ u + (df \circ u) \eta.
$$
\end{lemma}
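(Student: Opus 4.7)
The plan is to induct on $r$, leveraging Lemma~\ref{lemma:babySmoothness} pointwise in $f$ together with the new hypothesis that $T$ is continuous to upgrade pointwise regularity in $u$ to joint regularity in $(f,u)$. The key structural observation is that $\Psi$ is linear in $f$, so the entire dependence on $f$ beyond first order is trivial, and the real content is in mixed derivatives of the form $\partial_f\partial_u^n\Psi$.

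\textbf{Base case $r=1$.} I would compute the two partial derivatives directly: the $f$-partial is $\partial_f\Psi(f,u)\cdot g = g\circ u$ by linearity of $\Psi$ in $f$, and the $u$-partial is $\partial_u\Psi(f,u)\cdot\eta = (df\circ u)\eta$ by Lemma~\ref{lemma:babySmoothness} applied to fixed $f\in C^{k+1}_b$. Joint continuity of the first partial in $(f,u)$ is precisely what the new hypothesis on $T$ buys, via the splitting
\[
f\circ u - f_0\circ u_0 \;=\; T(u)(f-f_0) \,+\, (f_0\circ u - f_0\circ u_0),
\]
combined with local boundedness of $\|T(u)\|$ and the $C^k$-continuity hypothesis of Lemma~\ref{lemma:babySmoothness}. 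The same telescoping applied to $df$ in $C^k_b(\Omega,\mathcal{L}(\mathbb{R}^n,\mathbb{R}^N))$, followed by the continuous bilinear Banach-algebra pairing $(v,\eta)\mapsto v\eta$, handles the second partial. This yields $\Psi\in C^1$ with $d\Psi(f,u)(g,\eta) = g\circ u + (df\circ u)\eta$.

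\textbf{Inductive step.} Assume the lemma is known at order $r$ (for all Euclidean target spaces). To show $\Psi$ is $C^{r+1}$ on $C^{k+r+1}_b\times\mathbf{X}(\mathcal{U},\Omega)$, I would show that each summand of the derivative formula above is $C^r$ with values in the appropriate operator space. For the summand $(f,u)\mapsto[(g,\eta)\mapsto g\circ u]$, note that it does not depend on $f$, and that $(g,u)\mapsto g\circ u$ at the level $C^{k+r}_b\times\mathbf{X}(\mathcal{U},\Omega)\to\mathbf{X}(\mathcal{U},\mathbb{R}^N)$ is $C^r$ by the inductive hypothesis; precomposition with the continuous embedding $C^{k+r+1}_b\hookrightarrow C^{k+r}_b$ and currying in the linear $g$-variable yields the required $C^r$ regularity. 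For the summand $(f,u)\mapsto[(g,\eta)\mapsto(df\circ u)\eta]$, I would factor through the $C^{\infty}$ linear map $f\mapsto df$ from $C^{k+r+1}_b(\Omega,\mathbb{R}^N)$ to $C^{k+r}_b(\Omega,\mathcal{L}(\mathbb{R}^n,\mathbb{R}^N))$, apply the inductive hypothesis to the composition $\widetilde\Psi(h,u)=h\circ u$ with target $\mathbf{X}(\mathcal{U},\mathcal{L}(\mathbb{R}^n,\mathbb{R}^N))$, and then compose with the continuous bilinear Banach-algebra pairing. Summing the two $C^r$ contributions gives $d\Psi\in C^r$, hence $\Psi\in C^{r+1}$.

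\textbf{Main obstacle.} The most delicate step is the currying that passes from a jointly $C^r$ map $(g,u)\mapsto g\circ u$, linear in $g$, to a $C^r$ map $u\mapsto[g\mapsto g\circ u]$ with values in $\mathcal{L}(C^{k+r+1}_b,\mathbf{X}(\mathcal{U},\mathbb{R}^N))$. This is the standard Banach-space exponential law in a linear variable, but it must be invoked with the correct uniform bounds on the derivatives of the composition on balls in $\mathbf{X}(\mathcal{U},\Omega)$, which in turn trace back to local boundedness of $\|T(u)\|$ supplied by the $T$-continuity hypothesis. A minor bookkeeping point is to verify that Lemma~\ref{lemma:babySmoothness} and the $T$-hypothesis apply to $\mathbf{X}(\mathcal{U},\mathcal{L}(\mathbb{R}^n,\mathbb{R}^N))$; this is immediate because $\mathcal{L}(\mathbb{R}^n,\mathbb{R}^N)$ is canonically a Euclidean space, to which the symbol $\mathbf{X}$ assigns a Banach space fulfilling the same abstract hypotheses by assumption.
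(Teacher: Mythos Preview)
The paper does not contain its own proof of this lemma: it is stated with the attribution \cite[Lemma~2.98]{Wend} and simply quoted for later use (in Corollary~\ref{cor:smoothness} and the proof of Proposition~\ref{prop:Jia1}). There is therefore nothing in the paper to compare your argument against.

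That said, your inductive scheme is the standard one and is sound. The point you flag as the ``main obstacle''---passing from joint $C^r$ regularity of $(g,u)\mapsto g\circ u$, linear in $g$, to $C^r$ regularity of $u\mapsto T(u)$ into the operator space---can be resolved directly from the $T$-continuity hypothesis rather than via an abstract exponential law. Concretely, for $g\in C^{k+1}_b$ the integral remainder
\[
g\circ(u+\eta)-g\circ u-(dg\circ u)\eta=\int_0^1\bigl[(dg)\circ(u+t\eta)-(dg)\circ u\bigr]\eta\,dt
\]
is bounded in $\mathbf X$-norm by $\sup_{t\in[0,1]}\|T(u+t\eta)-T(u)\|\cdot\|g\|_{C^{k+1}_b}\cdot\|\eta\|_{\mathbf X}$, and continuity of $T$ makes the first factor $o(1)$ uniformly over $\|g\|_{C^{k+1}_b}\le 1$. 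This shows $u\mapsto T(u)\in\mathcal L(C^{k+1}_b,\mathbf X)$ is $C^1$ with derivative $dT(u)[\eta]\cdot g=(dg\circ u)\eta$, and iterating this computation (each step costing one order of differentiability in $g$) gives $u\mapsto T(u)\in\mathcal L(C^{k+r}_b,\mathbf X)$ of class $C^r$. Your remark that the hypotheses transfer to targets $\mathcal L(\mathbb R^n,\mathbb R^N)\cong\mathbb R^{nN}$ componentwise is correct and is what makes the Banach-algebra pairing step go through.
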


Suppose that $\mathcal{U}$ satisfies the cone condition.
By the Sobolev imbedding theorem (\cite[Theorem~4.12]{AdFo}),  if either $kp>m$ or $k=m$ and $p=1$ then
the inclusion $W^{k,p}(\mathcal{U},\mathbb{R}^N) \hookrightarrow C^0_b(\mathcal{U},\mathbb{R}^N)$ is continuous.
In particular,  there exists, for any multi-index  $\beta\in (\mathbb{N}\cup\{0\})^m$
of length $|\beta|\le k$, a constant $C_\beta=C(\beta,m,k,p,V)$, where $V$ is the finite cone determining the cone condition for $\mathcal{U}$, such that
\begin{equation}\label{e:Sobolev1}
\int_{\mathcal{U}}|D^\beta w(x)|^r\le C_\beta\|w\|^r_{k,p}\quad\forall w\in W^{k,p}(\mathcal{U})
\end{equation}
provided $(k-|\beta|)p\le m$ and $p\le r\le mp/(m-[k-|\beta|]p)$ or $(k-|\beta|)p=m$ and $r\in [p, \infty)$, and that
\begin{equation}\label{e:Sobolev2}
|D^\beta w(x)|\le C_\beta\|w\|_{k,p}\quad\hbox{\rm a.e. in $\mathcal{U}$}
\end{equation}
for any $w\in W^{k,p}(\mathcal{U})$ provided $(k-|\beta|)p>m$. Denote by
\begin{equation}\label{e:Sobolev3}
C(m,k,p,V)=\max\{C(\beta,m,k,p,V)\,|\, \beta\in (\mathbb{N}\cup\{0\})^m\;\&\;|\beta|\le k\}.
\end{equation}

\begin{lemma}\label{lem:BanAlgebra}
Suppose that $\mathcal{U} \subset \mathbb{R}^m$ is a nonempty open subset
 satisfying the cone condition, and that $kp>m$. %either $kp>m$ % or $k\ge m$ and $p=1$.
For positive constants $C_l(m,k,p,V)$ inductively defined by
\begin{eqnarray*}
C_{l+1}(m,k,p,V)=(C(m,k,p,V))^{\frac{1}{p}}\max\{C_l(m,k,p,V),1\}
\end{eqnarray*}
with $C_1(m,k,p,V)=1$,  $l=1,2,\cdots$, it holds that
 \begin{equation}\label{e:BanachAlge2}
\|D^{\alpha^1}w_1D^{\alpha^2}w_2\cdots D^{\alpha^l}w_l\|_{p}\le C_l(m,k,p,V)\prod^l_{j=1}\|w_j\|_{k,p}
\end{equation}
for any $w_j\in W^{k,p}(\mathcal{U}),\;j=1,\cdots,l$, and
 multi-indexes  $\alpha^j\in (\mathbb{N}\cup\{0\})^m$, $j=1,\cdots,l$,
 whose sum $\alpha:=\alpha^1+\cdots+\alpha^l$ has length $\le k$.
 \end{lemma}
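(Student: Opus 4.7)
The plan is to prove the inequality (\ref{e:BanachAlge2}) by induction on $l$. The base case $l=1$ is immediate: since $|\alpha^1|\le k$, the definition of the Sobolev norm gives $\|D^{\alpha^1}w_1\|_p\le\|w_1\|_{k,p}$, so the choice $C_1=1$ works.

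For the inductive step I plan to peel off a single factor. Given $\alpha^1,\ldots,\alpha^{l+1}$ with $|\alpha^1+\cdots+\alpha^{l+1}|\le k$, I would relabel so that $|\alpha^{l+1}|$ is the smallest of the multi-indexes; then $|\alpha^{l+1}|\le k/(l+1)$, and combined with the standing hypothesis $kp>m$ this typically forces $(k-|\alpha^{l+1}|)p>m$. Invoking the Sobolev embedding (\ref{e:Sobolev2}) at almost every $x\in\mathcal{U}$ yields
\[
|D^{\alpha^{l+1}}w_{l+1}(x)|^{p}\;\le\;C(m,k,p,V)\,\|w_{l+1}\|_{k,p}^{p},
\]
the $p$-th power form absorbing the $L^\infty$ Sobolev constant into the maximum $C(m,k,p,V)$ defined in (\ref{e:Sobolev3}). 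I would then raise the target inequality (\ref{e:BanachAlge2}) to the $p$-th power, pull this pointwise bound out of the integrand, and apply the inductive hypothesis to the remaining product $\prod_{j=1}^{l}D^{\alpha^j}w_j$ (which is legal because $|\alpha^1+\cdots+\alpha^l|\le|\alpha^1+\cdots+\alpha^{l+1}|\le k$). This produces
\[
\Big\|\prod_{j=1}^{l+1}D^{\alpha^j}w_j\Big\|_p^{p}\;\le\;C(m,k,p,V)\,C_l^{\,p}\prod_{j=1}^{l+1}\|w_j\|_{k,p}^{\,p},
\]
and taking $p$-th roots reproduces exactly the recursion $C_{l+1}=C(m,k,p,V)^{1/p}\max\{C_l,1\}$ (the $\max$ with $1$ only being active in the degenerate regime $C_l<1$, which is automatic when $C(m,k,p,V)\ge 1$).

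The hard part will be producing a peel-off index $j_0$ with $(k-|\alpha^{j_0}|)p>m$ in \emph{every} configuration of multi-indexes. In the borderline regime where $kp$ only slightly exceeds $m$ and several of the $|\alpha^j|$ cluster near the threshold $k-m/p$, the strict inequality can fail uniformly in $j$. To handle this, I would replace the $L^\infty$ bound from (\ref{e:Sobolev2}) by the $L^r$ bound from (\ref{e:Sobolev1}) for a finite $r$ in the admissible Sobolev range, and split the product via generalized H\"older with exponents $r_j$ tailored to each $|\alpha^j|$ so that $\sum 1/r_j=1/p$; the constraint $\sum|\alpha^j|\le k$ together with $kp>m$ guarantees that such a choice exists. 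The maximum convention used in defining $C(m,k,p,V)$ then absorbs the constants $C(\alpha^j,m,k,p,V)^{1/r_j}$ arising from this H\"older--Sobolev combination, so the same recursive bound persists and the induction goes through.
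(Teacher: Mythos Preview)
Your proposal is correct and essentially identical to the paper's proof: the paper also splits into the case where some $|\alpha^j|\le s$ (with $s$ the largest integer satisfying $(k-s)p>m$), peels off that factor via the $L^\infty$ Sobolev bound (\ref{e:Sobolev2}) and invokes the induction hypothesis, and the complementary case where every $|\alpha^j|>s$, handled directly by generalized H\"older with exponents chosen so that $\sum 1/r_j=1$ on the $p$-th power (equivalently $\sum 1/q_j=1/p$ in your notation), the feasibility of which follows from the computation $\sum_j\frac{m-(k-|\alpha^j|)p}{m}\le 1+l(1-kp/m)<1$. One small clarification: in the H\"older case the induction hypothesis is not actually invoked---you bound all $l{+}1$ factors simultaneously and obtain the constant $C(m,k,p,V)^{1/p}$, which is already $\le C_{l+1}$; the phrase ``the induction goes through'' is therefore slightly misleading, but the mathematics is sound.
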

\begin{proof}
(\ref{e:BanachAlge2}) clearly holds if $l=1$ and $C_1(m,k,p,V)=1$. The case $l=2$ was proved in the proof of \cite[Theorem~3.17]{AdFo}. Their proof can be generalized to the general case by induction.

Suppose that (\ref{e:BanachAlge2}) holds with some $l\ge 1$.
Consider  any $w_j\in W^{k,p}(\mathcal{U}),\;j=1,\cdots,l+1$, and
 multi-indexes  $\alpha^j\in (\mathbb{N}\cup\{0\})^m$, $j=1,\cdots,l+1$,
 whose sum $\alpha:=\alpha^1+\cdots+\alpha^{l+1}$ has length $\le k$.
Let $s$ be the largest integer such that $(k-s)p>m$. Then $s\ge 0$ because $kp>m$.

Suppose that $|\alpha^j|\le s$ for some $j\in\{1,\cdots,l+1\}$.
Without loss of generality we can assume $j=1$.
Then $(k-|\alpha^1|)p>m$, and so by (\ref{e:Sobolev2}) we get
\begin{eqnarray*}
&&\int_{\mathcal{U}}|D^{\alpha^1}w_1(x)D^{\alpha^2}w_2(x)\cdots D^{\alpha^l}w_l(x)D^{\alpha^{l+1}}w_{l+1}(x)|^pdx\\
&\le& C_{\alpha^1}\|w_1\|^p_{k,p}\int_{\mathcal{U}}|D^{\alpha^2}w_2(x)\cdots D^{\alpha^l}w_l(x)D^{\alpha^{l+1}}w_{l+1}(x)|^pdx
\\
&\le&C_{\alpha^1}\|w_1\|^p_{k,p}(C_l(m,k,p,V)\prod^{l+1}_{j=2}\|w_j\|_{k,p})^p
\qquad\hbox{(by induction hypothesis)}\\
&\le&(C'_{l+1}(m,k,p,V)\prod^{l+1}_{j=1}\|w_j\|_{k,p})^p
\end{eqnarray*}
where $C'_{l+1}(m,k,p,V)=C_l(m,k,p,V) (C(m,k,p,V))^{\frac{1}{p}}$.

Suppose  $|\alpha^j|>s$ for all $j\in\{1,\cdots,l+1\}$. Then $m\ge (k-|\alpha^j|)p$, $j=1,\cdots, l+1$, and so
\begin{eqnarray*}
\sum^{l+1}_{j=1}\frac{m-(k-|\alpha^j|)p }{m}&=&l+1-\frac{(k(l+1)-\sum^{l+1}_{j=1}|\alpha^j|)p}{m}\\
&=&l+1-\frac{(k(l+1)-|\alpha|)p}{m}\\
&=&l+1-\frac{k(l+1)p}{m}+\frac{|\alpha|p}{m}\\
&\le& l+1-\frac{k(l+1)p}{m}+\frac{kp}{m}\\
&=&1+l-\frac{lkp}{m}\\
&=&1+l(1-\frac{kp}{m})<1.
\end{eqnarray*}
Let $\varepsilon>0$ be determined by $1-(l+1)\varepsilon=\sum^{l+1}_{j=1}\frac{m-(k-|\alpha^j|)p }{m}$.
Put
$$
r_j=\left(\varepsilon+ \frac{m-(k-|\alpha^j|)p }{m}\right)^{-1},\quad j=1,\cdots,l+1.
$$
 Then
$$
\sum^{l+1}_{j=1}\frac{1}{r_j}=1\quad\hbox{and}\quad 1\le r_j\le \frac{m}{m-(k-|\alpha^j|)p}, \quad j=1,\cdots, l+1.
$$
It follows from H\"older inequality and (\ref{e:Sobolev1}) that \allowdisplaybreaks
\begin{eqnarray*}
&&\int_{\mathcal{U}}|D^{\alpha^1}w_1(x)D^{\alpha^2}w_2(x)\cdots D^{\alpha^l}w_l(x)D^{\alpha^{l+1}}w_{l+1}(x)|^pdx\\
&\le&\prod^{l+1}_{j=1}\left(\int_{\mathcal{U}}|D^{\alpha^j}w_j(x)|^{r_jp}dx\right)^{1/r_j}\\
&\le& \prod^{l+1}_{j=1}(C_{\alpha^j})^{1/r_j}\prod^{l+1}_{j=1}\|w_j\|^p_{k,p}\\
&\le& C(m,k,p,V)\prod^{l+1}_{j=1}\|w_j\|^p_{k,p}.
\end{eqnarray*}
Hence we can take
\begin{eqnarray*}
C_{l+1}(m,k,p,V)&=&\max\{C'_{l+1}(m,k,p,V), (C(m,k,p,V))^{1/p}\}\\
&=&\max\{C_l(m,k,p,V)(C(m,k,p,V))^{\frac{1}{p}} , (C(m,k,p,V))^{1/p}\}\\
&=&(C(m,k,p,V))^{\frac{1}{p}}\max\{C_l(m,k,p,V),1\}
\end{eqnarray*}
with $C_1(m,k,p,V)=1$.
\end{proof}

By the definition in (\ref{e:restrict}),
$W^{k,p}(\mathcal{U},\Omega)= \{ u \in W^{k,p}(\mathcal{U},\mathbb{R}^n)\ |\
\overline{u(\mathcal{U})}\subset \Omega \}$. Fix a $u\in W^{k,p}(\mathcal{U},\Omega)$.
Then $\overline{u(\mathcal{U})}$ is compact and is contained in $\Omega$.

If $1\le p<\infty$, by \cite[Theorem~3.17]{AdFo}, $W^{k,p}(\mathcal{U})$
is equal to the completion $H^{k,p}(\mathcal{U})$ of $\{v\in C^k(\mathcal{U})\,|\,\|v\|_{k,p}<\infty\}$
with respect to the norm $\|\cdot\|_{k,p}$. Actually, it was proved in
\cite[Theorem~3.17]{AdFo} that $\{v\in C^\infty(\mathcal{U})\,|\,\|v\|_{k,p}<\infty\}$
is dense in $W^{k,p}(\mathcal{U})$.

For a multi-index $\alpha\in(\mathbb{N}\cup\{0\})^m$,
a decomposition of it was defined in \cite{Ma} as a  list $(s,\sigma,\omega)$ consisting of \\
$\bullet$ a positive integer $s$;\\
$\bullet$ $\sigma=\{\sigma_1,\cdots,\sigma_s\}$, where $\sigma_j=(\sigma_{j1},\cdots,\sigma_{jm})\in(\mathbb{N}\cup\{0\})^m$,
$j=1,\cdots,s$, satisfy $\sigma_1\ll\sigma_2\ll\cdots\ll\sigma_s$,
where $\sigma_j\ll\sigma_{j+1}$ means that there is $i\in\{1,\cdots,m\}$ such that
$$
\sigma_{j1}=\sigma_{(j+1)1},\cdots, \sigma_{j(i-1)}=\sigma_{(j+1)(i-1)}\quad\hbox{but}\quad \sigma_{ji}<\sigma_{(j+1)i};
$$
$\bullet$ $\omega=\omega_1+\cdots+\omega_s$, where $\omega_j=(\omega_{j1},\cdots,\omega_{jn})\in(\mathbb{N}\cup\{0\})^n$, $j=1,\cdots,s$, satisfying
$\alpha=|\omega_1|\sigma_1+\cdots+|\omega_s|\sigma_s$.

Let $r_i=\omega_{1i}+\cdots+\omega_{si}$, $i=1,\cdots,n$. Then $\omega=(r_1,\cdots,r_n)$.
Denote by $\mathscr{D}(\alpha)$  the set of all decompositions of $\alpha$.
By the end of \cite{Ma},  $\sharp\mathscr{D}(\alpha)\le |\alpha|(1+|\alpha|)^{m+n}$.

The author cannot find the following theorem  in literatures though their Corollaries~\ref{cor:babySmoothness},~\ref{cor:smoothness}
in case $m=2$ were used in \cite{Wend}. There exists a similar result to Corollary~\ref{cor:smoothness}, \cite[Theorem~1.2]{InKaTo},
which was proved for the case in which the domains of mappings are closed manifolds. As stated in \cite{InKaTo}, most of various versions of
this kind of results hardly have proofs.

\begin{theorem}
\label{th:babySmoothness}
Let $f\in C^k(\Omega,\mathbb{R}^N)$. Suppose that
$kp>m$ and $v\in W^{k,p}(\mathcal{U},\Omega)$ (and thus $\overline{v(\mathcal{U})}$ is compact and is contained in $\Omega$).
Fix  a compact neighborhood $K$ of $\overline{v(\mathcal{U})}$ contained in $\Omega$
(and so $v$ has a neighborhood $\mathscr{N}(v)$ in $W^{k,p}(\mathcal{U},\Omega)$
such that $\overline{u(\mathcal{U})}\subset K$ for all $u\in\mathscr{N}(v)$).
Then
\begin{eqnarray*}
\|f\circ u\|_{k,p}&:=&\sum^N_{\nu=1}\|f_\nu\circ u\|_{k,p}
=\sum^N_{\nu=1}\sum_{|\alpha|\le k}
\left\|\frac{\partial^{|\alpha|}(f_\nu\circ u)}{\partial x^\alpha}\right\|_p\\
&&\hspace{-1cm}\le\sum^N_{\nu=1}\sum_{|\alpha|\le k}\alpha!
\sum_{(s,\sigma, \omega)\in\mathscr{D}(\alpha)}C_{l_\omega}(m,k,p,V)\sup_{y\in K}|D^\omega f_\nu|\prod^s_{j=1}\prod^n_{i=1}(\left\|u_i\right\|_{k,p})^{\omega_{ji}}
\end{eqnarray*}
for any $u\in\mathscr{N}(v)$, where $l_\omega=\sum^s_{j=1}\sum^n_{i=1}\omega_{ji}$.
\end{theorem}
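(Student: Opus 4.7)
The plan is to combine the multivariate Fa\`a di Bruno chain rule with the multiplicative $L^p$-estimate of Lemma~\ref{lem:BanAlgebra}. First I would exploit the hypothesis $kp>m$, which gives the continuous embedding $W^{k,p}(\mathcal{U},\mathbb{R}^n)\hookrightarrow C^0_b(\mathcal{U},\mathbb{R}^n)$: any $u$ close enough to $v$ in $\|\cdot\|_{k,p}$ satisfies $\overline{u(\mathcal{U})}\subset K$. Consequently $(D^\omega f_\nu)\circ u$ is well defined and, by compactness of $K$ and continuity of $D^\omega f_\nu$ on $K$, pointwise bounded by $\sup_{y\in K}|D^\omega f_\nu(y)|$ for every $\nu$ and every multi-index $\omega$ with $|\omega|\le k$.

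Next, for each fixed $\nu\in\{1,\dots,N\}$ and each multi-index $\alpha$ with $|\alpha|\le k$, I would apply the multivariate Fa\`a di Bruno formula in the form of \cite{Ma}:
\begin{equation*}
\frac{\partial^{|\alpha|}(f_\nu\circ u)}{\partial x^\alpha}(x)=\sum_{(s,\sigma,\omega)\in\mathscr{D}(\alpha)} c_{\alpha,s,\sigma,\omega}\,(D^\omega f_\nu)(u(x))\prod_{j=1}^s\prod_{i=1}^n\bigl(D^{\sigma_j}u_i(x)\bigr)^{\omega_{ji}},
\end{equation*}
where $c_{\alpha,s,\sigma,\omega}=\alpha!\bigl(\prod_{j=1}^s (\sigma_j!)^{|\omega_j|}\omega_j!\bigr)^{-1}\le\alpha!$, since the denominator is a positive integer. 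Taking the $L^p(\mathcal{U})$ norm, applying the triangle inequality, and using the pointwise bound on $(D^\omega f_\nu)\circ u$ gives
\begin{equation*}
\Big\|\frac{\partial^{|\alpha|}(f_\nu\circ u)}{\partial x^\alpha}\Big\|_p\le\alpha!\sum_{(s,\sigma,\omega)\in\mathscr{D}(\alpha)}\sup_{y\in K}|D^\omega f_\nu(y)|\,\Big\|\prod_{j=1}^s\prod_{i=1}^n\bigl(D^{\sigma_j}u_i\bigr)^{\omega_{ji}}\Big\|_p.
\end{equation*}

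The product inside the remaining $L^p$ norm consists of $l_\omega=\sum_{j,i}\omega_{ji}$ factors (each $D^{\sigma_j}u_i$ counted with multiplicity $\omega_{ji}$), and the sum of the associated multi-indices equals $\sum_{j,i}\omega_{ji}\sigma_j=\sum_{j=1}^s|\omega_j|\sigma_j=\alpha$, whose length is $\le k$. Thus Lemma~\ref{lem:BanAlgebra}, applied with $w$-slots filled by the appropriate $u_i$ and multi-indices given by the corresponding $\sigma_j$, directly yields
\begin{equation*}
\Big\|\prod_{j=1}^s\prod_{i=1}^n\bigl(D^{\sigma_j}u_i\bigr)^{\omega_{ji}}\Big\|_p\le C_{l_\omega}(m,k,p,V)\prod_{j=1}^s\prod_{i=1}^n\|u_i\|_{k,p}^{\omega_{ji}}.
\end{equation*}
Summing the resulting bound over all $|\alpha|\le k$ and over $\nu=1,\dots,N$ produces the asserted inequality.

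The main obstacle is purely combinatorial: confirming the explicit factorial form of the Fa\`a di Bruno coefficients $c_{\alpha,s,\sigma,\omega}$ and, in particular, organising the decomposition data $(s,\sigma,\omega)\in\mathscr{D}(\alpha)$ so that the list of factors $\bigl(D^{\sigma_j}u_i\bigr)^{\omega_{ji}}$ becomes exactly $l_\omega$ entries counted with multiplicity, ready for Lemma~\ref{lem:BanAlgebra}. Everything analytic then reduces to the triangle inequality, compactness of $K$, and the product estimate already established in Lemma~\ref{lem:BanAlgebra}.
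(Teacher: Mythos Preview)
Your proposal is correct and follows essentially the same route as the paper: apply the multivariate Fa\`a di Bruno formula from \cite{Ma}, bound the outer factor $(D^\omega f_\nu)\circ u$ by $\sup_{y\in K}|D^\omega f_\nu|$, and then invoke Lemma~\ref{lem:BanAlgebra} on the remaining $l_\omega$-fold product of derivatives. The only step the paper makes explicit that you leave implicit is a density reduction---assuming first that $u\in C^k(\mathcal{U},\Omega)$ so that the pointwise chain rule (\ref{e:Hchain}) is literally applicable, and then passing to general $u\in W^{k,p}(\mathcal{U},\Omega)$ by approximation.
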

\begin{proof}
Since $\{w\in C^k(\mathcal{U},\Omega)\,|\,\|w\|_{k,p}<\infty\}$
is dense in $W^{k,p}(\mathcal{U},\Omega)$, we can assume $u\in C^k(\mathcal{U},\Omega)$.
Let $|\alpha|\le k$.
By the higher chain formula in \cite{Ma}, for any $x\in\mathcal{U}$ we get
{\small \begin{eqnarray}\label{e:Hchain}
&&\frac{\partial^{|\alpha|}(f_\nu\circ u)}{\partial x^\alpha}(x)\nonumber\\
&&\hspace{-.7cm}=\alpha!\sum_{(s,\sigma, \omega)\in\mathscr{D}(\alpha)}
\frac{\partial^{r_1+\cdots+r_n}f_\nu}{\partial y_1^{r_1}\cdots\partial y_n^{r_n}}(u(x))
\prod^s_{j=1}\prod^n_{i=1}\frac{1}{\omega_{ji}!}\left[\frac{1}{\sigma_{j1}!\cdots\sigma_{jm}!}
\frac{\partial^{\sigma_{j1}+\cdots+\sigma_{jm}}u_i}{\partial x_1^{\sigma_{j1}}\cdots\partial x_m^{\sigma_{jm}}}(x)\right]^{\omega_{ji}}\nonumber\\
&&\hspace{-.7cm}=\alpha!\sum_{(s,\sigma, \omega)\in\mathscr{D}(\alpha)}
\frac{\partial^{r_1+\cdots+r_n}f_\nu}{\partial y_1^{r_1}\cdots\partial y_n^{r_n}}(u(x))
\prod^s_{j=1}\prod^n_{i=1}\frac{1}{\omega_{ji}!}\left[\frac{1}{\sigma_{j1}!\cdots\sigma_{jm}!}
D^{\sigma_j}u_i(x)\right]^{\omega_{ji}}.
\end{eqnarray}}
Note that $|\omega|=|\omega_1|+\cdots+|\omega_s|\le|\alpha|$, and
$$
\sup_{x\in\mathcal{U}}\left|\frac{\partial^{r_1+\cdots+r_n}f_\nu}{\partial y_1^{r_1}\cdots\partial y_n^{r_n}}(u(x))\right|\le
\sup_{y\in K}|D^\omega f_\nu(y)|.
$$
Then
\begin{eqnarray*}
&&\left\|\frac{\partial^{|\alpha|}(f_\nu\circ u)}{\partial x^\alpha}\right\|_p\\&\le&\alpha!\sum_{(s,\sigma, \omega)\in\mathscr{D}(\alpha)}
\sup_{y\in K}|D^\omega f_\nu(y)|\left\|\prod^s_{j=1}\prod^n_{i=1}\frac{1}{\omega_{ji}!}
\left[\frac{1}{\sigma_{j1}!\cdots\sigma_{jm}!}
D^{\sigma_j}u_i\right]^{\omega_{ji}}\right\|_p\\
&\le&\alpha!\sum_{(s,\sigma, \omega)\in\mathscr{D}(\alpha)}\sup_{y\in K}|D^\omega f_\nu(y)|
\left\|\prod^s_{j=1}\prod^n_{i=1}\left[D^{\sigma_j}u_i\right]^{\omega_{ji}}\right\|_p.
\end{eqnarray*}
Recall that $l_\omega=\sum^s_{j=1}\sum^n_{i=1}\omega_{ji}$. By Lemma~\ref{lem:BanAlgebra}
$$
\left\|\prod^s_{j=1}\prod^n_{i=1}\left[D^{\sigma_j}u_i\right]^{\omega_{ji}}\right\|_p\le  C_{l_\omega}(m,k,p,V)
\prod^s_{j=1}\prod^n_{i=1}(\left\|u_i\right\|_{k,p})^{\omega_{ji}}
$$
and hence
\begin{eqnarray*}
&&\left\|\frac{\partial^{|\alpha|}(f_\nu\circ u)}{\partial x^\alpha}\right\|_p\\&\le&\alpha!
\sum_{(s,\sigma, \omega)\in\mathscr{D}(\alpha)}C_{l_\omega}(m,k,p,V)
\sup_{y\in K}|D^\omega f_\nu(y)|
\prod^s_{j=1}\prod^n_{i=1}(\left\|u_i\right\|_{k,p})^{\omega_{ji}}.
\end{eqnarray*}
The desired inequality follows.
\end{proof}

\begin{corollary}\label{cor:babySmoothness}
If $f\in C^k(\Omega,\mathbb{R}^N)$ and $kp>m$ then the map
\begin{equation}
\label{eqn:Phif+}
\Phi_f : W^{k,p}(\mathcal{U},\Omega) \to W^{k,p}(\mathcal{U},\mathbb{R}^N):
u\mapsto f \circ u
\end{equation}
is well defined and continuous.
Moreover, if $f \in C^{k+r}(\Omega,\mathbb{R}^N)$ for some $r \in \mathbb{N}$,
then the map $\Phi_f$  is of class $C^r$ and has derivative
$$
d\Phi_f(u)\eta = (df \circ u) \eta.
$$
\end{corollary}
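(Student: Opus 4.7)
The plan is to prove the two claims separately: the first (well-definedness and $C^0$-continuity of $\Phi_f$ for $f \in C^k$) by a direct Faà di Bruno estimate using Theorem~\ref{th:babySmoothness} and Lemma~\ref{lem:BanAlgebra}, and the second ($C^r$-smoothness plus the derivative formula for $f \in C^{k+r}$) by verifying the three abstract hypotheses of Lemma~\ref{lemma:babySmoothness} for the choice $\mathbf{X}(\mathcal{U},\mathbb{R}^N) := W^{k,p}(\mathcal{U},\mathbb{R}^N)$ and then invoking that lemma.

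For the first claim, fix $v \in W^{k,p}(\mathcal{U},\Omega)$. Since $kp > m$, the Sobolev embedding $W^{k,p}(\mathcal{U}) \hookrightarrow C^0_b(\mathcal{U})$ is continuous, so $\overline{v(\mathcal{U})}$ is compact in $\Omega$; pick a compact neighborhood $K \subset \Omega$ of it and a $W^{k,p}$-neighborhood $\mathscr{N}(v)$ with $\overline{u(\mathcal{U})} \subset K$ for all $u \in \mathscr{N}(v)$. Theorem~\ref{th:babySmoothness} then gives $\|f \circ u\|_{k,p} < \infty$, establishing well-definedness and a locally uniform bound. For continuity at $v$, apply the Faà di Bruno identity (\ref{e:Hchain}) to decompose, for each $|\alpha| \le k$,
$$
D^\alpha(f_\nu \circ u) - D^\alpha(f_\nu \circ v) = \alpha!\sum_{(s,\sigma,\omega)\in\mathscr{D}(\alpha)} \bigl( T^{(1)}_{s,\sigma,\omega} + T^{(2)}_{s,\sigma,\omega} \bigr),
$$
where $T^{(1)}_{s,\sigma,\omega} := \bigl[(D^\omega f_\nu) \circ u - (D^\omega f_\nu) \circ v\bigr]\cdot \Pi(u)$ and $T^{(2)}_{s,\sigma,\omega} := (D^\omega f_\nu) \circ v \cdot \bigl[\Pi(u) - \Pi(v)\bigr]$, with $\Pi(w)$ denoting the product of derivatives of $w$ appearing in (\ref{e:Hchain}). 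The type-$(1)$ terms are bounded in $L^p$ by $\|(D^\omega f_\nu) \circ u - (D^\omega f_\nu) \circ v\|_\infty \cdot \|\Pi(u)\|_p$: the first factor tends to $0$ because $u \to v$ uniformly on $\mathcal{U}$ and $D^\omega f_\nu$ is uniformly continuous on $K$, while the second is uniformly bounded on $\mathscr{N}(v)$ by Lemma~\ref{lem:BanAlgebra}. For the type-$(2)$ terms, expand $\Pi(u) - \Pi(v)$ telescopically into $l_\omega := \sum_{j,i}\omega_{ji}$ terms, each a product of $l_\omega$ factors containing exactly one difference $D^{\sigma_j}(u_i - v_i)$ and $l_\omega - 1$ other factors drawn from $\{D^{\sigma_{j'}} u_{i'}, D^{\sigma_{j'}} v_{i'}\}$; Lemma~\ref{lem:BanAlgebra} then bounds each in $L^p$ by a constant times $\|u - v\|_{k,p}(\|u\|_{k,p} + \|v\|_{k,p})^{l_\omega - 1}$, multiplied by $\sup_K |D^\omega f_\nu|$, which tends to $0$. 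Summing over the finite set of $\alpha$ and $(s,\sigma,\omega)$ yields $\|f \circ u - f \circ v\|_{k,p} \to 0$.

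For the second claim, it suffices to verify the three hypotheses of Lemma~\ref{lemma:babySmoothness} for $\mathbf{X} = W^{k,p}$: the $C^0$-inclusion hypothesis is the Sobolev embedding; the Banach algebra hypothesis, i.e.\ continuity of $(A, u) \mapsto Au$ from $W^{k,p}(\mathcal{U}, \mathcal{L}(\mathbb{R}^n,\mathbb{R}^N)) \times W^{k,p}(\mathcal{U},\mathbb{R}^n)$ into $W^{k,p}(\mathcal{U},\mathbb{R}^N)$, follows by expanding each $D^\alpha(Au)$ through Leibniz and applying Lemma~\ref{lem:BanAlgebra} with $l = 2$; and the $C^k$-continuity hypothesis is precisely the first claim just established. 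Lemma~\ref{lemma:babySmoothness} then gives, for $f \in C^{k+r}(\Omega,\mathbb{R}^N)$, that $\Phi_f$ is of class $C^r$ with derivative $d\Phi_f(u)\eta = (df \circ u)\eta$, as claimed. The main obstacle is the continuity step~1: one must organize the Faà di Bruno decomposition so that the telescoping and Lemma~\ref{lem:BanAlgebra} estimates combine into a single $L^p$ bound tending to zero; this is feasible because $\mathscr{D}(\alpha)$ is a finite set (of cardinality at most $|\alpha|(1+|\alpha|)^{m+n}$), so the total control reduces to a finite sum of manageable terms.
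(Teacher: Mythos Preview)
Your proposal is correct and follows the same overall strategy as the paper: establish well-definedness and continuity via the Fa\`a di Bruno expansion together with Lemma~\ref{lem:BanAlgebra}, then deduce the $C^r$ claim by verifying the three hypotheses of Lemma~\ref{lemma:babySmoothness} for $\mathbf{X}=W^{k,p}$.

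The one noteworthy difference is in the continuity step. The paper's argument is extremely compressed: it writes down the single inequality
\[
\|f\circ u - f\circ v\|_{k,p}\le \sum_{t,\alpha}\alpha!\sum_{(s,\sigma,\omega)\in\mathscr{D}(\alpha)} C_{l_\omega}\sup_{K}|D^\omega f_t|\prod_{j,i}\|u_i-v_i\|_{k,p}^{\omega_{ji}},
\]
citing ``Theorem~\ref{th:babySmoothness} (or its proof)''. Read literally, this is the Theorem~\ref{th:babySmoothness} bound applied with $u-v$ in place of $u$, which controls $\|f\circ(u-v)\|_{k,p}$ rather than $\|f\circ u - f\circ v\|_{k,p}$; the inequality as displayed is not a direct consequence of the theorem's statement. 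Your decomposition into type-$(1)$ terms (difference in the outer factor $D^\omega f_\nu$) and type-$(2)$ terms (telescoped difference in the inner product $\Pi(u)-\Pi(v)$) is exactly what is needed to make this step rigorous, and is presumably what the paper intends by the parenthetical ``or its proof''. Your version has the advantage of being self-contained and manifestly correct; the paper's version is shorter but leaves the reader to reconstruct the telescoping.
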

\begin{proof}
For a fixed $v\in W^{k,p}(\mathcal{U},\Omega)$ let $\mathscr{N}(v)$ be as in
Theorem~\ref{th:babySmoothness}. By the latter  (or its proof ) we have
\begin{eqnarray*}
&&\|f\circ u-f\circ v\|_{k,p}\\&&\le\sum^N_{t=1}\sum_{|\alpha|\le k}\alpha!
\sum_{(s,\sigma, \omega)\in\mathscr{D}(\alpha)}C_{l_\omega}(m,k,p,V)
\sup_{y\in K}|D^\omega f_t|
\prod^s_{j=1}\prod^n_{i=1}(\left\|u_i-v_i\right\|_{k,p})^{\omega_{ji}}
\end{eqnarray*}
for any $u\in\mathscr{N}(v)$. Hence $\Phi_f$ is continuous at $v$.
This and Lemma~\ref{lemma:babySmoothness}  lead to other claims.
\end{proof}

\begin{corollary}\label{cor:smoothness}
Let $kp>m$.  Then the map
\begin{equation}
\label{eqn:T2+}
T : W^{k,p}(\mathcal{U},\Omega) \to \mathcal{L}(C^k_b(\Omega,\mathbb{R}^N), W^{k,p}(\mathcal{U},\mathbb{R}^N) ).
\end{equation}
defined by $T(u) f := f \circ u$ is continuous.
Consequently, (by Lemma~\ref{lemma:smoothness}) for any $r \in \mathbb{N}$, the map
$$
\Psi : C^{k+r}_b(\Omega,\mathbb{R}^N) \times W^{k,p}(\mathcal{U},\Omega) \to
W^{k,p}(\mathcal{U},\mathbb{R}^N) : (f,u) \mapsto f \circ u
$$
is of class $C^r$ and has derivative
$$
d\Psi(f,u)(g,\eta) = g \circ u + (df \circ u) \eta.
$$
\end{corollary}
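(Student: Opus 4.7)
The $C^r$-regularity of $\Psi$ together with the derivative formula will follow from Lemma~\ref{lemma:smoothness} applied with $\mathbf{X}=W^{k,p}$ as soon as the three structural hypotheses of that lemma and the continuity of $T$ are verified: the continuous $C^0$-inclusion is the Sobolev embedding $W^{k,p}\hookrightarrow C^0_b$ (valid since $kp>m$), the Banach-algebra pairing is a special case of Lemma~\ref{lem:BanAlgebra}, and the $C^k$-continuity of $\Phi_f$ is Corollary~\ref{cor:babySmoothness}.  So the real task is to establish continuity of $T$.  Boundedness of each $T(u)$ is immediate: pick a compact neighborhood $K\subset\Omega$ of $\overline{u(\mathcal{U})}$; for $|\omega|\le k$ one has $\sup_{K}|D^\omega f_\nu|\le\|f\|_{C^k_b}$, so Theorem~\ref{th:babySmoothness} yields $\|f\circ u\|_{k,p}\le C(\|u\|_{k,p})\|f\|_{C^k_b}$, whence $T(u)\in\mathcal{L}(C^k_b,W^{k,p})$.

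For continuity of $T$ at a fixed $v\in W^{k,p}(\mathcal{U},\Omega)$, choose a compact neighborhood $K\subset\Omega$ of $\overline{v(\mathcal{U})}$; the Sobolev embedding provides a $W^{k,p}$-neighborhood $\mathscr{N}$ of $v$ with $\overline{u(\mathcal{U})}\subset K$ for all $u\in\mathscr{N}$.  Apply the higher chain rule \eqref{e:Hchain} to $D^\alpha(f\circ u-f\circ v)$ for $|\alpha|\le k$ and, in each term labelled by a decomposition $(s,\sigma,\omega)\in\mathscr{D}(\alpha)$, use the identity
\[
D^\omega f(u)M_{\sigma,\omega}(u)-D^\omega f(v)M_{\sigma,\omega}(v)=[D^\omega f(u)-D^\omega f(v)]M_{\sigma,\omega}(u)+D^\omega f(v)[M_{\sigma,\omega}(u)-M_{\sigma,\omega}(v)],
\]
where $M_{\sigma,\omega}(w):=\prod_{j,i}[D^{\sigma_j}w_i]^{\omega_{ji}}$.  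The second summand is dominated in $L^p$ by $\|f\|_{C^k_b}\,\|M_{\sigma,\omega}(u)-M_{\sigma,\omega}(v)\|_p$; telescoping the product one factor at a time and invoking Lemma~\ref{lem:BanAlgebra} bounds this by $C(\|u\|_{k,p},\|v\|_{k,p})\|f\|_{C^k_b}\|u-v\|_{k,p}$, which is $o(1)$ uniformly over $\{\|f\|_{C^k_b}\le 1\}$.  For the first summand, if $|\omega|<k$ then $D^\omega f\in C^1(K)$ with $\|D(D^\omega f)\|_{C^0(K)}\le\|f\|_{C^k_b}$, hence $|D^\omega f(u)-D^\omega f(v)|\le\|f\|_{C^k_b}\|u-v\|_{C^0}\le C\|f\|_{C^k_b}\|u-v\|_{k,p}$ via Sobolev embedding, and Lemma~\ref{lem:BanAlgebra} controls $\|M_{\sigma,\omega}(u)\|_p$, so this piece is again uniformly $o(1)$.

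The hard case is $|\omega|=k$: then the definition of a decomposition forces $|\alpha|=k$ and all $|\sigma_j|=1$, so $M_{\sigma,\omega}(v)\in L^p$ is merely a product of first-order derivatives of $v$.  One further splits $[D^\omega f(u)-D^\omega f(v)]M_{\sigma,\omega}(u)$ as
\[
[D^\omega f(u)-D^\omega f(v)][M_{\sigma,\omega}(u)-M_{\sigma,\omega}(v)]+[D^\omega f(u)-D^\omega f(v)]M_{\sigma,\omega}(v);
\]
the first piece is $\le 2\|f\|_{C^k_b}\|M_{\sigma,\omega}(u)-M_{\sigma,\omega}(v)\|_p$ and is absorbed as before, while the second piece is the main obstacle, since $D^\omega f$ is only continuous, not Lipschitz, on $K$ and $\{D^\omega f:\|f\|_{C^k_b}\le 1\}$ is not equicontinuous, so $\|D^\omega f\circ u-D^\omega f\circ v\|_\infty\to 0$ holds for each fixed $f$ but not obviously uniformly in $f$.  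My plan to overcome this is a mollification sandwich: for $\varepsilon>0$ approximate $f$ on $K$ by $f_\varepsilon\in C^{k+1}_b$ with $\|f-f_\varepsilon\|_{C^k(K)}<\varepsilon$, pay the uniform cost $2\varepsilon\|M_{\sigma,\omega}(v)\|_p$, and bound the $f_\varepsilon$-contribution by the Lipschitz estimate now available for $D^\omega f_\varepsilon\in C^1(K)$; the delicate point is arranging this sandwich uniformly over $\{\|f\|_{C^k_b}\le 1\}$, and should strict norm uniformity resist, one notes that the proof of Lemma~\ref{lemma:smoothness} only makes use of $T$ at a fixed $f$, so the strong-operator continuity already at hand from Corollary~\ref{cor:babySmoothness} in fact suffices to derive the $C^r$ statement for $\Psi$ together with the stated derivative formula.
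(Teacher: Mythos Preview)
Your approach is substantially more careful than the paper's, and in fact the paper's short argument is flawed.  The paper writes $\|(T(u)-T(v))f\|_{k,p}=\|f\circ(u-v)\|_{k,p}$ and then invokes the bound of Theorem~\ref{th:babySmoothness} for $f\circ(u-v)$; but $(T(u)-T(v))f=f\circ u-f\circ v$, which is \emph{not} $f\circ(u-v)$ unless $f$ is linear (and $u-v$ need not even take values in $\Omega$).  So the paper's derivation of the displayed operator-norm estimate does not stand as written.

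Your decomposition via the chain rule and the splitting
\[
D^\omega f(u)\,M_{\sigma,\omega}(u)-D^\omega f(v)\,M_{\sigma,\omega}(v)
=[D^\omega f(u)-D^\omega f(v)]M_{\sigma,\omega}(u)+D^\omega f(v)[M_{\sigma,\omega}(u)-M_{\sigma,\omega}(v)]
\]
is the honest route, and your handling of the $M$-difference and of the case $|\omega|<k$ is correct.  You are also right that the top-order case $|\omega|=k$ is a genuine obstruction to \emph{norm} continuity: the family $\{D^\omega f:\|f\|_{C^k_b}\le 1\}$ is not equicontinuous, and a concrete test (e.g.\ $m=n=N=1$, $k=1$, $p>1$, $\mathcal{U}=(0,1)$, $v(x)=x$, $u_j(x)=x+1/j$, $f_j(y)=j^{-1}\sin(jy)$) gives $\|u_j-v\|_{1,p}\to 0$ while $\|(f_j\circ u_j)'-(f_j\circ v)'\|_p$ stays bounded away from $0$, so $\|T(u_j)-T(v)\|_{\mathcal{L}}\not\to 0$.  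Hence your mollification sandwich cannot be made uniform over the unit ball of $C^k_b$, and the norm-continuity assertion itself appears to fail.

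Your fallback is the right repair.  For each fixed $f\in C^k_b$ the map $u\mapsto f\circ u$ is continuous by Corollary~\ref{cor:babySmoothness}, i.e.\ $T$ is continuous in the strong operator topology, and this is what the proof of Lemma~\ref{lemma:smoothness} actually consumes (one differentiates at a fixed $f$ and uses continuity of $u\mapsto df\circ u$, which is again Corollary~\ref{cor:babySmoothness} applied to $df$).  So the $C^r$ conclusion for $\Psi$ and the derivative formula survive; it is only the intermediate norm-continuity statement that should be weakened to strong continuity.
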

\begin{proof}
For $u,v\in W^{k,p}(\mathcal{U},\Omega)$, by Theorem~\ref{th:babySmoothness} we have
\begin{eqnarray*}
&&\|(T(u)-T(v))f\|_{k,p}=\|f\circ (u-v)\|_{k,p}\\
&\le&\sum^N_{t=1}\sum_{|\alpha|\le k}\alpha!
\sum_{(s,\sigma, \omega)\in\mathscr{D}(\alpha)}C_{l_\omega}(m,k,p,V)\sup_{y\in \Omega}|D^\omega f_t|
\prod^s_{j=1}\prod^n_{i=1}(\left\|u_i-v_i\right\|_{k,p})^{\omega_{ji}}\\
&\le&\|f\|_{k,b}\sum^N_{t=1}\sum_{|\alpha|\le k}\alpha!
\sum_{(s,\sigma, \omega)\in\mathscr{D}(\alpha)}C_{l_\omega}(m,k,p,V)
\prod^s_{j=1}\prod^n_{i=1}(\left\|u_i-v_i\right\|_{k,p})^{\omega_{ji}}
\end{eqnarray*}
 where $l_\omega=\sum^s_{j=1}\sum^n_{i=1}\omega_{ji}$, and hence
 \begin{eqnarray*}
&&\|T(u)-T(v)\|_{\mathcal{L}(E,F)}\\&&\le\sum^N_{t=1}\sum_{|\alpha|\le k}\alpha!
\sum_{(s,\sigma, \omega)\in\mathscr{D}(\alpha)}C_{l_\omega}(m,k,p,V)
\prod^s_{j=1}\prod^n_{i=1}(\left\|u_i-v_i\right\|_{k,p})^{\omega_{ji}}
\end{eqnarray*}
with $E=C^k_b(\Omega,\mathbb{R}^N)$ and $F=W^{k,p}(\mathcal{U},\mathbb{R}^N)$.
\end{proof}

In order to use the Sobolev imbedding theorem we require that
$\mathcal{U}$ satisfies the cone condition. If $\mathcal{U}$ is bounded
this condition may be replaced by $C^1$-smoothness of $\partial\mathcal{U}$.

\begin{proof}[Proof of Proposition~\ref{prop:Jia1}]
%\noindent{\bf Proof of Proposition~\ref{prop:Jia1}}.\quad
Let $\kappa=\dim(\prod^m_{k=0}\mathbb{R}^{N\times M_0(k)})= (m+1)N+
  \sum^m_{k=0}M_0(k)$. For $\vec{u}\in X_{k,p}$ and $x\in\overline{\Omega}$
  put ${\bf u}(x)=(x, \vec{u}(x),\cdots, D^m \vec{u}(x))$. Then
  $$
  \Upsilon:X_{k,p}\to W^{k-m, p}(\Omega, \mathbb{R}^{n+\kappa}),\;\vec{u}\mapsto{\bf u}
  $$
  is  a smooth map.
   ({\it Note}: $\Upsilon(\vec{u})$ does not belong to
    $W^{k-m, p}(\Omega, \Omega\times\mathbb{R}^\kappa)$
     according to (\ref{e:restrict}).)

We  prove the second claim. The first is easily seen from the following proof.
For $\rho>0$ let $B_\rho(X_{k,p})=\{\vec{u}\in X_{k,p}\,|\, \|\vec{u}\|_{k,p}<\rho\}$.
It suffices to prove that
$A$ is $C^r$ in $Z\times B_\rho(X_{k,p})$.

Since  $X_{k,p}\subset C^m(\overline{\Omega}, \mathbb{R}^N)$,
we have $\Gamma_\rho>0$ such that
\begin{eqnarray}\label{e:App1}
&{\bf u}( \overline{\Omega})\subset \overline{\Omega}\times B^\kappa(0, \Gamma_\rho/2)=\overline{\Omega}\times \{\omega\in\mathbb{R}^\kappa\,|\, |\omega|<\Gamma_\rho/2\},\notag\\
&\quad\forall \vec{u}\in B_\rho(X_{k,p}).
\end{eqnarray}
Let $\Omega_\delta$ denote the $\delta$-neighborhood of $\Omega$, i.e.,
$\Omega_\delta=\{x\in\mathbb{R}^n\,|\, {\rm dist}(x,\Omega)<\delta\}$.
For $\delta>0$ small enough,
the restriction of each $C^{k-m+2r}$ function
$\overline\Omega\times\mathbb{R}^\kappa\times Z\ni (x,\xi,\lambda)\mapsto
{\bf F}^i_\alpha(x,\xi;\lambda)\in\mathbb{R}$ to $\overline\Omega\times B^\kappa(0, 2\Gamma_\rho)\times Z$
 can extend a $C^{k-m+2r}$ function on
 $\Omega_{2\delta}\times B^\kappa(0, 2\Gamma_\rho)\times Z$, which
 restricts to a function in
 $C^{k-m+2r}_b(\Omega_\delta\times B^\kappa(0, \Gamma_\rho)\times Z,\mathbb{R})$,
 still denoted by ${\bf F}^i_\alpha$.  By Corollary~\ref{cor:Ir} we get a $C^r$ map
 \begin{equation}\label{e:Ir+}
Z\ni \lambda\mapsto {\bf F}^i_\alpha(\cdot;\lambda)|_{\Omega_\delta\times B^\kappa(0, \Gamma_\rho)}\in C^{k-m+r}_b(\Omega_\delta\times B^\kappa(0, \Gamma_\rho),\mathbb{R}).
\end{equation}

   Since $(k-m)p>n$, by  Corollary~\ref{cor:smoothness}, for any $s\in\N$,
    the composition map
$$
\Psi : C^{k-m+s}_b(\Omega_\delta\times B^\kappa(0, \Gamma_\rho),\mathbb{R})
 \times W^{k-m, p}(\Omega,\Omega_\delta\times
B^\kappa(0, \Gamma_\rho))$$ $$ \to
W^{k-m,p}(\Omega, \mathbb{R}),\; (f,{\bf u}) \mapsto f \circ {\bf u}
$$
is of class $C^s$.    Hence the composition
%$$
%Z\times W^{k-m, p}(\Omega,\Omega_\delta\times
%B^\kappa(0, \Gamma_\rho))$$ $$ \to
%W^{k-m,p}(\Omega, \mathbb{R}) : (\lambda,{\bf u}) \mapsto \Psi({\bf F}^i_\alpha(\cdot;\lambda)|_{\Omega\times B^\kappa(0, \Gamma_\rho)}, {\bf u})
%$$
\begin{eqnarray*}
Z\times W^{k-m, p}(\Omega,\Omega_\delta\times
B^\kappa(0, \Gamma_\rho))&\to&
W^{k-m,p}(\Omega, \mathbb{R}),\\
  (\lambda,{\bf u}) &\mapsto& \Psi({\bf F}^i_\alpha(\cdot;\lambda)|_{\Omega\times B^\kappa(0, \Gamma_\rho)}, {\bf u})
\end{eqnarray*}
 is $C^r$. For any $\vec{u}\in B_\rho(X_{k,p})$, by (\ref{e:App1})
we have
$$
\overline{\Upsilon(\vec{u})(\overline{\Omega})}\in\overline{\Omega}\times \overline{B^\kappa(0, \Gamma_\rho/2)}\subset
 \Omega_\delta\times B^\kappa(0, \Gamma_\rho).
 $$
  Then   according to (\ref{e:restrict}), $\Upsilon(\vec{u})\in W^{k-m, p}(\Omega,\Omega_\delta\times
B^\kappa(0, \Gamma_\rho))$, and so $\Upsilon$ is a smooth map from
$B_\rho(X_{k,p})$ to
$W^{k-m, p}(\Omega,\Omega_\delta\times B^\kappa(0, \Gamma_\rho))$.
 It follows that the map
 $\mathbb{A}_{i,\alpha}:Z\times B_\rho(X_{k,p})\to W^{k-m,p}(\Omega, \mathbb{R})$
  given by
 $$
   \mathbb{A}_{i,\alpha}(\lambda,\vec{u})=\Psi\left(F^i_\alpha(\cdot;\lambda)|_{\Omega\times B^\kappa(0, \Gamma_\rho)}, \Upsilon(\vec{u})\right)
  =F^i_\alpha(\cdot, \vec{u}(\cdot),\cdots, D^m \vec{u}(\cdot);\lambda)
 $$
  is $C^r$. Obverse that
  $$
(A(\lambda,\vec{u}))^i=\triangle^{-m}\sum_{|\alpha|\le m}(-1)^{m+|\alpha|}D^\alpha(\mathbb{A}_{i,\alpha}(\lambda,\vec{u})),\quad i=1,\cdots,N.
$$
 Since $\triangle^{-m}:W^{k-2m,p}(\Omega)\to W^{k,p}(\Omega)\cap W^{m,2}_0(\Omega)$
is a Banach space isomorphism and \linebreak
$D^\alpha: W^{k-m,p}(\Omega)\to W^{k-m-|\alpha|,p}(\Omega)$
is a continuous linear operator,
we deduce that ${A}$ is $C^r$ in $Z\times B_\rho(X_{k,p})$.

For the first claim, since ${\bf F}$ is also $C^{k-m+2r}$,
replacing ${\bf F}^i_\alpha$ by ${\bf F}$ in the above arguments
we can obtain that  the map
 $Z\times B_\rho(X_{k,p})\to W^{k-m,p}(\Omega, \mathbb{R})$  given by
 $$
  (\lambda,\vec{u})\mapsto\Psi({\bf F}(\cdot;\lambda)|_{\Omega\times B^\kappa(0, \Gamma_\rho)}, \Upsilon(\vec{u}))
  ={\bf F}(\cdot, \vec{u}(\cdot),\cdots, D^m \vec{u}(\cdot);\lambda)
 $$
  is $C^r$. The integral $W^{k-m,p}(\Omega, \mathbb{R})\ni w\mapsto\int_\Omega w(x)dx$ is linear and continuous. The claim holds.

Other claims are easily derived from the relation $X_{k,p}\subset C^m(\overline{\Omega}, \mathbb{R}^N)$ as  before.
\end{proof}
  %\qed

\begin{remark}\label{rm:corr}
{\rm According to Remark~B.1 and Theorem~B.5 in
\cite{Lu7}, and Corollary~\ref{cor:babySmoothness}
the function $F$ in \cite[Theorem~4.21]{Lu8} should be required to be $C^{k-m+4}$
so that the corresponding map $\mathbb{A}:X_{k,p}\to X_{k,p}$ is $C^3$.}
\end{remark}

\vspace{0.8cm}

%\vspace{-8pt}
\section{Appendix:\quad
 Proofs of Lemmas~\ref{lem:BifE.10}, \ref{lem:BifE.11}, \ref{lem:BifE.12}}\label{app:B}\setcounter{equation}{0}

\subsection{Proof of Lemma~\ref{lem:BifE.10}}\label{app:B.1}
As in the proof of \cite[(4.8)]{Lu6}, using the mean value theorem
we get a $\tau\in (0,1)$ such that
%\ \vspace{-10pt}
\begin{eqnarray}\label{e:B.1}
|D_\lambda F(x,\xi;\lambda)-D_\lambda F(x,0;\lambda)|&\le& \sum^N_{i=1}\sum_{|\alpha|\le m}|\frac{D_\lambda F}{\partial\xi^i_\alpha}(x,\tau\xi;\lambda)|\cdot|\xi_\alpha^i|
\nonumber\\
&\le&\sum^N_{i=1}\sum_{|\alpha|\le m}|D_\lambda F^i_{\alpha}(x,\tau\xi;\lambda)|\cdot|\xi_\alpha^i|.
\end{eqnarray}
because $D_\lambda F^i_{\alpha}(x,\tau\xi;\lambda)=D_\lambda F^i_{\alpha}(x,\tau\xi;\lambda)$  by (i) in Theorem~\ref{th:BifE.9}.
By (iii) we derive
\begin{eqnarray}\label{e:B.2}
&&|D_\lambda F^i_\alpha(x,\tau\xi;\lambda)|\le|D_\lambda F^i_\alpha(x,0;\lambda)|\nonumber\\
&&+ \mathfrak{g}(\sum^N_{k=1}|\xi_\circ^k|)\sum_{|\beta|<m-n/2}\bigg(1+
\sum^N_{k=1}\sum_{m-n/2\le |\gamma|\le
m}|\xi^k_\gamma|^{2_\gamma }\bigg)^{2_{\alpha\beta}}\nonumber\\
&&+\mathfrak{g}(\sum^N_{k=1}|\xi^k_\circ|)\sum^N_{l=1}\sum_{m-n/2\le |\beta|\le m}
\bigg(1+ \sum^N_{k=1}\sum_{m-n/2\le
|\gamma|\le m}|\xi^k_\gamma|^{2_\gamma }\bigg)^{2_{\alpha\beta}}|\xi^l_\beta|
\end{eqnarray}
because $\mathfrak{g}(\sum^N_{k=1}|\tau\xi_\circ^k|)\le \mathfrak{g}(\sum^N_{k=1}|\xi_\circ^k|)$.
Recall that $\xi^k_\circ=\{\xi^k_\alpha\,:\,|\alpha|<m-n/2\}$ for $k=1,\cdots,N$.
There exists a constant $C(m,n,N)>0$ such that
\begin{eqnarray}\label{e:B.3}
\sum^N_{i=1}\sum_{|\alpha|\le m}|\xi_\alpha^i|&=&\sum^N_{i=1}\sum_{|\alpha|<m-n/2}|\xi_\alpha^i|+
\sum^N_{i=1}\sum_{m-n/2\le |\alpha|\le m}|\xi_\alpha^i|\nonumber\\
&\le&\sqrt{M(m-n/2)}\sum^N_{i=1}|\xi_\circ^i|+
\sum^N_{i=1}\sum_{m-n/2\le |\alpha|\le m}|\xi_\alpha^i|\nonumber\\
&\le&C(m,n,N)\left(1+\sum^N_{i=1}|\xi_\circ^i|+
\sum^N_{i=1}\sum_{m-n/2\le |\alpha|\le m}|\xi_\alpha^i|^{2_\alpha}\right).
\end{eqnarray}
Moreover,
$|D_\lambda F^i_\alpha(x,0;\lambda)|\cdot|\xi_\alpha^i|\le \frac{1}{2_\alpha'}
|D_\lambda F^i_\alpha(x,0;\lambda)|^{2'_\alpha}+ \frac{1}{2^\alpha}|\xi_\alpha^i|^{2^\alpha}$ we obtain
\begin{eqnarray}\label{e:B.4}
&&\sum^N_{i=1}\sum_{|\alpha|\le m}|D_\lambda F^i_\alpha(x,0;\lambda)|\cdot|\xi_\alpha^i|\nonumber\\
&=&\sum^N_{i=1}\sum_{|\alpha|<m-n/2}|D_\lambda F^i_\alpha(x,0;\lambda)|\cdot|\xi_\alpha^i|+
\sum^N_{i=1}\sum_{m-n/2\le|\alpha|\le m}|D_\lambda F^i_\alpha(x,0;\lambda)|\cdot|\xi_\alpha^i|\nonumber\\
&\le&(\sum^N_{i=1}|\xi_\circ^i|)\sum^N_{i=1}\sum_{|\alpha|<m-n/2}|D_\lambda F^i_\alpha(x,0;\lambda)|\nonumber\\
&&+\sum^N_{i=1}\sum_{m-n/2\le|\alpha|\le m}|D_\lambda F^i_\alpha(x,0;\lambda)|^{2'_\alpha}+
\sum^N_{i=1}\sum_{m-n/2\le|\alpha|\le m}|\xi_\alpha^i|^{2_\alpha}.
\end{eqnarray}
Let $J_1=\sum^N_{i=1}|\xi_\circ^i|$ and $J_2=\sum^N_{i=1}\sum_{m-n/2\le |\alpha|\le m}|\xi_\alpha^i|^{2_\alpha}$.
Hence from (\ref{e:B.1}) and (\ref{e:B.2}) we derive
\begin{eqnarray}\label{e:B.5}
|D_\lambda F(x,\xi;\lambda)|&\le&|D_\lambda F(x,0;\lambda)|+ \sum^N_{i=1}\sum_{|\alpha|\le m}|D_\lambda F^i_{\alpha}(x,\tau\xi;\lambda)|\cdot|\xi_\alpha^i|\nonumber\\
&&\hspace{-1.5cm}\le|D_\lambda F(x,0;\lambda)|+
\sum^N_{i=1}\sum_{|\alpha|\le m}|D_\lambda F^i_\alpha(x,0;\lambda)|\cdot|\xi_\alpha^i|+T_1+T_2,
\end{eqnarray}
where \allowdisplaybreaks
{\footnotesize
\begin{eqnarray*}
T_1&=& \mathfrak{g}(\sum^N_{k=1}|\xi_\circ^k|)\sum_{|\alpha|\le m}\sum_{|\beta|<m-n/2}\bigg(1+
\sum^N_{k=1}\sum_{m-n/2\le |\gamma|\le
m}|\xi^k_\gamma|^{2_\gamma }\bigg)^{2_{\alpha\beta}}\sum^N_{i=1}|\xi_\alpha^i|\nonumber\\
T_2&=&\mathfrak{g}(\sum^N_{k=1}|\xi^k_\circ|)\sum_{|\alpha|\le m}\sum^N_{l=1}\sum_{m-n/2\le |\beta|\le m} \bigg(1+
\sum^N_{k=1}\sum_{m-n/2\le |\gamma|\le m}|\xi^k_\gamma|^{2_\gamma }\bigg)^{2_{\alpha\beta}}%\\&&
|\xi^l_\beta|\sum^N_{i=1}|\xi_\alpha^i|.
\end{eqnarray*}}
 Then $T_1=T_{11}+T_{12}$, where
\begin{eqnarray*}
T_{11}&=&\mathfrak{g}(\sum^N_{k=1}|\xi_\circ^k|)\sum_{|\alpha|< m-n/2}\sum_{|\beta|<m-n/2}\bigg(1+
\sum^N_{k=1}\sum_{m-n/2\le |\gamma|\le
m}|\xi^k_\gamma|^{2_\gamma }\bigg)^{2_{\alpha\beta}}\sum^N_{i=1}|\xi_\alpha^i|\nonumber\\
&\le&\mathfrak{g}(\sum^N_{k=1}|\xi_\circ^k|)M(m)\bigg(1+
\sum^N_{k=1}\sum_{m-n/2\le |\gamma|\le
m}|\xi^k_\gamma|^{2_\gamma }\bigg)\sum^N_{i=1}\sum_{|\alpha|< m-n/2}|\xi_\alpha^i|\nonumber\\
&\le&M(m)^2\mathfrak{g}(\sum^N_{k=1}|\xi_\circ^k|)\bigg(1+
\sum^N_{k=1}\sum_{m-n/2\le |\gamma|\le
m}|\xi^k_\gamma|^{2_\gamma }\bigg)\sum^N_{i=1}|\xi_\circ^i|
\end{eqnarray*}
because $\sum^N_{i=1}\sum_{|\alpha|<m-n/2}|\xi_\alpha^i|\le M(m)\sum^N_{i=1}|\xi_\circ^i|$ as above, and
\begin{eqnarray*}
T_{12}&&\hspace{-0.8cm}=\mathfrak{g}(\sum^N_{k=1}|\xi_\circ^k|)\sum_{m-n/2\le|\alpha|\le m}\sum_{|\beta|<m-n/2}\bigg(1+
\sum^N_{k=1}\sum_{m-n/2\le |\gamma|\le
m}|\xi^k_\gamma|^{2_\gamma }\bigg)^{2_{\alpha\beta}}\sum^N_{i=1}|\xi_\alpha^i|\nonumber\\
&\le&M(m)\mathfrak{g}(\sum^N_{k=1}|\xi_\circ^k|)\sum_{m-n/2\le|\alpha|\le m}\bigg(1+
\sum^N_{k=1}\sum_{m-n/2\le |\gamma|\le
m}|\xi^k_\gamma|^{2_\gamma }\bigg)^{1/2'_{\alpha}}\sum^N_{i=1}|\xi_\alpha^i|\nonumber\\
&&\hspace{-1cm}\le M(m)\mathfrak{g}(\sum^N_{k=1}|\xi_\circ^k|)\sum^N_{i=1}\sum_{m-n/2\le|\alpha|\le m}\left[\bigg(1+
\sum^N_{k=1}\sum_{m-n/2\le |\gamma|\le
m}|\xi^k_\gamma|^{2_\gamma }\bigg)+ |\xi_\alpha^i|^{2_\alpha}\right]\nonumber\\
&\le&2NM(m)^2\mathfrak{g}(\sum^N_{k=1}|\xi_\circ^k|)\bigg(1+
\sum^N_{k=1}\sum_{m-n/2\le |\gamma|\le m}|\xi^k_\gamma|^{2_\gamma }\bigg).
\end{eqnarray*}
because of the definition of $2_{\alpha\beta}$ and the H\"older inequality. Similarly,
we write $T_2=T_{21}+T_{22}$, where
{\footnotesize
\begin{eqnarray*}
T_{21}&&\hspace{-0.8cm}=\mathfrak{g}(\sum^N_{k=1}|\xi^k_\circ|)
\sum_{|\alpha|<m-n/2}\sum^N_{l=1}\sum_{m-n/2\le |\beta|\le m} \bigg(1+
\sum^N_{k=1}\sum_{m-n/2\le |\gamma|\le m}|\xi^k_\gamma|^{2_\gamma }\bigg)^{2_{\alpha\beta}}|\xi^l_\beta|\sum^N_{i=1}|\xi_\alpha^i|\\
&&\hspace{-1cm}=\mathfrak{g}(\sum^N_{k=1}|\xi^k_\circ|)
\sum_{|\alpha|<m-n/2}\sum^N_{l=1}\sum_{m-n/2\le |\beta|\le m}
\bigg(1+ \sum^N_{k=1}\sum_{m-n/2\le |\gamma|\le m}
|\xi^k_\gamma|^{2_\gamma }\bigg)^{1/2'_{\beta}}|\xi^l_\beta|\sum^N_{i=1}|\xi_\alpha^i|\\
&\le&M(m)\mathfrak{g}(\sum^N_{k=1}|\xi^k_\circ|)\sum^N_{i=1}|\xi_\circ^i|\sum^N_{l=1}
\sum_{m-n/2\le |\beta|\le m}
\bigg(1+ \sum^N_{k=1}\sum_{m-n/2\le |\gamma|\le m}|\xi^k_\gamma|^{2_\gamma }\bigg)^{1/2'_{\beta}}|\xi^l_\beta|
\end{eqnarray*}}because
$\sum^N_{i=1}\sum_{|\alpha|<m-n/2}|\xi_\alpha^i|\le M(m)\sum^N_{i=1}|\xi_\circ^i|$.
 But the H\"older inequality leads to
\begin{eqnarray*}
&&\sum^N_{l=1}\sum_{m-n/2\le |\beta|\le m}
\bigg(1+\sum^N_{k=1}\sum_{m-n/2\le
|\gamma|\le m}|\xi^k_\gamma|^{2_\gamma }\bigg)^{1/2'_{\beta}}|\xi^l_\beta|\\
&\le&\sum^N_{l=1}\sum_{m-n/2\le |\beta|\le m} \left[\bigg(1+\sum^N_{k=1}\sum_{m-n/2\le |\gamma|\le m}|\xi^k_\gamma|^{2_\gamma }\bigg)+
|\xi^l_\beta|^{2_\beta}\right]\\
&\le& 2NM(m)\bigg(1+\sum^N_{k=1}\sum_{m-n/2\le
 |\gamma|\le m}|\xi^k_\gamma|^{2_\gamma }\bigg).
\end{eqnarray*}
Hence
$$
T_{21}\le 2NM(m)^2\mathfrak{g}(\sum^N_{k=1}|\xi^k_\circ|)\sum^N_{i=1}|\xi_\circ^i|
\bigg(1+\sum^N_{k=1}\sum_{m-n/2\le |\gamma|\le m}|\xi^k_\gamma|^{2_\gamma }\bigg).
$$

For $m-n/2\le|\alpha|\le m$ and $m-n/2\le|\beta|\le m$, since
$0<2_{\alpha\beta}\le 1-2_\alpha^{-1}-2_\beta^{-1}=\tau^{-1}$,
using the H\"older inequality we get
\begin{eqnarray*}
&&\bigg(1+\sum^N_{k=1}\sum_{m-n/2\le |\gamma|\le m}|\xi^k_\gamma|^{2_\gamma }\bigg)^{2_{\alpha\beta}}|\xi^l_\beta||\xi_\alpha^i|\\
&\le&\bigg(1+\sum^N_{k=1}\sum_{m-n/2\le |\gamma|\le m}|\xi^k_\gamma|^{2_\gamma }\bigg)^{2_{\alpha\beta}\tau}+ |\xi^l_\beta|^{2_\beta}+ |\xi_\alpha^i|^{2_\alpha}\\
&\le&1+\sum^N_{k=1}\sum_{m-n/2\le |\gamma|\le m}|\xi^k_\gamma|^{2_\gamma }+ |\xi^l_\beta|^{2_\beta}+ |\xi_\alpha^i|^{2_\alpha}
\end{eqnarray*}
and therefore
{\footnotesize
\begin{eqnarray*}
&&T_{22}\\&=&\mathfrak{g}(\sum^N_{k=1}|\xi^k_\circ|)\sum_{m-n/2\le|\alpha|\le m}\sum^N_{l=1}\sum_{m-n/2\le |\beta|\le m} \bigg(1+
\sum^N_{k=1}\sum_{m-n/2\le |\gamma|\le m}|\xi^k_\gamma|^{2_\gamma }\bigg)^{2_{\alpha\beta}}|\xi^l_\beta|\sum^N_{i=1}|\xi_\alpha^i|\\
&\le&N^2M(m)^2\mathfrak{g}(\sum^N_{k=1}|\xi^k_\circ|)\left(1+\sum^N_{k=1}\sum_{m-n/2\le |\gamma|\le m}|\xi^k_\gamma|^{2_\gamma }\right)\\
&&+NM(m)\mathfrak{g}(\sum^N_{k=1}|\xi^k_\circ|)\sum_{m-n/2\le|\beta|\le m}\sum^N_{l=1}|\xi^l_\beta|^{2_\beta}\\
&&+NM(m)\mathfrak{g}(\sum^N_{k=1}|\xi^k_\circ|)\sum_{m-n/2\le|\alpha|\le m}\sum^N_{i=1}|\xi^i_\alpha|^{2_\alpha}\\
&\le&3N^2M(m)^2\mathfrak{g}(\sum^N_{k=1}|\xi^k_\circ|)\left(1+\sum^N_{k=1}\sum_{m-n/2\le |\gamma|\le m}|\xi^k_\gamma|^{2_\gamma }\right).
\end{eqnarray*}}

It follows that
\begin{eqnarray*}
T_1&\le& M(m)^2\mathfrak{g}(J_1)J_1(1+J_2)+2NM(m)^2\mathfrak{g}(J_1)(1+J_2)\\
&=&M(m)^2[\mathfrak{g}(J_1)J_1+2N\mathfrak{g}(J_1)](1+J_2),\\
T_2&\le&2NM(m)^2\mathfrak{g}(J_1)J_1(1+J_2)+3N^2M(m)^2\mathfrak{g}(J_1)(1+J_2)\\
&=&M(m)^2[2N\mathfrak{g}(J_1)J_1+3N^2\mathfrak{g}(J_1)](1+J_2).
\end{eqnarray*}
Finally, by (\ref{e:B.4}) and (\ref{e:B.5}) we deduce
\begin{eqnarray}\label{e:B.6}
|D_\lambda F(x,\xi;\lambda)|&\le&|D_\lambda F(x,0;\lambda)|+
\sum^N_{i=1}\sum_{m-n/2\le|\alpha|\le m}|D_\lambda F^i_\alpha(x,0;\lambda)|^{2'_\alpha}\nonumber\\
&&+J_1\sum^N_{i=1}\sum_{|\alpha|<m-n/2}|D_\lambda F^i_\alpha(x,0;\lambda)|+
J_2\nonumber\\
&&+M(m)^2[3N\mathfrak{g}(J_1)J_1+4N^2\mathfrak{g}(J_1)](1+J_2).
\end{eqnarray}
Define $\widehat{\mathfrak{g}}:[0,\infty)\to\R$ by
$\widehat{\mathfrak{g}}(t)=1+M(m)^2(3N\mathfrak{g}(t)t+4N^2\mathfrak{g}(t))$.
This is a continuous, positive, nondecreasing function
$\widehat{\mathfrak{g}}:[0,\infty)\to\mathbb{R}$.
Lemma~\ref{lem:BifE.10} follows from (\ref{e:B.6}).

\subsection{Proof of Lemma~\ref{lem:BifE.11}}\label{app:B.2}

For $\vec{u}\in V$ and $w\in V_0$, by (\ref{e:6.4}) we have
\begin{eqnarray}\label{e:B.6+}
&&|(\nabla\mathfrak{F}_{\lambda_1}(\vec{u})- \nabla\mathfrak{F}_{\lambda_2}(\vec{u}), \vec{w})_{m,2}|
\notag\\&=&\Bigg|\sum^N_{j=1}\sum_{|\alpha|\le m}\int_\Omega F^j_\alpha(x,
\vec{u}(x),\cdots, D^m \vec{u}(x);\lambda_1)D^\alpha w^j dx\nonumber\\
&-&\sum^N_{j=1}\sum_{|\alpha|\le m}\int_\Omega F^j_\alpha(x,
\vec{u}(x),\cdots, D^m \vec{u}(x);\lambda_2)D^\alpha w^j dx\Bigg|\nonumber\\
&\le&|\lambda_1-\lambda_2|\sum^N_{j=1}\sum_{|\alpha|\le m}I_{j\alpha},
\end{eqnarray}
where
$$
I_{j\alpha}=\sup_{\lambda\in[\lambda_1,\lambda_2]}\int_\Omega |D_\lambda F^j_\alpha(x,
\vec{u}(x),\cdots, D^m \vec{u}(x);\lambda)|\cdot|D^\alpha w^j| dx.
$$

By the Sobolev embedding theorem there exists  a constant $C>0$ such that
for any $\vec{w}\in W^{m,2}(\Omega,\R^N)$ and $j=1,\cdots,N$,
\begin{eqnarray}\label{e:B.7}
&&\|D^\alpha w^j\|_{C^0}\le C\|w^j\|_{m,2}\le C\|\vec{w}\|_{m,2}\quad\hbox{if}\quad |\alpha|<m-n/2,\\
&&\hspace{-.6cm}\|D^\alpha w^j\|_{2_\alpha}\le C\|D^\alpha w^j\|_{m-|\alpha|,2}\le C\|\vec{w}\|_{m,2}\quad\hbox{if}\quad m-n/2\le |\alpha|\le m.\label{e:B.8}
\end{eqnarray}
It follows from  the condition (iii) of Theorem~\ref{th:BifE.9} that
{\footnotesize
 \begin{eqnarray*}
 &&\int_\Omega |D_\lambda F^j_\alpha(x, \vec{u}(x),\cdots, D^m \vec{u}(x);\lambda)|\cdot|D^\alpha w^j| dx
\le \int_\Omega |D_\lambda F^j_\alpha(x,0;\lambda)|\cdot|D^\alpha w^j| dx\\
&+& \sup_{k<m-n/2}\mathfrak{g}(\|\vec{u}\|_{C^k})\sum_{|\beta|<m-n/2}\int_\Omega \bigg(1+
\sum^N_{k=1}\sum_{m-n/2\le |\gamma|\le
m}|D^\gamma u^k|^{2_\gamma }\bigg)^{2_{\alpha\beta}}\cdot|D^\alpha w^j| dx\nonumber\\
&+&\sup_{k<m-n/2}\mathfrak{g}(\|\vec{u}\|_{C^k})\sum^N_{l=1}\sum_{m-n/2\le |\beta|\le m} \int_\Omega \bigg(1+
\sum^N_{k=1}\sum_{m-n/2\le |\gamma|\le m}|D^\gamma u^k|^{2_\gamma }\bigg)^{2_{\alpha\beta}}\\&&|D^\beta u^l|\cdot|D^\alpha w^j| dx\\
&=&J_1+J_2+J_3.
\end{eqnarray*}}
Since $2_\alpha'=1$ for $|\alpha|<m-n/2$, by the H\"older inequality and (\ref{e:B.7})-(\ref{e:B.8}) we always have
\begin{eqnarray}\label{e:B.9}
J_1&=&\int_\Omega |D_\lambda F^j_\alpha(x,0;\lambda)|\cdot|D^\alpha w^j| dx \notag\\&\le&
C\left(\int_\Omega |D_\lambda F^j_\alpha(x,0;\lambda)|^{2'_\alpha}dx\right)^{1/2'_\alpha}\|\vec{w}\|_{m,2}.
\end{eqnarray}
In order to estimate $J_2$, we first consider the case $|\alpha|<m-n/2$. Because $2_{\alpha\beta}=1$
for $|\alpha|<m-n/2$ and $|\beta|<m-n/2$, by (\ref{e:B.7}) we obtain
\begin{eqnarray*}
&&\sum_{|\beta|<m-n/2}\int_\Omega \bigg(1+
\sum^N_{k=1}\sum_{m-n/2\le |\gamma|\le
m}|D^\gamma u^k|^{2_\gamma }\bigg)^{2_{\alpha\beta}}\cdot|D^\alpha w^j| dx\\
&\le&C\|\vec{w}\|_{m,2}\sum_{|\beta|<m-n/2}\int_\Omega \bigg(1+
\sum^N_{k=1}\sum_{m-n/2\le |\gamma|\le
m}|D^\gamma u^k|^{2_\gamma }\bigg)dx\\
&\le& C(m,n,N,\Omega)\left(1+\sum_{m-n/2\le |\gamma|\le
m}\|\vec{u}\|_{m,2}^{2_\gamma}\right)\|\vec{w}\|_{m,2}.
\end{eqnarray*}
Hereafter we use $C(m,n,N,\Omega)$ to denote different constants only depending on $m,n,N$ and $\Omega$.
Similarly, for the case $m-n/2\le|\alpha|\le m$ we have $2_{\alpha\beta}=  1-\frac{1}{2_\alpha}= \frac{1}{2'_\alpha}$ if $|\beta|<m-n/2$,
 and hence the H\"older inequality and (\ref{e:B.8}) lead to
\begin{eqnarray*}
&&\sum_{|\beta|<m-n/2}\int_\Omega \bigg(1+
\sum^N_{k=1}\sum_{m-n/2\le |\gamma|\le
m}|D^\gamma u^k|^{2_\gamma }\bigg)^{2_{\alpha\beta}}\cdot|D^\alpha w^j| dx\\
&\le&M(m)\left(\int_\Omega \bigg(1+\sum^N_{k=1}\sum_{m-n/2\le |\gamma|\le
m}|D^\gamma u^k|^{2_\gamma }\bigg)dx\right)^{\frac{1}{2'_\alpha}}\left(\int_\Omega |D^\alpha w^j|^{2_\alpha}dx\right)^{\frac{1}{2_\alpha}}\\
&\le& C(m,n,N,\Omega)\left(1+\sum_{m-n/2\le |\gamma|\le
m}\|\vec{u}\|_{m,2}^{2_\gamma}\right)\|\vec{w}\|_{m,2}.
\end{eqnarray*}
These imply
\begin{eqnarray}\label{e:B.10}
J_2\le C(m,n,N,\Omega)\sup_{k<m-n/2}\mathfrak{g}(\|\vec{u}\|_{C^k}|)\left(1+\sum_{m-n/2\le |\gamma|\le
m}\|\vec{u}\|_{m,2}^{2_\gamma}\right)\|\vec{w}\|_{m,2}.
\end{eqnarray}

The estimation of $J_3$ is as follows. Firstly we consider the case $|\alpha|<m-n/2$. Then
$2_{\alpha\beta}=1-1/2_\beta=1/2_\beta'$ for $m-n/2\le |\beta|\le m$. By (\ref{e:B.8}) and the H\"older inequality we derive
\begin{eqnarray*}
&&\sum_{m-n/2\le |\beta|\le m}\int_\Omega \bigg(1+\sum^N_{k=1}\sum_{m-n/2\le |\gamma|\le m}|D\gamma u^k|^{2_\gamma }\bigg)^{2_{\alpha\beta}}|D^\beta u^l|\cdot|D^\alpha w^j| dx\\
&\le&C\|\vec{w}\|_{m,2}\sum_{m-n/2\le |\beta|\le m}\int_\Omega \bigg(1+\sum^N_{k=1}\sum_{m-n/2\le |\gamma|\le m}|D\gamma u^k|^{2_\gamma }\bigg)^{1/2'_{\beta}}|D^\beta u^l|dx\\
&\le&C'\|\vec{u}\|_{m,2}\|\vec{w}\|_{m,2}\left(\int_\Omega \bigg(1+\sum^N_{k=1}\sum_{m-n/2\le |\gamma|\le m}|D\gamma u^k|^{2_\gamma }\bigg)dx\right)^{1/2'_{\beta}}\\
&\le& C(m,n,N,\Omega)\left(1+\sum_{m-n/2\le |\gamma|\le
m}\|\vec{u}\|_{m,2}^{2_\gamma}\right)\|\vec{u}\|_{m,2}\|\vec{w}\|_{m,2}.
\end{eqnarray*}
Next we study the case $m-n/2\le|\alpha|\le m$. If $m-n/2\le |\beta|\le m$ satisfies $|\alpha|+|\beta|=2m$ (so $|\alpha|=|\beta|=m$)
then $2_{\alpha\beta}=  1-\frac{1}{2_\alpha}-\frac{1}{2_\beta}$. It follows from the H\"older inequality and (\ref{e:B.7})-(\ref{e:B.8}) that
{\footnotesize
\begin{eqnarray*}
&&\int_\Omega \bigg(1+\sum^N_{k=1}\sum_{m-n/2\le |\gamma|\le m}|D\gamma u^k|^{2_\gamma }\bigg)^{2_{\alpha\beta}}|D^\beta u^l|\cdot|D^\alpha w^j| dx\\
&\le&\left(\int_\Omega \bigg(1+\sum^N_{k=1}\sum_{m-n/2\le |\gamma|\le m}|D\gamma u^k|^{2_\gamma }\bigg)dx\right)^{2_{\alpha\beta}}
\left(\int_\Omega|D^\beta u^l|^{2_\beta}dx\right)^{1/2_\beta}\\&&\left(\int_\Omega|D^\alpha w^j|^{2_\alpha} dx\right)^{1/2_\alpha}\\
&\le& C(m,n,N,\Omega)\left(1+\sum_{m-n/2\le |\gamma|\le
m}\|\vec{u}\|_{m,2}^{2_\gamma}\right)\|\vec{u}\|_{m,2}\|\vec{w}\|_{m,2}.
\end{eqnarray*}}
If $m-n/2\le |\beta|\le m$ satisfies $|\alpha|+|\beta|<2m$
then  $0<2_{\alpha\beta}<1-\frac{1}{2_\alpha}-\frac{1}{2_\beta}$, $2_\alpha>1$ and $2_\beta>1$.
We can choose  $q_{\alpha\beta}>1$ such that
$\frac{1}{q_{\alpha\beta}}+{2_{\alpha\beta}}+\frac{1}{2_\alpha}+\frac{1}{2_\beta}=1$.
Let $|\Omega|$ denote the Euclidean volume of $\Omega$. Using  the H\"older inequality and (\ref{e:B.7})-(\ref{e:B.8}) as above we get
{\scriptsize
\begin{eqnarray*}
&&\int_\Omega \bigg(1+\sum^N_{k=1}\sum_{m-n/2\le |\gamma|\le m}|D\gamma u^k|^{2_\gamma }\bigg)^{2_{\alpha\beta}}|D^\beta u^l|\cdot|D^\alpha w^j| dx\\
&\le&|\Omega|^{1/q_{\alpha\beta}}\left(\int_\Omega \bigg(1+\sum^N_{k=1}\sum_{m-n/2\le |\gamma|\le m}|D\gamma u^k|^{2_\gamma }\bigg)dx\right)^{2_{\alpha\beta}}\\&&
\left(\int_\Omega|D^\beta u^l|^{2_\beta}dx\right)^{1/2_\beta}\left(\int_\Omega|D^\alpha w^j|^{2_\alpha} dx\right)^{1/2_\alpha}\\
&\le& C(m,n,N,\Omega)\left(1+\sum_{m-n/2\le |\gamma|\le
m}\|\vec{u}\|_{m,2}^{2_\gamma}\right)\|\vec{u}\|_{m,2}\|\vec{w}\|_{m,2}.
\end{eqnarray*}}
All above arguments show
%\begin{eqnarray}\label{e:B.11}
%&&J_3\le C(m,n,N,\Omega)\sup_{k<m-n/2}\mathfrak{g}\notag\\&&\quad(\|\vec{u}\|_{C^k})(1+\sum_{m-n/2\le |\gamma|\le
%m}\|\vec{u}\|_{m,2}^{2_\gamma})\|\vec{u}\|_{m,2}\|\vec{w}\|_{m,2}.
%\end{eqnarray}
\begin{eqnarray}\label{e:B.11}
\hspace{-2.4cm}J_3\le C(m,n,N,\Omega)\sup_{k<m-n/2}\mathfrak{g}%\notag\\&&\quad
(\|\vec{u}\|_{C^k})\bigg(1+\sum_{m-n/2\le |\gamma|\le
m}\|\vec{u}\|_{m,2}^{2_\gamma}\bigg)\|\vec{u}\|_{m,2}\|\vec{w}\|_{m,2}.
\hspace{-3.8cm}
\nonumber\\
\end{eqnarray}
This and (\ref{e:B.9})-(\ref{e:B.10}) lead to
\begin{eqnarray*}
 &&\int_\Omega |D_\lambda F^j_\alpha(x, \vec{u}(x),\cdots, D^m \vec{u}(x);\lambda)|\cdot|D^\alpha w^j| dx\\&&\le C\left(\int_\Omega |D_\lambda F^j_\alpha(x,0;\lambda)|^{2'_\alpha}dx\right)^{1/2'_\alpha}\|\vec{w}\|_{m,2}\\
&& +C(m,n,N,\Omega)\sup_{k<m-n/2}\mathfrak{g}(\|\vec{u}\|_{C^k})
\bigg(1+\sum_{m-n/2\le |\gamma|\le
m}\|\vec{u}\|_{m,2}^{2_\gamma}\bigg)\|\vec{w}\|_{m,2}\\&&
 +C(m,n,N,\Omega)\sup_{k<m-n/2}\mathfrak{g}(\|\vec{u}\|_{C^k})\bigg(1+\sum_{m-n/2\le |\gamma|\le m}\|\vec{u}\|_{m,2}^{2_\gamma}\bigg)\|\vec{u}\|_{m,2}\|\vec{w}\|_{m,2},
 \end{eqnarray*}
 and therefore via (\ref{e:B.6+})
\begin{eqnarray*}
&&\|\nabla\mathfrak{F}_{\lambda_1}(\vec{u})- \nabla\mathfrak{F}_{\lambda_2}(\vec{u})\|_{m,2}\\
&\le&|\lambda_1-\lambda_2|\bigg[
C\sum^N_{j=1}\sum_{|\alpha|\le m}\left(\int_\Omega |D_\lambda F^j_\alpha(x,0;\lambda)|^{2'_\alpha}dx\right)^{1/2'_\alpha}\\
&&\hspace{-8pt}+NM(m)C(m,n,N,\Omega)\sup_{k<m-n/2}\mathfrak{g}(\|\vec{u}\|_{C^k})
\bigg(1+\sum_{m-n/2\le |\gamma|\le
m}\|\vec{u}\|_{m,2}^{2_\gamma}\bigg)\\
&&\hspace{-8pt}+NM(m)C(m,n,N,\Omega)\sup_{k<m-n/2}\mathfrak{g}(\|\vec{u}\|_{C^k})
\bigg(1+\sum_{m-n/2\le |\gamma|\le
m}\|\vec{u}\|_{m,2}^{2_\gamma}\bigg)\|\vec{u}\|_{m,2}\bigg].
\end{eqnarray*}
The proof of (\ref{lem:BifE.11}) is complete.

\subsection{Proof of Lemma~\ref{lem:BifE.12}}\label{app:B.3}

Fix $R>0$.
By (i), all $F(\cdot;\lambda)$  uniformly satisfy  (\ref{e:6.2})
for all $\lambda\in [0,1]$.
It follows from (\ref{e:6.6}) that there exists a positive constant $c_0$
such that for all $\vec{u}\in\bar{B}_V(0, R)$ and $\vec{v}\in V_0$,
\begin{eqnarray}\label{e:B.12}
({P}_\lambda(\vec{u})\vec{v},\vec{v})_{m,2}\ge c_0\|\vec{v}\|_{m,2}^2.
\end{eqnarray}

Let $(\lambda, \vec{u}_k)\in [0,1]\times\bar{B}_V(0,R)$ such that
$\nabla\mathfrak{F}_{\lambda}(\vec{u}_k)\to 0$ as $k\to\infty$. We can assume
that $\vec{u}_k\rightharpoonup\vec{u}\in \bar{B}_V(0,R)$.
By the mean value theorem we have
a sequence $(\tau_k)\subset (0,1)$ such that
{\footnotesize
\begin{eqnarray}\label{e:B.12+}
&&(\nabla\mathfrak{F}_{\lambda}(\vec{u}_k)- \nabla\mathfrak{F}_{\lambda}(\vec{u}), \vec{u}_k-\vec{u})_{m,2}
=(B_{\lambda}(\tau_k\vec{u}_k+ (1-\tau_k)\vec{u})(\vec{u}_k-\vec{u}), \vec{u}_k-\vec{u})_{m,2}\nonumber\\
&=&(P_{\lambda}(\tau_k\vec{u}_k+ (1-\tau_k)\vec{u})(\vec{u}_k-\vec{u}), \vec{u}_k-\vec{u})_{m,2}+(Q_{\lambda}(\tau_k\vec{u}_k+ (1-\tau_k)\vec{u})(\vec{u}_k-\vec{u}), \vec{u}_k-\vec{u})_{m,2}\nonumber\\
&\ge&c_0\|\vec{u}_k-\vec{u}\|^2_{m,2}+(Q_{\lambda}(\tau_k\vec{u}_k+ (1-\tau_k)\vec{u})(\vec{u}_k-\vec{u}), \vec{u}_k-\vec{u})_{m,2}.
\end{eqnarray}}
Define $\vec{w}_k=\tau_k\vec{u}_k+ (1-\tau_k)\vec{u}$ and
$\vec{v}_k=\vec{u}_k-\vec{u}$ for $k=1,2,\cdots$.
Then  (\ref{e:6.7}) produces
 \begin{eqnarray}\label{e:B.13}
   && (Q_\lambda(\vec{w}_k)\vec{v}_k,\vec{v}_k)_{m,2}\notag \\&=&\sum^N_{i,j=1}\sum_{|\alpha|+|\beta|<2m}
   \int_\Omega F^{ij}_{\alpha\beta}(x, \vec{w}_k(x),\cdots,
   D^m \vec{w}_k(x);\lambda)D^\beta v^j_k\cdot D^\alpha v^i_k dx\nonumber\\
  &&-\sum^N_{i=1}\sum_{|\alpha|\le m-1}
  \int_\Omega  D^\alpha v^i_k\cdot D^\alpha v^i_k dx.
    \end{eqnarray}
By the condition (i) of Theorem~\ref{th:BifE.9}
there exists a continuous, positive, nondecreasing functions
$\mathfrak{g}_1$ such that for $i,j=1,\cdots,N$, $|\alpha|, |\beta|\le m$  and
$(x, \xi, \lambda)\in\overline\Omega\times\R^{M(m)}\times [0,1]$,
\begin{eqnarray}\label{e:B.14}
 |F^{ij}_{\alpha\beta}(x,\xi;\lambda)|\le
\mathfrak{g}_1(\sum^N_{l=1}|\xi_\circ^l|)\left(1+
\sum^N_{l=1}\sum_{m-n/2\le |\gamma|\le
m}|\xi^l_\gamma|^{2_\gamma}\right)^{2_{\alpha\beta}}.
\end{eqnarray}
Since $\|\vec{w}_k\|_{m,2}\le R$, by the Sobolev embedding
theorem we have a constant $C_0=C_0(m,n,N,\Omega)>0$ such that
$\sup_{l<m-n/2}\mathfrak{g}_1(\|\vec{w}_k\|_{C^l})\le \mathfrak{g}_1(RC_0)$.

Using these inequalities we can estimate (\ref{e:B.13}) as follows.

\vspace{4pt}\noindent
{\bf Case 1} ($m-n/2\le |\alpha|\le  m,\; |\beta|<m-n/2$).\quad
Then $2_{\alpha\beta}=  1-\frac{1}{2_\alpha}$ and $D^\beta v^j_k\in C^0(\overline{\Omega})$.
It follows from these that
{\footnotesize
\begin{eqnarray*}
&&\left|\int_\Omega F^{ij}_{\alpha\beta}(x, \vec{w}_k(x),\cdots, D^m \vec{w}_k(x);\lambda)D^\beta v^j_k\cdot D^\alpha v^i_k dx\right|\nonumber\\
 &\le&\mathfrak{g}_1(RC_0)\int_\Omega
  \bigg(1+\sum^N_{l=1}\sum_{m-n/2\le |\gamma|\le
m}|D^\gamma w_k^l(x)|^{2_\gamma}\bigg)^{2_{\alpha\beta}}|D^\beta v_k^j(x)|\cdot |D^\alpha v_k^i(x)|dx\\
&\le&\mathfrak{g}_1(RC_0)\|D^\beta v^j_k\|_{C^0}\int_\Omega
  \bigg(1+\sum^N_{l=1}\sum_{m-n/2\le |\gamma|\le
m}|D^\gamma w_k^l(x)|^{2_\gamma}\bigg)^{1/2'_{\alpha}}\cdot |D^\alpha v_k^i(x)|dx\\
&&\hspace{-0.8cm}\le \mathfrak{g}_1(RC_0)\|D^\beta v^j_k\|_{C^0}\bigg(\int_\Omega
  \bigg(1+\sum^N_{l=1}\sum_{m-n/2\le |\gamma|\le
m}|D^\gamma w_k^l(x)|^{2_\gamma}\bigg)\bigg)^{1/2'_{\alpha}}
\bigg(\int_\Omega|D^\alpha v_k^i(x)|^{2_\alpha}\bigg)^{1/2_\alpha}\\
&\le&  C(m,n,N,\Omega)\mathfrak{g}_1(RC_0)\bigg(1+\sum_{m-n/2\le |\gamma|\le
m}\|\vec{w}_k\|_{m,2}^{2_\gamma}\bigg)^{1/2'_{\alpha}}\|D^\beta v^j_k\|_{C^0}\|\vec{v}_k\|_{m,2}\\
&\le& \|D^\beta v^j_k\|_{C^0} C(m,n,N,\Omega)\mathfrak{g}_1(RC_0)\bigg(1+\sum_{m-n/2\le |\gamma|\le
m}R^{2_\gamma}\bigg)^{1/2'_{\alpha}}\|\vec{v}_k\|_{m,2}.
    \end{eqnarray*}}

\noindent
{\bf Case 2} ($m-n/2\le |\beta|\le  m,\; |\alpha|<m-n/2$).\quad Commuting $\alpha$ and $\beta$ in the last case we obtain
  \begin{eqnarray*}
&&\left|\int_\Omega F^{ij}_{\alpha\beta}(x, \vec{w}_k(x),\cdots, D^m \vec{w}_k(x);\lambda)D^\beta v^j_k\cdot D^\alpha v^i_k dx\right|\nonumber\\
&\le& \|D^\alpha v^i_k\|_{C^0} C(m,n,N,\Omega)\mathfrak{g}_1(RC_0)\bigg(1+\sum_{m-n/2\le |\gamma|\le m}R^{2_\gamma}\bigg)^{1/2'_{\beta}}\|\vec{v}_k\|_{m,2}.
    \end{eqnarray*}

\noindent
{\bf Case 3} ($|\alpha|, |\beta|<m-n/2$).\quad
We have $2_{\alpha\beta}= 1$, $2'_{\alpha}=1$ and so
\begin{eqnarray*}
&&\left|\int_\Omega F^{ij}_{\alpha\beta}(x, \vec{w}_k(x),\cdots, D^m \vec{w}_k(x);\lambda)D^\beta v^j_k\cdot D^\alpha v^i_k dx\right|\nonumber\\
 &\le&\mathfrak{g}_1(RC_0)\int_\Omega
  \bigg(1+\sum^N_{l=1}\sum_{m-n/2\le |\gamma|\le
m}|D^\gamma w_k^l(x)|^{2_\gamma}\bigg)^{2_{\alpha\beta}}|D^\beta v_k^j(x)|\cdot |D^\alpha v_k^i(x)|dx\\
&\le&  C(m,n,N,\Omega)\mathfrak{g}_1(RC_0)\bigg(1+\sum_{m-n/2\le |\gamma|\le
m}\|\vec{w}_k\|_{m,2}^{2_\gamma}\bigg)\|D^\beta v^j_k\|_{C^0}\|D^\alpha\vec{v}^i_k\|_{C^0}\\
&\le& \|D^\beta v^j_k\|_{C^0} C(m,n,N,\Omega)\mathfrak{g}_1(RC_0)\bigg(1+\sum_{m-n/2\le |\gamma|\le
m}R^{2_\gamma}\bigg)^{1/2'_{\alpha}}\|\vec{v}_k\|_{m,2}.
    \end{eqnarray*}

\noindent
{\bf Case 4} ($|\alpha|,\;|\beta|\ge m-n/2,\;|\alpha|+|\beta|<2m$).\quad
Then $0<2_{\alpha\beta}<1-\frac{1}{2_\alpha}-\frac{1}{2_\beta}$, $2_\alpha>1$ and $2_\beta>1$.
We can choose $\epsilon>0$ and $q_{\alpha\beta}>1$ such that $2_\alpha-\epsilon>1$, $2_\beta-\epsilon>1$ and
$$
\frac{1}{q_{\alpha\beta}}+{2_{\alpha\beta}}+
\frac{1}{2_\alpha-\epsilon}+\frac{1}{2_\beta-\epsilon}=1.
$$
Using the H\"older inequality we get
{\small
\begin{eqnarray*}
&&\left|\int_\Omega F^{ij}_{\alpha\beta}(x, \vec{w}_k(x),\cdots, D^m \vec{w}_k(x);\lambda)D^\beta v^j_k\cdot D^\alpha v^i_k dx\right|\nonumber\\
 &\le&\mathfrak{g}_1(RC_0)\int_\Omega
  \bigg(1+\sum^N_{l=1}\sum_{m-n/2\le |\gamma|\le
m}|D^\gamma w_k^l(x)|^{2_\gamma}\bigg)^{2_{\alpha\beta}}|D^\beta v_k^j(x)|\cdot |D^\alpha v_k^i(x)|dx\\
&\le&\mathfrak{g}_1(RC_0)|\Omega|^{1/q_{\alpha\beta}}\bigg(\int_\Omega
  \bigg(1+\sum^N_{l=1}\sum_{m-n/2\le |\gamma|\le
m}|D^\gamma w_k^l(x)|^{2_\gamma}\bigg)\bigg)^{2_{\alpha\beta}}\times\\
&&\hspace{30mm}\times\bigg(\int_\Omega|D^\beta v_k^j(x)|^{2_\beta-\epsilon}\bigg)^{1/(2_\beta-\epsilon)}
\bigg(\int_\Omega|D^\alpha v_k^i(x)|^{2_\alpha-\epsilon}\bigg)^{1/(2_\alpha-\epsilon)}\\
&\le&|\Omega|^{1/q_{\alpha\beta}} C(m,n,N,\Omega)\mathfrak{g}_1(RC_0)
\bigg(1+\sum_{m-n/2\le |\gamma|\le
m}\|\vec{w}_k\|_{m,2}^{2_\gamma}\bigg)\times\\
&&\hspace{30mm}\times\bigg(\int_\Omega|D^\beta v_k^j(x)|^{2_\beta-\epsilon}\bigg)^{1/(2_\beta-\epsilon)}
\bigg(\int_\Omega|D^\alpha v_k^i(x)|^{2_\alpha-\epsilon}\bigg)^{1/(2_\alpha-\epsilon)}.
    \end{eqnarray*}}

Note that $\vec{v}_k\rightharpoonup 0$.
In Case 1,  since $|\beta|<m-n/2$, $m-|\beta|>n/2$ and the embedding \linebreak $W^{m-|\beta|,2}(\Omega)\hookrightarrow C^0(\overline{\Omega})$ is compact.
Hence $\|D^\beta v^j_k\|_{C^0}\to 0$ as $k\to\infty$.

Similarly, in Case 2 we get that $\|D^\alpha v^i_k\|_{C^0}\to 0$ as $k\to\infty$.

In Case 3, since $|\beta|<m-n/2$ we have $\|D^\beta v^j_k\|_{C^0}\to 0$ as above.

In Case 4, since $|\alpha|+|\beta|<2m$, either $|\alpha|<m$ or $|\beta|<m$. It follows that the embedding
$$
\hbox{either}\;W^{m-|\alpha|,2}(\Omega)\hookrightarrow L^{2_\alpha-\epsilon}(\Omega)\quad\hbox{or}\quad
W^{m-|\beta|,2}(\Omega)\hookrightarrow L^{2_\beta-\epsilon}(\Omega)
$$
is compact. Then either $\|D^\beta v_k^j(x)\|_{L^{2_\beta-\epsilon}}\to 0$ or
$\|D^\alpha v_k^i\|_{L^{2_\alpha-\epsilon}}\to 0$.

Moreover, the embedding $W^{m,2}(\Omega)\hookrightarrow W^{m-1,2}(\Omega)$ is compact and
 \begin{eqnarray*}
   \bigg|\sum^N_{i=1}\sum_{|\alpha|\le m-1}\int_\Omega  D^\alpha v^i_k\cdot D^\alpha v^i_k dx\bigg|\le \|\vec{v}_k\|_{m-1,2}^2.
    \end{eqnarray*}
It follows from these and (\ref{e:B.13}) that there exists a positive integer $K>0$ such that
\begin{eqnarray*}
    |(Q_\lambda(\vec{w}_k)\vec{v}_k,\vec{v}_k)_{m,2}|\le \frac{c_0}{2}\|\vec{v}_k\|_{m,2},\quad\forall k>K.
    \end{eqnarray*}
Then (\ref{e:B.12+}) gives rise to
\begin{eqnarray*}
c_0\|\vec{u}_k-\vec{u}\|^2_{m,2}-\frac{c_0}{2}\|\vec{u}_k-\vec{u}\|_{m,2}
&\le&(\nabla\mathfrak{F}_{\lambda}(\vec{u}_k)- \nabla\mathfrak{F}_{\lambda}(\vec{u}), \vec{u}_k-\vec{u})_{m,2}\\
&=&(\nabla\mathfrak{F}_{\lambda}(\vec{u}_k),\vec{u}_k-\vec{u})_{m,2}-
(\nabla\mathfrak{F}_{\lambda}(\vec{u}), \vec{u}_k-\vec{u})_{m,2}
\end{eqnarray*}
for all $k>K$. However, $\nabla\mathfrak{F}_{\lambda}(\vec{u}_k)\to 0$, $\sup_k\|\vec{u}_k-\vec{u}\|_{m,2}\le 2R$ and
$\vec{u}_k\rightharpoonup\vec{u}$. Hence the right side converges to zero.
It follows that $\vec{u}_k\to\vec{u}$.

\section{Appendix:\quad
 A note on  the Nemytski operators}\label{app:C}\setcounter{equation}{0}

When $\Lambda$ consists of a single point,
the following proposition is the standard result concerning the continuity of
the Nemytski operator (cf. \cite[Lemma 3.2]{Bro-},
   \cite[Proposition 1.1, page 3]{Skr3} and \cite[Corollary~1.16]{Ch1}).

\begin{proposition}\label{prop:C.2}
Let $G$ be as above, $\Lambda$ a sequential compact topology space, and let
$f:G\times\R^N\times A\to\R$ satisfy the following conditions:
\begin{enumerate}
\item[\rm (a)] $f(x,\xi_1,\cdots, \xi_N;\lambda)$ is continuous in
$(\xi_1,\cdots, \xi_N; \lambda)$ for almost all $x\in G$;

\item[\rm (b)] $f(x,\xi_1,\cdots, \xi_N;\lambda)$ is measurable in $x$ for any fixed $(\xi_1,\cdots, \xi_N;\lambda)\in\R^N\times\Lambda$;

\item[\rm (c)] there exist positive numbers $C$, $1<p, p_1,\cdots, p_N<\infty$ and a function $g\in L^p(G)$ such that
\begin{equation}\label{e:C.1}
|f(x,\xi_1,\cdots,\xi_N;\lambda)|\le C\sum^N_{i=1}|\xi_i|^{\frac{p_i}{p}}+ g(x),\quad
\forall (x,\xi)\in\overline{\Omega}\times\R^N.
\end{equation}
\end{enumerate}
Then the  Nemytskii operators $F_\lambda: \prod^N_{i=1}L_{p_i}(G)\to L_p(G)$, $\lambda\in\Lambda$, defined by
$$
F_\lambda(u_1,\cdots,u_N)(x)=f(x,u_1(x),\cdots,u_N(x);\lambda)
$$
have an uniform bound on any bounded subset,  and uniform continuity at any point with respect to $\lambda\in\Lambda$, i.e.,  given a point $(u_1,\cdots,u_N)\in\prod^N_{i=1}L_{p_i}(G)$, $\forall\epsilon>0$, $\exists\;\delta>0$
such that
$$
\|F_\lambda(v_1,\cdots,v_N)-F_\lambda(u_1,\cdots,u_N)\|_p<\epsilon
$$
for all $\lambda\in\Lambda$ and $(v_1,\cdots,v_N)\in\prod^N_{i=1}L_{p_i}(G)$ with $\sum^N_{i=1}\|v_i-u_i\|_{p_i}<\delta$.
\end{proposition}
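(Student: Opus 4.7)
The uniform bound on bounded subsets is the easy half: from \eqref{e:C.1} and the elementary inequality $(a_1+\cdots+a_{N+1})^p\le (N+1)^{p-1}(a_1^p+\cdots+a_{N+1}^p)$ I get a constant $C'=C'(p,N,C)$ with
\begin{equation*}
\|F_\lambda(u_1,\ldots,u_N)\|_p^p\le C'\sum_{i=1}^N\int_G|u_i(x)|^{p_i}\,dx+C'\|g\|_p^p,
\end{equation*}
which is independent of $\lambda\in\Lambda$ and hence uniform on any bounded subset of $\prod_i L^{p_i}(G)$.

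For the uniform continuity at $(u_1,\ldots,u_N)\in\prod_i L^{p_i}(G)$, my plan is to argue by contradiction. If the claim fails, there exist $\epsilon_0>0$, sequences $(v_i^n)\subset L^{p_i}(G)$ with $\sum_i\|v_i^n-u_i\|_{p_i}\to0$, and $\lambda_n\in\Lambda$ such that
\begin{equation*}
\|F_{\lambda_n}(v_1^n,\ldots,v_N^n)-F_{\lambda_n}(u_1,\ldots,u_N)\|_p\ge\epsilon_0.
\end{equation*}
Sequential compactness of $\Lambda$ lets me pass to a subsequence with $\lambda_n\to\lambda_0\in\Lambda$. Standard $L^{p_i}$ theory (a diagonal extraction applied component by component) lets me further extract a subsequence along which, for each $i$, $v_i^n(x)\to u_i(x)$ a.e.\ on $G$ and there is a dominating function $h_i\in L^{p_i}(G)$ with $|v_i^n(x)|\le h_i(x)$ a.e.

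The core step is then to pass to the limit. By hypothesis (a), for a.e.\ $x$ the map $(\xi,\lambda)\mapsto f(x,\xi;\lambda)$ is continuous, so
\begin{equation*}
f(x,v_1^n(x),\ldots,v_N^n(x);\lambda_n)-f(x,u_1(x),\ldots,u_N(x);\lambda_n)\longrightarrow 0\quad\text{a.e.\ on }G,
\end{equation*}
since both terms converge to $f(x,u(x);\lambda_0)$. On the other hand, \eqref{e:C.1} together with the convexity inequality above gives a single integrable majorant
\begin{equation*}
|f(x,v^n(x);\lambda_n)-f(x,u(x);\lambda_n)|^p\le C''\Bigl(\sum_{i=1}^N h_i(x)^{p_i}+\sum_{i=1}^N|u_i(x)|^{p_i}+g(x)^p\Bigr)\in L^1(G),
\end{equation*}
independent of $n$. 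The Lebesgue dominated convergence theorem then forces $\|F_{\lambda_n}(v^n)-F_{\lambda_n}(u)\|_p\to0$, contradicting the lower bound $\epsilon_0$.

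The main obstacle is a bookkeeping one rather than a conceptual one: I need to extract a single subsequence along which $\lambda_n$ converges in $\Lambda$ and all $N$ components $v_i^n$ converge a.e.\ with $L^{p_i}$ dominants, so that hypothesis (a) (which is a joint continuity statement in $(\xi,\lambda)$, not just separate continuity) can be applied pointwise. Once this joint extraction is arranged, the rest is Vitali/DCT. Note that the argument never uses any structure of $\Lambda$ beyond sequential compactness, which is exactly what is assumed.
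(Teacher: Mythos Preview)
Your proof is correct and follows essentially the same route as the paper's: a contradiction argument, extraction of a subsequence along which $\lambda_n\to\lambda_0$ and the $v_i^n$ converge a.e.\ with $L^{p_i}$ dominants, then an application of dominated convergence via the growth bound \eqref{e:C.1}. The only cosmetic difference is that the paper shows $F_{\lambda_n}(v^n)\to F_{\lambda_0}(u)$ and $F_{\lambda_n}(u)\to F_{\lambda_0}(u)$ separately before subtracting, whereas you bound the difference directly; both are the same argument.
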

\begin{proof}
As in the case of $\Lambda=\{\hbox{a single point}\}$, the conditions
(a) and (b) imply that
$F_\lambda(u_1,\cdots,u_N)$ is measurable if all $u_1,\cdots,u_N$ are measurable.
Thus by (\ref{e:C.1}) all $F_\lambda$ map a bounded subset of $\prod^N_{i=1}L_{p_i}(G)$ into a common bounded subset of $L_p(G)$.

The proof for the second claim can be obtained by slightly changing the short
and elegant proof of \cite[Theorem~1.1.5]{Ch1}.
Indeed, by contradiction there exist sequences
$(\lambda_k)\subset\Lambda$, $(u_{i,k})\subset L^{p_i}(G)$, $i=1,\cdots,N$, a point $(u_1,\cdots,u_N)\in\prod^N_{i=1}L_{p_i}(G)$ and a real $\epsilon_0>0$ such that
\begin{eqnarray}\label{e:C.2}
\left.\begin{array}{ll}
&\|u_{i,k}-u_i\|_{p_i}\to 0,\;i=1,\cdots,N,\quad\hbox{and}\quad\\ %\nonumber\\
& \|F_{\lambda_k}(u_{1,k},\cdots,u_{N,k})-F_{\lambda_k}(u_1,\cdots,u_N)\|_{p}
\ge\epsilon_0\quad\forall k\in\N.\end{array}\right\}
\end{eqnarray}
As in the proof of \cite[Theorem~1.15]{Ch1}, passing to subsequences we may assume
$$
u_{i,k}(x)\to u_i(x)\;\hbox{a.e. in $G$,\quad and\quad} |u_{i,k}(x)|\le\Phi_i(x),
$$
where $\Phi_i\in L_{p_i}(\Omega)$, $i=1,\cdots,N$. Moreover, we can also assume $\lambda_k\to\lambda\in\Lambda$
since $\Lambda$ is sequential compact. Noting that $f:G\times\R^N\times A\to\R$ is continuous, we get
\begin{eqnarray*}
F_{\lambda_k}(u_{1,k},\cdots,u_{N,k})(x)&=&f(x,u_{1,k}(x),\cdots,u_{N,k}(x);\lambda_k)\\
&&\to F_{\lambda_0}(u_1,\cdots,u_N)(x)\;\hbox{a.e. in $G$},\\
F_{\lambda_k}(u_{1},\cdots,u_{N})(x)&=&f(x,u_{1}(x),\cdots,u_{N}(x);\lambda_k)\\
&&\to F_{\lambda_0}(u_1,\cdots,u_N)(x)\;\hbox{a.e. in $G$}.
\end{eqnarray*}
%\begin{eqnarray*}
%&&\hspace{-0.6cm}F_{\lambda_k}(u_{1,k},\cdots,u_{N,k})(x)=
%f(x,u_{1,k}(x),\cdots,u_{N,k}(x);\lambda_k)\to
%F_{\lambda_0}(u_1,\cdots,u_N)(x)\;\hbox{a.e. in $G$},\\
%&&F_{\lambda_k}(u_{1},\cdots,u_{N})(x)=f(x,u_{1}(x),\cdots,u_{N}(x);\lambda_k)\to F_{\lambda_0}(u_1,\cdots,u_N)(x)\;\hbox{a.e. in $G$}.
%\end{eqnarray*}
Moreover, (\ref{e:C.1}) implies that for some constant $C'=C(N, p_i,p,C)>0$
and  almost all $x\in G$,
\begin{eqnarray*}
|f(x,u_{1,k}(x),\cdots,u_{N,k}(x);\lambda_k)|^p&\le& (C\sum^N_{i=1}|u_{i,k}(x)|^{\frac{p_i}{p}}+ g(x))^p\\
&\le& C'(\sum^N_{i=1}\Phi_i(x)^{{p_i}}+ g(x)^p
\end{eqnarray*}
Hence the Lebesgue dominated convergence theorem leads to
\begin{eqnarray*}
&&\|F_{\lambda_k}(u_{1,k},\cdots,u_{N,k})-F_{\lambda_0}(u_1,\cdots,u_N)\|_{p}\to 0, \\
&&\|F_{\lambda_k}(u_{1},\cdots,u_{N})-F_{\lambda_0}(u_1,\cdots,u_N)\|_{p}\to 0
\end{eqnarray*}
and therefore $\|F_{\lambda_k}(u_{1,k},\cdots,u_{N,k})-F_{\lambda_k}(u_{1},\cdots,u_{N})\|_{p}\to 0$,
which is a contradiction to (\ref{e:C.2}).
\end{proof}

\noindent{\bf Acknowledgments}.
The author is deeply grateful to Professor Xiaochun Rong for his invitation. % and helps.
I also deeply thank the anonymous referees for useful remarks.

\renewcommand{\refname}{REFERENCES}

\medskip
% The data information below will be filled by AIMS editorial staff
%Received for publication April 2021; early access October 2021.
%\medskip

\begin{tabular}{l}
 School of Mathematical Sciences, Beijing Normal University\\
 Laboratory of Mathematics and Complex Systems, Ministry of Education\\
 Beijing 100875, The People's Republic of China\\
 E-mail address: gclu@bnu.edu.cn\\
\end{tabular}

%\medskip
%\quad% The data information below will be filled by AIMS editorial staff
%%Received August 2017; revised October 2017.
%\medskip
\end{document}